\renewcommand{\d}{\partial}
\newcommand{\dbar}{\overline{\d}}
\newcommand{\ii}{\sqrt{-1}}
\newcommand{\wt}[1]{\widetilde{#1}}
\newcommand{\lc}{\left<}
\newcommand{\rc}{\right>}
\newcommand{\wh}[1]{\widehat{#1}}
\newcommand{\cw}[1]{\check{#1}}
\newcommand{\bb}{{\bf B \;}}
\newcommand{\ul}[1]{\underline{#1}}
\newcommand{\eps}{\epsilon}
\newcommand{\veps}{\varepsilon}
\newcommand{\vphi}{\varphi}
\newcommand{\al}{\alpha} 
\newcommand{\be}{\beta}
\newcommand{\ga}{\gamma}
\newcommand{\de}{\delta}
\newcommand{\la}{\lambda}
\newcommand{\om}{\omega}
\newcommand{\te}{\theta}
\newcommand{\el}{\ell}
\newcommand{\Te}{\Theta}
\newcommand{\Ga}{\Gamma}
\newcommand{\Om}{\Omega}
\newcommand{\na}{\nabla}
\newcommand{\cB}{\mathcal{B}}
\newcommand{\cE}{\mathcal{E}}
\newcommand{\cL}{\mathcal{L}}
\newcommand{\cO}{\mathcal{O}}
\newcommand{\cU}{\mathcal{U}}
\newcommand{\supp}{\mbox{supp }}
\newcommand{\bR}{\mathbb{R}}
\newcommand{\bC}{\mathbb{C}}
\newcommand{\cali}[1]{\mathscr{#1}}
\newcommand{\Lc}{\cali{L}}
\newtheorem{thm}{Theorem}
\newtheorem{prop}[thm]{Proposition}
\newtheorem{lem}[thm]{Lemma}
\newtheorem{cor}[thm]{Corollary}
\newtheorem*{thmx}{Localization Principle}
\theoremstyle{definition}
\newtheorem{defn}[thm]{Definition}
\newtheorem{remark}[thm]{Remark}
\numberwithin{thm}{section}
\numberwithin{equation}{section}
\renewcommand{\[}{\begin{equation}}
\renewcommand{\]}{\end{equation}}
\newcommand{\wed}{\wedge}
\newcommand{\ov}[1]{\overline{#1}}
\title[Complex Hessian measures with respect to a Hermitian form]{Complex Hessian measures with respect to a background Hermitian form}
\author{S\l awomir Ko\l odziej and Ngoc Cuong Nguyen} 
\address{Faculty of Mathematics and Computer Science, Jagiellonian University, \L ojasiewicza 6, 30-348 Krak\'ow, Poland}
\email{slawomir.kolodziej@im.uj.edu.pl}
\address{Department of Mathematical Sciences, KAIST, 291 Daehak-ro, Yuseong-gu, Daejeon 34141, South Korea}
\email{cuongnn@kaist.ac.kr}
\begin{document}

\maketitle 

\begin{center}
\em To the memory of Jean-Pierre Demailly \rm
\end{center}

\bigskip

\begin{abstract} We develop potential theory for $m$-subharmonic functions with respect to a Hermitian metric on a Hermitian manifold. First, we  show that the complex Hessian operator is well-defined for bounded functions in this class. This allows to define the $m$-capacity and  then showing the quasi-continuity of $m$-subharmonic functions. Thanks to this we derive  other results parallel to those  in pluripotential theory such as the equivalence between polar sets and negligible sets.  The theory is then used to study the complex Hessian equation on compact Hermitian manifold with boundary, with the right hand side of the equation admitting a bounded subsolution. This is an extension of a recent result of Collins and Picard dealing with classical solutions.
\end{abstract}

\section{Introduction}

The $m$-Hessian operator is defined in terms of elementary symmetric polynomials of degree $m$ of eigenvalues of the Hessian matrix of the given function. If the degree is equal to the dimension of the space then one deals with  the most important 
case of the Monge-Amp\`ere operator.  One can also consider more general symmetric functions of eigenvalues. The nonlinear equations involving such operators will be called in this article \it Hessian type \rm equations.
They do appear in geometry in problems involving curvatures, like prescribed the Gauss curvature equation or the Lagrangian mean curvature equation. The $m$-Hessian equations in $\bR^n$ were first solved by Caffarelli-Nirenberg-Spruck \cite{CNS85} for smooth, non-degenerate data. The study of weak solutions for measures on the right hand side was initiated by Trudinger and Wang \cite{TrWa97, TrWa99, TrWa02} (see also \cite{Wa09}).

Here we are interested in the complex setting and weak solutions.  
For smooth data  the first solutions in complex variables were obtained by Vinacua \cite{V88} and S.Y. Li \cite{Li04} who followed the method of \cite{CNS85}.
B\l ocki \cite{Bl05}  adopted the methods of pluripotential theory (initiated by Bedford and Taylor \cite{BT76, BT82} in relation to 
the complex Monge-Amp\`ere equation) to define the action of the $m$-Hessian operator on non smooth functions
and study weak solutions of the associated equation.
 
Let $\Om\subset \bC^n$ be an open set and $\om$ is a positive Hermitian $(1,1)$-form on $\Om$. Let $1\leq m\leq n$ be an integer and consider  a function $u\in C^2(\Om,\bR)$. The complex Hessian operator  with respect to $\om$ acts on  $u$ by
$$
	H_m(u) = (dd^c u)^m \wed \om^{n-m}.
$$
The operator is elliptic if we restrict ourselves to functions $u$ whose  eigenvectors $\la = (\la_1,...,\la_n)$ of the complex Hessian matrix $[u_{i\bar j}]_{1\leq i,j\leq n}$, with respect to $\om$, belong to the G\aa rding cone
$$
	\Ga_m  = \left\{ \la \in \bR^n: S_1(\la)>0,...,S_m(\la)>0 \right\},
$$
where $S_k(\la)$ is the $k$-th elementary symmetric polynomial on $\la$. Such a function is called $m-\om$-subharmonic (or $m-\om$-sh for short).

For $\om = dd^c |z|^2$  the standard K\"ahler form on $\bC^n$  B\l ocki  defined non-smooth $m$-subharmonic functions. He showed that the Hessian operator acting on a bounded $m$-subharmonic function is a well-defined positive Radon measure, that the operator is stable under decreasing sequences, and that the homogeneous Dirichlet problem is solvable.
The non-homogeneous one with the right hand side in $L^p ,\ p>n/m,$ was solved by Dinew and the first author in \cite{DK14}.

On a compact Hermitian manifold  $(X, \om )$ the right $m$-Hessian operator to consider is
$$
	H_m(u) = (dd^c u + \om )^m \wed \om^{n-m},
$$
 or more generally 
$$
	H_{m, \alpha} (u) = (dd^c u+\alpha )^m \wed \om^{n-m},
$$
where $\alpha$ is another  $(1,1)$ form.

For $\omega$ K\"ahler the counterpart of Calabi-Yau theorem was shown by  Dinew and the first author in \cite{DK17},
with a use of  earlier $C^2$ estimates of Hou-Ma-Wu \cite{HMW10}. Having this result an analogue of pluripotential theory
yields weak solutions (see \cite{DK14}). We refer the readers to  \cite{DiL15}, \cite{chinh13a, chinh13b,chinh15} and \cite{chinh-dong}  for results in  potential theory for $m-\om$-sh functions on a compact K\"ahler manifold.

 Our first goal here is to develop   potential theory for $m$-subharmonic functions (with respect to a Hermitian metric) on a  Hermitian manifold. The results  often parallel those of pluripotential theory. 

Now we assume that $\om$ is a general Hermitian metric. The complex $m$-Hessian equation on compact manifolds was solved independently by Sz\'ekelyhidi \cite{Sz18} and Zhang \cite{Zha17}. 
The authors \cite{KN3} obtained weak continuous solutions for  the right hand side in $L^p ,\ p>n/m.$
This partially motivates the development of  potential theory for $m-\om$-sh functions on Hermitian manifolds. Unlike in the K\"ahler case, we have to deal with the non-zero torsion terms $dd^c \om$ and $d\om\wed d^c\om$.  A direct computation shows that for  a smooth $m-\om$-sh function $u$,  $0\leq p \leq n-m-1$ and $k\geq 1$, the form $(dd^c u)^k\wed \om^p$ may not be positive.  Those terms appear when we perform integration by parts. This makes the proofs of basic potential estimates in the  Hermitian setting substantially more difficult. For example,  we need to fully exploit the properties of the positive cone $\Ga_m$, and show new inequalities on elementary symmetric polynomials to prove the Chern-Levine-Nirenberg (CLN) inequality \cite{KN3} and a variant of Cauchy-Schwarz inequality in this paper. This coupled with the uniform convergence allows us to define
the complex Hessian measure of a continuous $m-\om$-sh function $u$ as the weak limit of
\[\label{eq:intro:weak-limit}
	H_m(u) := \lim_{\de\to 0} H_m(u^\de) = \lim_{\de\to 0}(dd^c u^\de)^m \wed \om^{n-m},
\]
where $\{u^\de\}$ is a sequence of smooth $m-\om$-sh functions converging uniformly to $u$.

However, if $u$ is a bounded $m-\om$-sh function up till now we have not been able to define its complex Hessian measure. Thanks to a basic observation (Lemma~\ref{lem:zero-order-1}) that $dd^c u$ is a current of order zero we found a very natural way to extend 
the classical approach of Bedford-Taylor in this case.
The first main result of the paper (Theorem~\ref{thm:w-m-functions}) shows that  the measure on the left hand side of \eqref{eq:intro:weak-limit} can be defined by taking weak limit inductively. Thus, for bounded $m-\om$-subharmonic functions $u$ and $u^\de \downarrow u$ point-wise as $\de\to 0$,  
$$	H_m(u) := \lim_{\de_m\to0} \cdots\lim_{\de_1\to 0} dd^c u^{\de_m} \wed \cdots \wed dd^c u^{\de_1} \wed \om^{n-m}
$$
exists in the  sense of currents of order zero.
The proof is based on the CLN inequality in \cite{KN3}. 
This is the starting point for proving analogues of Bedford-Taylor results presented in Chapter 1 of \cite{K05}. The main difference  is that we no longer have a nice integration by parts formula  for closed positive currents. Instead we need to work with the (non-closed) currents of order zero in general. This makes our proof of  weak convergence of currents  under decreasing limits
 of $m-\om$-sh  bounded functions in Proposition~\ref{prop:w-convergence-m} more complicated than the one in  \cite{BT82}.
Next, we obtain (Theorem~\ref{thm:quasi-continuity})  the quasi-continuity of $m-\om$-sh functions with respect to a suitable $m$-capacity:  for a Borel set $E\subset \Om$,
\[\label{eq:cap-intro}
	cap_m (E) = \sup\left\{\int_E H_m(u) : u \text{ is }m-\om\text{-sh in }\Om, -1\leq u\leq 0 \right\}.
\]
To define this capacity we needed Theorem~\ref{thm:w-m-functions}.
Once quasi-continuity is proven, one obtains weak convergence  of mixed wedge products of the forms $dd^c u_j$ for
$m-\om$-sh functions $u_j$ under monotone convergence (Lemma~\ref{lem:bounded-decreasing} and Lemma~\ref{lem:bounded-increasing}) following the classical arguments in \cite{BT82, BT87}.
Next we  study the polar sets and negligible sets of $m-\om$-sh functions. In this setting it seems impossible to obtain  nice formulae for the capacity of compact or open sets in terms of Hessian measures of extremal functions as it is the case for Monge-Amp\`ere measures. Exploiting further the properties of  $\Ga_m$ in Section~\ref{ssec:integral-smooth-fct}, especially Proposition~\ref{prop:cap-integral} we are able to compare the outer capacity and the Hessian measures of relative extremal functions in Lemma~\ref{lem:cap-formula}. This suffices to give a characterization of a polar set by $cap_m^*(E) =0$ in Proposition~\ref{prop:polar-set-characterization}. Consequently, we conclude  the equivalence of  polar sets and negligible sets (Theorem~\ref{thm:NP}).

In the last sections we apply the above results to the complex $m$-Hessian equation.
Recently, there is a lot of  active research on  fully non-linear elliptic equations on compact Hermitian manifolds with or without boundary (cf. \cite{CM21}, \cite{CP22}, \cite{D21}, \cite{DL21} \cite{Gu98}, \cite{GL10}, \cite{GLu23}, \cite{GP22}, \cite{LS20}, \cite{PT21}, \cite{Sz18},  \cite{SzTW17},   \cite{TW10b}, \cite{Yu21}) in various geometric contexts.

In particular Collins and Picard  \cite{CP22} solved the Dirichlet problem for the $m$-Hessian equation  in an open subset of a Hermitian manifold under the hypothesis of existence of  a subsolution and smooth data.
We extend it in Sections \ref{sec:DP}, \ref{sec:subsolution}  showing that the existence of a bounded subsolution implies the existence of  a bounded solution for both bounded domains  (Theorem~\ref{thm:bounded-subsolution}) and compact complex manifolds with boundary (Theorem~\ref{thm:bounded-sub-mfd}).
 In the proof the above equivalence of polar and negligible sets will play a crucial role (see Lemma~\ref{lem:L1-convergence}).
 The homogeneous  $m$-Hessian equation on a (K\"ahler) manifold with boundary was recently solved in a particular case in \cite{W23} in relation to the
Wess-Zumino-Witten type equation proposed by Donaldson  in \cite{Do99}.

\medskip

{\em Acknowledgement.} The first author is partially supported by  grant  no. \linebreak 2021/41/B/ST1/01632 from the National Science Center, Poland. The second author is  partially supported by  the National Research Foundation of Korea (NRF) grant  no. 2021R1F1A1048185. This project was initiated during the visit of the first author in KAIST, and he wishes to express his gratitude for great hospitality and prefect working conditions. 

We are very grateful to  Zywomir Dinew for kindly pointing out to us the gap in the proof of Lemma~3.2 in the previous version. We also thank Le Mau Hai for useful comments. 

\section{Generalized $m$-subharmonic functions}

\subsection{Elementary symmetric positive cones}
\label{ssec:positive-cone}

In this section we prove important point-wise estimates for elementary symmetric polynomials. 
Let $1\leq m \leq n$ be two integers. The positive cone $\Ga_m$ is given by
\[\label{eq:m-cone}
	\Ga_m = \{\la = (\la_1,...,\la_n) \in \bR^n: S_1(\la)>0,...,S_m(\la)>0\},
\]
where $S_k(\la) =\sum_{1\leq i_1<...<i_k\leq n} \la_{i_1} \cdots \la_{i_k}$; and conventionally:  $S_0(\la)=1$ and $S_k(\la)=0$ for $k<0$ or $k>n$. 
Let  $\la = (\la_1,...,\la_n) \in \Ga_m$ be arranged in the decreasing manner, i.e., 
$$\la_1 \geq \cdots \geq \la_m \geq \cdots  \geq \la_n.$$
Then, we know from \cite[Lemma~8]{Iv83}  that $\la_m>0$ which is a consequence of a characterization of the cone $\Ga_m$. Namely, for $\{i_1,...,i_t\} \subset\{1,...,n\}$ such that $k+t \leq m$, one has
\[\label{eq:char-Ivochina}
	S_{k;i_1\cdots i_t} (\la) >0, 
\]
where $S_{k;i_1\cdots i_t} (\la) = S_k|_{\la_{i_1} = \cdots = \la_{i_t}=0}.$ This implies also that for $1\leq k \leq m$,
\[\label{eq:Ivochina}
	S_{k-1} (\la) \geq \la_1 \cdots \la_{k-1}.
\]

\begin{lem} \label{lem:sym-e} 
There exists $\te = \te(n,m)>0$ such that the following statements hold.
\begin{itemize}
\item
[(a)] for $1\leq j \leq m$,  
$
	\la_j S_{m-1;j} \geq \te S_{m};
$
\item
[(b)]  for $1\leq i \leq m-1$, 
$
	\la_i S_{m-2;im} \geq \te S_{m-1;m}.
$
\end{itemize}
\end{lem}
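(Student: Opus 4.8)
The plan is to deduce (b) from (a) by a shift of parameters, and to prove (a) by squeezing both $\la_j S_{m-1;j}$ (from below) and $S_m$ (from above) between constant multiples of the product $\la_1\cdots\la_m$ of the $m$ largest eigenvalues.

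\emph{Reduction of (b) to (a).} I would let $\mu\in\bR^{n-1}$ be the vector obtained from $\la$ by deleting the entry $\la_m$; it is still arranged decreasingly, $\mu_i=\la_i$ for $i\le m-1$, and by \eqref{eq:char-Ivochina} one has $S_k(\mu)=S_{k;m}(\la)>0$ for all $k\le m-1$, so $\mu\in\Ga_{m-1}$. Since moreover $S_{m-1}(\mu)=S_{m-1;m}(\la)$ and $\mu_i S_{m-2;i}(\mu)=\la_i S_{m-2;im}(\la)$ for $i\le m-1$, assertion (b) for $(n,m,\la)$ is exactly assertion (a) for $(n-1,m-1,\mu)$. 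Hence it suffices to prove (a) for all admissible pairs (the case $m=1$ of (b) being vacuous) and then take for $\te$ the least of the finitely many constants so produced.

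\emph{Proof of (a).} Fix $1\le j\le m$. By \cite[Lemma 8]{Iv83} and \eqref{eq:char-Ivochina} we have $\la_j\ge\la_m>0$ and $S_{m-1;j}>0$. Using the expansion $S_m=\la_j S_{m-1;j}+S_{m;j}$: if $S_{m;j}\le0$, then $\la_j S_{m-1;j}=S_m-S_{m;j}\ge S_m$ and there is nothing to prove; so assume $S_{m;j}>0$. Deleting $\la_j$ from $\la$ gives $\nu\in\bR^{n-1}$ with $S_k(\nu)=S_{k;j}(\la)>0$ for $k\le m-1$ (by \eqref{eq:char-Ivochina}) and $S_m(\nu)=S_{m;j}(\la)>0$ (our assumption), so $\nu\in\Ga_m$ and in particular $n>m$. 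Since $j\le m$, the $m-1$ largest entries of $\nu$ are precisely $\{\la_1,\dots,\la_m\}\setminus\{\la_j\}$, whence $\nu_1\cdots\nu_{m-1}=\la_1\cdots\la_m/\la_j$; applying \eqref{eq:Ivochina} to $\nu$ with $k=m$ yields $\la_j S_{m-1;j}(\la)=\la_j S_{m-1}(\nu)\ge\la_1\cdots\la_m$. Thus (a) will follow once we establish the companion bound $S_m(\la)\le C(n,m)\,\la_1\cdots\la_m$, valid for every decreasing $\la\in\Ga_m$, with $C(n,m)=\binom{n}{m}\max(1,n-m)^m$; then (a) holds with $\te=1/C(n,m)$.

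\emph{The upper bound --- the crux.} This last estimate is where the cone condition is genuinely used (for a general vector no such inequality can hold), and I expect it to be the main point. I would estimate each monomial $\la_I=\la_{i_1}\cdots\la_{i_m}$, $I=\{i_1<\dots<i_m\}$, on its own. Because $\la_m>0$, every negative eigenvalue has index $>m$; and from $S_{1;1\cdots m-1}(\la)=\la_m+\cdots+\la_n>0$ (again \eqref{eq:char-Ivochina}) together with $\la_i\le\la_m$ for $i\ge m$ one gets $-\la_i<(n-m)\la_m$ for every $i>m$, i.e. the ``dangerous'' entries are small relative to $\la_m$. Now for each position $t$ we have $i_t\ge t$, so $\la_t\ge\la_m>0$; if $\la_{i_t}\ge0$ then $\la_{i_t}\le\la_t$, and if $\la_{i_t}<0$ then $i_t>m$ and $|\la_{i_t}|<(n-m)\la_m\le(n-m)\la_t$. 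In either case $|\la_{i_t}|\le\max(1,n-m)\la_t$, hence $|\la_I|\le\max(1,n-m)^m\la_1\cdots\la_m$; summing over the $\binom{n}{m}$ subsets gives $S_m(\la)\le\sum_I|\la_I|\le C(n,m)\la_1\cdots\la_m$ (when $n=m$ there is a single subset and the inequality reads $S_m=\la_1\cdots\la_m$). Combined with the previous step this proves (a), hence the lemma. The only real obstacle is thus preventing the small, possibly very negative, eigenvalues from destroying the upper bound, which is exactly what the characterisation \eqref{eq:char-Ivochina} of $\Ga_m$ supplies through $\la_m+\cdots+\la_n>0$; everything else is elementary manipulation of $S_m=\la_j S_{m-1;j}+S_{m;j}$ and \eqref{eq:char-Ivochina}.
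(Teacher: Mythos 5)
Your reduction of (b) to (a) — deleting $\la_m$ to obtain $\mu\in\Ga_{m-1}\subset\bR^{n-1}$ and matching up all the relevant quantities — is precisely what the paper does, just spelled out more carefully (the paper only says ``replace $n,m,S_m$ by $n-1,m-1,S_{m-1;m}$'').

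Where you genuinely diverge is in part (a): the paper simply cites \cite[Eq.~(2.7)]{KN3} and offers no argument, whereas you supply a self-contained proof. Your two-sided squeeze around $\la_1\cdots\la_m$ is correct and well executed. The lower bound $\la_j S_{m-1;j}\ge\la_1\cdots\la_m$ by deleting $\la_j$ and invoking \eqref{eq:Ivochina} on $\nu\in\Ga_m\subset\bR^{n-1}$ is clean, and you handle the degenerate alternative $S_{m;j}\le 0$ and the case $n=m$ correctly. The upper bound $S_m\le\binom{n}{m}\max(1,n-m)^m\,\la_1\cdots\la_m$ is the real content, and the key inequality $|\la_i|<(n-m)\la_m$ for $i>m$, coming from $S_{1;1\cdots m-1}>0$, is exactly the mechanism the paper itself uses later (in the proof of Lemma~2.2(b)), so your proof fits the paper's toolbox. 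The payoff of your approach is an explicit constant $\te=1/\bigl(\binom{n}{m}\max(1,n-m)^m\bigr)$, which the citation-based version does not exhibit; the cost is that you essentially reproduce a result the authors preferred to outsource. One small notational slip in the reduction paragraph: you wrote $\mu_i S_{m-2;i}(\mu)=\la_i S_{m-2;im}(\la)$, which is correct, but it is worth noting that this uses $i\le m-1$ so that $\mu_i=\la_i$; for $i=m$ the identification would fail, which is why (b) is restricted to $1\le i\le m-1$.
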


\begin{proof} The item (a) follows from \cite[Eq. (2.7)]{KN3}, while (b) follows from (a) if we replace $n, m$ and $S_m$ by $n-1, m-1$ and $S_{m-1;m}$, respectively.
\end{proof}

\begin{lem} \label{lem:sym-ineq} 
There exists a uniform constant $C$, depending on $n,m$,  such that the following inequalities are satisfied.
\begin{itemize}
\item[(a)]
 For $1\leq i \leq m-1$ and $\la \in \Ga_m$
\[\notag
	\frac{\la_{1} \cdots \la_{m}}{\la_{i}} \leq  C (S_{m-1;i}  \, S_{m-1})^\frac{1}{2}.
\]
\item
[(b)] Generally, for $1\leq \ell \leq n$ and increasing multi-indices $(i_1,...,i_{m-1})$,
$$
	 \prod_{i_s\neq \ell; s=1}^{m-1} |\la_{i_s}|  \leq C \left(S_{m-1;\ell} S_{m-1} \right)^\frac{1}{2}.
$$
\end{itemize}
\end{lem}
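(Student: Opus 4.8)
The plan is to establish (a) first and then derive (b) from it. Fix $1\le i\le m-1$ and let $\mu$ be the $(n-1)$-tuple obtained from $\la$ by deleting $\la_i$, kept in decreasing order. By \eqref{eq:char-Ivochina} (one deleted index, $k\le m-1$) we have $\mu\in\Ga_{m-1}\subset\bR^{n-1}$ and $S_{k;i}(\la)=S_k(\mu)$; in particular $S_{m-1;i},S_{m-1}>0$, and $\mu_{m-1}=\la_m$ since $i\le m-1$. Applying \eqref{eq:Ivochina} to $\mu$ gives $S_{m-2;i}=S_{m-2}(\mu)\ge\mu_1\cdots\mu_{m-2}=\la_1\cdots\la_{m-1}/\la_i$, so, using $\la_m\le\la_i$,
\[\label{eq:cs-aux1}
\frac{\la_1\cdots\la_m}{\la_i}=\la_m\cdot\frac{\la_1\cdots\la_{m-1}}{\la_i}\le\la_m S_{m-2;i},\qquad S_{m-1}=S_{m-1;i}+\la_i S_{m-2;i}\ge\la_i S_{m-2;i}.
\]
Hence (a) reduces to the pointwise inequality
\[\label{eq:cs-aux2}
\la_i\,S_{m-1;i}\ \ge\ c(n,m)\,\la_m^2\,S_{m-2;i}\qquad(1\le i\le m-1),
\]
because \eqref{eq:cs-aux2} and the second bound in \eqref{eq:cs-aux1} yield $S_{m-1;i}S_{m-1}\ge(\la_i S_{m-1;i})S_{m-2;i}\ge c\,\la_m^2 S_{m-2;i}^2$, while the first bound in \eqref{eq:cs-aux1} gives $(\la_1\cdots\la_m/\la_i)^2\le\la_m^2 S_{m-2;i}^2$, whence $(\la_1\cdots\la_m/\la_i)^2\le c^{-1}S_{m-1;i}S_{m-1}$.

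The estimate \eqref{eq:cs-aux2} is the crux. From $S_m(\la)=S_{m;i}(\la)+\la_i S_{m-1;i}(\la)>0$ one gets $\la_i S_{m-1;i}\ge-S_{m;i}$, and also $\la_i S_{m-1;i}\ge\te S_m$ by Lemma~\ref{lem:sym-e}(a). When $S_{m-1;i}$ is comparable with $\la_m S_{m-2;i}$, \eqref{eq:cs-aux2} is immediate from $\la_i\ge\la_m$. The delicate range is $S_{m-1;i}=S_{m-1}(\mu)\ll\mu_{m-1}S_{m-2}(\mu)$, i.e. $\mu$ near $\partial\Ga_{m-1}$ (it cannot lie on it: $S_{m-1}(\mu)=0$ would force $S_m(\la)\le0$ via Newton's inequalities for $\mu$), where $S_{m;i}=S_m(\mu)\le0$ and one must prove $-S_m(\mu)\ge c\,\mu_{m-1}^2 S_{m-2}(\mu)$. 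I would do this by peeling the index $m$ off $S_{m-1;i}$ and $S_{m-2;i}$ via $S_{k;i}=S_{k;im}+\la_m S_{k-1;im}$: the term $\la_i\la_m S_{m-2;im}\ge\la_m^2 S_{m-2;im}$ is trivial, and the remaining inequality $\la_i S_{m-1;im}\ge c\,\la_m^3 S_{m-3;im}$ has exactly the shape of (a)/Lemma~\ref{lem:sym-e}(b) one dimension down (exponent $m-1$ on $\bR^{n-1}$), so one closes the argument by induction on $m$, the cases $m=1,2$ being direct. This inductive step, resting on the cone identities and on Lemma~\ref{lem:sym-e}, is the part I expect to be the main obstacle; everything else is bookkeeping.

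To deduce (b), put $J=\{i_s:i_s\ne\ell\}$, so $r:=|J|\in\{m-2,m-1\}$ and $\ell\notin J$. Taking $1,\dots,m-1$ as the deleted indices in \eqref{eq:char-Ivochina} gives $\la_m+\cdots+\la_n>0$, hence $|\la_j|\le(n-m)\la_m$ for every $j\ge m$, so $|\la_j|\le C_0(n,m)\la_{\min(j,m)}$ for all $j$. Replace each factor of $\prod_{j\in J}|\la_j|$ with $j\ge m$ by $\la_m$ (losing a factor $C_0$), and then — since $\{1,\dots,m\}\setminus(\{j\in J:j<m\}\cup\{\ell\})$ has at least $q:=\#\{j\in J:j\ge m\}=r-\#\{j\in J:j<m\}$ elements and each such $\la_k$ with $k\le m$ satisfies $\la_k\ge\la_m$ — trade those copies of $\la_m$ for distinct eigenvalues with indices in $\{1,\dots,m\}\setminus\{\ell\}$; this gives
\[\label{eq:cs-aux3}
\prod_{i_s\ne\ell}|\la_{i_s}|\ \le\ C_0^{\,m-1}\prod_{k\in J'}\la_k,\qquad J'\subseteq\{1,\dots,m\}\setminus\{\ell\},\ |J'|=r.
\]
If $\ell\le m-1$ and $r=m-1$ then $J'=\{1,\dots,m\}\setminus\{\ell\}$ and the right side is $C_0^{m-1}\la_1\cdots\la_m/\la_\ell$, so (a) finishes the proof. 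The remaining configurations ($\ell=m$, or $\ell>m$, or $r=m-2$) are handled identically: one bounds $\prod_{k\in J'}\la_k$ by a product of length $\le m-1$ of top eigenvalues and runs the argument \eqref{eq:cs-aux1}--\eqref{eq:cs-aux2} with $m$ (resp. a shorter product) in place of $i$, which is legitimate since those steps used only $\la_1\ge\cdots\ge\la_m>0$ together with \eqref{eq:char-Ivochina}, \eqref{eq:Ivochina} and Lemma~\ref{lem:sym-e}.
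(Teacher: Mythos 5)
The reduction of (a) to the pointwise inequality \eqref{eq:cs-aux2}, $\la_i S_{m-1;i}\geq c\la_m^2 S_{m-2;i}$, is sound, but your sketch of the proof of \eqref{eq:cs-aux2} does not close. After peeling the index $m$, the ``remaining inequality'' you state, $\la_i S_{m-1;im}\geq c\,\la_m^3 S_{m-3;im}$, is simply false: if $n=m$ (say $n=m=3$, $i=1$) then $S_{m-1;im}$ is the $(m-1)$-st elementary symmetric polynomial in only $m-2$ variables and therefore vanishes identically, while $\la_m^3 S_{m-3;im}>0$. The structural reason is that $S_{m-1;im}$ has two deleted indices with $k+t=m+1>m$, so \eqref{eq:char-Ivochina} gives it no sign, and the degree gap to $S_{m-3;im}$ is $2$, not $1$, so it does not have ``the shape of Lemma~\ref{lem:sym-e}(b) one dimension down'' as you assert. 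Your ``delicate range'' analysis also silently assumes $S_{m;i}\leq 0$ there, which Newton's inequality does not give you (it only bounds $S_{m;i}$ above by something small, not by $0$). So the heart of (a) is missing. For (b), the extension to $\ell\geq m$ by ``running \eqref{eq:cs-aux1}--\eqref{eq:cs-aux2} with $m$ in place of $i$'' breaks at the first step of \eqref{eq:cs-aux1}: for $i=m$ the bound $\frac{\la_1\cdots\la_m}{\la_m}\leq\la_m S_{m-2;m}$ says $\la_1\leq\la_2$ when $n=m=2$ (the derivation used $\mu_{m-1}=\la_m$, which requires $i\leq m-1$). The paper handles $\ell\geq m$ by the Lin--Trudinger inequality $S_{m-1;\ell}\geq\te S_{m-1}$, an ingredient you do not invoke.

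For comparison, the paper's proof of (a) never goes through \eqref{eq:cs-aux2}: it multiplies the two identities $S_{m;1m}+\la_1 S_{m-1;1m}=S_{m;m}$ and $S_{m-1;1m}+\la_1 S_{m-2;1m}=S_{m-1;m}$ by $S_{m-2;1m}$ and $S_{m-1;1m}$ respectively to eliminate $\la_1$, applies Newton's inequality $S_{m;1m}S_{m-2;1m}\leq c_m S_{m-1;1m}^2$, and arrives at the key estimate $S_{m-1;1m}^2\leq(1-c_m)^{-1}S_{m-1;m}S_{m-1;1}$; (a) then follows by Cauchy--Schwarz together with $S_{m-1;1m}=S_{m-1;1}-\la_m S_{m-2;1m}$, Lemma~\ref{lem:sym-e} and \eqref{eq:Ivochina}. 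You would either need a complete proof of \eqref{eq:cs-aux2} (which, if true, may well be a genuine new inequality) or an argument along the paper's lines.
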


\begin{proof}  (a) The inequality is equivalent to saying that there exist uniform constants $c_1,c_2>0$ such that for  every positive number $a$, 
\[\label{eq:CS-elementary}
	a \frac{\la_{1} \cdots \la_m}{\la_i} \leq c_1 a^2 S_{m-1;i} + c_2 S_{m-1}, 
\]
where  $1 \leq i \leq m-1$. In fact as we will see in the proof  $c_1, c_2$ are explicitly given constants. We observe that if $a \leq 1$, then we can easily get the claim as
$$\la_1 \cdots \la_m /\la_i \leq \la_1 \cdots \la_{m-1} \leq S_{m-1}.$$
Now we consider $a> 1$. We prove (a) for the case $i=1$, the other cases $1\leq i \leq m-1$ follow in the same way. The basic identities/inequalities  are
\[\label{eq:i1}
\begin{aligned}
	S_{m;1m}+ \la_1 S_{m-1;1m} = S_{m;m} &= S_m - \la_m S_{m-1;m} \\
&\geq -\la_m S_{m-1;m},
\end{aligned}\]
and
\[\label{eq:i2}	S_{m-1;1m} + \la_1 S_{m-2;1m} = S_{m-1;m}.
\]
Multiplying \eqref{eq:i1} by $S_{m-2;1m}$ and \eqref{eq:i2} by $S_{m-1;1m}$ to eliminate $\la_1$ we get that
\[\label{eq:i3}
\begin{aligned}
	S_{m-1;1m}^2 - S_{m;1m} S_{m-2;1m} 
&\leq S_{m-1;m} (S_{m-1;1m} + \la_m S_{m-2;1m}) \\
&= S_{m-1;m} S_{m-1;1}.
\end{aligned}
\]
The Newton inequality holds for every $\la \in \bR^n$ and tells us
$$
	S_{m;1m} S_{m-2;1m} \leq \frac{(m-1) (n-m+1)}{m (n-m+2)} [S_{m-1;1m}]^2 =: c_{m}  [S_{m-1;1m}]^2.
$$
Notice that $0< c_m <1$. Hence, we derive from the above and \eqref{eq:i3} that
$$
	S_{m-1;1m}^2 - c_{m} [S_{m-1;1m}]^2 \leq S_{m-1;m} S_{m-1;1}.
$$
Therefore,
\[\label{eq:key}
	S_{m-1;1m}^2 \leq \frac{1}{1- c_{m}}S_{m-1;m} S_{m-1;1}.
\]
Using  Cauchy-Schwarz' inequality,  the inequality \eqref{eq:key} and the formula 
$$S_{m-1;1m} = S_{m-1;1} - \la_m S_{m-2;1m},$$ we get
$$
\begin{aligned}
 	a^2 S_{m-1;1} + \frac{1}{4(1-c_m)} S_{m-1;m} 
&\geq a \left[\frac{S_{m-1;1} S_{m-1;m}}{1-c_m}\right]^\frac{1}{2} \\
&\geq  a |S_{m-1;1m}| \\
&\geq a (-S_{m-1;1} + \la_m S_{m-2;1m}).
\end{aligned}
$$
This implies the inequality
\[\label{eq:basic}
	(a^2 + a) S_{m-1;1} + \frac{1}{4(1-c_m)} S_{m-1;m}  \geq  a \la_m S_{m-2;1m}.
\]
Since  $a\geq 1$, it follows that
$2a^2 S_{m-1;1} +  C S_{m-1} \geq  a \la_m S_{m-2;1m}.$
So, using Lemma~\ref{lem:sym-e} and \eqref{eq:Ivochina} we have
\[\begin{aligned} 
 (2 a^2 S_{m-1;1} + C S_{m-1} ) \la_1 
 &\geq  a \la_m \la_1 S_{m-2;1m}  \\
 &\geq a \te^2 \la_m S_{m-1} \\
 &\geq a \te^2 \la_1 \cdots \la_m.
\end{aligned}\]
The proof of the lemma is completed with   $c_1 = 2$ and $c_2 = \frac{1}{4(1-c_m)}$ and 
$C = \sqrt{2/(1-c_m)}.$

(b) The characterization \eqref{eq:char-Ivochina} implies that a sum of  any $(n-m+1)$ entries of $\la$ is positive. Hence, we have for $\la_{i_s} \leq 0$, 
$$|\la_{i_s}| \leq  (n-m)\la_{m}.$$
So,  for $1\leq \ell \leq m-1$, 
\[\label{eq:point-3} \begin{aligned}
	 \prod_{i_s \neq \ell; s=1}^{m-1} |\la_{i_s}|
\leq 	(n-m)^{m-1} \frac{ \la_1\cdots \la_m}{\la_\ell} 
\leq 	C \left[ S_{m-1;\ell} S_{m-1}\right]^\frac{1}{2},
\end{aligned}\]
where we used (a) for the second inequality. 

Now we treat the remaining range $m\leq \ell \leq n$. By a result of Lin and Trudinger \cite[Theorem~1.1]{LT94}, we know that $S_{m-1;\ell} \geq \te S_{m-1}$ for a constant $\te =\te(n,m)$ depending only on $n,m$. This implies $$S_{m-1} \leq (S_{m-1;\ell} S_{m-1})^\frac{1}{2}/\sqrt{\te}.$$ Thus, the desired inequality easily follows from this and the bound
\[\label{eq:point-4}\begin{aligned}
	   \prod_{i_s \neq \ell; s=1}^{m-1} |\la_{i_s}| 
\leq 	(n-m)^{m-1} \la_1 \cdots \la_{m-1} 
\end{aligned}
\]
for $m\leq \ell \leq n$.
The proof of (b) is completed.
\end{proof}

\subsection{Cauchy-Schwarz's inequality}

Let $\om$ be a Hermitian metric on $\bC^n$ and let $\Om$ be a bounded open set in $\bC^n$. The positive cone $\Ga_m(\om)$, associated to $\om$, of real $(1,1)$-forms is defined as follows. A real form $\ga$ is said to belong to $\Ga_m(\om)$ if at any point $z\in \Om$,
$$
	\ga^k \wed \om^{n-k} (z)>0  \quad\text{for } k=1,...,m.
$$
Equivalently, in the normal coordinate system with respect to $\om$ at $z$, diagonalizing $\ga = \ii \sum_i \la_i dz_i\wed d\bar z_i$, we have $\la = (\la_1,...,\la_n) \in \Ga_m$. Now we will translate the estimates in Section~\ref{ssec:positive-cone} into the integral forms.

We can state the following versions of Cauchy-Schwarz's inequality in this setting. 
Let $h$ be a smooth real-valued function and $\phi, \psi$ be Borel functions. Let $T$ be a positive current of bidegree $(n-2,n-2)$. 
\begin{lem}\label{lem:CS-classic} There exists a uniform constant $C$ depending on $\om$ such that
$$\begin{aligned}
&	\left |\int \phi\psi\; dh \wed d^c \om \wed T \right|^2 
&\leq 	C\int |\phi|^2 \;dh\wed d^c h \wed \om\wed T 
  \int |\psi|^2\; \om^2 \wed T. 
\end{aligned}$$
\end{lem}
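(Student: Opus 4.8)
The plan is to reduce the estimate to a pointwise linear-algebra inequality for the measure-valued coefficients, exactly in the spirit of how Lemma~\ref{lem:sym-ineq} was used. First I would diagonalize at an arbitrary point: choose normal coordinates for $\omega$ so that $\omega = \ii\sum_i dz_i\wedge d\bar z_i$ at the point, and write $T$ in the standard basis of $(n-2,n-2)$-forms. Since $T$ is a positive current, after this reduction its relevant coefficients are (locally finite) positive measures; the wedge products $dh\wedge d^c\omega\wedge T$, $dh\wedge d^c h\wedge\omega\wedge T$, and $\omega^2\wedge T$ all become sums, over index pairs, of scalar measures times these positive coefficient-measures. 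The torsion form $d\omega$ is smooth, hence $dh\wedge d^c\omega$ has smooth (bounded) coefficients, which is where the constant $C = C(\omega)$ enters: it absorbs $\sup|d\omega|$ and combinatorial factors.

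Next I would carry out the following steps. (1) Expand $\int \phi\psi\, dh\wedge d^c\omega\wedge T$ as a finite sum $\sum_{j,k} \int \phi\psi\, a_{jk}\, (\partial_j h)\, dz_j \wedge \overline{dz_k}\wedge\overline{\text{(stuff)}}\wedge T$ — more precisely, group terms so that each summand is $\int \phi\psi\,(\partial_j h)\,b_j\, d\mu_j$ where $b_j$ is a bounded smooth coefficient of $d^c\omega$ and $d\mu_j \geq 0$ is the corresponding coefficient measure of (a piece of) $\omega\wedge T$ or $\omega^2\wedge T$. (2) On each summand apply the scalar Cauchy-Schwarz inequality with respect to $d\mu_j$, splitting $\phi\psi\,(\partial_j h) = (\phi\,\partial_j h)\cdot(\psi)$ appropriately, to get the product of $\int |\phi|^2 |\partial_j h|^2\, d\mu_j$ and $\int|\psi|^2\, d\mu_j$ up to the bound on $b_j$. (3) Recognize $\sum_j |\partial_j h|^2\, (\text{coefficient of }\omega\wedge T) \lesssim dh\wedge d^c h\wedge\omega\wedge T$ and $\sum_j d\mu_j \lesssim \omega^2\wedge T$ as positive measures (this is again a pointwise positivity statement, using that $T\geq 0$ and the diagonalization). (4) Sum over the finitely many index pairs and apply Cauchy-Schwarz once more in the $j$-sum (or just crude estimation, since the number of terms depends only on $n$) to reassemble the two factors on the right-hand side.

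The main obstacle, and the step that needs genuine care rather than bookkeeping, is step (3) together with the positivity manipulations in step (1): one must verify that after diagonalizing $\omega$, the pointwise coefficient of $dh\wedge d^c h\wedge\omega\wedge T$ indeed dominates $\sum_j|\partial_j h|^2$ times each relevant coefficient of $\omega^2\wedge T$, and that the ``cross terms'' produced by expanding $d^c\omega$ (which is not diagonal and mixes indices) can all be controlled by these diagonal quantities. Here the positivity of $T$ as a current of bidegree $(n-2,n-2)$ is essential — it guarantees that the relevant $2\times 2$ minors of the coefficient array are positive semidefinite, so that Cauchy-Schwarz is applicable coefficient-by-coefficient. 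Once this pointwise picture is in place, integrating against the positive measures and collecting constants finishes the proof; no integration by parts is needed, which is why the statement holds for merely Borel $\phi,\psi$ and a general positive (not necessarily closed) current $T$.
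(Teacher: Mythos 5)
Your outline is essentially the route the paper invokes: the paper's proof is the one-line citation of Proposition~1.4 in \cite{Ng16}, which is also a pointwise (measure-level) Cauchy--Schwarz estimate exploiting positivity of $T$ together with the boundedness of the torsion $d^c\om$, and which involves no integration by parts. Your closing observation --- that this is precisely why merely Borel $\phi,\psi$ and a non-closed positive current $T$ suffice --- is correct and is the right way to understand the hypotheses.

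That said, the logical ordering of your steps (1)--(3) is reversed, and step (1) is misstated. After diagonalizing $\om$, the coefficient measures of $T$ that enter when you pair against the $(2,2)$-part of $dh\wed d^c\om$ are off-diagonal (because $d^c\om$ mixes holomorphic indices), and for a positive $(n-2,n-2)$-current these are signed/complex-valued, not positive. So one cannot write each summand as $\int\phi\psi\,(\partial_j h)\,b_j\,d\mu_j$ with $d\mu_j\geq 0$ as claimed in (1), and the scalar Cauchy--Schwarz of step (2) is not applicable ``with respect to $d\mu_j$.'' The ingredient you postpone to step (3) --- that positivity of $T$ on simple positive $(2,2)$-forms yields, for increasing multi-indices $I,J$ of length $2$, the minor inequality $|T\wed dz_I\wed d\bar z_J|^2 \leq (T\wed dz_I\wed d\bar z_I)(T\wed dz_J\wed d\bar z_J)$ as measures --- has to come \emph{first}, to replace each off-diagonal coefficient measure by a geometric mean of two diagonal (hence positive) ones; only after that can scalar Cauchy--Schwarz distribute $|\phi|^2$ and $|\psi|^2$ between the two factors. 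Also, the assertion in (3) that the pointwise coefficient of $dh\wed d^c h\wed\om\wed T$ ``dominates $\sum_j|\partial_j h|^2$ times each relevant coefficient of $\om^2\wed T$'' is not the correct estimate: $\ii\,\partial h\wed\overline{\partial} h\wed\om\wed T$ is a full Hermitian form in the gradient components, and what is needed is the bilinear Cauchy--Schwarz for that form, not a coefficient-by-coefficient domination. These defects are repairable and the crucial idea is present, but as written the plan would not execute cleanly.
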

\begin{proof} The proof of \cite[Proposition~1.4]{Ng16} can be easily adapted.
\end{proof}

The above lemma can be applied  for the case $T= \ga^{s} \wed \om^{n-m + \ell}$, where $\ga\in \Ga_m(\om)$ and  $0\leq s, \ell\leq m-1$ and $s+\ell = m-1$. Next, we also need to deal with possible non-positive forms  $T' = \ga^{m-1} \wed \om^{n-m-1}$ where the classical Cauchy-Schwarz is not immediately applicable. However, we still have

 \begin{lem} \label{lem:CS} 
There exists a uniform constant $C$ depending on $\om, n,m$ such for every $\ga \in \Ga_{m}(\om)$,
$$\begin{aligned}
&	\left |\int \phi\psi\; dh \wed d^c \om \wed \ga^{m-1}  \wed  \om^{n-m-1} \right|^2 \\
&\leq 	C\int |\phi|^2 \;dh\wed d^c h \wed \ga^{m-1} \wed \om^{n-m}  \times  \int |\psi|^2\;\ga^{m-1} \wed \om^{n-m+1}.
\end{aligned}$$
 \end{lem}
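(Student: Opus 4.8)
The plan is to localize the estimate to a pointwise inequality between densities and then integrate. Fix a point $z_0\in\Om$ and choose holomorphic coordinates centred at $z_0$ so that $\om(z_0)=\ii\sum_j dz_j\wed d\bar z_j$ while $\ga(z_0)=\ii\sum_j\la_j\, dz_j\wed d\bar z_j$ with $\la=(\la_1,\dots,\la_n)\in\Ga_m$; in such coordinates the first derivatives of the coefficients of $\om$ at $z_0$ are bounded by a constant depending only on $\om$, which we absorb into the constant $C$ (depending on $\om,n,m$) below. Write the three top forms entering the statement as densities against $\be_n:=\bigwedge_j\ii\, dz_j\wed d\bar z_j$: at $z_0$ put $P\,\be_n=dh\wed d^c\om\wed\ga^{m-1}\wed\om^{n-m-1}$, $Q_1\,\be_n=dh\wed d^c h\wed\ga^{m-1}\wed\om^{n-m}$ and $Q_2\,\be_n=\ga^{m-1}\wed\om^{n-m+1}$. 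Diagonalizing one computes $Q_1=a_{n,m}\sum_j S_{m-1;j}(\la)\,|h_j|^2$ and $Q_2=b_{n,m}\,S_{m-1}(\la)$, where $h_j=\d h/\d z_j$ and $a_{n,m},b_{n,m}>0$; both $Q_1,Q_2$ are nonnegative since $S_{m-1;j}(\la)>0$ and $S_{m-1}(\la)>0$ by the characterization \eqref{eq:char-Ivochina} of $\Ga_m$ (used with one, resp.\ zero, variable set to zero). Once we establish the pointwise bound $P^2\le C\,Q_1Q_2$, the lemma follows: since
\[\notag
	\left|\int\phi\psi\, P\,\be_n\right|\le\int|\phi||\psi|\sqrt{Q_1Q_2}\,\be_n\le\Big(\int|\phi|^2Q_1\,\be_n\Big)^{\frac12}\Big(\int|\psi|^2Q_2\,\be_n\Big)^{\frac12}
\]
by the Cauchy--Schwarz inequality, squaring gives the claim (trivially if either integral on the right is infinite).

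To estimate $P$, expand in the diagonalizing frame
\[\notag
	\ga^{m-1}\wed\om^{n-m-1}=a'_{n,m}\sum_{x<y}S_{m-1;xy}(\la)\bigwedge_{a\neq x,y}\ii\, dz_a\wed d\bar z_a ,
\]
where $S_{m-1;xy}(\la)=S_{m-1}(\la)|_{\la_x=\la_y=0}$ (this presupposes $m\le n-1$, implicit in the statement). Wedging against $dh\wed d^c\om$, only the part of $dh\wed d^c\om$ of bidegree $(2,2)$ all of whose differentials lie in $\{dz_x,d\bar z_x,dz_y,d\bar z_y\}$ survives, because the diagonal remainder uses up every other $dz_a,d\bar z_a$ exactly once. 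Its coefficient $\rho_{xy}$ is, since $dh$ supplies exactly one of those four differentials and $d^c\om$ is linear in the first derivatives of the coefficients of $\om$, a linear form in $h_x,\bar h_x,h_y,\bar h_y$ with coefficients bounded by a constant depending on $\om$. Hence
\[\notag
	|P|\le C\sum_{x<y}|S_{m-1;xy}(\la)|\,(|h_x|+|h_y|).
\]

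The decisive step is to dominate the (possibly negative) coefficients $S_{m-1;xy}(\la)$ by Lemma~\ref{lem:sym-ineq}(b). Since $S_{m-1;xy}(\la)$ is a sum of boundedly many monomials $\prod_{i\in I}\la_i$ over $(m-1)$-subsets $I\subset\{1,\dots,n\}\setminus\{x,y\}$, Lemma~\ref{lem:sym-ineq}(b) with $\ell=x$ gives $|S_{m-1;xy}(\la)|\le C(S_{m-1;x}(\la)\,S_{m-1}(\la))^{1/2}$, and with $\ell=y$ gives $|S_{m-1;xy}(\la)|\le C(S_{m-1;y}(\la)\,S_{m-1}(\la))^{1/2}$. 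Applying the former to the terms carrying $|h_x|$ and the latter to those carrying $|h_y|$, and then Cauchy--Schwarz in the index $j$,
\[\notag
	|P|\le C\,S_{m-1}(\la)^{\frac12}\sum_j S_{m-1;j}(\la)^{\frac12}|h_j|\le C\,S_{m-1}(\la)^{\frac12}\Big(\sum_j S_{m-1;j}(\la)\,|h_j|^2\Big)^{\frac12},
\]
which is $\le C(Q_1Q_2)^{1/2}$ after reinstating the constants $a_{n,m},b_{n,m}$. This yields $P^2\le C\,Q_1Q_2$ and completes the proof.

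I expect the third paragraph to be the crux. The classical pointwise Cauchy--Schwarz used to prove Lemma~\ref{lem:CS-classic} relies on $T'=\ga^{m-1}\wed\om^{n-m-1}$ being positive, which fails here precisely because $S_{m-1;xy}(\la)$ need not be positive (the range $k+t=m+1>m$ is not covered by \eqref{eq:char-Ivochina}); one must instead recognize that these dangerous expressions are controlled by the geometric mean of the genuinely positive quantities $S_{m-1;x}$, $S_{m-1;y}$ and $S_{m-1}$ that actually appear on the right-hand side, which is exactly the content of Lemma~\ref{lem:sym-ineq}(b). The remaining ingredients — the bookkeeping of which differentials contribute to $\rho_{xy}$, the combinatorial constants $a_{n,m},a'_{n,m},b_{n,m}$, and the two applications of Cauchy--Schwarz — are routine.
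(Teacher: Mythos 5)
Your proof is correct and follows essentially the same route as the paper's: reduce to the pointwise density inequality $P^2\le C\,Q_1Q_2$ in normal coordinates diagonalizing $\ga$, identify the $(2,2)$-part of $dh\wed d^c\om$ that survives against the $(n-2,n-2)$-form, bound the resulting products of eigenvalues by Lemma~\ref{lem:sym-ineq}(b), and finish with Cauchy--Schwarz in the index. The only cosmetic difference is that you organize the surviving terms by the complementary pair $(x,y)$ and the quantities $S_{m-1;xy}$ (bounding each monomial in $S_{m-1;xy}$ separately by Lemma~\ref{lem:sym-ineq}(b)), whereas the paper keeps the sum indexed by the $(m-1)$-multi-index $I$ and the derivative index $\ell$; these are equivalent up to a dimensional constant and both rely on exactly the same key inequality.
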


\begin{proof} We express the integrands of both sides as follows.
$$\begin{aligned}
	dh \wed d^c \om \wed \ga^{m-1} \wed \om^{n-m-1} = f_1(z) \om^n, \\
	dh \wed d^c h \wed \ga^{m-1} \wed \om^{n-m} = [f_2(z)]^2 \om^n,\\
	\ga^{m-1} \wed \om^{n-m+1} = [f_3(z)]^2 \om^n.
\end{aligned}$$
Thus, the inequality will follow from the classical Cauchy-Schwarz inequality if we have  point-wise  $|f_1(z)| \leq C f_2(z) f_3(z)$ for every $z\in \Om$. This is proved by using the normal coordinate system at a given point $z$ with respect to $\om$  which diagonalizes also $\ga$, i.e., 
$$
 \om = \ii \sum_{i=1}^n  dz_i \wed d\bar z_i, \quad \ga = \ii \sum_i \la_i dz_i \wed d\bar z_i, 
$$
where $\la =(\la_1, ..., \la_n) \in \Ga_m$. Denote $h_i = \d h/\d z_i$. Then,  at the point $z$,
\[\label{eq:point-1}
	\binom{n}{m-1} (f_2)^2 = \sum_{i=1}^n |h_i|^2 S_{m-1;i},\quad \binom{n}{m-1} (f_3)^2 =  S_{m-1}.
\]
Now, observe that $\ga^{m-1}\wed \om^{n-m-1}$ is a $(n-2,n-2)$ form, so after taking the wedge product with $dh\wed d^c\om$ the non-zero contribution give only  $\ii \d h \wed \dbar \om$ and $\ii \;\dbar h \wed \d \om$. As $h$ is a real valued function, these two forms are mutually  conjugate. Let us write 
$$\dbar \om = \sum \om_{i\bar j \bar k} d\bar z_k \wed dz_i \wed d\bar z_j.$$
Denote $dV= (\ii)^{n^2} dz_1 \wed \cdots dz_n \wed d\bar z_1 \wed \cdots d\bar z_n.$ 
Let $J = (j_1,...,j_{n-m-1})$ be an increasing multi-index. Then, 
$$
	\frac{1}{(m-1)!}\d h \wed \dbar \om \wed \ga^{m-1} \wed dz_J \wed d\bar z_J/ dV = \sum_{j,\ell \not\in J;j\neq \ell}  c_{j \bar j \bar \ell} \;h_\ell \prod_{i_s \not\in J\cup\{j,\ell \}} \la_{i_s},
$$
where $c_{i\bar i \bar \ell}$ is $\om_{i\bar i \bar \ell}$ or $\om_{i \bar \ell \, \bar i}$, and $i_s \in I = (i_1,..,i_{m-1})$ which is an increasing multi-index satisfying $I \cap J = \emptyset$.
Then, at the point $z$,
\[\label{eq:point-2}
	|f_1(z)| \leq  c_0  \sum_{|I|=m-1} \sum_{\ell =1}^n  |h_\ell|   \prod_{i_s \neq \ell; s=1}^{m-1} |\la_{i_s}|,
\]
where $c_0$ is a uniform constant depending only on $\om$. 

By \eqref{eq:point-1} and \eqref{eq:point-2} we have reduced the inequality $|f_1| \leq C f_2f_3$  to the one for symmetric polynomials. To show the latter, by Lemma~\ref{lem:sym-ineq}-(b)
for $1\leq \ell \leq n$, we have
$$\begin{aligned}
	|h_\ell|  \prod_{i_s \neq \ell; s=1}^{m-1} | \la_{i_s}|  
&\leq 	C \left[ |h_\ell|^2 S_{m-1;\ell} S_{m-1}\right]^\frac{1}{2} \\
&\leq 	C \left( \sum_{i=1}^n |h_i|^2 S_{m-1;i} \right)^\frac{1}{2} [S_{m-1}]^\frac{1}{2}. 
\end{aligned}$$
Taking the sum of (finitely many)  terms on the  left hand side of  \eqref{eq:point-2} the proof of the theorem follows.
\end{proof}

We also need this inequality for wedge products of two forms and more. This is done by solving a  linear system of inequalities  as in  [KN16, page 2226]. 

\begin{cor}\label{cor:CS} There exists a uniform constant $C$, depending on $\om,n, m$, such that the following inequalities hold.
\begin{itemize}
\item
[(a)]
For $\eta, \ga \in \Ga_m(\om)$, 
$$
\begin{aligned}
&\left| \int \phi \psi  dh \wed d^c \om \wed \eta^k \wed \ga^{m-k-1} \wed \om^{n-m-1}\right|^2  \\
& \leq C \int  |\phi|^2 dh \wed d^c h \wed (\eta+\ga)^{m-1} \wed \om^{n-m}
\int |\psi|^2 (\eta + \ga)^{m-1} \wed \om^{n-m+1}.
\end{aligned} 
$$
\item
[(b)] Generally, for $\ga_1,...,\ga_{m-1} \in \Ga_m(\om)$, 
$$
\begin{aligned}
&\left| \int\phi\psi dh \wed d^c \om \wed \ga_1 \wed \cdots \wed \ga_{m-1} \wed \om^{n-m-1}\right|^2  \\
& \leq C \int |\phi|^2\; dh \wed d^c h \wed (\sum_{i=1}^{m-1}\ga_i )^{m-1} \wed \om^{n-m}  \times \\
&\qquad \times\int |\psi|^2 ( \sum_{i=1}^{m-1} \ga_i )^{m-1} \wed \om^{n-m+1}.
\end{aligned} 
$$
\end{itemize} 
\end{cor}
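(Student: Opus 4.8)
The plan is to deduce Corollary~\ref{cor:CS} from Lemma~\ref{lem:CS} by a polarization/linear-system argument, exactly in the spirit of the computation referenced as [KN16, page 2226]. For part (a), fix $\eta,\ga\in\Ga_m(\om)$ and note that $\eta+\ga\in\Ga_m(\om)$ as well, since the cone $\Ga_m$ is convex; moreover each of the forms $\eta^k\wed\ga^{m-k-1}\wed\om^{n-m-1}$ (for $0\le k\le m-1$) appears, up to a positive binomial coefficient, in the expansion of $(\eta+\ga)^{m-1}\wed\om^{n-m-1}$. So I would first apply Lemma~\ref{lem:CS} with $\ga$ replaced by $\eta+\ga$ to obtain
\[\notag
\left|\int \phi\psi\, dh\wed d^c\om\wed(\eta+\ga)^{m-1}\wed\om^{n-m-1}\right|^2
\le C\int|\phi|^2 dh\wed d^c h\wed(\eta+\ga)^{m-1}\wed\om^{n-m}\int|\psi|^2(\eta+\ga)^{m-1}\wed\om^{n-m+1}.
\]
This controls the full sum $\sum_k\binom{m-1}{k}\int\phi\psi\,dh\wed d^c\om\wed\eta^k\wed\ga^{m-k-1}\wed\om^{n-m-1}$, but not each summand individually, because the individual forms $\eta^k\wed\ga^{m-k-1}\wed\om^{n-m-1}$ need not be positive and cancellation could occur.

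To isolate the individual terms I would run the standard scaling trick: for $t>0$ replace $\eta$ by $t\eta$ (still in $\Ga_m(\om)$) and apply the estimate just obtained, yielding for every $t>0$
\[\notag
\left|\sum_{k=0}^{m-1}\binom{m-1}{k}t^k\int\phi\psi\, dh\wed d^c\om\wed\eta^k\wed\ga^{m-k-1}\wed\om^{n-m-1}\right|^2
\le C\int|\phi|^2 dh\wed d^c h\wed(t\eta+\ga)^{m-1}\wed\om^{n-m}\int|\psi|^2(t\eta+\ga)^{m-1}\wed\om^{n-m+1}.
\]
Denote the left-hand integrals by $A_k$ and the right-hand ones by $P(t)=\int|\phi|^2\cdots$ and $Q(t)=\int|\psi|^2\cdots$, both polynomials in $t$ of degree $m-1$ with nonnegative coefficients (each coefficient is of the form $\binom{m-1}{j}\int(\cdots)\eta^j\wed\ga^{m-1-j}\wed\om^{\cdots}$, which is $\ge 0$ because after pairing with the positive current $dh\wed d^c h\wed\om^{n-m}$, resp. $\om^{n-m+1}$, and a nonnegative weight it is a mixed Hessian integral of forms in $\Ga_m(\om)$, hence nonnegative by the positivity built into the cone — this is the same positivity used throughout \cite{KN3}). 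Since $P(t)^{1/2}Q(t)^{1/2}\le\max_j P_j^{1/2}\cdot\max_j Q_j^{1/2}\cdot(m)\,t^{m-1}\cdot(\text{const})$ is dominated on, say, $t\ge 1$ by a constant multiple of $\sum_j P_j t^j\cdot\sum_j Q_j t^j$ whose leading and lowest terms are exactly the two products $\int|\phi|^2 dh\wed d^c h\wed(\eta+\ga)^{m-1}\wed\om^{n-m}\int|\psi|^2(\eta+\ga)^{m-1}\wed\om^{n-m+1}$ up to binomial factors, one extracts each coefficient $A_k$ by a Vandermonde/finite-difference argument: evaluating at $m$ distinct values $t_1,\dots,t_m$ and inverting the Vandermonde matrix expresses each $A_k$ as a fixed linear combination of the $m$ right-hand bounds, and every one of those bounds is $\le C\int|\phi|^2 dh\wed d^c h\wed(\eta+\ga)^{m-1}\wed\om^{n-m}\int|\psi|^2(\eta+\ga)^{m-1}\wed\om^{n-m+1}$ after absorbing the $t_i$-powers into the constant (here one uses $P(t_i)\le C_{t_i}P(1)$ etc., valid since all coefficients are nonnegative and we may just take all $t_i\in[1,2]$). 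This gives (a).

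For part (b) the argument is identical but with $m-1$ scaling parameters: replace $\ga_i$ by $t_i\ga_i$, expand $\prod_i(t_i\ga_i)$, and note the multilinear coefficients are the integrals $\int\phi\psi\,dh\wed d^c\om\wed\ga_{j_1}\wed\cdots\wed\ga_{j_{m-1}}\wed\om^{n-m-1}$ over all choices (the fully distinct one being the target). One then solves the resulting multilinear system — a tensor product of $m-1$ one-variable Vandermonde inversions — to extract the single mixed term $\ga_1\wed\cdots\wed\ga_{m-1}$, bounding each of the finitely many right-hand quantities $\int|\phi|^2 dh\wed d^c h\wed(\sum_i t_i\ga_i)^{m-1}\wed\om^{n-m}\int|\psi|^2(\sum_i t_i\ga_i)^{m-1}\wed\om^{n-m+1}$ by the version with all $t_i=1$ times a constant, again using nonnegativity of all coefficients so that the substitution $t_i\in[1,2]$ only costs a uniform factor. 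Collecting terms yields the stated inequality with a constant $C=C(\om,n,m)$.

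The main obstacle is the positivity bookkeeping: one must be sure that the coefficients of the polynomials $P(t)$, $Q(t)$ (and their multivariable analogues) are genuinely nonnegative, since the forms $\ga_{j_1}\wed\cdots\wed\ga_{j_{m-1}}\wed\om^{n-m}$ and $\ga_{j_1}\wed\cdots\wed\ga_{j_{m-1}}\wed\om^{n-m+1}$ are \emph{not} positive forms in general for $\ga_i\in\Ga_m(\om)$. What saves the argument is that we only ever integrate them against $|\phi|^2\,\om^n$-type densities after pairing with $dh\wed d^c h$ (which contributes a nonnegative factor pointwise) or directly, and the relevant mixed-Hessian \emph{integrals} are nonnegative — this is precisely the content of the integration-by-parts-free positivity estimates established for the CLN inequality in \cite{KN3}; I would cite that and Lemma~\ref{lem:CS} as the two inputs, and present the Vandermonde extraction as the routine linear-algebra step it is, referring to [KN16, page 2226] for the explicit solution of the linear system.
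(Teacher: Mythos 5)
Your strategy — apply Lemma~\ref{lem:CS} with $\gamma$ replaced by $t\eta+\gamma$ (resp.\ $\sum_i t_i\gamma_i$), expand the left side as a polynomial in $t$ whose coefficients are the target integrals, and extract each coefficient by inverting a Vandermonde system while bounding $P(t_i)Q(t_i)$ by $C\,P(1)Q(1)$ via nonnegativity of the polynomial coefficients — is exactly what the paper means by ``solving a linear system of inequalities as in [KN16, page 2226],'' so the approach matches and the proof is correct in substance.

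Two corrections to the write-up, neither of which is a gap in the logic but both of which you should fix. First, your stated worry about positivity is backwards: for $\gamma_{j_1},\dots,\gamma_{j_{m-1}}\in\Gamma_m(\omega)$ the forms $\gamma_{j_1}\wedge\cdots\wedge\gamma_{j_{m-1}}\wedge\omega^{n-m}$ and $\gamma_{j_1}\wedge\cdots\wedge\gamma_{j_{m-1}}\wedge\omega^{n-m+1}$ \emph{are} positive. The second is a nonnegative top form by G\aa rding's inequality (Remark~\ref{rmk:Garding}; see also the proof of Lemma~\ref{lem:w-p-functions}); the first is a positive $(n-1,n-1)$-form because any positive semidefinite $(1,1)$-form, in particular $dh\wedge d^ch$, lies in $\overline{\Gamma_m(\omega)}$, so wedging with it again gives a nonnegative top form by G\aa rding. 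The possible failure of positivity that the paper is concerned with occurs only for $\gamma_1\wedge\cdots\wedge\gamma_{m-1}\wedge\omega^{p}$ with $p\le n-m-1$ — which is precisely why Lemma~\ref{lem:CS} is nontrivial — not for the $\omega^{n-m}$ and $\omega^{n-m+1}$ factors on the right. So the nonnegativity of the coefficients of $P(t),Q(t)$ is elementary and does not rest on any CLN-type estimate from \cite{KN3}; drop the hedging and cite G\aa rding. Second, the sentence ``Since $P(t)^{1/2}Q(t)^{1/2}\le\max_j P_j^{1/2}\cdots$'' is garbled as written; the clean version of the step you need is simply that $P,Q$ are degree-$(m-1)$ polynomials with nonnegative coefficients, so choosing $m$ distinct nodes $t_1,\dots,t_m$ in $[1,2]$ gives $P(t_i)\le 2^{m-1}P(1)$ and $Q(t_i)\le 2^{m-1}Q(1)$, hence $|S(t_i)|\le C\,(P(1)Q(1))^{1/2}$, and Vandermonde inversion then bounds each $|A_k|$ by a fixed multiple of the same quantity. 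With these repairs both (a) and (b) go through as you intend.
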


\subsection{$m$-subharmonic functions on Hermitian manifolds}

Let us recall the definition of generalized $m$-subharmonic function in the Hermitian setting (see \cite{Bl05}, \cite{KN3} and \cite{GN18}). Let $\Om$ be a bounded open set in $\bC^n$ and let $\om$ be a Hermitian metric on $\Om$.

\begin{defn} An upper semi-continuous function $u:\Om \to [-\infty,+\infty)$ is called $m-\om$-subharmonic if $u\in L^1_{\rm loc}(\Om)$ and for any collection $\ga_1,...,\ga_{m-1} \in \Ga_m(\om)$, 
$$
	dd^c u \wed \ga_1 \wed \cdots \wed \ga_{m-1} \wed \om^{n-m}\geq 0
$$
in the sense of currents.
\end{defn}

\begin{remark}\label{rmk:Garding}
By G\aa rding's \cite{Ga59}  results  a function $u\in C^2(\Om)$ is $m-\om$-sh if and only if $dd^c u \in \ov{\Ga_m(\om)}$, that is
$
	dd^c u \wed \ga_1 \wed \cdots \wed \ga_{m-1} \geq 0
$
point-wise in $\Om$. Thus, the estimates for forms in $\Ga_m(\om)$ are applicable to $dd^cu$ if $u$ is a smooth $m-\om$-sh function.
\end{remark}

 It follows from Michelsohn \cite{Mi82} that  for $\ga_1,...,\ga_{m-1} \in \Ga_m(\om)$ there is a unique $(1,1)$ positive form $\al$ such that 
$$
	\al^{n-1} = \ga_1 \wed \cdots \wed \ga_{m-1} \wed \om^{n-m}.
$$
The above definition of $m-\om$-sh function can be expressed more familiarly, in terms of potential theory, using  the notion of  $\al$-subharmonicity (see e.g., \cite[Definition~2.1, Lemma~9.10]{GN18}). Thanks to this many potential-theoretic properties of $m-\om$-sh functions can be derived from those of $\al$-sh functions. For example, if two $m-\om$-sh  functions are equal almost everywhere (with respect to the Lebesgue measure), then they are equal everywhere \cite[Corollary~9.7]{GN18}.
One also has the "gluing property" that allows to modify the function outside a compact subset.  This is an immediate consequence of \cite[Lemma~9.5]{GN18}.

\begin{lem} Let $U \subset \Om$ be two open sets in $\bC^n$. Let $u$ be $m-\om$-sh in $U$ and $v$ be $m-\om$-sh in $\Om$. Assume that $\limsup_{z\to x} u(z) \leq v(x)$ for every $x \in \d U \cap \Om$. Then, the function 
$$ \wt u
= \begin{cases} \max\{u, v\} &\quad\text{on } U, \\
	v &\quad\text{on } \Om\setminus U,
\end{cases}
$$
is $m-\om$-sh in $\Om$.
\end{lem}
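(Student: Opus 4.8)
The plan is to reduce the statement to the corresponding gluing property for $\alpha$-subharmonic functions, which is already available via \cite[Lemma~9.5]{GN18}. The key point is that $m$-$\omega$-subharmonicity is a \emph{local} notion and is equivalent to $\alpha$-subharmonicity for an appropriate positive $(1,1)$-form $\alpha$ built out of the test forms $\gamma_1,\dots,\gamma_{m-1}$ via Michelsohn's theorem, as recalled in the paragraph preceding the statement. So the first step is to note that $\wt u$ is upper semi-continuous on $\Omega$: on $U$ this is clear since $\max\{u,v\}$ is u.s.c., on the interior of $\Omega\setminus \overline U$ it equals $v$, and at a boundary point $x\in \d U\cap\Omega$ the hypothesis $\limsup_{z\to x}u(z)\le v(x)$ together with u.s.c.\ of $v$ gives $\limsup_{z\to x}\wt u(z)\le v(x)=\wt u(x)$; at points of $\d U$ not in $\Omega$ there is nothing to check. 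Membership in $L^1_{\rm loc}$ is immediate because $|\wt u|\le |u|+|v|$ locally.

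Next I would verify the defining differential inequality. Fix arbitrary $\gamma_1,\dots,\gamma_{m-1}\in\Gamma_m(\omega)$ and let $\alpha$ be the positive $(1,1)$-form with $\alpha^{n-1}=\gamma_1\wed\cdots\wed\gamma_{m-1}\wed\omega^{n-m}$, which exists and is unique by Michelsohn's result quoted above. Then $u$ being $m$-$\omega$-sh on $U$ says precisely that $dd^c u\wed\alpha^{n-1}\ge 0$, i.e.\ $u$ is $\alpha$-subharmonic on $U$ in the sense of \cite[Definition~2.1]{GN18}; likewise $v$ is $\alpha$-subharmonic on $\Omega$. The boundary hypothesis $\limsup_{z\to x}u(z)\le v(x)$ for $x\in\d U\cap\Omega$ is exactly the hypothesis needed in \cite[Lemma~9.5]{GN18}, so that lemma yields that $\wt u$ is $\alpha$-subharmonic on $\Omega$, i.e.\ $dd^c\wt u\wed\gamma_1\wed\cdots\wed\gamma_{m-1}\wed\omega^{n-m}\ge 0$ in the sense of currents. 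Since the forms $\gamma_i$ were arbitrary, $\wt u$ is $m$-$\omega$-sh on $\Omega$ by definition.

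I do not expect genuine obstacles here; the statement is a packaging of known facts. The only mild subtlety is making sure the reduction to $\alpha$-subharmonicity is uniform in the choice of $\gamma_i$ — that is, the form $\alpha$ depends on the $\gamma_i$, but for each fixed tuple the argument goes through verbatim, and the defining inequality for $m$-$\omega$-sh functions is a family of inequalities indexed by such tuples, so checking it tuple-by-tuple suffices. A second small point worth stating explicitly is that the set $\Omega\setminus U$ on which $\wt u=v$ together with $U$ covers $\Omega$, and near points of $\d U\cap\Omega$ the function $\wt u$ agrees with $v$ on one side and with $\max\{u,v\}$ on the other, so the verification of the current inequality near $\d U\cap\Omega$ is precisely what \cite[Lemma~9.5]{GN18} handles. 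Hence the proof is essentially a one-line invocation of that lemma after the u.s.c.\ check, and I would present it in that compressed form.
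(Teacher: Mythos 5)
Your argument is correct and is essentially the paper's own proof, which simply cites \cite[Lemma~9.5]{GN18} as giving the result immediately; you have merely spelled out the reduction to $\alpha$-subharmonicity (via Michelsohn) and the u.s.c./$L^1_{\rm loc}$ checks that make that citation work. No gap, same route.
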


Because of this we have the following way of reducing a proof to a simpler case (see \cite[page 7]{K05} for the proof).

\begin{thmx} If we are to prove the weak convergence or local estimate for a family of locally uniformly bounded $m-\om$-sh functions, it is no loss of generality to we assume that the functions are defined in a ball and are all equal on some neighborhood of the boundary.
\end{thmx}

For a bounded psh function its convolution with a radial smoothing kernel provides locally a smooth, decreasing approximation of this function. It is no longer the case for generalized $m-\om$-sh. However, in a strictly  $m$-pseudoconvex domain $\Om$, that is for $\Om = \{\rho <0\}$, where  $\rho \in C^2(\ov \Om)$  is strictly $m-\om$-sh and $d \rho \neq 0$ on $\d\Om$,  
we still have (non-explicit) way of approximation by smooth $m-\om$-sh  functions.

\begin{prop}\label{prop:smoothing} Let $\Om \subset \subset \bC^n$ be  strictly $m$-pseudoconvex domain. Let $u$ be a bounded $m-\om$-sh function in a neighborhood of $\ov\Om$. Then, there exists a sequence of smooth $m-\om$-sh functions $u_j \in C^\infty(\ov\Om)$ that decreases to $u$ point-wise in $\ov\Om$.
\end{prop}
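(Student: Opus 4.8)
\textbf{Proof plan for Proposition~\ref{prop:smoothing}.}

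The plan is to construct the decreasing smooth approximation in three stages: first a local convolution step to produce smooth $m-\om$-sh functions on slightly shrunk domains, then a gluing step that uses strict $m$-pseudoconvexity to patch these local pieces into a global smooth $m-\om$-sh function, and finally a regularized-max construction to restore monotone convergence. Since this is a local weak-convergence/estimate type statement, by the Localization Principle we may assume $\Om$ is (contained in) a ball and $u$ is defined in a neighborhood of $\ov\Om$.

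For the local step, I would fix a point $z_0\in\ov\Om$ and choose normal coordinates for $\om$ centered there. The obstruction is that the ordinary convolution $u*\chi_\veps$ need not be $m-\om$-sh because $\om$ is not constant — its second-order variation contributes an error of size $O(\veps)$ to the relevant wedge products. To absorb this, I would work with the strictly $m-\om$-sh defining function: since $\rho$ is strictly $m-\om$-sh on $\ov\Om$, for small $t>0$ the perturbed form $dd^c\rho$ has eigenvalues with respect to $\om$ bounded below, so $u*\chi_\veps + A\veps\,\rho$ (or $u*\chi_\veps$ plus a suitable multiple of $|z-z_0|^2$ locally, corrected by $\rho$ globally) is genuinely $m-\om$-sh on a neighborhood of $z_0$ once $\veps$ is small relative to $t$ and the constant $A$ is chosen from the uniform ellipticity of $\rho$. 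This uses Remark~\ref{rmk:Garding}: on smooth functions, $m-\om$-subharmonicity is the pointwise cone condition $dd^c v\in\ov{\Ga_m(\om)}$, which is stable under small $C^2$-perturbations away from the boundary of the cone. Call the resulting family $u_{\veps,z_0}$, smooth and $m-\om$-sh on a ball $B(z_0,r_0)$, converging uniformly to $u$ as $\veps\to 0$ (after letting the correction term tend to zero).

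For the gluing step, I would cover $\ov\Om$ by finitely many such balls $B_1,\dots,B_N$ with local smooth approximants $u^{(k)}_\veps$, each satisfying $u^{(k)}_\veps \to u$ uniformly, hence $\|u^{(k)}_\veps - u^{(\ell)}_\veps\|_{L^\infty(B_k\cap B_\ell)} \to 0$. One then patches them using the regularized-maximum function $M_\eta(x_1,\dots,x_N)$ (a smooth convex function equal to $\max$ when the arguments are $\eta$-separated and depending on each $x_k$ only where $x_k$ is within $\eta$ of the maximum); because $M_\eta$ is convex and nondecreasing in each variable, $M_\eta(u^{(1)}_\veps,\dots,u^{(N)}_\veps)$ is smooth and $m-\om$-sh wherever all the relevant arguments are defined, and by choosing $\eta$ small relative to the defining function $\rho$ near $\d\Om$ one arranges that near each boundary piece the regularized max agrees with the $u^{(k)}_\veps$ defined there, so the patched function is a well-defined global $v_\veps\in C^\infty(\ov\Om)$ that is $m-\om$-sh and tends to $u$ uniformly. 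The hardest part of the argument is exactly this bookkeeping: making the local corrections $A\veps\rho$ and the regularization parameter $\eta$ compatible on all overlaps while keeping everything inside the cone and keeping the error uniformly small; here the rigidity of strict $m$-pseudoconvexity (a uniform lower bound on the eigenvalues of $dd^c\rho$, and $d\rho\neq 0$ on $\d\Om$) is what makes the compatibility possible.

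Finally, to upgrade uniform convergence to a \emph{decreasing} sequence, I would not insist that any single $v_\veps$ dominate $u$; instead, since $\|v_\veps - u\|_{L^\infty(\ov\Om)}\le \eta(\veps)$ with $\eta(\veps)\downarrow 0$, the functions $w_\veps := v_\veps + \eta(\veps)$ are smooth, $m-\om$-sh (adding a positive constant preserves the cone condition), and satisfy $w_\veps \ge u$ on $\ov\Om$. Choosing a sequence $\veps_j\downarrow 0$ rapidly enough that $\eta(\veps_{j+1}) + \|v_{\veps_{j+1}} - v_{\veps_j}\|_\infty \le \eta(\veps_j)$, one gets $w_{\veps_{j+1}} \le w_{\veps_j}$, so $u_j := w_{\veps_j}$ is the desired sequence in $C^\infty(\ov\Om)$ of $m-\om$-sh functions decreasing pointwise to $u$ on $\ov\Om$. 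I expect the only genuinely delicate point to be the interaction between the cone condition and the non-constant background metric in the local step together with the overlap compatibility in the gluing step; the monotonization at the end is routine once uniform convergence is in hand.
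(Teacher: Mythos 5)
Your mollify-and-glue strategy has a genuine gap, and the paper takes a fundamentally different route.

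The gap: you repeatedly assume that the local regularizations $u^{(k)}_\veps$ converge to $u$ \emph{uniformly} on $\ov\Om$, and the gluing and monotonization steps both rest on this. But a bounded $m-\om$-sh function is merely upper semicontinuous; it need not be continuous, and convolution does not converge uniformly to a discontinuous function. Concretely, the quantity $\eta(\veps)=\|v_\veps-u\|_{L^\infty(\ov\Om)}$ that you add in the last step need not tend to $0$ (it is bounded below by half the oscillation of $u$ at any discontinuity point), so $w_{\veps_j}=v_{\veps_j}+\eta(\veps_j)$ does not decrease to $u$. Likewise the overlap bound $\|u^{(k)}_\veps-u^{(\ell)}_\veps\|_{L^\infty(B_k\cap B_\ell)}\to 0$ fails, which is precisely what the regularized-max patching needs; this is the reason Richberg-type smoothing theorems require \emph{continuous} plurisubharmonic functions and do not extend to merely bounded ones. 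A secondary issue is the local step: for a bounded $m-\om$-sh $u$ one cannot in general write $u$ as a psh function minus a smooth one (the Ivochkina characterization only gives $\la_n\geq -(n-m)\la_m$, with $\la_m$ unbounded), so the claim that $u*\chi_\veps + A\veps\rho$ lands back in the cone with $A$ depending only on $\om$ and $\rho$ is not justified; the required correction can depend on $u$ and need not be $O(\veps)$.

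The paper's proof does not mollify at all. It adapts \cite[Theorem~3.18]{GN18}, replacing the ball there by the strictly $m$-pseudoconvex domain and invoking \cite[Theorem~1.1]{CP22} for the smooth solvability of the Dirichlet problem for the complex Hessian equation on such a domain. In other words, the approximating sequence is produced by a Perron-type/Dirichlet-problem construction: one solves a sequence of Hessian Dirichlet problems with smooth data (decreasing boundary values and controlled right-hand sides), and the comparison principle forces the resulting smooth solutions to decrease to $u$. This bypasses the uniform-convergence obstruction entirely, because the monotonicity comes from the comparison principle rather than from $L^\infty$-closeness of regularizations. If you want to pursue your route, you would first need to reduce to continuous $u$ (itself nontrivial) and justify the $O(\veps)$ cone error carefully; the Dirichlet-problem approach the paper uses is the standard way around both difficulties.
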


\begin{proof} The proof is the same as the one of \cite[Theorem~3.18]{GN18} if we replace the ball by a strictly $m$-pseudoconvex domain and invoke \cite[Theorem~1.1]{CP22} for the smooth solution of the Dirichlet problem on a strictly $m$-pseudoconvex domain.
\end{proof}

\subsection{Integral estimates for smooth functions}
\label{ssec:integral-smooth-fct}

Let $\Om$ be a bounded open set in $\bC^n$. Let $-1 \leq v \leq w\leq 0$ be smooth $m-\om$-sh functions in $\Om$ such that $v=w$ in a neighborhood of $\d\Om$.  Let $\rho$ be a smooth $m-\om$-sh function such that $-1 \leq \rho \leq 0$. 
Using the notation $\ga := dd^c \rho$, $h = w-v$  we consider
\[\label{eq:integral-fct}
	e_{(q,k,s)} = \int h^{q+1} \ga^{k} \wed (dd^c v)^s \wed \om^{n-k-s}, 
\]
where $q\geq 0$, the integers $0\leq k \leq m$ and $0\leq s \leq m-k$.  We wish to bound 
$$
	e_{(q,m, 0)} = \int h^{q+1} \ga^m \wed \om^{n-m},
$$
by the integrals 
$$
	e_{(r, 0, i)} = \int h^{r+1} (dd^cv)^i \wed \om^{n-i},
$$
where  $i=0,...,m$ and $1\leq r < q$.
This is done via repeated use of  the integration by parts to replace $\ga$ by $dd^c v$. 
However, there are three different cases depending on the total degree $k+s$ of the form $\ga^k\wed (dd^cv)^s$ that we need to deal with separately.

\begin{itemize}
\item Case 1: $k+s =m$; 
\item Case 2: $k+s = m-1$;
\item Case 3: $k+s \leq m-2$.
\end{itemize}

Let us start with an auxiliary inequality that we use frequently bellow. 
\begin{lem} \label{lem:CLN-grad} 
Let $p\geq 1$ and $0\leq k \leq m-1$. There exists a constant $C$ depending on $\om, n,m$ such that 
\begin{itemize}
\item
[(a)] for $0\leq s \leq m-1-k$: 
$$\begin{aligned}
&	\int h^{p-1}dh \wed d^c h \wed \ga^k \wed (dd^c v)^{s}  \wed \om^{n-k-s-1} \\
&\leq 	\int h^{p} \ga^k\wed(dd^cv)^{s+1} \wed \om^{n-k-s-1}  
+ C  \int h^{p+1} (\ga+dd^cv)^{k+s} \wed \om^{n-k-s}.
\end{aligned}$$
\item
[(b)] for  $0\leq s \leq m-3-k$:
$$\begin{aligned}
&	\int h^{p-1}dh \wed d^c h \wed \ga^k \wed (dd^c v)^{s}  \wed \om^{n-k-s-1} \\
&\leq 	\int h^{p}  \ga^k\wed(dd^cv)^{s+1} \wed \om^{n-k-s-1}  + C  \int h^{p+1} \ga^k \wed (dd^cv)^{s} \wed \om^{n-k-s}.
\end{aligned}$$
\end{itemize}
 \end{lem}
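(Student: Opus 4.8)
The goal is an integration-by-parts inequality that trades one copy of $\ga=dd^c\rho$ for one copy of $dd^c v$, at the cost of a lower-order term in the right-hand side, with the precise shape of that lower-order term dictated by whether the ambient form $\ga^k\wed(dd^cv)^s$ has enough ``room'' (i.e.\ total degree $k+s\le m-1$) to stay positive after one more wedge with a form in $\Ga_m(\om)$. The starting point is the Stokes-type identity
\[\label{eq:plan-stokes}
\begin{aligned}
\int h^{p-1}\,dh\wed d^c h\wed \ga^k\wed (dd^cv)^s\wed\om^{n-k-s-1}
&= \frac{1}{p}\int h^p\, dd^c h\wed \ga^k\wed(dd^cv)^s\wed\om^{n-k-s-1}\\
&\qquad -\frac{1}{p}\int h^p\, d h\wed \ga^k\wed(dd^cv)^s\wed d^c\!\left(\om^{n-k-s-1}\right),
\end{aligned}
\]
obtained by writing $h^{p-1}dh=\tfrac1p d(h^p)$, integrating by parts, and using that $v=w$ near $\d\Om$ so there is no boundary term. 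Since $dd^c h=dd^c w-dd^c v\le dd^c w$ and $h\ge0$, replacing $dd^c h$ by $dd^c(w-v)$ and then dropping the good sign of $-dd^cv$ against the positive current $\ga^k\wed(dd^cv)^s\wed\om^{n-k-s-1}$ (positive precisely because $k+s\le m-1$ and $v,\rho$ are $m$-$\om$-sh, by Remark~\ref{rmk:Garding}) gives
\[\notag
\int h^p\, dd^c h\wed\ga^k\wed(dd^cv)^s\wed\om^{n-k-s-1}
\le \int h^p\, dd^c w\wed\ga^k\wed(dd^cv)^s\wed\om^{n-k-s-1}
-\int h^p\, dd^cv\wed\ga^k\wed(dd^cv)^s\wed\om^{n-k-s-1},
\]
and one more integration by parts converts the $dd^c w$-term into an $h^{p+1}$-term (using $-1\le w\le0$ so $|h^p dd^c w|$-type quantities are controlled), while the $dd^cv$-term is exactly the first term on the right-hand side of the claim with $s$ replaced by $s+1$. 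Thus the whole matter reduces to estimating the torsion term
\[\notag
\left|\int h^p\, dh\wed\ga^k\wed(dd^cv)^s\wed d^c(\om^{n-k-s-1})\right|,
\]
and, after the second integration by parts, an analogous term $\int h^{p+1}\ga^k\wed(dd^cv)^s\wed dd^c(\om^{n-k-s-1})$ plus $\int h^{p+1}\,dh\wed\ga^k\wed(dd^cv)^s\wed d^c\om\wed(\text{smooth})$, etc. Expanding $d^c(\om^{n-k-s-1})$ and $dd^c(\om^{n-k-s-1})$ by the Leibniz rule produces a fixed finite number of terms, each a constant (depending on $\om,n,m$) times an integral involving $d\om$ or $dd^c\om$.

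The heart of the argument is bounding these torsion integrals, and this is where the two cases split. For part (a), where $0\le s\le m-1-k$ so that the total degree is at most $m-1$, the term containing $dh\wed d^c\om$ is handled by the Cauchy--Schwarz inequality of Corollary~\ref{cor:CS} (applied with $\phi$ a power of $h$, $\psi\equiv1$, and the positive current being $\ga^k\wed(dd^cv)^s$ wedged with the appropriate power of $\om$): it is dominated by a small multiple of $\int h^{p-1}dh\wed d^c h\wed(\ga+dd^cv)^{k+s}\wed\om^{\cdots}$ — which can be absorbed back after one more application of the lemma to itself on the degree-$(k+s)$ level, or reabsorbed via Cauchy's inequality $2ab\le \epsilon a^2+\epsilon^{-1}b^2$ into the already-present gradient term on the left — times $\int h^{p+1}(\ga+dd^cv)^{k+s}\wed\om^{n-k-s}$, exactly the second term in (a). The terms with $dd^c\om$ (no $dh$) are directly bounded, using $dd^c\om\le C\om^2$-type pointwise estimates and positivity of $\ga^k\wed(dd^cv)^s\wed\om^{\cdots}$, by $C\int h^{p+1}(\ga+dd^cv)^{k+s}\wed\om^{n-k-s}$. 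For part (b), where $s\le m-3-k$, we have two extra degrees of freedom, so after wedging with one more $\ga$ or $dd^cv$ and with $\om$'s to fill up, all currents stay positive and we can afford to keep $\ga^k\wed(dd^cv)^s$ rather than $(\ga+dd^cv)^{k+s}$; the Cauchy--Schwarz step uses Lemma~\ref{lem:CS} (or Lemma~\ref{lem:CS-classic} when there is still an extra $\om$ available, i.e.\ when $k+s\le m-2$) with the positive current $T=\ga^k\wed(dd^cv)^s\wed\om^{\cdots}$, and every torsion term is absorbed into $C\int h^{p+1}\ga^k\wed(dd^cv)^s\wed\om^{n-k-s}$, giving the cleaner right-hand side in (b).

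\textbf{Main obstacle.} The delicate point is the bookkeeping in the Cauchy--Schwarz step: one must check that, after applying Corollary~\ref{cor:CS}, the gradient integral appearing on the right genuinely is of the same type as the left-hand side of the lemma (so it can be reabsorbed, possibly after iterating the lemma once at total degree $k+s$), and that the exponent of $h$ works out — the torsion integral comes with $h^p$ but Cauchy--Schwarz splits it as $h^{(p-1)/2}\cdot h^{(p+1)/2}$, which is exactly what is needed to land in $h^{p-1}dh\wed d^ch$ on one side and $h^{p+1}$ on the other. One also has to be careful that in case (b) the strict inequality $s\le m-3-k$ (not $m-2-k$) is what guarantees an extra $\om$ is available for the Cauchy--Schwarz of Lemma~\ref{lem:CS}, since that lemma consumes one $\om$ going from the left to the $\psi$-factor on the right; getting this degree accounting exactly right across all the Leibniz-expanded terms is the bulk of the real work, though each individual estimate is routine given the inequalities already established in Section~\ref{ssec:positive-cone} and \ref{ssec:integral-smooth-fct}.
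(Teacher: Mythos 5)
Your plan takes a genuinely different (and harder) route from the paper, and as written it has a sign error that matters and a structural gap at the endpoint case.

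The sign issue: with $h^{p-1}dh=\tfrac1p d(h^p)$ and Stokes one gets
\[
\int h^{p-1}dh\wedge d^ch\wedge T \;=\; -\frac1p\int h^p\,dd^ch\wedge T \;+\;\frac1p\int h^p\,d^ch\wedge\ga^k\wedge(dd^cv)^s\wedge d\!\left(\om^{n-k-s-1}\right),
\]
with $T=\ga^k\wedge(dd^cv)^s\wedge\om^{n-k-s-1}$; the $dd^ch$ term comes with a \emph{minus} sign, not the plus sign in your first display. This matters because it is $-dd^ch=dd^cv-dd^cw$ that is being bounded against $T\ge0$: you drop $-dd^cw\wedge T\le0$ and \emph{keep} $+dd^cv\wedge T$, which is already the first term of the right-hand side of (a). Your version keeps $dd^cw$ and drops $dd^cv$, which points you towards an ``integrate $dd^cw$ once more'' step that does not produce the $h^{p+1}$ weight you need (moving $dd^c$ off $w$ regenerates gradient terms of $h$ at lower exponent and uses up the $h^p$ factor rather than raising it).

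The structural issue: a single-$d$ Stokes unavoidably creates the gradient torsion term $\int h^p\,d^ch\wedge d\om\wedge\ga^k\wedge(dd^cv)^s\wedge\om^{n-k-s-2}$. For $k+s\le m-2$ the ambient $(n-2,n-2)$-form is positive and Lemma~\ref{lem:CS-classic} does close the loop (the gradient factor reproduces exactly the left-hand side of the lemma and can be absorbed). But at the endpoint $k+s=m-1$ of part (a) you must use Lemma~\ref{lem:CS}/Corollary~\ref{cor:CS}, whose gradient factor is $\int h^{p-1}dh\wedge d^ch\wedge(\ga+dd^cv)^{m-1}\wedge\om^{n-m}$; this is not the same as the left-hand side $\int h^{p-1}dh\wedge d^ch\wedge\ga^k\wedge(dd^cv)^s\wedge\om^{n-m}$, so a simple $\varepsilon$-absorption into the left-hand side does not close. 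One would need to run the estimate simultaneously for all $(k',s')$ with $k'+s'=m-1$ and solve the resulting linear system, which is a real additional argument you have waved at but not supplied.

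The paper avoids both problems by starting from the pointwise Leibniz identity
\[
p(p+1)h^{p-1}dh\wedge d^ch \;=\; dd^c(h^{p+1})-(p+1)h^p\,dd^ch,
\]
which replaces the gradient term by a full $dd^c$ acting on $h^{p+1}$. For the $dd^ch$ part one drops $-dd^cw\wedge T\le0$ and keeps $dd^cv\wedge T$, giving the first right-hand term; for the $dd^c(h^{p+1})$ part a \emph{double} integration by parts moves the whole $dd^c$ onto $\om^{n-k-s-1}$ with no cross terms (because $\ga^k\wedge(dd^cv)^s$ is closed and $h^{p+1}$ vanishes near $\d\Om$), yielding $\int h^{p+1}\,\ga^k\wedge(dd^cv)^s\wedge dd^c(\om^{n-k-s-1})$, which is then bounded \emph{pointwise} — no Cauchy--Schwarz at all and no gradient torsion term ever appears. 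Part (b) then follows by writing $dd^c(\om^{n-k-s-1})=\eta\wedge\om^{n-m}$ (possible precisely when $k+s\le m-3$, since $d\om\wedge d^c\om\wedge\om^{n-k-s-3}$ must factor through $\om^{n-m}$) and using positivity of $\ga^k\wedge(dd^cv)^s\wedge\om^{n-m}$.
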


\begin{proof}  (a) Note first that $0\leq h \leq 1$ and also $T := \ga^k \wed (dd^c v)^s \wed \om^{n-k-s-1}$ and $dd^cw \wed T$ are positive forms for $n-s-k-1 \geq n-m$. Therefore,
$$\begin{aligned}
	p(p+1) h^{p-1} dh\wed d^c h  \wed T
&=	 [dd^c h^{p+1} - (p+1) h^{p} dd^c h] \wed T \\
&\leq		 [dd^c h^{p+1} + (p+1) h^{p} dd^c v] \wed T.
\end{aligned}$$  Hence, 
\[\label{eq:grad1}\begin{aligned}
&	\int h^{p-1} dh\wed d^c h \wed \ga^k\wed(dd^cv)^{s} \wed \om^{n-s-k-1} \\
&\leq \int (dd^c h^{p+1} + h^{p} dd^cv) \wed \ga^k\wed (dd^cv)^{s}  \wed \om^{n-s-k-1}.
\end{aligned}\]
It remains  to estimate the product involving the first term in the bracket. By integration by parts and [Lemma 2.3, KN16],
\[\label{eq:grad2} \begin{aligned}
&	\int dd^c h^{p+1} \wed \om^{n-s-k-1} \wed \ga^k\wed(dd^cv)^{s} \\
&	= \int h^{p+1} dd^c (\om^{n-s-k-1}) \wed \ga^k\wed(dd^cv)^{s}  \\
&\leq C \int h^{p+1} (\ga+ dd^cv)^{k+s} \wed \om^{n-m+1}.
\end{aligned}\]
Combining the last two inequalities the proof of the lemma follows.

(b) The proof is very similar, we first have \eqref{eq:grad1}. Then, in the middle integral of  \eqref{eq:grad2}
one can express $	dd^c (\om^{n-s-k-1}) = \eta \wed \om^{n-m}
$ for a smooth $(m-s-k,m-s-k)$-form $\eta$. Then, since $\ga, dd^c v \in \Ga_m(\om)$, in this case the inequality has a better form
$$
	\left|\int h^{p+1} \eta \wed \ga^k \wed (dd^c v)^s \wed \om^{n-m} \right| \leq C \int h^{p+1} \ga^k \wed (dd^c v)^s \wed \om^{n-k-s}.
$$
The item (b) is proven.
\end{proof}

Let us start with the simplest situation in Case 1. We are going to show that 
\[ \label{eq:case1}
	e_{(q,m,0)} \leq C \left(e_{(q-1, m-1, 1)} +  e_{(q-1, m-1,0)} \right),
\]
where $C$ is a uniform constant depending on $\om, m,n$.
Equivalently, 
\begin{lem}\label{lem:case1} Let $q\geq 1$. Then,
$$\begin{aligned}
	\int_\Om (w-v)^{q+1} \ga^{m} \wed \om^{n-m}  &\leq  C \int_\Om (w-v)^{q}  \ga^{m-1}  \wed dd^c v \wed \om^{n-m} \\
&\quad + C \int_\Om (w-v)^{q} \ga^{m-1} \wed \om^{n-m+1}.
\end{aligned}$$
\end{lem}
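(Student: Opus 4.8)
The plan is to integrate by parts once to trade one factor of $\ga = dd^c\rho$ for $dd^c v$, keeping careful track of the torsion terms $dd^c\om$ and $d\om\wed d^c\om$ that appear because $\om$ is only Hermitian. Write $h = w-v \ge 0$. Since $v = w$ near $\d\Om$ we have $h \equiv 0$ near $\d\Om$, so all boundary integrals in the integration by parts vanish. Starting from $e_{(q,m,0)} = \int h^{q+1}\,\ga\wed\ga^{m-1}\wed\om^{n-m}$ and using $\ga = dd^c\rho$, integrate by parts twice to move $dd^c$ onto $h^{q+1}$:
\[\notag
\int h^{q+1}\, dd^c\rho \wed \ga^{m-1}\wed\om^{n-m}
= \int \rho\, dd^c(h^{q+1}) \wed \ga^{m-1}\wed\om^{n-m}
+ (\text{torsion terms}),
\]
where the torsion terms come from $dd^c\om$ and are of the schematic form $\int \rho\, h^{q+1}\, \ga^{m-1}\wed(\text{smooth $(2,2)$-form})\wed\om^{n-m-1}$; since $\rho,h$ are bounded and $\ga,dd^cv\in\Ga_m(\om)$, these are controlled by $C\,e_{(q, m-1,0)}$ using the pointwise estimates of Section~\ref{ssec:positive-cone} (exactly as in Lemma~\ref{lem:CLN-grad}(b)). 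For the main term, expand $dd^c(h^{q+1}) = (q+1)h^q\, dd^c h + q(q+1)h^{q-1}\,dh\wed d^c h$, and use $dd^c h = dd^c w - dd^c v$ together with $-1\le\rho\le 0$. The term with $dd^c w$ is harmless because $dd^c w\wed\ga^{m-1}\wed\om^{n-m}\ge 0$ and $-\rho\le 1$, so it is bounded by $\int h^q\, dd^c w\wed\ga^{m-1}\wed\om^{n-m}$, which by one more integration by parts (now moving $dd^c$ back, picking up only the already-controlled torsion) is bounded by $C(e_{(q-1,m-1,1)}+e_{(q-1,m-1,0)})$ — here one writes $dd^c w = dd^c v + dd^c h$ and iterates, or more simply bounds it directly since $q\ge 1$. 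The term $-\int \rho(q+1)h^q\,dd^c v\wed\ga^{m-1}\wed\om^{n-m}$ is $\le (q+1)e_{(q-1,m-1,1)}$ after using $0\le -\rho\le 1$.

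The one genuinely positive-sign obstacle is the gradient term $q(q+1)\int \rho\, h^{q-1}\,dh\wed d^c h\wed\ga^{m-1}\wed\om^{n-m}$: since $\rho\le 0$ this has a favorable sign (it is $\le 0$), so in fact it can simply be dropped from the upper bound. The subtle point — and what I expect to be the main technical obstacle — is organizing the iteration so that every torsion contribution that is produced can be reabsorbed into $e_{(q-1,m-1,0)}$ or $e_{(q-1,m-1,1)}$ rather than generating a form of total degree $m$ again (which would be circular). This is handled precisely by invoking Lemma~\ref{lem:CLN-grad}(b), whose hypothesis $0\le s\le m-3-k$ is arranged to be met here because after the first integration by parts we are working with $\ga^{m-1}\wed\om^{n-m}$, i.e. $k=m-1$, $s=0$, and the relevant torsion form $dd^c(\om^{n-m-1}) = \eta\wed\om^{n-m}$ has degree low enough that the "better form" estimate in Lemma~\ref{lem:CLN-grad}(b) applies; alternatively, when the degree is exactly $m-1$, the Cauchy–Schwarz estimate of Lemma~\ref{lem:CS} and Corollary~\ref{cor:CS} is used to absorb $dh\wed d^c\om$ terms. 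Collecting all the pieces yields
\[\notag
e_{(q,m,0)} \le C\bigl(e_{(q-1,m-1,1)} + e_{(q-1,m-1,0)}\bigr),
\]
which is exactly the claimed inequality \eqref{eq:case1}, with $C = C(\om,m,n)$.
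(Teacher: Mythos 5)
Your overall strategy is the same as the paper's: integrate by parts once, drop the terms whose sign is favorable, and use the Cauchy--Schwarz estimates of Lemma~\ref{lem:CS} for the torsion contributions $dh\wed d^c\om$. However, there are two concrete gaps in your write-up.

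First, your treatment of the $dd^c w$ piece of $T_1$ is both unnecessary and, as written, doesn't close. You correctly observe that $-\rho\le 1$, but then try to bound $\int h^q\,dd^c w\wed\ga^{m-1}\wed\om^{n-m}$ by ``one more integration by parts, moving $dd^c$ back,'' which regenerates a $\ga^m$ of total degree $m$ and is therefore circular; the alternative you offer (write $dd^cw = dd^cv + dd^ch$ and iterate) leaves an uncontrolled $\int h^q\,dd^ch\wed\ga^{m-1}\wed\om^{n-m}$ term. The clean observation — which you already make for the gradient term $T_0$ — is that $\rho\le 0$ together with the positivity of $dd^cw\wed\ga^{m-1}\wed\om^{n-m}$ means the $dd^cw$ contribution to $\int\rho\,T_1\wed\ga^{m-1}$ is $\le 0$ and can simply be dropped. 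That is what the paper does in \eqref{eq:T1}.

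Second, your invocation of Lemma~\ref{lem:CLN-grad}(b) is out of range: that statement requires $0\le s\le m-3-k$, and with $k=m-1$, $s=0$ this reads $0\le -2$, which is false. Indeed, in Case~1 the form $\om^{n-m-1}$ has exactly the critical degree for which the ``better'' bound of (b) fails. The paper instead controls $T_4$ directly via the inequality $\ga^{m-1}\wed dd^c\om^{n-m}\le C\,\ga^{m-1}\wed\om^{n-m+1}$ from \cite[Lemma~2.3]{KN3}, and uses Lemma~\ref{lem:CLN-grad}\emph{(a)} (with $k=m-1$, $s=0$, $p=q$, whose hypothesis $s\le m-1-k$ is indeed satisfied) for the gradient integral that appears after the Cauchy--Schwarz step on $T_2, T_3$. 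Switching your citations to \cite[Lemma~2.3]{KN3} and Lemma~\ref{lem:CLN-grad}(a) respectively closes this gap.
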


\begin{proof} 
Recall that $h := w-v \geq 0$ and $h=0$ near $\d\Om$. A direct computation gives
\[\label{eq:basic-computation}\begin{aligned}
&	dd^c (h^{q+1}\om^{n-m}) \\
&= q(q+1) h^{q-1} dh \wed d^c h\wed  \om^{n-m} \\
&\quad+ (q+1) h^{q} dd^c h \wed \om^{n-m} \\ 
&\quad+ (q+1) (n-m) h^{q} d\om \wed d^c h \wed \om^{n-m-1} \\
&\quad+ (q+1) (n-m) h^{q}dh\wed d^c \om \wed \om^{n-m-1} \\
&\quad + h^{q+1} dd^c \om^{n-m} \\
&\quad =: T_0 + T_1 + T_2 + T_3 +T_4.
\end{aligned}\]
By integration by parts,
\[\label{eq:int-by-parts}\begin{aligned}
	\int h^{q+1} dd^c \rho \wed \ga^{m-1} \wed \om^{n-m} 
&= \int \rho dd^c (h^{q+1} \om^{n-m})  \wed \ga^{m-1}  \\
&= \int \rho (T_0 + T_1 + T_2 + T_3 + T_4) \wed \ga^{m-1}.
\end{aligned}\]

{\bf Case 1a: Estimate of $T_0$.} Since $-1 \leq \rho \leq 0$ and $T_0$ is a positive current,
\[\label{eq:T0} \rho T_0 \wed \ga^{m-1} \leq 0 \] 

{\bf Case 1b: Estimate of $T_1$}.
Similarly, because $v$ is a $m-\om$-sh function,
\[\label{eq:T1}
\begin{aligned}
	\rho T_1 \wed \ga^{m-1}
&= (q+1) \rho h^{q} (dd^c w - dd^c v) \wed \om^{n-m} \wed \ga^{m-1} \\
& \leq (q+1) h^{q} dd^c v \wed \ga^{m-1} \wed\om^{n-m}.
\end{aligned}\]

{\bf Case 1c: Estimate of $T_4$.}
Using the inequality [Lemma 2.3, KN16]   
\[\label{eq:T4-rough} 
	\ga^{m-1} \wed dd^c \om^{n-m} \leq C_{m,n} \ga^{m-1} \wed \om^{n-m+1},
\] 
we can estimate the last term $T_4$,
\[\label{eq:T4}\begin{aligned}
	\left| \int \rho h^{q+1} \ga^{m-1} \wed  dd^c \om^{n-m} \right| 
&\leq 	C\int |\rho| h^{q+1} \ga^{m-1} \wed \om^{n-m+1}  \\
&\leq C e_{(q, m-1,0)},
\end{aligned}
\]
where we used again the fact that $|\rho| \leq 1$. 

{\bf Case 1d: Estimate of  $T_2$ and $T_3$.} We use Cauchy-Schwarz' inequality in Lemma~\ref{lem:CS}, where an extra uniform constant  appears.  Let us estimate   $T_2$ (for  $T_3$ the estimate  is completely the same). By Cauchy-Schwarz' inequality 
\[\label{eq:T23} \begin{aligned}
&	\left |\int \rho h^{q} d\om \wed d^c h \wed \om^{n-m-1} \wed \ga^{m-1} \right|^2   \\
&\leq	 C \int |\rho| h^{q-1} dh\wed d^c h \wed \om^{n-m} \wed \ga^{m-1} \int |\rho| h^{q+1} \om^{n-m+1} \wed \ga^{m-1}  \\
&\leq C \left( \int h^{q-1} dh\wed d^c h \wed \ga^{m-1} \wed \om^{n-m} + \int h^{q+1}  \ga^{m-1} \wed\om^{n-m+1}  \right)^2 \\
&\leq C \left( e_{(q-1, m-1,1)} + e_{(q,m-1,0)}\right)^2
\end{aligned}\]
where we used Lemma~\ref{lem:CLN-grad} with $s=0, k=m-1$ and $p=q$ for the first integral in the  last inequality, namely,
\[\notag
	\int h^{q-1} dh \wed d^c h \wed \om^{n-m} \wed \ga^{m-1} \leq C (e_{(q-1, m-1,1)} + e_{(q,m-1,0)}).
\]

Combining the estimates \eqref{eq:T0}, \eqref{eq:T1}, \eqref{eq:T4} and \eqref{eq:T23} and the fact that $e_{(q, \bullet, \bullet)} \leq e_{(q-1, \bullet, \bullet)}\leq e_{(q-2, \bullet, \bullet)}$ we conclude the proof of the lemma.
\end{proof}

We can now state the general inequality in Case 1.

\begin{cor} \label{cor:case1} For $1 \leq k \leq m$ and $s =m-k \geq 0$ and $q\geq 2$,  
$$
	e_{(q,k,s)}   \leq   c_k \sum_{i=0}^{k-1} e_{(q-1, i, m-i)} + C \sum_{i=0}^{m-1}e_{(q-2, i, m-1-i)}.
$$
\end{cor}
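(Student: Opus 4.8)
The plan is to prove Corollary~\ref{cor:case1} by downward induction on $k$, starting from the base case $k=m$ (which is Lemma~\ref{lem:case1}, i.e., \eqref{eq:case1}) and using the integration-by-parts machinery to lower the power of $\ga$ by one at each step. Concretely, for $1\le k\le m-1$ with $s=m-k$, I would write $\ga^k\wed(dd^cv)^s = \ga^{k-1}\wed dd^c\rho\wed (dd^cv)^s$ and integrate by parts, throwing the $dd^c$ onto $h^{q+1}\om^{n-k-s}=h^{q+1}\om^{n-m}$ exactly as in \eqref{eq:basic-computation}--\eqref{eq:int-by-parts}, obtaining the five terms $T_0,\dots,T_4$ wedged against $\ga^{k-1}\wed(dd^cv)^s$. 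Since $k+s=m$ throughout this case, the background form $\om^{n-m}$ is the critical power, so the positivity arguments used in Lemma~\ref{lem:case1} (for $T_0$ via positivity of the gradient current, for $T_1$ via $m$-$\om$-subharmonicity of $v$, namely $dd^cw\ge 0$ modulo $dd^cv$ with the ambient form $\ga^{k-1}\wed(dd^cv)^s\wed\om^{n-m}$ positive since $\ga,dd^cv\in\Ga_m(\om)$ by Remark~\ref{rmk:Garding}) go through verbatim.

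The $T_1$-term is where the inductive structure appears: $\rho T_1\wed\ga^{k-1}\wed(dd^cv)^s \le (q+1)h^q\,dd^cv\wed\ga^{k-1}\wed(dd^cv)^s\wed\om^{n-m}$, which is precisely $(q+1)e_{(q-1,k-1,s+1)}$. Iterating — or rather, this being the single step of the downward induction — replaces one $\ga$ by one $dd^cv$ at the cost of dropping $q$ by one, which after $k$ such steps produces the terms $e_{(q-1,i,m-i)}$ for $i=0,\dots,k-1$ on the right, with the $\sum$ and the constant $c_k$ accumulating the $(q+1)$-factors and the uniform constants from Lemma~\ref{lem:sym-e}--Lemma~\ref{lem:CS}. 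The remaining terms $T_4$ (the torsion term $dd^c\om^{n-m}$, bounded via \eqref{eq:T4-rough} by $C e_{(q,k-1,s)}$, and then absorbed into the $e_{(q-2,\cdot,\cdot)}$ family since $e_{(q,\cdot,\cdot)}\le e_{(q-2,\cdot,\cdot)}$ as $0\le h\le 1$) and $T_2,T_3$ (the mixed torsion terms) are handled by the Cauchy-Schwarz estimate of Corollary~\ref{cor:CS}-(b) — this is the point where the non-positive intermediate form $\ga_1\wed\cdots\wed\ga_{m-1}\wed\om^{n-m-1}$ shows up, with $\{\ga_i\}$ being the multiset consisting of $\ga$ repeated $k-1$ times and $dd^cv$ repeated $s$ times — combined with Lemma~\ref{lem:CLN-grad}-(a) to convert the resulting $dh\wed d^ch$ gradient integral back into the $e_{(\cdot,\cdot,\cdot)}$ scale. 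Each application of Cauchy-Schwarz followed by $ab\le \tfrac12(a^2+b^2)$ costs another drop in $q$, which is why the torsion contributions land at level $q-2$.

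I expect the main obstacle to be bookkeeping rather than a genuine new idea: one must track carefully that after integrating by parts and distributing the five terms over the wedge $\ga^{k-1}\wed(dd^cv)^s$, every resulting integrand is either controlled by positivity (needing $\ga,dd^cv\in\Ga_m(\om)$ and the ambient power $\om^{n-m}$ being exactly critical, so that $\ga^{k-1}\wed(dd^cv)^s\wed\om^{n-m}$ is a positive $(n,n)$-form) or falls under the Cauchy-Schwarz regime of Corollary~\ref{cor:CS}, and that the index on $h$ never drops below the stated range ($q\ge 2$ is exactly what is needed so that after one step we are at $q-1\ge 1$, within the hypothesis range of Lemma~\ref{lem:CLN-grad}, and the torsion terms at $q-2\ge 0$ remain meaningful). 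A secondary subtlety: in the $T_2,T_3$ estimates one produces $(\sum\ga_i)^{m-1}\wed\om^{n-m}$ and $(\sum\ga_i)^{m-1}\wed\om^{n-m+1}$ after Cauchy-Schwarz; expanding the binomial and using $\ga\le C\om$, $dd^cv\le$ (bounded, since $v$ is smooth on the relevant set) one bounds these by a uniform constant times $\om^n$-type integrals, hence by $e_{(q-2,i,m-1-i)}$ for appropriate $i\le m-1$ — this is the same linear-algebra-of-inequalities trick cited from [KN16, page 2226] and invoked for Corollary~\ref{cor:CS}. Assembling all the pieces and collecting the constants into $c_k$ (depending on $q$ through the finitely many $(q+1)$-factors, but $q$ is fixed) and $C=C(\om,n,m)$ completes the proof.
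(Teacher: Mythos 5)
Your high-level plan matches the paper's: downward induction on $k$, integration by parts to produce $T_0,\dots,T_4$, positivity for $T_0,T_1$, Cauchy--Schwarz (Corollary~\ref{cor:CS}) plus Lemma~\ref{lem:CLN-grad}-(a) for $T_2,T_3$, and a rough torsion bound for $T_4$. However, the technical execution has two genuine gaps that would prevent the argument from closing.

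First, the bound \lq\lq $\ga\le C\om$, $dd^cv\le$ bounded\rq\rq\ in your treatment of $T_2,T_3$ is false and cannot be used. Here $\ga=dd^c\rho$ with $\rho$ an arbitrary $m$-$\om$-sh function in $[-1,0]$, and $dd^cv$ is likewise uncontrolled in $C^0$; the entire point of the capacity estimates is that such bounds are unavailable. The paper keeps $\ga$ and $dd^cv$ intact after expanding $(\ga+dd^cv)^{m-1}$ and controls each binomial summand $\ga^i\wed(dd^cv)^{m-1-i}$ directly through the integrals $e_{(\cdot,\cdot,\cdot)}$, not by comparison to $\om$. (Also, the \lq\lq linear system of inequalities\rq\rq\ trick from [KN16, p.~2226] is what produces Corollary~\ref{cor:CS} itself; it is not invoked again for the binomial step.) Second, after Lemma~\ref{lem:CLN-grad}-(a) is applied to the gradient factor, the output contains $e_{(q+1,i,m-i)}$ for \emph{all} $0\le i\le m-1$. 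The terms with $i\ge k+1$ must be controlled by the induction hypothesis, and the $i=k$ term is comparable to $e_{(q,k,s)}$ itself, so it must be absorbed into the left-hand side. This requires the weighted Cauchy--Schwarz $ab\le \tfrac{a^2}{2\veps}+\tfrac{\veps b^2}{2}$ with $\veps$ chosen small enough that $\veps(1+b_k)=1/2$, not the unweighted $ab\le\tfrac12(a^2+b^2)$ you propose, since the constant $C$ coming from Corollary~\ref{cor:CS} would otherwise prevent absorption. Your proposal does not identify this absorption step or the point at which the induction hypothesis is used, and these are precisely where the proof's substance lies. As a minor point, the $T_4$ estimate in this case produces the full sum $C\sum_{i=0}^{m-1}e_{(q,i,m-1-i)}$ (via the last inequality in (\ref{eq:T4-rough}$'$)), not the single term $Ce_{(q,k-1,s)}$.
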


\begin{proof} The proof is by induction in $k$ but "downward".  For $k=m$ it is Lemma~\ref{lem:case1}. Assume that it is true for every $k+1 \leq \ell \leq m$, i.e.,  we have 
\[\label{eq:case1-induction}
	e_{(q,\ell, m-\ell)} \leq c_{\ell} \sum_{i=0}^{k} e_{(q-1, i, m-i)} + c_\ell \sum_{i=0}^{m-1} e_{(q-2,i,m-1-i)}.
\]
We proceed to prove the conclusion holds for $k$. 
The strategy is  the same as in  the proof  of the last  lemma, however we need to estimate $T_2$ and $T_3$ more carefully.
We repeat the steps of the proof of Lemma~\ref{lem:case1}   replacing  $dd^c\rho \wed \Ga$ by $dd^c\rho \wed \Ga^{(s)}$, where 
$$\Ga=\ga^{m-1}\wed \om^{n-m} \text{ and }\Ga^{(s)}= \ga^{m-1-s} \wed (dd^c v)^s \wed \om^{n-m} ,$$ in the integrand on the left hand side. The corresponding estimates for $T_0, T_1$ are similar. Namely,
\[
\tag{\ref{eq:T0}$'$}
	\rho T_0 \wed \ga^{m-1-s}\wed (dd^cv)^s \leq 0, 
\]
and
\[\tag{\ref{eq:T1}$'$} \rho T_1\wed \ga^{m-s-1} \leq   (q+1) h^q (dd^cv)^{s+1} \wed \ga^{m-1-s}  \wed\om^{n-m} .
\]
The one  for $T_4$ is 
\[\tag{\ref{eq:T4-rough}$'$} \begin{aligned}
&	\ga^{m-1-s} \wed (dd^c v)^s \wed dd^c \om^{n-m} \\
&\leq C_{m,n} (\ga + dd^cv)^{m-1} \wed \om^{n-m+1} \\
& \leq C_{m,n}  [\ga^{m-1} + \ga^{m-2}
\wed dd^c v+  \cdots + (dd^cv)^{m-1}] \wed \om^{n-m+1}.
\end{aligned}\]
Hence, integrating both sides and using $0\leq h\leq 1$, leads to
\[
\tag{\ref{eq:T4}$'$} 	
	\left|\int h^{q+1} \ga^{m-1-s}\wed (dd^c v)^{s} \wed dd^c \om^{n-m} \right|\leq C\sum_{i=0}^{m-1}e_{(q,i,m-1-i)}.
\]

Lastly for $T_2$ and $T_3$, we need to use Corollary~\ref{cor:CS}, 
$$
\begin{aligned}
&	I^2:=\left|\int \rho h^q d\om \wed d^c h \wed \om^{n-m-1} \wed \ga^{m-1-s} \wed (dd^cv)^s\right|^2  \\
&\leq  C \int  h^{q-1} (\ga+ dd^cv)^{m-1} \wed \om^{n-m+1} \\
&\qquad \times \int h^{q+1} dh\wed d^c h \wed (\ga+ dd^c v)^{m-1}\wed \om^{n-m} \\   
\end{aligned}
$$
The standard  Cauchy-Schwarz inequality (Lemma~\ref{lem:CS-classic}) gives for $\veps>0$ to be determined later,
$$\begin{aligned}
	I 
&\leq \frac{C}{\veps}  \int  h^{q-1} (\ga+ dd^cv)^{m-1} \wed \om^{n-m+1} \\
&\qquad+  \veps \int h^{q+1} dh\wed d^c h \wed (\ga+ dd^c v)^{m-1}\wed \om^{n-m}.
\end{aligned}$$
By the last inequality in (\ref{eq:T4-rough}$'$) the first integral is bounded by
$$
	\frac{C}{\veps} \sum_{i=0}^{m-1} e_{(q-2, i, m-1-i)}.
$$
To bound the second integral we use 
$$ (\ga+dd^cv)^{m-1} \wed\om^{n-m}\leq C_{m,n} \sum_{i=0}^{m-1} \ga^i \wed (dd^cv)^{m-1-i} \wed \om^{n-m},$$
and then Lemma~\ref{lem:CLN-grad}-(a) for $k=i, s= m-1-i$ and $p=q+2$.
This gives a bound for the second integral by $$ C\veps  \sum_{i=0}^{m-1} e_{(q+1,i,m-i)} + C\veps\sum_{i=0}^{m-1} e_{(q+2, i, m-1-i)}.$$  Let us consider the first sum above:
$$
	\veps \sum_{i=0}^{m-1} e_{(q+1,i,m-i)} =  \veps \sum_{i\geq k+1} e_{(q+1,i,m-i)} + \veps \sum_{i=0}^{k-1} e_{(q+1,i,m-i)} + \veps e_{(q+1,k,m-k)}.
$$
Applying the induction hypothesis \eqref{eq:case1-induction} 
 to the first term on the right, we derive
$$\begin{aligned}
	\veps \sum_{i=0}^{m-1} e_{(q+1,i,m-i)} 
&\leq  \veps b_k\left(  e_{(q, k,m-k)} + \sum_{i=0}^{k-1} e_{(q,i,m-i)} \right) +  \veps b_k \sum_{i=0}^{m-1} e_{(q-1,i,m-1-i)} \\
&\qquad + \veps \sum_{i=0}^{k-1} e_{(q+1,i,m-i)} +  \veps e_{(q+1,k,m-k)},
\end{aligned}$$
where $b_k =  c_m+\cdots + c_{k+1}$. 

Since $e_{(q+2,\bullet,\bullet)} \leq e_{(q+1, \bullet, \bullet)} \leq e_{(q, \bullet, \bullet)}\leq e_{(q-1, \bullet, \bullet)}$, it follows from the above   estimates  that
\[\tag{\ref{eq:T23}$'$}
\begin{aligned}
	I
&\leq \veps (1+b_k)\;  e_{(q, k,m-k)} + \veps \sum_{i=0}^{k-1} e_{(q-1,i,m-i)} \\
&\qquad + [ \veps (b_k+1) + C/\veps + C\veps] \sum_{i=0}^{m-1} e_{(q-2,i,m-1-i)}.
\end{aligned}\]

Thus, 
combining (\ref{eq:T0}$'$), (\ref{eq:T1}$'$), (\ref{eq:T4}$'$) and (\ref{eq:T23}$'$) we have 
$$\begin{aligned}
	e_{(q,k,s)}
& \leq    C e_{(q-1, k-1, s+1)} + C \sum_{i=0}^{m-1} e_{(q,i,m-1-i)} \\
&\quad + \veps (1+b_k) e_{(q,k,s)}+ \veps \sum_{i=0}^{k-1} e_{(q-1,i,m-i)} \\
&\quad+ [ \veps (b_k+1) + C/\veps + C\veps ] \sum_{i=0}^{m-1} e_{(q-2,i,m-1-i)}.
\end{aligned}$$
Now we can choose $\veps$ so that $\veps(1+b_k) =1/2$. Since $s=m-k$,  regrouping terms on the right hand side (decreasing the first parameter in $e_{(q, i, m-1-i)}$ if necessary) we get for possibly larger $C>0$ that
$$
	e_{(q,k,m-k)}/2 \leq (C+\veps) \sum_{i=0}^{k-1} e_{(q-1,i,m-i)} + (C + C/\veps) \sum_{i=0}^{m-1} e_{(q-2,i,m-1-i)}.
$$
The induction step is proven  and the lemma follows.
\end{proof}

Next, we consider Case 2.

\begin{lem}\label{lem:case2} For $1 \leq k \leq m-1$ and $s = m-1-k \geq 0$ and $q\geq 1$, we have 
$$
	e_{(q,k,s)} \leq C \left(e_{(q-1,k-1,s+1)}  + \sum_{i=0}^{m-2}e_{(q-1, i, m-2-i)} \right).
$$
\end{lem}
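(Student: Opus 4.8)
\textit{Proof plan.} The plan is to mimic the proofs of Lemma~\ref{lem:case1} and Corollary~\ref{cor:case1}, with one simplification built in: since the total degree $k+s=m-1$ of $\ga^k\wed(dd^cv)^s$ is already strictly below $m$, a single integration by parts suffices and no absorption of a degree-$m$ term on the right-hand side is needed. Write $h=w-v$ (so $0\le h\le1$ and $h=0$ near $\d\Om$) and $\ga=dd^c\rho$. Since $dd^c\rho$ and $dd^cv$ are $d$-closed, $\ga^{k-1}\wed(dd^cv)^s$ is a closed smooth form, so integration by parts gives
$$
	e_{(q,k,s)}=\int h^{q+1}\,dd^c\rho\wed\ga^{k-1}\wed(dd^cv)^s\wed\om^{n-k-s}=\int\rho\,dd^c\big(h^{q+1}\om^{n-k-s}\big)\wed\ga^{k-1}\wed(dd^cv)^s .
$$
I then expand $dd^c(h^{q+1}\om^{n-k-s})=T_0+T_1+T_2+T_3+T_4$ as in \eqref{eq:basic-computation}, with $n-m$ replaced by $n-k-s=n-m+1$, and estimate $\int\rho\,T_j\wed\ga^{k-1}\wed(dd^cv)^s$ for each $j$.

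Since $k-1+s=m-2\le m-1$, the current $\ga^{k-1}\wed(dd^cv)^s\wed\om^{n-m}$ is positive of bidegree $(n-2,n-2)$, which makes the first three terms routine. For $T_0$ one gets $\rho\,T_0\wed\ga^{k-1}\wed(dd^cv)^s\le0$ because $\rho\le0$. For $T_1$ write $dd^ch=dd^cw-dd^cv$; the $dd^cw$-part is $\le0$ since $w$ is $m$-$\om$-sh, and $|\rho|\le1$ leaves a bound by $C\,e_{(q-1,k-1,s+1)}$. For $T_4$ the factor $dd^c\om^{n-m+1}$ is controlled, exactly as in the treatment of $T_4$ in Lemma~\ref{lem:case1} and Corollary~\ref{cor:case1}, by the torsion inequalities of \cite{KN3}, which yields the bound $C\sum_{i=0}^{m-2}e_{(q,i,m-2-i)}\le C\sum_{i=0}^{m-2}e_{(q-1,i,m-2-i)}$.

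The torsion terms $T_2,T_3$ are the heart of the matter, and the step I expect to require the most care. Because $k-1+s=m-2\le m-1$, the current $T=\ga^{k-1}\wed(dd^cv)^s\wed\om^{n-m}$ is positive, so here — unlike in Lemma~\ref{lem:case1} — the classical Cauchy--Schwarz inequality (Lemma~\ref{lem:CS-classic}) already applies, to $T_3$ and likewise to $T_2$; with $\phi=\rho h^{(q-1)/2}$, $\psi=h^{(q+1)/2}$ and $|\rho|\le1$ it bounds $\big|\int\rho\,T_3\wed\ga^{k-1}\wed(dd^cv)^s\big|^2$ by
$$
	C\int h^{q-1}\,dh\wed d^ch\wed\ga^{k-1}\wed(dd^cv)^s\wed\om^{n-m+1}\ \cdot\ e_{(q,k-1,s)} .
$$
Applying Lemma~\ref{lem:CLN-grad}(a) (with parameters $k-1,\,s,\,p=q$, legitimate since $s=m-1-k\le m-k$) bounds the first factor by $e_{(q-1,k-1,s+1)}+C\sum_{i=0}^{m-2}e_{(q,i,m-2-i)}$. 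Since every term produced this way, as well as $e_{(q,k-1,s)}$, has $\ga$-power at most $k-1$ and total degree at most $m-1$, a plain AM--GM inequality — with no auxiliary parameter $\veps$ and no absorption — closes the estimate: $T_2$ and $T_3$ are bounded by $C\big(e_{(q-1,k-1,s+1)}+\sum_{i=0}^{m-2}e_{(q-1,i,m-2-i)}\big)$, where $e_{(q,k-1,s)}$ folds into the Case~3 sum as its $i=k-1$ term. Summing the four contributions and using $e_{(q,\cdot,\cdot)}\le e_{(q-1,\cdot,\cdot)}$ to normalize the first indices then gives the claimed inequality.

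In short, the only genuinely delicate point is the $T_2,T_3$ estimate: one must verify that the auxiliary current $\ga^{k-1}\wed(dd^cv)^s\wed\om^{n-m}$ entering Cauchy--Schwarz is really positive — it is, carrying only $m-2$ factors from $\ov{\Ga_m(\om)}$, one fewer than in the typical use of Lemma~\ref{lem:CS-classic} — and one must track the parameters so that every term coming out of Lemma~\ref{lem:CLN-grad}(a) is either $e_{(q-1,k-1,s+1)}$ or one of the $e_{(q-1,i,m-2-i)}$; everything else is a direct transcription of the proof of Lemma~\ref{lem:case1}.
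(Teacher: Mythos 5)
Your proof is correct and matches the paper's own argument step for step: the same integration by parts and $T_0,\dots,T_4$ decomposition of $dd^c(h^{q+1}\om^{n-m+1})$, the same key observation that $\ga^{k-1}\wed(dd^cv)^s\wed\om^{n-m}$ is a positive $(n-2,n-2)$-current so the classical Cauchy--Schwarz inequality (Lemma~\ref{lem:CS-classic}) suffices for $T_2,T_3$, and the same application of Lemma~\ref{lem:CLN-grad}(a) followed by an unconditional AM--GM with no $\veps$-absorption. The only difference is cosmetic --- you split the weight as $h^{(q-1)/2}\cdot h^{(q+1)/2}$ while the paper implicitly uses $h^{q/2}\cdot h^{q/2}$, which shifts the parameter $p$ in Lemma~\ref{lem:CLN-grad}(a) from $q+1$ to $q$ but changes nothing substantive.
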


\begin{proof} 
The basic computation using integration  by parts that corresponds to \eqref{eq:basic-computation} starts with
\[\tag{\ref{eq:basic-computation}$''$}
	dd^c (h^{q+1} \om^{n-m+1}):= T_0 + T_1 + T_2 + T_3 + T_4,
\]
where each term has higher exponent of $\om$.
The estimates for $T_0, T_1$ are the same as the ones in (\ref{eq:T0}$'$) and (\ref{eq:T1}$'$). Precisely,
\[\tag{\ref{eq:T0}$''$}
	\rho T_0\wed \ga^{m-2-s} \wed (dd^cv)^s \leq 0,
\]
and
\[\tag{\ref{eq:T1}$''$}
	\rho T_1 \wed \ga^{m-2-s} \leq (q+1) h^q \ga^{m-2-s} \wed (dd^cv)^{s+1} \wed \om^{n-m+1}.
\]
The one for $T_4$ is 
\[\tag{\ref{eq:T4-rough}$''$} 
\begin{aligned}
&\ga^{m-2-s} \wed (dd^cv)^s \wed dd^c (\om^{n-m+1}) \\
&\leq 	C_{m,n} (\ga+dd^c v)^{m-2} \wed \om^{n-m+2} \\
&\leq C_{m,n} [\ga^{m-2} + \ga^{m-3} \wed dd^c v + \cdots + (dd^cv)^{m-2}] \wed \om^{n-m+2}.
\end{aligned}\]
Integrating both sides  and using the fact that $0\leq h\leq 1$ yield
\[\tag{\ref{eq:T4}$''$}
	\left|\int \rho h^{q+1} \ga^{m-2-s} \wed (dd^cv)^s \wed dd^c (\om^{n-m+1})\right| \leq C \sum_{i=0}^{m-2} e_{(q,i,m-2-i)}.
\]

Next, the corresponding inequalities for $T_2$ and $T_3$ are easier. This is due to the fact that 
$$
	T_2 \wed \ga^{m-2-s} \wed (dd^cv)^s = C_0 h^q d\om \wed d^c h \wed \om^{n-m} \wed \ga^{m-2-s} \wed (dd^cv)^s.
$$
Therefore, the classical Cauchy-Schwarz inequality (Lemma~\ref{lem:CS-classic}) is sufficient. Namely,
\[\notag 
\begin{aligned}
&	I^2:=\left|\int \rho h^{q} d\om \wed d^c h \wed \om^{n-m} \wed \ga^{m-2-s} \wed (dd^c v)^s \right|^2 \\
& \leq C \int h^{q} \ga^{m-2-s}\wed (dd^c v)^s \wed\om^{n-m+2} \\ 
&\qquad \times  \int h^q dh \wed d^c h \wed \ga^{m-2-s}\wed (dd^cv)^s \wed \om^{n-m+1}. 
\end{aligned}\]
The Cauchy-Schwarz inequality  gives 
$$\begin{aligned}
	I 
&	\leq   C \int h^{q} \ga^{m-2-s}\wed (dd^c v)^s \wed\om^{n-m+2} \\
&\qquad +C \int h^q dh \wed d^c h \wed \ga^{m-2-s}\wed (dd^cv)^s \wed \om^{n-m+1}. 
\end{aligned}$$
Here, the first integral in the sum is $e_{(q-1,k-1, s)}$. By applying Lemma~\ref{lem:CLN-grad}-(a) for $k-1, s$ and $p=q+1$ one gets a bound for the second integral by
$$\begin{aligned}
&	\int h^{q+1} \ga^{m-2-s} \wed (dd^cv)^{s+1}\wed\om^{n-m+1} +C \int h^{q+2} (\ga+dd^cv)^{m-2} \wed \om^{n-m+2} \\
&\leq  e_{(q, k-1, s+1)} + C \sum_{i=0}^{m-2} e_{(q+1, i, m-2-i)}.
\end{aligned}$$
Combining this  with the decreasing property in the first parameter of $e_{(q, \bullet,\bullet)}$ we get
\[\tag{\ref{eq:T23}$''$}
\begin{aligned}
	I
&	\leq C [e_{(q-1, k-1, s)} + e_{(q,k-1,s+1)} + \sum_{i=0}^{m-2} e_{(q+1,i,m-2-i)} ] \\
& \leq C [e_{(q-1,k-1,s+1)} + \sum_{i=0}^{m-2} e_{(q-1,i,m-2-i)}].
\end{aligned}\]

Finally, combining  (\ref{eq:T0}$''$), (\ref{eq:T1}$''$), (\ref{eq:T4}$''$) and (\ref{eq:T23}$''$) one completes the proof of the lemma.
\end{proof} 

Lastly, we consider Case 3.

\begin{lem} \label{lem:case3} For $1\leq k \leq m-2$  and $0 \leq s \leq m-2-k$ and $q\geq 1$ we have
$$
	e_{(q,k,s)} \leq C \left[ e_{(q-1, k-1, s+1)} + e_{(q, k-1,s)} \right].
$$
\end{lem}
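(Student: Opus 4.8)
The plan is to mimic the integration-by-parts scheme of Lemma~\ref{lem:case2}, exploiting that the hypothesis $k+s\le m-2$ keeps \emph{every} form occurring along the way positive, so that only the classical Cauchy--Schwarz inequality (Lemma~\ref{lem:CS-classic}) is needed and, in contrast with Cases~1 and~2, no term $e_{(q,k,s)}$ has to be reabsorbed from the right-hand side. Concretely, I would write $\ga^k=dd^c\rho\wed\ga^{k-1}$, set $\Phi:=\ga^{k-1}\wed(dd^c v)^s$ — a closed positive current of bidegree $(k-1+s,k-1+s)$ with $k-1+s\le m-3$ — and integrate by parts as in \eqref{eq:int-by-parts}, using $h=0$ near $\d\Om$, to obtain
\[\notag
	e_{(q,k,s)} = \int \rho\; dd^c\big(h^{q+1}\om^{n-k-s}\big)\wed\Phi .
\]
Then I would expand $dd^c(h^{q+1}\om^{n-k-s})=T_0+T_1+T_2+T_3+T_4$ exactly as in \eqref{eq:basic-computation}, with $\om^{n-m}$ replaced throughout by $\om^{n-k-s}$, and estimate $\int\rho\,T_i\wed\Phi$ term by term.

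The terms $T_0$, $T_1$, $T_4$ are straightforward. For $T_0$, the form $dh\wed d^c h\wed\ga^{k-1}\wed(dd^c v)^s\wed\om^{n-k-s}$ is positive (total cone-degree $k+s\le m$) and $\rho\le0$, so its contribution is $\le0$. For $T_1$, write $dd^c h=dd^c w-dd^c v$; since $w$ is $m-\om$-sh and $k+s\le m-1$ one has $dd^c w\wed\ga^{k-1}\wed(dd^c v)^s\wed\om^{n-k-s}\ge0$, whence $\rho\,T_1\wed\Phi\le(q+1)h^q\,\ga^{k-1}\wed(dd^c v)^{s+1}\wed\om^{n-k-s}$ and, after integration, $\int\rho\,T_1\wed\Phi\le(q+1)\,e_{(q-1,k-1,s+1)}$. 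For $T_4$, write $dd^c(\om^{n-k-s})=\eta\wed\om^{n-m}$ with $\eta$ smooth and argue as in the proof of Lemma~\ref{lem:CLN-grad}-(b) (this is where the smallness of $k-1+s$ enters) to get the clean bound $\big|\int\rho\,T_4\wed\Phi\big|\le C\,e_{(q,k-1,s)}$; this is precisely the step at which Case~3 escapes the sum $\sum_i e_{(q,i,m-2-i)}$ that Case~2 is stuck with.

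For the two remaining terms $T_2$ (containing $d\om\wed d^c h$) and $T_3$ (containing $dh\wed d^c\om$), which are handled in the same way, I would treat $T_2$. Here the current $T:=\ga^{k-1}\wed(dd^c v)^s\wed\om^{n-k-s-1}$ is a \emph{positive} $(n-2,n-2)$-form, so the classical Cauchy--Schwarz inequality (Lemma~\ref{lem:CS-classic}, applied with $\phi=|\rho|^{1/2}h^{(q-1)/2}$ and $\psi=|\rho|^{1/2}h^{(q+1)/2}$) gives
\[\notag
\begin{aligned}
	\Big|\int\rho\,T_2\wed\Phi\Big|^2
&\le C \int |\rho|\,h^{q-1}\,dh\wed d^c h\wed\ga^{k-1}\wed(dd^c v)^s\wed\om^{n-k-s} \\
&\qquad\times \int |\rho|\,h^{q+1}\,\ga^{k-1}\wed(dd^c v)^s\wed\om^{n-k-s+1}.
\end{aligned}
\]
Using $|\rho|\le1$, the second factor is $\le e_{(q,k-1,s)}$, and the first is bounded by Lemma~\ref{lem:CLN-grad}-(b) applied with parameters $k-1$, $s$ and $p=q$ (legitimate since $0\le s\le m-2-k$ and $q\ge1$), which yields $e_{(q-1,k-1,s+1)}+C\,e_{(q,k-1,s)}$. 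By the arithmetic--geometric mean inequality, $\big|\int\rho\,T_2\wed\Phi\big|\le C\big(e_{(q-1,k-1,s+1)}+e_{(q,k-1,s)}\big)$, and likewise for $T_3$. Adding the five estimates gives $e_{(q,k,s)}\le C\big(e_{(q-1,k-1,s+1)}+e_{(q,k-1,s)}\big)$.

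The hard part is not any individual computation but the positivity bookkeeping: one must check that $dd^c w\wed\Phi\wed\om^{n-k-s}$, $dh\wed d^c h\wed\Phi\wed\om^{n-k-s}$ and $T$ are genuinely positive forms, and that $dd^c(\om^{n-k-s})\wed\Phi$ is dominated by $\om^{n-k-s+1}\wed\Phi$. All of this rests on $k+s\le m-2$ keeping the cone-degree strictly below $m$, which is exactly what makes Case~3 simpler than Cases~1 and~2, where one must instead pass through $(\ga+dd^c v)^{m-1}$ and absorb a reflexive term with a small parameter $\veps$.
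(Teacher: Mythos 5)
Your proof is correct and follows the same route as the paper: integration by parts against $\rho$, the five-term decomposition $T_0,\dots,T_4$ with $\om^{n-k-s}$ in place of $\om^{n-m}$, the sign and positivity arguments for $T_0,T_1$, the classical Cauchy--Schwarz inequality (Lemma~\ref{lem:CS-classic}) for $T_2,T_3$ since $\ga^{k-1}\wed(dd^cv)^s\wed\om^{n-k-s-1}$ stays positive when $k+s\le m-2$, the observation that $T_4$ is dominated by $e_{(q,k-1,s)}$ because $\ga^{k-1}\wed(dd^cv)^s\wed\om^{n-m}$ is positive, and finally Lemma~\ref{lem:CLN-grad}-(b) to absorb the gradient integral. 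One small remark: your choice $p=q$ in the invocation of Lemma~\ref{lem:CLN-grad}-(b) is the consistent one (matching $h^{p-1}=h^{q-1}$); the paper writes $p=q-1$ at the corresponding step, which appears to be an off-by-one typo, so you got this detail right.
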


\begin{proof} We need to estimate  $dd^c \rho \wed \ga^{k-1} \wed (dd^cv)^s \wed \om^{n-k-s}$, where $n-k-s \geq n-m+2$. Then, there is a significant change in basic computation of
\[\tag{\ref{eq:basic-computation}$'''$}
	dd^c (h^{q+1} \om^{n-k-s}) = T_0 + T_1+ T_2+T_3+ T_4,
\]
where all forms $T_i$ contain powers of $\om$ with the exponent  at least $n-m$. 
The estimates for $T_0, T_1$ are the same as in Corollary~\ref{cor:case1} and improved estimates for $T_2, T_3$ are as in Lemma~\ref{lem:case2}. Moreover the bound for $T_4$ is easier. Namely,
since $\ga^{k-1} \wed (dd^c v)^s \wed \om^{n-m}$ is a positive form, one obtains 
\[\tag{\ref{eq:T4-rough}$'''$}
	\ga^{k-1}\wed (dd^c v)^s \wed dd^c (\om^{n-k-s}) \leq C \ga^{k-1} \wed (dd^cv)^s \wed \om^{n-k-s+1}.
\]
Hence, multiplying both sides by $\rho h^{q+1}$ and then integrating we get
\[\tag{\ref{eq:T4}$'''$}
\left| \int \rho h^{q+1} \ga^{k-1} \wed (dd^c v)^s \wed dd^c (\om^{n-k-s}) \right|  
\leq C e_{(q,k-1,s)},
\]
where we used the fact that $-1\leq \rho \leq 0$.

Next, the estimates for $T_2$ and $T_3$ are
\[\notag
\begin{aligned}	
I&:= \left| \int \rho h^q d\om \wed d^c h \wed  \om^{n-k-s-1} \wed \ga^{k-1}\wed (dd^cv)^s  \right|  \\
&\leq C \int h^{q+1} \ga^{k-1}\wed (dd^cv)^s \wed \om^{n-k-s} \\
&\qquad + C\int h^{q-1} dh\wed d^c h \wed \ga^{k-1} \wed (dd^cv)^s \wed \om^{n-k-s}.
\end{aligned}\]
Using Lemma~\ref{lem:CLN-grad}-(b) for $k-1, s$ and $p=q-1$ in the last  integral above yields
\[\tag{\ref{eq:T23}$'''$}
\begin{aligned}
	I 
&	\leq  Ce_{(q,k-1, s)} + C[e_{(q-1, k-1,s+1)} + e_{(q, k-1,s)}] \\
&	\leq C [e_{(q-1, k-1, s+1)} + e_{(q,k-1,s)}].
\end{aligned}
\]
Combining the estimates for $T_0, T_1$ in (\ref{eq:basic-computation}$'''$), (\ref{eq:T4}$'''$) and (\ref{eq:T23}$'''$) we  complete the proof of lemma.
\end{proof}

We are ready to state the main inequality.

\begin{prop}\label{prop:cap-integral} Let $e_{(q,k,s)}$ be the numbers defined by \eqref{eq:integral-fct}. Then, for $q = 3m$,
$$
	e_{(q,m,0)} \leq C \sum_{s=0}^m  e_{(0, 0, s)},
$$
where $C = C(\om, n,m)$ is a uniform constant. 
\end{prop}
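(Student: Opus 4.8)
The plan is a three-stage ``downward'' elimination of the powers of $\ga$ in $e_{(q,m,0)}$, organized by the total degree $k+s$ of the form $\ga^{k}\wed(dd^{c}v)^{s}$: I would use Corollary~\ref{cor:case1} to dispose of the total-degree-$m$ terms, then Lemma~\ref{lem:case2} for the total-degree-$(m-1)$ terms, then Lemma~\ref{lem:case3} for the terms of total degree at most $m-2$, and finally observe that every $e_{(r,0,s)}$ with $r\ge 0$ and $0\le s\le m$ is dominated by $e_{(0,0,s)}$: indeed $0\le h\le 1$ forces $h^{r+1}\le h$ pointwise, while $(dd^{c}v)^{s}\wed\om^{n-s}$ is a positive current for $s\le m$ (Remark~\ref{rmk:Garding}). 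The role of the choice $q=3m$ is that of a ``budget'': each of the three stages costs at most $m$ decrements of the first parameter, so starting at $3m$ guarantees the first parameter never becomes negative.

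Stage~1: starting from $e_{(3m,m,0)}$ I would apply Corollary~\ref{cor:case1}, and then keep applying it to each resulting term $e_{(q,k,m-k)}$ with $k\ge 1$. Every such application strictly decreases the $\ga$-exponent $k$, so after at most $m$ rounds none is left; a short induction on the rounds yields the invariant $q\ge 2m+k$ for every processed term, whence $q\ge 2m+1\ge 2$ (so each application is legitimate) and every term that appears has first parameter $\ge 2m-1\ge 0$. What survives is a finite combination of base terms $e_{(r,0,m)}$ with $r\ge 2m$ and of total-degree-$(m-1)$ terms $e_{(r,i,m-1-i)}$ with $0\le i\le m-1$ and $r\ge 2m-1$.

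Stage~2: to the terms $e_{(r,i,m-1-i)}$ with $i\ge 1$ I would apply Lemma~\ref{lem:case2} repeatedly; again the $\ga$-exponent drops by one each time, so the process stops after at most $m-1$ rounds, and the invariant $q\ge m+k$ is propagated, so every application is legitimate ($q\ge m+1\ge 1$) and every first parameter stays $\ge m\ge 0$. This leaves base terms $e_{(r,0,m-1)}$ with $r\ge m$ together with a finite combination of total-degree-$(m-2)$ terms $e_{(r,i,m-2-i)}$ with $0\le i\le m-2$ and $r\ge m$. Stage~3: to the latter with $\ga$-exponent $\ge 1$ I would apply Lemma~\ref{lem:case3} repeatedly; once more $k$ strictly decreases, the invariant $q\ge k+2$ is propagated, and all first parameters stay $\ge 2\ge 0$. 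After finitely many steps only base terms $e_{(r,0,s)}$ remain, now with $r\ge 0$ and $0\le s\le m$, and the domination $e_{(r,0,s)}\le e_{(0,0,s)}$ noted above gives $e_{(3m,m,0)}\le C\sum_{s=0}^{m}e_{(0,0,s)}$ after collecting the finitely many contributions and the accumulated absolute constants.

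The main obstacle I anticipate is purely the index bookkeeping: keeping track of which of the three lemmas applies (this is dictated exactly by the total degree $k+s$ of the current term), verifying that each stage propagates its own invariant so that no first parameter ever becomes negative (this is precisely what the choice $q=3m$ buys), and checking that the process terminates, which is automatic because within each stage the $\ga$-exponent strictly decreases at every application. There is no analytic difficulty beyond what is already contained in Corollary~\ref{cor:case1} and Lemmas~\ref{lem:case2} and \ref{lem:case3}.
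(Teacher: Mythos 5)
Your proposal is correct and follows essentially the same three-stage downward elimination as the paper's proof, invoking Corollary~\ref{cor:case1} (which already subsumes Lemma~\ref{lem:case1} as its $k=m$ case), then Lemma~\ref{lem:case2}, then Lemma~\ref{lem:case3}, and finally the monotonicity of $e_{(q,k,s)}$ in its first parameter together with the positivity of $(dd^cv)^s\wed\om^{n-s}$ for $s\le m$. The only cosmetic difference is that you track stage invariants ($q\ge 2m+k$, $q\ge m+k$, $q\ge k+2$) instead of the paper's explicit running parameters ($q'=q-m-2$, $q''=q-2m-2$, landing at $q''-m+2=q-3m=0$); both make precise why the budget $q=3m$ suffices.
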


\begin{proof} We start with Lemma~\ref{lem:case1} which gives 
\[\label{eq:e1}	e_{(q,m,0)} \leq C \left[e_{(q-1, m-1, 1)} +  e_{(q-1, m-1,0)}\right].
\]
Then, the first term in the bracket is estimated via Corollary~\ref{cor:case1}. Applying this corollary $(m-1)$-times and using decreasing property of $e_{(p,k,s)}$ in the first parameter, we get
\[\label{eq:e2}
	e_{(q-1,m-1,1)} \leq C e_{(q-m, 0, m)} + C\sum_{i=0}^{m-1} e_{(q-m-2, i, m-1-i)}.
\]
The second term in the bracket  in \eqref{eq:e1} satisfies  $e_{(q-1,m-1,0)} \leq e_{(q-m-2,m-1,0)}$. Next, we use Lemma~\ref{lem:case2} for each term $e_{(q', \ell, m-1-\ell)}$ with $q'=q-m-2$ in the sum above. Namely, applying the lemma $\ell$-times and using the decreasing property again, we get
$$
	e_{(q', \ell, m-1-\ell)} \leq C e_{(q'-\ell, 0, m-1)} + C \sum_{i=0}^{m-2} e_{(q'-\ell-1,i,m-2-i)}.
$$
Note that the smallest value of the first parameter in the last sum is $q'-m$ for $\ell =m-1$.  Hence,  
\[\label{eq:e3}
	\sum_{i=0}^{m-1} e_{(q',i,m-1-i)} \leq C e_{(q'-\ell, 0, m-1)} + C \sum_{i=0}^{m-2} e_{(q'-m, i, m-2-i)}.
\]
It remains to apply Lemma~\ref{lem:case3} for each term $e_{(q'',\ell,m-2-\ell)}$ in the sum on the right hand side, where $q''= q'-m=q-2m-2$. Again, we have
$$\begin{aligned}
	e_{(q'', \ell, m-2-\ell)} 
&\leq 	C e_{(q''-1, \ell-1, m-\ell -1)} + C e_{(q'',\ell-1, m-\ell-2)}\\ 
&\leq \cdots \\
&\leq C e_{(q''-\ell, 0, m-2)} + \sum_{i=0}^{\ell-1} e_{(q''-i, \ell-1-i, m-\ell-2+i)}.
\end{aligned}$$
Therefore, an easy induction argument gives us
\[\label{eq:e4}
	e_{(q'',\ell, m-2-\ell)} \leq  C\sum_{s=0}^{m-2} e_{(q''-\ell, 0, s)}. 
\]
Combining \eqref{eq:e1}, \eqref{eq:e2}, \eqref{eq:e3} and \eqref{eq:e4} we arrive at
$$
	e_{(q,m,0)} \leq C \sum_{s=0}^{m} e_{(q''-m+2,0,s)} = C \sum_{s=0}^m e_{(0,0,s)}
$$
as $q''-m+2 = q-3m = 0.$
\end{proof}

So far all considered functions were smooth, however, by \cite[Proposition~2.11]{KN3} we know that the integrands on  both sides of the above statements (Lemmas~\ref{lem:CLN-grad}-\ref{lem:case3}, Corollary~\ref{cor:case1} and Proposition~\ref{prop:cap-integral}) are well-defined for continuous $m-\om$-sh functions. Let us record the following observation.

\begin{remark}\label{rmk:continuous-case}  Let $\Om$ be a strictly $m$-pseudoconvex domain. The statements above are still valid for continuous $m-\om$-sh functions $v, w$ and $-1\leq \rho \leq 0$ satisfying $-1\leq v\leq w \leq 0$ and $v=w$ in a neighborhood of $\d\Om$.

 In fact, there exist decreasing sequences of $m-\om$-sh functions $v_j, w_j$ and $\rho_j$ belonging to $C^\infty(\ov\Om)$ such that $v_j \downarrow v$, $w_j \downarrow w$ and $\rho_j\downarrow \rho$ (uniformly) in $\ov\Om$, and  moreover, 
 $$
 	-1\leq v_j \leq w_j \leq 0, \quad  -1\leq \rho_j \leq 0.
 $$
 For plurisubharmonic functions the usual convolution with standard kernels  produces the approximating sequence and hence, the  property:  $v_j = w_j$ near the boundary is preserved. Then, the integration by parts is not  affected and passing to the limit as $j\to +\infty$ gives the desired inequalities. However, in this new setting we used a different way to obtain the approximating sequence. The property that  $v_j = w_j$  near the boundary of  $\Om$ needs to be verified, which is possible via the stability estimates for complex Hessian equations. However, we can get around this by showing  the uniform convergence to zero of the sequence $h_j=w_j-v_j$ near the boundary. 

Let $\Om' \subset \subset \Om$ be a smooth domain such that $v=w$ outside $\Om'$. Let $T_j =  (dd^c \rho_j)^{k} \wed (dd^c v_j)^s$, where $k+s\leq m$. Then, it follows from the weak convergence \cite[Proposition~2.11]{KN3} and the CLN inequality \cite[Proposition~2.9]{KN3} that for $p\geq 1$, 
$$
	\lim_{j\to \infty} \int_{\Om'} h_j^{p} T_j \wed \om^{n-k-s} = \int_\Om h^{p} T \wed \om^{k-s}.
$$
Therefore, we reduce the required inequality to the case of  smooth functions $v_j, w_j$ and $\rho_j$. 
However the integration by parts in \eqref{eq:int-by-parts} will contain the extra boundary terms:
$$
	\int_{\Om'} h_j^{p} dd^c\rho_j \wed T_j \wed \om^{n-k-s} 
	= \int_{\Om'} \rho_j dd^c(  h_j^p \om^{n-k-s}) \wed T_j + E_1+ E_2,
$$
where 
$$\begin{aligned}
	E_1 &= \int_{\d \Om'} h_j^p d^c\rho_j \wed \om^{n-k-s}\wed T_j; \\
 	E_2 &= - \int_{\d\Om'} \rho_j d^c (h^p_j \om^{n-k-s})\wed T_j  \\
&= - \int_{\d\Om'} \rho_j h_j^{p-1} (pd^ch_j \wed \om^{n-k-s}+ h_j d^c\om^{n-k-s})\wed T_j.
\end{aligned}$$
By the CLN inequality and $h_j \to 0$ uniformly  on a neighborhood of $\d\Om'$ as $j\to \infty$, the two boundary terms go to zero when we pass to the limit. 
\end{remark}

\begin{remark} \label{rmk:bounded-case}We will see later that the above statements also hold for  bounded $m-\om$-sh functions once we define the wedge product for currents related to such functions and prove the weak convergence under decreasing sequences.
\end{remark}

\section{Wedge product for bounded functions}

In this section we prove the existence of the wedge product of currents where $dd^c$ operator is applied to bounded $m-\om$-sh functions. Then, we introduce a weighted Sobolev space associated to these resulted positive currents. This is a crucial technical tool that allows us to establish the integration by part formulae for (non-closed)  currents of order zero. 

\subsection{The wedge product for bounded $m-\om$-sh functions}

Let $\Om \subset \bC^n$ be a bounded open set. 
We are going to show that the wedge product of bounded $m-\om$-sh functions can be defined inductively by
$$
	dd^c v \wed dd^c u = dd^c(v dd^cu)
$$
and similarly for more terms. Let us start with the following observation.

\begin{lem} \label{lem:zero-order-1}
Let $u$ be a bounded $m-\om$-sh function. Then, $dd^c u$ is a real $(1,1)$-current of order zero whose coefficients are singed Radon measures. 
\end{lem}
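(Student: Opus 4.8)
The plan is to reduce the statement to a known local property of $m-\omega$-subharmonic functions expressed via $\alpha$-subharmonicity. First I would recall, as noted in the excerpt after Michelsohn's theorem, that for any fixed collection $\gamma_1,\dots,\gamma_{m-1} \in \Gamma_m(\omega)$ there is a positive $(1,1)$-form $\alpha$ with $\alpha^{n-1} = \gamma_1 \wedge \cdots \wedge \gamma_{m-1} \wedge \omega^{n-m}$, and that an $m-\omega$-sh function is $\alpha$-subharmonic in the sense of \cite[Definition~2.1]{GN18}; in particular it is subharmonic with respect to the Laplacian of the metric whose volume form is governed by $\alpha$. Since a subharmonic function is locally integrable and its distributional Laplacian is a positive Radon measure, this already tells us that various scalar traces of $dd^c u$ against positive test forms are (locally finite signed, in fact positive) measures.

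The key step is to upgrade these scalar statements to the assertion that \emph{every} coefficient of the current $dd^c u$, in any local coordinate frame, is a signed Radon measure of locally finite mass, i.e. that $dd^c u$ is a current of order zero. I would do this by a polarization/linear-algebra argument: fix a point and local holomorphic coordinates $z_1,\dots,z_n$; the coefficients of $dd^c u$ are the distributions $u_{i\bar j} = \partial^2 u / \partial z_i \partial \bar z_j$. For the diagonal entries $u_{i\bar i}$, choose constant forms $\gamma_1,\dots,\gamma_{m-1}$ in $\Gamma_m(\omega)$ (near the given point) such that $\gamma_1 \wedge \cdots \wedge \gamma_{m-1} \wedge \omega^{n-m}$, wedged with $dd^c u$, isolates $u_{i\bar i}$ up to positive-coefficient combinations with the other diagonal entries; running over enough such choices and inverting the resulting (constant) linear system expresses each $u_{i\bar i}$ as a finite linear combination of positive measures, hence a signed Radon measure. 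The off-diagonal entries $u_{i\bar j}$ are then handled by the standard polarization trick: $u_{i\bar j}$ and its conjugate appear in the $(i',\bar j')$-entry relative to rotated coordinates $z_i \mapsto z_i, z_j \mapsto e^{\mathbf{i}\theta} z_j$ or $z_i \pm z_j$, so they too are finite combinations of diagonal entries in suitable frames, hence signed measures. Locally finite total variation follows because each is dominated by a sum of positive measures each of which is locally finite (being a trace of $dd^c u$ against a fixed smooth strongly positive form, controlled by the CLN-type bound, or simply by local integrability of $u$ and the distributional identity).

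An alternative, perhaps cleaner route: use the gluing/approximation available on strictly $m$-pseudoconvex pieces (Proposition~\ref{prop:smoothing}) to write $u$ locally as a decreasing limit of smooth $m-\omega$-sh functions $u_j$; for these, each $dd^c u_j$ lies in $\overline{\Gamma_m(\omega)}$ pointwise (Remark~\ref{rmk:Garding}), so the scalar measures $\mu_j := dd^c u_j \wedge \gamma_1 \wedge \cdots \wedge \gamma_{m-1} \wedge \omega^{n-m}$ are positive with locally uniformly bounded mass (by the CLN inequality of \cite{KN3} applied to $u_j$, using $u_j$ bounded); passing to weak limits and using polarization as above recovers that the coefficient distributions of $dd^c u$ are differences of positive Radon measures with locally finite mass. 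Either way, the main obstacle is the same: controlling the off-diagonal coefficients, since the positivity of $\Gamma_m(\omega)$ only directly bounds certain symmetric traces, and one must exploit that the cone $\Gamma_m(\omega)$ is open (so contains enough directions to span, via polarization, all of the $(1,1)$-forms) to pin down every entry. Once that linear-algebra step is in place, local finiteness of the total variation is routine, and the conclusion that $dd^c u$ is a real $(1,1)$-current of order zero with signed Radon measure coefficients follows immediately.
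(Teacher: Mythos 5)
Your proposal correctly identifies the crux of the matter --- that the positivity encoded in $\Gamma_m(\omega)$ must be leveraged, via some form of polarization, to control \emph{every} coefficient of $dd^c u$ and not just the positive traces --- but it leaves exactly that step as an acknowledged gap, whereas the paper discharges it in one stroke by citing a ready-made pointwise inequality.

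Concretely, the paper's proof runs: approximate $u$ locally by a decreasing sequence of smooth $m$-$\omega$-sh functions $u_s$ (Proposition~\ref{prop:smoothing}); since $dd^c u_s \in \overline{\Gamma_m(\omega)}$ pointwise, invoke \cite[Corollary~2.4]{KN3} to get the pointwise bound $|dd^c u_s \wedge \phi| \leq C\|\phi\|_K\, dd^c u_s \wedge \omega^{n-1}$ for test $(n-1,n-1)$-forms $\phi$; integrate and bound the trace mass $\int_K dd^c u_s \wedge \omega^{n-1}$ by the CLN inequality; pass to the limit and apply Federer's criterion to conclude $dd^c u$ has order zero with signed measure coefficients dominated by $dd^c u\wedge\omega^{n-1}$. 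The cited inequality from \cite{KN3} is precisely the ``all entries of a form in $\Gamma_m(\omega)$ are controlled by its trace'' statement that your polarization/linear-algebra argument is trying to reprove from scratch.

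Your second route (approximation, CLN, weak limits) is essentially the paper's route, with ``polarization as above'' standing in for \cite[Corollary~2.4]{KN3}. Your first route is a genuinely different and in some ways cleaner idea: work directly with the bounded $u$, observe that $dd^c u \wedge \gamma_1 \wedge \cdots \wedge \gamma_{m-1} \wedge \omega^{n-m} \geq 0$ is a positive distribution (hence a Radon measure, automatically of locally finite mass --- no CLN needed for that) for each choice of $\gamma_i \in \Gamma_m(\omega)$, then polarize over the open cone and invert to recover every $u_{i\bar j}$. This bypasses approximation entirely. But the inversion step you assert is nontrivial: after polarization you can reach $dd^c u \wedge \eta_1\wedge\cdots\wedge\eta_{m-1}\wedge\omega^{n-m}$ for arbitrary smooth $(1,1)$-forms $\eta_i$, and to conclude you need these pairings to determine $dd^c u$ --- equivalently, that $T\mapsto T\wedge\omega^{n-m}$ is injective on $(1,1)$-forms. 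This is a hard-Lefschetz-type statement that holds for $m\geq 2$ and fails for $m=1$, and it should be stated and proved (or at least localized to a clean linear-algebra claim) rather than asserted. The paper's advantage is that \cite[Corollary~2.4]{KN3} has this already baked in, in a form that also yields the quantitative mass bound used repeatedly later (e.g.\ in Lemma~\ref{lem:w-p-functions}), which your first route would not directly provide.
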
 

\begin{proof} 
Without loss of generality we may assume $-1\leq u\leq 0$. We will show that  there is a signed measure $\mu$ such that 
$$
	dd^c u (\phi) = \int \phi d\mu
$$
for every (smooth) test $(n-1,n-1)$-form.  By a theorem in distribution theory in Federer's book   \cite[Section  4.15]{Fe69} it is enough to show that for all $K \subset \Om$ compact there exists $C = C(K,\Om)$ such that
$$
	|dd^c u (\phi)| \leq C \|\phi\|_{K}
$$
for any smooth test form $\phi$ with $\supp\phi \subset K$. Here we denote 
$$
	\|\phi\|_K = \sum_{i, j}\sup_K |\phi_{i\bar j}|,
$$
where $\phi_{i\bar j}$'s are coefficients of the form $\phi$.
By the definition of $dd^c$-operator for currents
$$
	dd^c u (\phi) = u (dd^c \phi) = \int u \wed dd^c \phi.
$$
Thus, for a smooth  sequence of $m-\om$-sh functions $\{u_s\}$ that decrease to $u$ we have
$$
	dd^c u (\phi) = \lim_{s\to\infty} (dd^c u_s) (\phi) = \lim_{s\to \infty} \int dd^c u_s \wed \phi.
$$
Since $dd^cu_s \in \Ga_m(\om)$, it follows from \cite[Corollary~2.4]{KN3} that
\[\label{eq:variation-dominated}
	|dd^c u_s \wed \phi| \leq C \|\phi\|_K \;  dd^c u_s \wed \om^{n-1}
\]
for some universal constant depending on $n,m$. It follows that
$$
	\left| \int dd^c u_s \wed \phi \right| \leq C \|\phi\|_K \int_K dd^c u_s \wed \om^{n-1}. 
$$
Thanks to the CLN inequality \cite[Proposition~2.9]{KN3} we know that the last integral on the right hand side is bounded by a constant $C(K,\Om) \|u_s\|_{L^\infty}$. This finishes the proof. 

 Furthermore, for any test function $\chi \geq 0$, 
$$
	0 \leq \lim_{s\to \infty} \int \chi \om^{n-1} \wed dd^c u_s = dd^c u(\chi \om^{n-1}). 
$$
In other words,  $dd^c u\wed \om^{n-1}$ is a (positive) Radon measure. Moreover, it follows from \eqref{eq:variation-dominated} that the total variation of coefficients of $dd^cu$ is dominated by  $dd^c u\wed \om^{n-1}$. Therefore, the coefficients of $T = dd^c u$ are signed Radon measures. 
\end{proof}

It follows from Lemma~\ref{lem:zero-order-1} that we can multiply $dd^cu$ by a bounded function $v$. In fact, we can continue to define inductively the wedge product in this way and obtain a real current of order zero for up to $(m-1)$ bounded functions.

\begin{lem}\label{lem:w-p-functions} Let $u_1,...,u_{p}$ with $1\leq p\leq m-1$ be bounded $m-\om$-sh  functions. Assume $T_0 =1$ and $T_{p-1} = dd^c u_{p-1} \wed \cdots \wed dd^c u_1$.   Then, the current 
$$
	dd^c u_p \wed \cdots \wed dd^c u_1 :=  dd^c [u_p T_{p-1}]
$$
is a well-defined real $(p,p)$-current of order zero whose coefficients are signed Radon measures. Moreover, 
$$
	\Lc_p(u_p,...,u_1):=	dd^c u_p \wed \cdots \wed dd^c u_1\wed \om^{n-m}
$$
is a positive $(n-m+p,n-m+p)$-current.
\end{lem}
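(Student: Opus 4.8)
The plan is to prove the statement by induction on $p$, using Lemma~\ref{lem:zero-order-1} as the base case and the Chern--Levine--Nirenberg inequality from \cite{KN3} as the engine that makes each inductive step work. The starting observation is that since $T_{p-1}$ is (by induction) a current of order zero whose coefficients are signed Radon measures, and $u_p$ is bounded and Borel, the product $u_p T_{p-1}$ is a well-defined current of order zero; applying $dd^c$ to it in the sense of distributions then produces a well-defined $(p,p)$-current. The real content is twofold: first, that this $(p,p)$-current is again \emph{of order zero} with signed Radon measure coefficients (so that the induction can continue), and second, that wedging with $\om^{n-m}$ yields a \emph{positive} current.

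For the order-zero and Radon-measure claim I would mimic the argument of Lemma~\ref{lem:zero-order-1}. Fix smooth $m-\om$-sh approximants $u_j^{(\de)}\downarrow u_j$ for each $j\le p$ (these exist locally, or on a strictly $m$-pseudoconvex subdomain by Proposition~\ref{prop:smoothing}; by the Localization Principle it suffices to work on a ball where all functions agree near the boundary). For smooth functions the product $dd^cu_p^{(\de)}\wed\cdots\wed dd^cu_1^{(\de)}\wed\om^{n-m}$ is classically defined, and by Remark~\ref{rmk:Garding} each $dd^cu_j^{(\de)}$ lies in $\overline{\Ga_m(\om)}$. The key pointwise bound is \cite[Corollary~2.4]{KN3} (used as in \eqref{eq:variation-dominated}): for a test form $\phi$ with support in a compact $K$,
$$
	\left| dd^cu_p^{(\de)}\wed\cdots\wed dd^cu_1^{(\de)}\wed\phi \right| \le C\,\|\phi\|_K \; dd^cu_p^{(\de)}\wed\cdots\wed dd^cu_1^{(\de)}\wed\om^{n-1},
$$
and then the CLN inequality \cite[Proposition~2.9]{KN3} bounds $\int_K dd^cu_p^{(\de)}\wed\cdots\wed dd^cu_1^{(\de)}\wed\om^{n-1}$ by $C(K,\Om)\prod_j\|u_j\|_{L^\infty}$. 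To pass to the currents defined inductively by $dd^c[u_pT_{p-1}]$ I would integrate by parts, peeling off the $dd^c$ onto the test form, so that one only ever needs to evaluate products of the \emph{smooth} forms $dd^cu_j^{(\de)}$ against a smooth (now higher-order) test form; letting the parameters $\de_1,\dots,\de_p$ go to zero one at a time (using at each stage the weak convergence for the currents already constructed, together with dominated convergence justified by the uniform CLN bound) identifies the limit with $dd^c[u_pT_{p-1}]$ and yields the uniform estimate $|dd^c[u_pT_{p-1}](\phi)|\le C\|\phi\|_K$. Federer's criterion \cite[Section~4.15]{Fe69} then gives that the coefficients are signed Radon measures.

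For positivity of $\Lc_p(u_p,\dots,u_1)=dd^cu_p\wed\cdots\wed dd^cu_1\wed\om^{n-m}$, I would test against a nonnegative smooth function $\chi$ and write $\Lc_p(u_p,\dots,u_1)(\chi)=\lim dd^cu_p^{(\de)}\wed\cdots\wed dd^cu_1^{(\de)}\wed\om^{n-m}(\chi)$ along the smooth approximation (again letting $\de_1,\dots,\de_p\to 0$ successively, invoking the weak convergence established in the previous paragraph). Each term in the approximating sequence is $\ge 0$ because $dd^cu_j^{(\de)}\in\overline{\Ga_m(\om)}$ for all $j$ and $p\le m-1\le m$, so by definition of $\Ga_m(\om)$ and Remark~\ref{rmk:Garding} the product of $p$ such forms with $\om^{n-m}$ is a nonnegative multiple of the volume form; integrating against $\chi\ge0$ gives a nonnegative number, and the limit is then $\ge 0$.

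I expect the main obstacle to be bookkeeping the \emph{iterated} weak limits: unlike in the Bedford--Taylor setting one cannot rely on closedness of the currents, so at each stage of the induction one must re-derive weak convergence of $dd^c[u_p^{(\de_p)}T_{p-1}^{(\de_{p-1},\dots,\de_1)}]$ as $\de_p\to0$ from the order-zero estimate rather than from a closed-positive-current integration-by-parts identity. The crucial point that makes this go through is that the dominating measure $dd^cu_p^{(\de)}\wed\cdots\wed dd^cu_1^{(\de)}\wed\om^{n-1}$ has total mass on compact sets bounded \emph{uniformly} in $\de$ by CLN, which legitimizes the dominated-convergence passage to the limit in each variable separately; combined with the fact (from the inductive hypothesis) that $T_{p-1}$ is already a well-defined order-zero current, the commutation of the limits causes no trouble. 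Everything else — the pointwise inequalities, the integration by parts on a ball where the functions agree near the boundary — is routine given the results already in the excerpt.
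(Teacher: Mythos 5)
Your approach matches the paper's proof essentially step by step: induction on $p$ with Lemma~\ref{lem:zero-order-1} as base case, smooth decreasing approximants with iterated weak limits, the CLN inequality from \cite{KN3} for the uniform bound, integration by parts to move the $dd^c$ onto the test form, Federer's criterion for the order-zero conclusion, and G\aa rding's inequality for positivity. One inaccuracy worth flagging: your displayed pointwise bound is dimensionally inconsistent for $p>1$ --- the right-hand side $dd^cu_p^{(\de)}\wed\cdots\wed dd^cu_1^{(\de)}\wed\om^{n-1}$ has bidegree $(n+p-1,n+p-1)$, not $(n,n)$; the correct dominating form, which is what \cite[Corollary~2.4]{KN3} actually gives and what the paper uses (see \eqref{eq:zero-order-2c}), is $[dd^c(u_p^{(\de)}+\cdots+u_1^{(\de)})]^p\wed\om^{n-p}$, which is crucial since the subsequent CLN bound in \cite[Proposition~2.9]{KN3} then applies to the Hessian power of a single function. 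This is a slip rather than a conceptual gap; the rest of the argument carries through as in the paper.
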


\begin{proof} We argue by induction in $p$. If $p=1$, then this is Lemma~\ref{lem:zero-order-1}. Assume that it holds for $1\leq p \leq m-2$, we need to show that the lemma holds for $p+1$. For the simplicity of notation we present the argument for $p+1=2$ and note that the proof in the general case is  completely the same. Let us write $u_1 = u$ and $u_2 = v$ and assume that $-1\leq u, v\leq 0$. Since $dd^cu$ has signed measure coefficients, the currents $$
	S = v dd^c u, \quad \text{and } dd^c v\wed dd^c u := dd^c S = dd^c (v dd^c u).
$$
are well-defined. Our next goal is to show that the real (closed) current $dd^c S$ of bidegree $(2,2)$ has order zero, thus its coefficients are signed Radon measures. 

As above we will show that for each compact set $K \subset \Om$, there is a uniform constant $C$ such that
\[\label{eq:zero-order-2a}
	|dd^c S (\phi)| \leq C \|\phi\|_K 
\]
for every smooth test form with $\supp \phi \subset K$, where $\|\phi\|_K$ denotes the $C^0$-norm of the form $\phi$. To this end,  let $\{v_\ell\}_{\ell \geq 1}$ and $\{u_s\}_{s\geq 1}$ be sequences of smooth $m-\om$-sh functions such that 
$$
	v_\ell \downarrow v, \quad \text{and}\quad u_s \downarrow u.
$$
We may also assume that $-1\leq u_s, v_\ell \leq 0$. Fix  $\phi$ a test form. By definition we have 
\[\label{eq:zero-order-2b}
	dd^c S (\phi) = S (dd^c \phi) = \int v dd^c u\wed dd^c\phi. 
\]
Since $dd^c u \wed dd^c \phi$ has signed Radon measure coefficients, it follows from the  dominated convergence theorem that 
$$
	\int v dd^c u\wed dd^c \phi = \lim_{\ell \to \infty} \int v_\ell dd^c u \wed dd^c \phi = \lim_{\ell \to \infty} \lim_{s\to \infty} \int v_\ell dd^c u_s \wed dd^c \phi.
$$
 By the integration by parts formula, 
$$
	\int v_\ell dd^c u_s \wed dd^c \phi = 	\int dd^c v_\ell \wed dd^c u_s \wed \phi.
$$
Notice that $m-1\geq 2$. Applying \cite[Corollary~2.4]{KN3} for $\ga_1 = dd^c u_s, \ga_2 = dd^c v_\ell$ belonging to $\Ga_m(\om)$, we get
\[ \label{eq:zero-order-2c}
	\left|  dd^c v_\ell \wed dd^c u_s \wed \phi \right| \leq  C \|\phi\|_K  [dd^c (u_s +v_\ell)]^2 \wed \om^{n-2}. 
\]
Integrating both sides and then applying the CLN inequality for the right hand side we get
$$
	\left| \int dd^c v_\ell \wed dd^c u_s \wed \phi \right| \leq  C(K,\Om) \|\phi\|_K. 
$$
Letting $s\to\infty$ and then $\ell\to\infty$, in this order,
 we conclude from \eqref{eq:zero-order-2a} the desired estimate \eqref{eq:zero-order-2b}. Hence,  $dd^c S$ is a $(2,2)$ current of order zero. 
 
 Furthermore, for a test function $\chi \geq 0$,
$$
	dd^c S \wed \om^{n-2} (\chi ) = \lim_{\ell \to \infty} \lim_{s\to \infty}  \int \chi dd^c v_\ell \wed dd^c u_s \wed \om^{n-2} \geq 0.
$$
Thus, $dd^c S \wed \om^{n-2}$ is a positive Radon measure. It follows from \eqref{eq:zero-order-2c} (by passing to the limit) that the coefficients of $dd^c S$ are signed Radon measures.

Finally, for smooth $m-\om$-sh functions $u_1^{j_1},...,u_p^{j_p}$, then the form
$$
	\Lc_p(u_p^{j_p},...,u_1^{j_1}) = dd^c u_p^{j_p}\wed \cdots \wed dd^c u_1^{j_1} \wed \om^{n-m}
$$
is positive by G\aa rding's inequality  \cite{Ga59}. Letting $j_1\to \infty$, ...$j_p\to\infty$, in this order, for decreasing sequences approximating  $u_p,...,u_1$ we get that $\Lc_p(u_p,...,u_1)$ is a positive current of bidegree $(n-m+p, n-m+p)$.
\end{proof}

Thus, we defined inductively for bounded $m-\om$-sh functions, $u_1, ..., u_{m-1}$,  a real closed current 
\[\label{eq:(m-1)-w-prod-a} S: = dd^c u_{m-1} \wed \cdots \wed dd^c u_1\] 
of order zero whose coefficients are signed Radon measures. Furthermore, 
\[\label{eq:(m-1)-w-prod-b}
	S \wed \om^{n-m+1}
\]
is a positive Radon measure.  In the last step we multiply $S$ by a bounded function $v=u_m$.

\begin{thm}\label{thm:w-m-functions} Let $S$ be as in \eqref{eq:(m-1)-w-prod-a} and  let $v$ be a bounded $m-\om$-sh function. Then,
$$
	T := dd^c (v S)
$$
is a real closed $(m,m)$-current and  $T \wed \om^{n-m}$ is a positive Radon measure. 
\end{thm}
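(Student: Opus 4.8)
The plan is to realize $T\wed\om^{n-m}$ as a weak limit of positive smooth $(n,n)$-forms of locally bounded mass, and then conclude by the Riesz representation theorem. That $T$ is a real closed $(m,m)$-current requires no argument: by Lemma~\ref{lem:w-p-functions}, $S$ is a real current of order zero with signed Radon measure coefficients, so $vS$ is again a real current of order zero, and $dd^c$ of any current is $d$-closed. The content of the statement is therefore the positivity and the measure property of $T\wed\om^{n-m}$, and this is a local assertion. So I would fix $z_0\in\Om$ and a small Euclidean ball $B=\{|z-z_0|<r\}$ with $\ov B\subset\Om$; since $dd^c|z-z_0|^2$ is strictly positive, $B$ is strictly $m$-pseudoconvex, and Proposition~\ref{prop:smoothing} provides smooth $m-\om$-sh functions $v^\ell\downarrow v$ and $u_i^{\,j}\downarrow u_i$ $(i=1,\dots,m-1)$ on $\ov B$, all with values in $[-1,0]$.

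At the smooth level, put $S^{(j)}=dd^c u_{m-1}^{\,j_{m-1}}\wed\cdots\wed dd^c u_1^{\,j_1}$, where $j=(j_1,\dots,j_{m-1})$; this form is $d$-closed, hence $dd^c(v^\ell S^{(j)})=dd^c v^\ell\wed S^{(j)}$ in the classical sense, and
$$
	\mu^{\ell,j}:=dd^c v^\ell\wed dd^c u_{m-1}^{\,j_{m-1}}\wed\cdots\wed dd^c u_1^{\,j_1}\wed\om^{n-m}
$$
is the wedge of $m$ forms from $\ov{\Ga_m(\om)}$ with $\om^{n-m}$, so it is a positive $(n,n)$-form by G\aa rding's inequality (Remark~\ref{rmk:Garding}). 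Moreover, the CLN inequality \cite[Proposition~2.9]{KN3}, applied to the $m$ factors $dd^c v^\ell, dd^c u_{m-1}^{\,j},\dots,dd^c u_1^{\,j}$, gives $\mu^{\ell,j}(K)\le C_K$ uniformly in $\ell$ and $j$, for every compact $K\subset B$.

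Now I would fix $\ell$ and let $j_1\to\infty,\dots,j_{m-1}\to\infty$ (in the order in which $S$ was constructed in Lemma~\ref{lem:w-p-functions}). That proof, read inductively, yields precisely that $S^{(j)}\to S$ in the sense of currents of order zero; since $\chi\,dd^c v^\ell\wed\om^{n-m}$ is a smooth compactly supported form, this gives for every test function $\chi$
$$
	\mu^{\ell,j}(\chi)=S^{(j)}\!\left(\chi\,dd^c v^\ell\wed\om^{n-m}\right)\ \longrightarrow\ S\!\left(\chi\,dd^c v^\ell\wed\om^{n-m}\right)=\bigl(dd^c v^\ell\wed S\wed\om^{n-m}\bigr)(\chi).
$$
Since $S$ is $d$-closed and $v^\ell$ is smooth, $dd^c v^\ell\wed S=dd^c(v^\ell S)$ by the Leibniz rule, so $\nu^\ell:=dd^c(v^\ell S)\wed\om^{n-m}$ is the weak limit of the $\mu^{\ell,j}$; being a weak limit of positive measures with $\mu^{\ell,j}(K)\le C_K$, it satisfies $\nu^\ell\ge 0$ and $\nu^\ell(K)\le C_K$. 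It remains to let $\ell\to\infty$. Writing $\sig_\chi:=S\wed dd^c(\chi\,\om^{n-m})$, which is a compactly supported finite signed Radon measure because $S$ has signed Radon measure coefficients, we have
$$
	\nu^\ell(\chi)=\bigl(dd^c(v^\ell S)\wed\om^{n-m}\bigr)(\chi)=(v^\ell S)\!\left(dd^c(\chi\,\om^{n-m})\right)=\int v^\ell\,d\sig_\chi,
$$
and dominated convergence ($|v^\ell|\le1$, $v^\ell\to v$ pointwise, $|\sig_\chi|(\ov B)<\infty$) gives $\nu^\ell(\chi)\to\int v\,d\sig_\chi=(vS)\!\left(dd^c(\chi\,\om^{n-m})\right)=(T\wed\om^{n-m})(\chi)$. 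Hence $T\wed\om^{n-m}$ is a weak limit of positive measures of locally bounded mass, so it is a positive Radon measure on $B$; and since $z_0$ was arbitrary, on all of $\Om$.

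I expect the main obstacle to be the bookkeeping around the nested limits: checking that the weak convergence $S^{(j)}\to S$ and the identity $dd^c(v^\ell S)\wed\om^{n-m}=dd^c v^\ell\wed S\wed\om^{n-m}$ are legitimately available at each stage, and that the CLN inequality is invoked in the critical degree, with the full complement of $m$ factors and only $\om^{n-m}$ left over. The positivity itself is robust: it is carried entirely by G\aa rding's inequality at the smooth level and is preserved under weak limits, while the only possible failure — escape of mass to infinity — is excluded by the uniform CLN bound.
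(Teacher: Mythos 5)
Your proof is correct and is essentially the paper's own argument: both realize $T\wed\om^{n-m}(\chi)$ as the iterated limit of the manifestly positive smooth quantities $\int \chi\, dd^c v_\ell\wed dd^c u^{j_{m-1}}\wed\cdots\wed dd^c u^{j_1}\wed\om^{n-m}$, passing from the definition via integration by parts, the inductive (order-zero) convergence $S^{(j)}\to S$ from Lemma~\ref{lem:w-p-functions}, and dominated convergence in $\ell$ against the signed Radon measure $S\wed dd^c(\chi\om^{n-m})$. The extra ingredients you add — localizing to a ball before approximating, and invoking the CLN bound — are harmless but unnecessary, since a positive distribution is automatically a Radon measure; otherwise the structure of the argument coincides with the paper's.
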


\begin{proof}
Let $\chi \geq 0$ be a test function. By definition
$$
	T \wed \om^{n-m}(\chi)  = T (\chi \om^{n-m})= \int v S \wed dd^c (\chi \om^{n-m}). 
$$
Since $S$ is of order zero, it follows from the dominated convergence theorem that for a sequence of smooth $m-\om$-sh functions   $v_\ell \downarrow v$,
$$
	 \int v S \wed dd^c (\chi \om^{n-m})  = \lim_{\ell \to \infty} \int v_\ell S \wed dd^c (\chi \om^{n-m}).
$$
Using the inductive definition of $S$ and the integration by parts, we get
$$\begin{aligned}
&	\int v_\ell S \wed dd^c (\chi \om^{n-m}) \\
&= \lim_{j_{m-1}\to \infty} \cdots \lim_{j_1\to \infty} \int dd^c v_\ell \wed dd^c u^{j_{m-1}} \wed \cdots \wed u^{j_1} \wed dd^c (\chi \om^{n-m}) \\
&= \lim_{j_{m-1}\to \infty} \cdots \lim_{j_1\to \infty}  \int dd^cv_\ell \wed dd^c u^{j_{m-1}} \wed \cdots \wed dd^c u^{j_1} \wed  (\chi \om^{n-m}).
\end{aligned}$$
Since $v_\ell, u^{j_1},...,u^{j_{m-1}}$ are smooth $m-\om$-sh functions, the last integrand is positive and the proof follows.
\end{proof}

Consequently the following definition is justified.
\begin{defn}\label{defn:w-prod}  Let $u_1,...,u_{p}$ with $1\leq p\leq m$ be bounded $m-\om$-sh functions in $\Om$. The wedge product is given inductively by
$$	dd^c u_p \wed \cdots \wed dd^c u_1 := dd^c [u_p T_{p-1}],
$$
where $T_0=1$ and $T_{p-1} = dd^c u_{p-1} \wed\cdots \wed dd^c u_1$.
\end{defn}

We are most  interested in the following positive Radon measures.  

\begin{defn} \label{defn:hes-operator}Let $u$ be a bounded $m-\om$-sh function. 
 Then, the Hessian operator $H_m(u)$ is defined by
$$
	H_m(u):=(dd^cu)^m \wed \om^{n-m} = \Lc_m(u,..., u).
$$
Moreover, for $1\leq s \leq m$, we also write $H_s(u) = (dd^c u)^s \wed \om^{n-s}$.
\end{defn}

For convenience, we summarize the results in Lemma~\ref{lem:w-p-functions} and Theorem~\ref{thm:w-m-functions} as follows.  Let $\{u_s^{j}\}_{j\geq 1}$ be decreasing sequences such  that $u_s^j \downarrow u_s$ for each $1\leq s\leq m$. Define
\[\label{eq:inductive-limit-notation}
	T^{j_p \cdots j_1} = dd^c u_p^{j_p} \wed \cdots \wed dd^c u_1^{j_1}.
\]
Then, we have $T^{j_p\cdots j_2} \wed dd^c u_1 = \lim_{j_1\to \infty}T^{j_p \cdots j_1}$, and consequently
\[ \label{eq:inductive-limit-a}
	dd^c u_p \wed \cdots \wed dd^c u_1 = \lim_{j_p\to \infty} \cdots \lim_{j_1\to \infty} T^{j_p \cdots j_1}.
\]
For $1\leq p \leq m-1$, each limit in \eqref{eq:inductive-limit-a} is in the  sense of currents of order zero. For $p=m$, we have
\[\label{eq:inductive-limit-b}dd^c u_p \wed \cdots \wed dd^c u_1 \wed\om^{n-m}= \lim_{j_p\to \infty} \cdots \lim_{j_1\to \infty} T^{j_p \cdots j_1} \wed \om^{n-m},\]
where each limit is also in the  sense of currents of order zero.

\begin{remark}  At this point the definition of the wedge product in Definition~\ref{defn:w-prod}  is not symmetric with respect to $u_1,...,u_p$. Furthermore, if the metric $\om$ is K\"ahler or just the standard K\"ahler form $\be = dd^c|z|^2$ in $\bC^n$, then by \eqref{eq:inductive-limit-b} it follows that Definition~\ref{defn:w-prod} coincides with the ones given before in \cite{Bl05},  \cite{DK14}.
 \end{remark}

By taking the limit inductively as in \eqref{eq:inductive-limit-a} and \eqref{eq:inductive-limit-b} we can state now the CLN inequality for bounded functions.

\begin{prop}[CLN inequality] \label{prop:CLN-bdd} Let $K\subset\subset U \subset\subset \Om$, where $K$ is compact and $U$ is open. Let $u, u_1,...,u_p$, be bounded $m-\om$-sh functions in $\Om$, where $1\leq p\leq m$. Then, there exists a constant $C$ depending on $K, U, \om$ such that
\begin{itemize}
\item 
[(a)] $\int_K (dd^c u)^p \wed \om^{n-p} \leq C (1+ \|u\|_{L^\infty(U)})^p$;
\item
[(b)]
$
	\int_K dd^c u_1 \wed \cdots \wed dd^c u_p \wed \om^{n-p} \leq C \left(1+\sum_{s=1}^p \| u_s\|_{L^\infty(U)}\right)^{p}. 
$
\end{itemize}
\end{prop}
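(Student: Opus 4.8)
The plan is to reduce part (b) to part (a) and to prove (a) by induction on $p$, using the inductive definition of the wedge product in \eqref{eq:inductive-limit-notation}--\eqref{eq:inductive-limit-b} together with the CLN inequality for smooth $m$-$\om$-sh functions from \cite[Proposition~2.9]{KN3}. First I would fix a cut-off function $\chi \in C^\infty_0(U)$ with $0\le \chi\le 1$ and $\chi \equiv 1$ on a neighbourhood of $K$; since all the currents involved are positive after wedging with $\om^{n-p}$ (by Lemma~\ref{lem:w-p-functions}), it suffices to bound $\int \chi\, (dd^c u)^p \wed \om^{n-p}$. Choose a smooth decreasing sequence $u_s^j \downarrow u_s$ (for the diagonal case, $u^j\downarrow u$) with $-\|u\|_{L^\infty(U)} - 1 \le u^j \le 0$ on $U$, which is possible after replacing $u$ by $u - \sup_U u$ and invoking the smoothing from Proposition~\ref{prop:smoothing} on a slightly smaller strictly $m$-pseudoconvex neighbourhood of $K$ inside $U$. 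By \eqref{eq:inductive-limit-b} and monotone/weak convergence, $\int \chi\,(dd^c u)^p\wed\om^{n-p} = \lim_{j_p}\cdots\lim_{j_1}\int\chi\,(dd^c u^{j_p})\wed\cdots\wed(dd^c u^{j_1})\wed\om^{n-p}$, so it is enough to get the bound uniformly in $j$ for smooth functions.

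For smooth functions the argument is the classical Chern--Levine--Nirenberg iteration adapted to the Hermitian $\Ga_m(\om)$ setting, which is exactly \cite[Proposition~2.9]{KN3}: one integrates by parts in $\int \chi\,(dd^c u^{j})^p\wed\om^{n-p}$ to move one $dd^c$ onto $\chi\,\om^{n-p}$, picking up torsion terms $d\chi\wed d^c\om$, $dd^c\om$ etc.; these are controlled using Lemma~\ref{lem:CS-classic}, Corollary~2.4 of \cite{KN3} (domination of non-positive forms by positive ones), and the elementary estimate $dd^c\om^{n-p}\le C\,\om^{n-p+1}$ modulo lower-order pieces, so that one reduces $\int\chi\,(dd^c u^j)^p\wed\om^{n-p}$ to $C\|u^j\|_{L^\infty}\big(\int\chi'\,(dd^c u^j)^{p-1}\wed\om^{n-p+1}+\text{similar}\big)$ for a new cut-off $\chi'$. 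Iterating $p$ times lands on $\int_{\supp\chi^{(p)}}\om^n$, a finite constant depending only on $U$ and $\om$, producing the bound $C(1+\|u^j\|_{L^\infty(U)})^p$, uniform in $j$. Passing to the limit gives (a).

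For part (b) I would use the standard polarization trick: since $(dd^c u_1)\wed\cdots\wed(dd^c u_p)\wed\om^{n-p}$ is a positive current on $U$ by Lemma~\ref{lem:w-p-functions}, and $(dd^c v_1)\wed\cdots\wed(dd^c v_p)\wed\om^{n-p}\le (dd^c(v_1+\cdots+v_p))^p\wed\om^{n-p}$ holds point-wise for smooth $m$-$\om$-sh functions (it is a sum of nonnegative mixed terms by G\aa rding, by Remark~\ref{rmk:Garding}) and hence in the limit, apply (a) with $u = u_1+\cdots+u_p$ (which is again bounded $m$-$\om$-sh) to obtain $\int_K dd^c u_1\wed\cdots\wed dd^c u_p\wed\om^{n-p} \le \int_K (dd^c u)^p\wed\om^{n-p} \le C(1+\|u\|_{L^\infty(U)})^p \le C(1+\sum_{s=1}^p\|u_s\|_{L^\infty(U)})^p$. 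The main obstacle is purely in the smooth case inside \cite[Proposition~2.9]{KN3}: keeping the torsion terms $d\om\wed d^c\om$ and $dd^c\om$ under control through each integration by parts when the intermediate forms $(dd^c u^j)^k\wed\om^{n-m-1}$ need not be positive, which is precisely why the refined Cauchy--Schwarz inequalities of Lemma~\ref{lem:CS} and Corollary~\ref{cor:CS} and the symmetric-polynomial estimates of Section~\ref{ssec:positive-cone} were developed; here we simply invoke that result. The remaining care is bookkeeping: ensuring the approximating sequences can be taken with the stated uniform $L^\infty$ bounds and that $\chi\equiv 1$ near $K$ is preserved through the iteration by shrinking the support at each step, which is harmless since $K\subset\subset U$.
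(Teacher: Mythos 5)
Your argument follows the same route as the paper: reduce to concentric balls by the localization principle, approximate by a smooth decreasing sequence via Proposition~\ref{prop:smoothing}, invoke the smooth CLN inequality from \cite[Proposition~2.9]{KN3} with a fixed cut-off, and pass to the iterated limit in the sense of \eqref{eq:inductive-limit-b}; part (b) is reduced to part (a) by the identical polarization observation $\Lc(u_1,\dots,u_p)\le \Lc_p(u_1+\cdots+u_p)$.

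One small point worth repairing: you assert the approximating sequence can be taken with $-\|u\|_{L^\infty(U)}-1\le u^j\le 0$ on $U$, but Proposition~\ref{prop:smoothing} only guarantees $u^j\downarrow u$ and gives no a priori upper bound $u^j\le 0$ for finite $j$ (the early terms of a decreasing approximation of a non-continuous $u$ can overshoot). The paper circumvents this by not normalizing the approximants: it applies the smooth CLN bound with $\|\sum_s u^{j_s}\|_{L^\infty(U)}$ on the right and then uses the Hartogs lemma for $\om$-sh functions \cite[Lemma~9.14]{GN18} to obtain $\lim_j\|1+u^j\|_{L^\infty(U)}=1+\sup_{\ov U}u$, which yields the stated constant after taking the iterated limit. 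Your argument becomes complete if you either insert this Hartogs step in place of the claimed uniform upper bound on $u^j$, or justify the bound directly (e.g.\ by observing that for $j$ large enough $\sup_{\ov U}u^j\le \sup_{\ov U}u+\varepsilon$ and absorbing the $\varepsilon$).
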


\begin{proof} (a) Without loss of generality we may assume that $\|u\|_{L^\infty(\Om)} \leq 1$. We can cover $K$ by finitely many small balls, hence by localization principle we can assume that $K$ and $U$ are concentric balls. Let $\{u^j\}_{j\geq 1}$ be sequences of smooth $m-\om$-sh functions such that $u^{j} \downarrow u$ as $j \to \infty$. Let $0\leq \chi \leq 1$ be a cut-off function such that $\chi \equiv 1$ on $K$ and $\supp\chi \subset U$. By the CLN inequality for smooth functions \cite[Proposition~2.9]{KN3}
$$
	\int \chi \Lc_p(u^{j_p},...,u^{j_1}) \wed \om^{m-p} \leq  C \left(1+ \| \sum_{s=1}^pu^{j_s}\| _{L^\infty(U)}^p \right). 
$$
By the Hartogs lemma for $\om$-sh functions \cite[Lemma~9.14]{GN18} it follows that 
$$\lim_{j\to \infty}\|1+ u^j\|_{L^\infty(U)} = \lim_{j\to \infty} \sup_{\ov U} (1+u^j)= 1+ \sup_{\ov U} u \leq 1 + \|u\|_{L^\infty(U)}.$$ 
Let $j_1\to \infty$, ....,$j_p\to \infty$, in this order we get 
$$\begin{aligned}
\int_K \Lc_p(u) \wed \om^{m-p} 
&\leq 	\lim_{j_p \to \infty}\cdots\lim_{j_1 \to \infty}\int \chi \Lc_p(u^{j_p},...,u^{j_1}) \wed \om^{m-p} \\
&\leq 	C \lim_{j_p\to \infty} \cdots \lim_{j_1\to\infty} \left(1+ \|\sum_{s=1}^pu^{j_s}\|_{L^\infty}\right) ^p\\
&=  		C (1+ \| u\|_{L^\infty(U)})^p. 
\end{aligned}$$

(b) Observe that for $v:= u_1 +\cdots + u_p$ we have $\Lc_p(v) \geq \Lc(u_1,...,u_p)$ as positive currents. So, (b) is an immediate consequence of (a).
\end{proof}

We also need a simplified version of Lemma~2.3 for particular positive currents. Let $-1\leq u_1,...,u_{m-2} \leq 0$ be  bounded $m-\om$-sh functions in $\Om$. Denote
by $T$ the current
\[\label{eq:cs-weak}
T = dd^c u_1 \wed \cdots \wed dd^cu_{m-2} \wed \om^{n-m}.
\]

\begin{cor} \label{cor:cs-w-bdd} Let $\phi$ be a Borel function such that $\supp\phi = K\subset\subset \Om$. Assume also  $|\phi| \leq 1$. Let $0\leq h \leq 1$ be a smooth $m-\om$-sh function in $\Om$. There exists a constant $C = C(\om,K)$ such that
$$
	\left |\int \phi\; dh \wed d^c \om \wed T \right|^2 \leq C  \int |\phi|\;T \wed \om^2.
$$
\end{cor}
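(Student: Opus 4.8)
The plan is to reduce the claimed inequality to the Cauchy–Schwarz-type estimate of Lemma~\ref{lem:CS} (and its Corollary~\ref{cor:CS}), followed by the CLN inequality of Proposition~\ref{prop:CLN-bdd}. Since $T = dd^c u_1 \wed \cdots \wed dd^c u_{m-2} \wed \om^{n-m}$ is a positive current of order zero by Lemma~\ref{lem:w-p-functions}, the integrals below make sense. First I would apply Corollary~\ref{cor:CS}-(b) (with $\psi \equiv 1$, and with the $\ga_i$'s being $dd^c u_i$ of the approximating smooth sequences) to the current $dh \wed d^c\om \wed T$, which gives, for smooth $m-\om$-sh $u_i^{j}$,
$$
\left| \int \phi \; dh \wed d^c\om \wed dd^c u_1^{j_1} \wed \cdots \wed dd^c u_{m-2}^{j_{m-2}} \wed \om^{n-m}\right|^2
\leq C \int |\phi|^2 dh \wed d^c h \wed \left(\textstyle\sum_i dd^c u_i^{j_i}\right)^{m-2} \wed \om^{n-m} \times \int |\phi|\, \left(\textstyle\sum_i dd^c u_i^{j_i}\right)^{m-2}\wed \om^{n-m+2}.
$$
(Here I used $|\phi|^2 \le |\phi| \le 1$ in the second factor.) The second factor is, by Proposition~\ref{prop:CLN-bdd}-(b) applied on $K$, bounded by a constant $C(\om,K)$ since the $u_i$ are normalized $-1 \le u_i \le 0$; so after passing to the decreasing limits $j_i \to \infty$ in the order prescribed by \eqref{eq:inductive-limit-a}, the right-hand side is at most $C(\om,K) \int |\phi|^2\, dh \wed d^c h \wed dd^c u_1 \wed \cdots \wed dd^c u_{m-2}\wed \om^{n-m}$.

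The remaining task is to control $\int |\phi|\, dh \wed d^c h \wed T$ by $\int |\phi|\, T \wed \om^2$. Since $h$ is smooth, $0\le h\le 1$, and $dd^c h \wed T' \ge 0$ where $T' := dd^c u_1\wed\cdots\wed dd^c u_{m-2}\wed\om^{n-m-1}$ is positive (this is where I use that each $dd^c u_i$ and $\om$ pair up correctly — we still have $n-m-1 \ge n-m-1$, so the product is positive in the limit by Lemma~\ref{lem:w-p-functions}), I would integrate by parts. Writing $\chi$ for a cutoff with $\chi \equiv 1$ on $K$, $\supp\chi \Subset \Om$, and using $dh\wed d^c h = \tfrac12 dd^c(h^2) - h\, dd^c h \le \tfrac12 dd^c(h^2)$, one gets
$$
\int |\phi|\, dh \wed d^c h \wed T \le \int \chi\, dh\wed d^c h \wed T \le \tfrac12 \int \chi\, dd^c(h^2)\wed T = \tfrac12\int h^2\, dd^c\chi \wed T + (\text{torsion terms}),
$$
after moving $dd^c$ onto $\chi$ — but here one must be careful because $T$ is not closed, so genuinely $\int \chi\, dd^c(h^2)\wed T = \int h^2\, dd^c(\chi\,\om^{n-m})\wed dd^c u_1\wed\cdots$ type identity with extra $d\om$, $dd^c\om$ factors. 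Expanding $dd^c(\chi\om^{n-m})$ and using \cite[Lemma~2.3]{KN3} to bound $dd^c(\om^{n-m})\wed (dd^cu_1)\wed\cdots \le C (dd^c u_1)\wed\cdots \wed \om^{n-m+1}$ and the Cauchy–Schwarz of Lemma~\ref{lem:CS-classic} to absorb the mixed $d\chi\wed d^c\om$ terms, everything is dominated by $C\int_{\supp\chi} (dd^c u_1)\wed\cdots\wed dd^c u_{m-2}\wed\om^{n-m}\wed\om^2 = C\int|\phi|\,T\wed\om^2$ up to passing again to decreasing limits and invoking Proposition~\ref{prop:CLN-bdd}; here one exploits $\supp\phi = K$ together with $|\phi|\le 1$ in reverse, choosing $\chi$ supported just slightly larger than $K$ so that the constant depends only on $\om$ and $K$. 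Combining the two displays yields the claim.

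\textbf{Main obstacle.} The delicate point is the integration by parts in the second paragraph: because $T$ fails to be closed, the naive identity $\int \chi\, dd^c(h^2)\wed T = \int h^2\, dd^c\chi\wed T$ is false, and one picks up torsion contributions involving $d\om$ and $dd^c\om$ wedged into the mixed product $dd^c u_1\wed\cdots\wed dd^c u_{m-2}$. Handling these requires precisely the Cauchy–Schwarz inequalities (Lemma~\ref{lem:CS-classic}, Corollary~\ref{cor:CS}) to bound the first-order terms, the elementary-symmetric-polynomial estimates behind \cite[Lemma~2.3]{KN3} for the second-order torsion term, and a careful choice of the cutoff $\chi$ (and of the order in which the approximating limits $j_i\to\infty$ are taken) so that the resulting constant depends only on $\om$ and $K$, as stated. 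This is exactly the kind of bookkeeping that the preceding lemmas were set up to make routine, so no genuinely new estimate should be needed beyond what is already in the excerpt.
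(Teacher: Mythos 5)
There is a genuine gap, and it stems from splitting the Cauchy--Schwarz factors the wrong way around. After applying the Cauchy--Schwarz inequality you chose to bound the \emph{non-gradient} factor $\int |\phi|\, (\sum_i dd^c u_i)^{m-2}\wed\om^{n-m+2}$ by a constant $C(\om,K)$ and retain the gradient factor $\int |\phi|^2\, dh\wed d^c h\wed T$. But then, to reach the target $C\int |\phi|\, T\wed\om^2$, you would need the gradient factor to be bounded by $C\int |\phi|\, T\wed\om^2$, and your route there — integration by parts against a cutoff $\chi\ge\mathbf{1}_K$ — only ever produces a bound by $\int_{\supp\chi} T\wed\om^2$, i.e.\ a constant $C(\om,K)$. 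Your claimed identity $\int_{\supp\chi} T\wed\om^2 = C\int |\phi|\, T\wed\om^2$ is false: since $\phi$ is an arbitrary Borel function with $|\phi|\le 1$, one only has $\int |\phi|\,T\wed\om^2 \le \int_K T\wed\om^2$, and there is no reverse inequality (if $|\phi|$ is small on most of $K$ the right side can be tiny while the left side is not). So your argument only delivers $\left|\int\phi\,dh\wed d^c\om\wed T\right|^2\le C(\om,K)$, which is strictly weaker than the stated corollary.

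The fix is to split the other way, and it is also much shorter. Since $T=dd^cu_1\wed\cdots\wed dd^cu_{m-2}\wed\om^{n-m}$ is already a positive $(n-2,n-2)$-current, apply the \emph{classical} Cauchy--Schwarz inequality of Lemma~\ref{lem:CS-classic} directly (not Corollary~\ref{cor:CS}, which addresses the borderline degree $\om^{n-m-1}$ and is not needed here). Writing $\phi = \sqrt{|\phi|}\cdot\bigl(\sgn\phi\sqrt{|\phi|}\bigr)$, Lemma~\ref{lem:CS-classic} gives
\[
\left|\int \phi\; dh\wed d^c\om\wed T\right|^2 \le C\int |\phi|\; dh\wed d^c h\wed\om\wed T\;\cdot\;\int |\phi|\;\om^2\wed T .
\]
The second factor is exactly the target right-hand side, so it suffices to bound the gradient factor by a constant. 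Since $h$ and $h^2$ are $m$-$\om$-sh and $T\wed\om$ is a positive $(n-1,n-1)$-current, $2\,dh\wed d^c h\wed T\wed\om = \bigl[dd^c(h^2) - 2h\,dd^c h\bigr]\wed T\wed\om \le dd^c(h^2)\wed T\wed\om$; then $|\phi|\le\mathbf{1}_K$ and the CLN inequality for bounded functions (Proposition~\ref{prop:CLN-bdd}) give $\int|\phi|\,dh\wed d^c h\wed\om\wed T\le \int_K dd^c(h^2)\wed T\wed\om\le C(\om,K)$. No integration by parts, cutoff, or torsion-term bookkeeping is required at all.
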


\begin{proof} By Lemma~2.3, it is enough to show that 
$\int |\phi| dh \wed d^c h \wed T \wed \om \leq C.$ In fact, since $h^2$ and $h$ are $m-\om$-sh, we have
$$
	2dh \wed d^c h \wed T\wed\om = [ dd^c h^2 - 2h dd^c h] \wed T\wed\om  \leq dd^c h^2 \wed T\wed\om.
$$
Multiplying both sides by $|\phi| $ and integrating them, we get
$$
	\int |\phi| dh \wed d^c h \wed T \wed \om \leq  \int |\phi| dd^c h^2 \wed T\wed\om.
$$
Using the assumption on $\phi$ the right hand side is bound by
$$
	\int_K dd^c h^2 \wed T \wed \om.
$$
The desired estimate follows from the CLN inequality for bounded functions (Proposition~\ref{prop:CLN-bdd}).
\end{proof}

\subsection{Weighted Sobolev space}
\label{ss:sobolev}

Let $K\subset \Om$ be a compact subset and let $\psi$ be a strictly plurisubharmonic defining function for $\Om$. Let us denote by $P^* = P^*(\Om, K, \psi)$  the set of bounded $m-\om$-sh functions that
$$
	u (z) = \psi, \quad z \in \Om\setminus K.
$$
Let $u_1,..,u_{p} \in P^*$ with $1\leq p\leq m-1$ and denote \[\label{eq:tau}\tau = dd^c u_1 \wed \cdots \wed dd^c u_{p},\] and conventionally $\tau=1$ for $p=0$. By definition this is a closed $(p,p)$-current of order zero.
We first make an additional assumption on the current $\tau$. By considering $u_i = u_i + \de |z|^2$ for some $\de>0$ small, we may assume that there is $c_0>0$ such that
\[\label{eq:add}
	\tau \wed \om^{n-p-1} \geq c_0 \om^{n-1}.
\]
Hence, the trace measure  (with respect to $\om$) of the positive current $\tau\wed \om^{n-p}$ satisfies
\[\label{eq:trace-m}
	 \mu_p:=\tau\wed \om^{n-p} \geq c_0 \om^n.
\]

Let us introduce a norm  associated to $\tau$ on the space smooth functions $C^\infty(\Om, \bR)$.
If $v,w$ are smooth functions in $\Om$, then we can define
$$
	\lc v, w\rc_\tau = \int dv \wed d^c w \wed 
	\tau\wed \om^{n-p-1},
$$
and 
$$
	\| v\|_\tau^2 = \lc v, v\rc_\tau =  \int dv \wed d^c v \wed \tau\wed \om^{n-p} .
$$
Let us consider the following inner product 
$$
	\lc v, w\rc = \int v w \; \be^n + \lc v, w \rc_\tau,
$$
where $\be = dd^c|z|^2$, and the associated norm
$$
	\|v\|^2 = \lc v,v\rc =  \|v\|_{L^2}^2 +  \|v\|_{\tau}^2.
$$
The assumption makes this inner product  positive definite. In fact, it is clear that  $\lc \bullet, \bullet \rc$ satisfies the properties of an inner product, namely
\begin{itemize}
\item[(i)] $\lc v, v \rc \geq 0$ and $\lc v, v\rc = 0$ if and only if $v=0$.
\item[(ii)] $\lc av_1+ bv_2,w\rc = a \lc v_1, w\rc + b \lc v_2, w\rc$.
\item[(iii)] $\lc v, w\rc = \lc w, v\rc$.
\end{itemize}
This implies also that  it satisfies the triangle inequality
$$
	\left|\lc v, w \rc \right| \leq \sqrt{\lc v, v\rc \lc w, w\rc}.
$$
Let us denote by $W^{1,2}(\Om, \mu_p)$  the real Hilbert space completion of $C^\infty(\Om,\bR)$ with respect to this norm. Let us state a property of this space 
immediately following from the definition.

\begin{lem} \label{lem:uni-conv}
$W^{1,2}(\Om, \mu_p)$ is uniformly convex. Consequently, it is reflexive.
\end{lem}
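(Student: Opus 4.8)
The plan is to verify that the norm $\|\cdot\|$ on $W^{1,2}(\Om,\mu_p)$ arises from an inner product, and then to invoke the fact that any inner product space — hence its Hilbert space completion — satisfies the parallelogram law and is therefore uniformly convex; reflexivity then follows from the Milman–Pettis theorem. First I would recall that we have already checked properties (i)--(iii) above, so $\lc\bullet,\bullet\rc$ is a genuine inner product on $C^\infty(\Om,\bR)$ and the completion $W^{1,2}(\Om,\mu_p)$ is a real Hilbert space in the usual sense. For a Hilbert space the parallelogram identity
$$
	\|x+y\|^2 + \|x-y\|^2 = 2\|x\|^2 + 2\|y\|^2
$$
holds for all $x,y$, and this is exactly what is needed to extract a modulus of convexity.

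The key computational step is the standard one: given $\veps>0$ and $x,y$ in the unit ball with $\|x-y\|\geq \veps$, the parallelogram law gives
$$
	\left\|\frac{x+y}{2}\right\|^2 = \frac{1}{2}\|x\|^2 + \frac{1}{2}\|y\|^2 - \left\|\frac{x-y}{2}\right\|^2 \leq 1 - \frac{\veps^2}{4},
$$
so $\|\tfrac{x+y}{2}\| \leq 1 - \de(\veps)$ with $\de(\veps) = 1 - \sqrt{1-\veps^2/4} > 0$. This is precisely the definition of uniform convexity, with a modulus independent of $x,y$. I would state this explicitly since it is short, and note that it applies verbatim to the completion because the parallelogram law passes to limits.

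Finally, for reflexivity I would cite the Milman–Pettis theorem: every uniformly convex Banach space is reflexive. Alternatively, and more elementarily in this case, one can simply remark that a Hilbert space is reflexive by the Riesz representation theorem, so the second assertion is automatic once we know $W^{1,2}(\Om,\mu_p)$ is a Hilbert space; the uniform convexity statement is then the substantive content and is what will be used later (e.g. to extract weakly convergent minimizing sequences). There is essentially no obstacle here — the only thing to be careful about is confirming that the bilinear form is genuinely positive definite, which is where the assumption \eqref{eq:add} (and hence \eqref{eq:trace-m}, guaranteeing $\mu_p \geq c_0\om^n$ so that $\|v\|_{L^2}$ controls $v$) is used; this was already arranged above, so the proof is a one-line invocation of the parallelogram law followed by the standard estimate.
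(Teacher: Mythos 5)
Your proof is correct and matches the paper's (implicit) argument: the paper simply notes the lemma follows immediately from the definition, since the norm comes from an inner product, which is exactly what you spell out via the parallelogram law and Milman--Pettis (or Riesz representation). Your remark that positive definiteness hinges on the lower bound $\mu_p\geq c_0\om^n$ from \eqref{eq:add} is also the right point to flag.
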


 Our goal is to show that for each $0\leq p\leq m-1$,  $$P^* \subset W^{1,2} (\Om, \mu_p).$$
 If we take $u_{p} = \psi$ the strictly psh defining function for $\Om$, then clearly 
 $$
 	W^{1,2}(\Om, \mu_{p}) \subset W^{1,2}(\Om,\mu_{p-1}).
 $$
 Therefore, we only need to prove two special cases for $p=0$ and $p=m-1$.
 Let us start with this simpler case $p=0$, i.e., $\tau = 1$, $\tau\wed \om^{n-1} =\om^{n-1}$ and $\mu_0 =\om^n$ which  is equivalent to  the Lebesgue measure $dV = \be^n$.  This is  a classical result if $\om =\be$ is the standard K\"aher metric (see e.g. \cite[Proposition~3.4.19]{Hor94}).

\begin{lem}\label{lem:sobolev-space}
All functions in $P^*$ belong to the Sobolev space $W^{1,2}(\Om)$. 
\end{lem}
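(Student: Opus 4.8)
The plan is to reduce the statement to the two assertions $u\in L^2(\Om)$ and $du\in L^2(\Om)$ for every $u\in P^*$. Since $\om$ is a Hermitian metric on the bounded set $\Om$, the forms $\om^{n-1}$ and $\be^{n-1}$ are uniformly comparable, so the norm $\|v\|^2=\int v^2\be^n+\int dv\wed d^c v\wed\om^{n-1}$ defining $W^{1,2}(\Om,\mu_0)$ is equivalent to the standard Sobolev norm; hence it suffices to prove these two assertions. The first is immediate because $u$ is bounded. For the second I would fix compact sets $K\subset\subset K'\subset\subset\Om$ with $u=\psi$ outside $K$, and a cut-off $0\le\chi\le1$ with $\chi\equiv1$ on $K$ and $\supp\chi\subset K'$. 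On $\Om\setminus K$ one has $u=\psi\in C^\infty(\ov\Om)$, so $\int_{\Om\setminus K}du\wed d^cu\wed\om^{n-1}=\int_{\Om\setminus K}d\psi\wed d^c\psi\wed\om^{n-1}<\infty$, and everything comes down to the single bound $\int_\Om\chi^2\,du\wed d^cu\wed\om^{n-1}\le C$.

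I would establish this bound first for a smooth $m-\om$-sh function $u$ with, say, $-1\le u\le0$, with $C$ depending only on $\om,n,m,\chi$ and $\|u\|_{L^\infty}$. Starting from $dd^c(u^2)=2\,du\wed d^cu+2u\,dd^cu$, integration by parts against $\chi^2\om^{n-1}$ (no boundary term, $\chi$ being compactly supported) writes $2\int\chi^2\,du\wed d^cu\wed\om^{n-1}$ as a sum of three pieces: the term $-2\int\chi^2u\,dd^cu\wed\om^{n-1}$, bounded by $2\|u\|_{L^\infty}\int_{K'}dd^cu\wed\om^{n-1}\le C$ via the Chern--Levine--Nirenberg inequality for smooth functions \cite[Proposition~2.9]{KN3}; a term $-4\int\chi u\,d\chi\wed d^cu\wed\om^{n-1}$ coming from differentiating $\chi^2$; and a torsion term $2\int\chi^2u\,d^cu\wed d\om^{n-1}$, present because $\om$ need not be closed. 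Using $\chi^2$ rather than $\chi$ as weight is what makes the absorption work: by a Cauchy--Schwarz estimate of the type in Lemma~\ref{lem:CS-classic} (applied with $h=u$ and a cut-off/constant as the second factor), both of the last two pieces are, for every $\veps>0$, bounded by $\veps\int\chi^2\,du\wed d^cu\wed\om^{n-1}+C_\veps\int_{K'}u^2\,\om^n$, and $\int_{K'}u^2\om^n\le\|u\|_{L^\infty}^2\,\mathrm{vol}(K')$. Choosing $\veps$ small enough to absorb the Dirichlet term into the left-hand side yields the claimed bound.

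To remove the smoothness assumption I would extend $u$ by $\psi$ to a neighbourhood of $\ov\Om$ — which is strictly $m$-pseudoconvex, $\psi$ being strictly plurisubharmonic — and invoke Proposition~\ref{prop:smoothing} to obtain smooth $m-\om$-sh functions $u_j\in C^\infty(\ov\Om)$ with $u_j\downarrow u$ and $-1\le u_j\le0$; equivalently one can argue locally on small balls, to which Proposition~\ref{prop:smoothing} also applies. The uniform bound $\int\chi^2\,du_j\wed d^cu_j\wed\om^{n-1}\le C$ then says that $\{\chi u_j\}$ is bounded in $W^{1,2}(\Om)$; since $u_j\to u$ in $L^2(\Om)$ by dominated convergence, a subsequence of $\chi u_j$ converges weakly in $W^{1,2}(\Om)$, necessarily to $\chi u$, and weak lower semicontinuity of the Dirichlet energy gives $\int\chi^2\,du\wed d^cu\wed\om^{n-1}\le C$. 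Combined with the boundary estimate, this yields $u\in W^{1,2}(\Om)$.

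The main obstacle, compared with the classical Kähler case treated in the literature, is precisely the torsion term $d\om^{n-1}$ produced by the integration by parts: it re-introduces first derivatives of $u$, and can only be controlled by a Cauchy--Schwarz estimate trading it for a fraction of the Dirichlet energy (absorbed on the left) plus an $L^2$ term. The a priori finiteness of the Dirichlet energy needed before one may legitimately absorb is the reason the argument must be run first for the smooth approximants $u_j$ and only then passed to the limit.
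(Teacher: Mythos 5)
Your proposal is correct, and it reaches the same destination as the paper's proof (a uniform gradient bound for the smooth approximants, followed by a weak-compactness passage to the limit), but it takes a noticeably different route for the gradient bound. The paper avoids integration by parts entirely: after normalising to $0\le u_j\le 1$ it uses the \emph{pointwise} identity $du_j\wed d^cu_j=\tfrac12 dd^c(u_j^2)-u_j\,dd^cu_j\le\tfrac12 dd^c(u_j^2)$ (the last inequality because $u_j\ge0$ and $dd^cu_j\wed\om^{n-1}\ge0$), and then bounds $\int dd^c(u_j^2)\wed\om^{n-1}$ directly by the CLN inequality, noting that $u_j^2$ is again $m-\om$-sh when $u_j\ge 0$. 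Since no Stokes formula is invoked, no torsion term $d\om^{n-1}$ appears at all. You instead normalise to $-1\le u\le0$, multiply by $\chi^2\om^{n-1}$, integrate by parts, and then have to fight the torsion term (and the $d\chi$ term) with Cauchy--Schwarz and absorption. Both are sound; the paper's is shorter precisely because the sign normalisation $u_j\ge 0$ lets one discard $-u_j dd^cu_j\wed\om^{n-1}$ pointwise rather than estimate it. Your passage to the limit (weak compactness plus lower semicontinuity of the weighted Dirichlet energy) and the paper's (weak compactness plus Mazur's lemma to upgrade to a Cauchy sequence in $W^{1,2}$) are essentially interchangeable. One small clarification is worth making in your write-up: the uniform bound $\int\chi^2\,du_j\wed d^cu_j\wed\om^{n-1}\le C$ gives that $\chi u_j$ is bounded in $W^{1,2}$ only after you also note $d(\chi u_j)=\chi\,du_j+u_j\,d\chi$ and that the second term is controlled by $\|u_j\|_{L^\infty}$ and $\|d\chi\|_{L^\infty}$; you do use this implicitly, but it is the reason $\chi u_j$, rather than $u_j$ itself, is the sequence whose $W^{1,2}$-boundedness you get for free.
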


\begin{proof} Let $u\in P^*$. By the approximation theorem there is $\{u_j\}_{j\geq 1} \subset P^*$ a sequence of smooth functions such that $u_j \downarrow u$ pointwise. We may assume that $0\leq u, u_j \leq 1$. Observe that  the sequence $\{u_j\}_{j\geq 1}$ is uniformly bounded in $W^{1,2}(\Om)$. Indeed, by the $\om$-subharmonicity
$$\begin{aligned}
	\int du_j \wed d^c u_j \wed \om^{n-1} 
&= \int \left(\frac{1}{2} dd^c u_j^2 - 2 u_j dd^c u_j\right) \wed \om^{n-1} \\
&\leq \frac{1}{2}\int dd^c u_j^2 \wed \om^{n-1}.
\end{aligned}$$
The last integral is bounded by a constant $C= C(K, \psi, \Om)$ by an application of the CLN inequality. Therefore, the sequence of (vector-valued) functions $\na u_j$ is uniformly bounded in $L^2(\Om)$. Thus, by the weak compactness theorem, $\na u_j$ converges weakly to a vector-valued function ${\bf v} \in L^2(\Om)$. 

To complete the proof we are going to show that ${\bf v} = \na u$. First, note that  $u_j \to u$  in $L^2(\Om)$ by the dominated convergence theorem. So, $(u_j, \na u_j) \to (u, {\bf v})$ weakly in the Hilbert space $L^2(\Om) \times L^2(\Om, \bR^n)$. The Mazur lemma \cite[Chapter V, Theorem~2]{Yo80} implies that there is a sequence of convex combinations
$$
	\wt u_\ell = \sum_{j=1}^\ell \la_{\ell,j} (u_{j}, \na u_j), \quad \la_{\ell,j} \geq 0 \text{ and } \sum_{j=1}^\ell \la_{\ell,j} =1,
$$
which converges in norm  to $(u,{\bf v})$ in $L^2(\Om) \times L^2(\Om, \bR^n)$. It follows that 
$$
	 w_\ell = \sum_{j=1}^\ell \la_{\ell,j} u_j
$$
is a Cauchy sequence in $W^{1,2}(\Om)$. Hence, there exist a limit function $$\wt u =\lim_{\ell\to \infty} w_\ell \in W^{1,2}(\Om).$$ 
It follows that $u = \wt u$ and ${\bf v} = \na \wt u = \na u$.
\end{proof}

We proceed to prove the most general case $ 1\leq p= m-1$ when
$$\mu= \mu_p:=\tau \wed \om^{n-m+1}$$ is a positive Radon measure. Equivalently, 
we wish to show that if $v \in P^*$, then the form $dv$ has coefficients in $L^2 (\Om, d\mu)$. Note that for smooth function $v$ we have
$$\begin{aligned}
&	\int |\na v|^2 dV \leq \frac{1}{c_0} \int dv \wed d^c v \wed \tau \wed \om^{n-m}. 
\end{aligned}$$
It implies that 
$W^{1,2}(\Om, \mu) \subset W^{1,2}(\Om)$. In particular, the gradient $\na u$ is well-defined in the space $W^{1,2}(\Om, \mu)$.

\begin{prop}\label{prop:weak-conv} Let $v\in P^*$ and $\{v_\ell\}_{\ell\geq 0} \subset P^*$ be a sequence of smooth $m-\om$-sh functions such  that $v_\ell \downarrow v$ point-wise in $\Om$. Then  $v_\ell$ converges weakly  to $v$ in $W^{1,2}(\Om, \mu)$,  i.e., for every $w\in W^{1,2}(\Om, \mu)$,
$$
	\lim_{\ell\to \infty}\lc v_\ell, w \rc = \lc v, w\rc. 
$$
In particular,  $P^*\subset W^{1,2}(\Om,\mu)$.
\end{prop}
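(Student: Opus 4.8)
The plan is to adapt the scheme used for Lemma~\ref{lem:sobolev-space}: first establish a uniform bound for $\{v_\ell\}$ in the Hilbert space $W^{1,2}(\Om,\mu)$, then use reflexivity (Lemma~\ref{lem:uni-conv}) to extract a weakly convergent subsequence, and finally identify its limit with $v$ by means of Mazur's lemma together with the monotone pointwise convergence $v_\ell\downarrow v$. As in that lemma, after normalization we may assume $0\le v_\ell\le 1$, hence $0\le v\le 1$.

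For the uniform bound I would argue as follows. Since $v_\ell$ is a smooth $m-\om$-sh function, $v_\ell\ge 0$, and $dd^c v_\ell\wed\tau\wed\om^{n-m}\ge 0$ by Lemma~\ref{lem:w-p-functions}, one has the pointwise inequality
\[
	2\,dv_\ell\wed d^c v_\ell\wed\tau\wed\om^{n-m}=(dd^c v_\ell^2-2v_\ell\,dd^c v_\ell)\wed\tau\wed\om^{n-m}\le dd^c v_\ell^2\wed\tau\wed\om^{n-m}.
\]
Integrating over $\Om$, integrating by parts to move $dd^c$ onto $\om^{n-m}$, and invoking the torsion estimate $\tau\wed dd^c\om^{n-m}\le C\,\tau\wed\om^{n-m+1}$ (legitimate since $\tau$ is a limit of wedge products of forms in $\Ga_m(\om)$), I get, using $0\le v_\ell\le 1$, that $\|v_\ell\|_\tau^2\le C\int_\Om\tau\wed\om^{n-m+1}=C\,\mu(\Om)$, and the right-hand side is finite by the CLN inequality (Proposition~\ref{prop:CLN-bdd}). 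Together with the trivial bound on $\|v_\ell\|_{L^2}$ this shows $\{v_\ell\}$ is bounded in $W^{1,2}(\Om,\mu)$.

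By reflexivity, some subsequence $v_{\ell_k}$ converges weakly to a limit $g\in W^{1,2}(\Om,\mu)$. Mazur's lemma then yields convex combinations $w_k=\sum_{i\ge k}\la_{k,i}v_{\ell_i}$ converging to $g$ in the norm of $W^{1,2}(\Om,\mu)$, in particular in $L^2(\Om,\be^n)$. On the other hand $v\le w_k\le v_{\ell_k}\downarrow v$, so $w_k\to v$ pointwise and, by dominated convergence, in $L^2(\Om,\be^n)$. Hence $g=v$, so $v$ is represented by an element of $W^{1,2}(\Om,\mu)$ and $v_{\ell_k}\rightharpoonup v$. Since the pointwise limit of \emph{any} subsequence of $\{v_\ell\}$ is again $v$, the usual subsequence argument upgrades this to weak convergence of the full sequence: $\lc v_\ell,w\rc\to\lc v,w\rc$ for every $w\in W^{1,2}(\Om,\mu)$. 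Finally, because every $v\in P^*$ admits such a smooth decreasing approximation, we conclude $P^*\subset W^{1,2}(\Om,\mu)$.

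The step I expect to be the main obstacle is the uniform bound, specifically the integration by parts: the current $\tau\wed\om^{n-m}$ has order zero but is not closed, so the computation must be justified by approximating the potentials $u_1,\dots,u_{m-1}$ of $\tau$ by smooth $m-\om$-sh functions and passing to the limit, while controlling the boundary contributions that appear because the functions equal $\psi$ — rather than vanish — near $\d\Om$, in the spirit of Remark~\ref{rmk:continuous-case}. Once the bound is in place, the remaining arguments are standard.
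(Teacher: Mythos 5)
Your outline — uniform bound in $W^{1,2}(\Om,\mu)$, extract a weak limit by reflexivity, identify it with $v$ via Mazur's lemma and monotone pointwise convergence, then a subsequence argument — is exactly the paper's route, and the identification step is fine. The difference, and the place where your version has a gap, is the uniform bound. After the pointwise inequality $2\,dv_\ell\wed d^cv_\ell\wed\tau\wed\om^{n-m}\le dd^cv_\ell^2\wed\tau\wed\om^{n-m}$ you integrate by parts to move $dd^c$ onto $\om^{n-m}$ and claim the only new term is $\tau\wed dd^c\om^{n-m}$. That is not the whole story: because $\tau\wed\om^{n-m}$ is not closed, the full Leibniz expansion of $dd^c\bigl(v_\ell^2\,\tau\wed\om^{n-m}\bigr)$ also produces the cross terms $dv_\ell^2\wed\tau\wed d^c\om^{n-m}$ and $d^cv_\ell^2\wed\tau\wed d\om^{n-m}$. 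If you estimate these by Cauchy--Schwarz you reintroduce $\int dv_\ell\wed d^cv_\ell\wed\tau\wed\om^{n-m}$ on the right, i.e.\ the very quantity you are bounding, so without a careful absorption argument (in the spirit of Lemma~\ref{lem:CLN-grad}) the bound is circular; and there are in addition boundary contributions because functions in $P^*$ equal $\psi\neq 0$ near $\d\Om$.

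The paper sidesteps all of this with one observation you did not make: since $dd^cv_\ell^2=2dv_\ell\wed d^cv_\ell+2v_\ell\,dd^cv_\ell$ with $dv_\ell\wed d^cv_\ell\ge 0$ and $v_\ell\ge 0$, the function $v_\ell^2$ is itself a bounded $m-\om$-sh function. Hence $\int_\Om dd^cv_\ell^2\wed\tau\wed\om^{n-m}$ is exactly a mixed Hessian mass of $m$ bounded $m-\om$-sh functions against $\om^{n-m}$, and the CLN inequality (Proposition~\ref{prop:CLN-bdd}, applied on $\ov\Om\subset\subset\wt\Om$ using the fact that $P^*$-functions extend by $\psi$) gives the uniform bound directly, with no further integration by parts and no torsion or cross-term bookkeeping at all. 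I would replace your integration-by-parts paragraph with this one observation; the rest of your argument then goes through as written.
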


\begin{proof} We may assume that $0\leq v\leq v_\ell \leq 1$. Then, there exists a uniform constant $C = C(\om,n,m,\psi,\Om)$ such that 
$$
	\|\na v_\ell\|^2_\tau= \int  dv_\ell \wed d^c v_\ell \wed \tau\wed \om^{n-m} \leq C \|v_{\ell}\|_{L^\infty}^2.
$$
In fact, since 
$
	dv_\ell \wed d^c v_\el = \frac{1}{2} dd^c v_\ell^2 - v_\ell dd^c v_\ell,
$
and $dd^c v_\ell \wed \tau \wed \om^{n-m}$ is a positive measure and $v_\ell\geq 0$, it follows that 
$$
	\int  dv_\ell \wed d^c v_\ell \wed \tau\wed \om^{n-m} \leq \int dd^c v_\ell^2 \wed \tau\wed \om^{n-m} .
$$
Since $v_\ell^2$ is also $m-\om$-sh, the desired inequality follows from the CLN inequality. 

In the same way as in the previous proof, via Mazur's lemma, one proves that $v\in W^{1,2}(\Om,\mu)$.
\end{proof}

\begin{remark} The proofs given in Lemma~\ref{lem:sobolev-space} and Proposition~\ref{prop:weak-conv} are inspired  by the one in the book by Heinonen,  Kilpel\"{a}inen and Martio \cite[Theorem~1.30]{HKM06}.
\end{remark}

\subsection{Integration by parts for bounded functions}
In this section we  show  "integration by parts inequalities" for bounded $m-\om$-sh functions which are smooth near the boundary. In particular, it holds for functions in the class $P^*$.
First we deal  with the case of  the wedge product of $(m-2)$ bounded functions and consider
\[\label{eq:eta}
\eta = dd^c u_1\wed \cdots \wed dd^c u_{m-2}
\]
where $u_1,..., u_{m-2} \in P^*$ as  in Section~\ref{ss:sobolev}. 
Recall that we also impose the assumption \eqref{eq:add} for $\eta$, namely
\[\label{eq:add-a}
	\eta \wed \om^{n-m+1} \geq c_0 \om^{n-1},
\]
and denote the corresponding trace measure by $\mu$:
\[\label{eq:trace-m-1}
	\mu = \eta \wed \om^{n-m+2}.
\]

The following integration by parts formula  for currents of order zero is used frequently below. Let $\phi$ and $\rho$ be smooth functions such that $\phi = 0$ near $\d\Om$. Then,
\[\begin{aligned}
\label{eq:IBP-normal}
	\int \phi \;dd^c \rho \wed \eta \wed \om^{n-m+1} 
&	= -\int d\phi \wed d^c \rho \wed \eta \wed \om^{n-m+1} \\
&\quad + \int\phi d^c \rho \wed d\om \wed\eta \wed\om^{n-m}.
\end{aligned}\]
If $v, w \in P^*\subset W^{1,2}(\Om, \mu)$ are smooth, then 
$$
	\lc v, w\rc_\eta = \int dv \wed d^c w \wed \eta \wed \om^{n-m+1}.
$$
By abusing the notation for general $v, w \in P^*$  we will write the integral on the right understanding that it is the inner product $\lc v, w\rc_\eta$. The first version of the "integration by part inequality" in the class $P^*$ reads as follows.

\begin{lem}\label{lem:w-IBP-a} Let $\{v_\ell\}_{\ell\geq 1}$ be a sequence of smooth functions from $P^*$ and let $w\in P^*$ be smooth. Assume $v_\ell\leq w$ and $v_\ell \downarrow v \in P^*$. Then,
$$\begin{aligned}
	-\int d(w-v) \wed d^c v \wed \eta\wed \om^{n-m+1} 
&\leq \int(w-v) dd^c v \wed \eta\wed\om^{n-m+1} \\
&+ C \left( \int(w-v) \eta\wed \om^{n-m+2}\right)^\frac{1}{2}.
\end{aligned}$$
The constant $C$ depends on $\om, K, \psi$ and the uniform norm of functions, but it is independent of  $\de$ in \eqref{eq:add}.
\end{lem}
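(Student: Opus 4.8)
The plan is to prove the inequality first for the smooth members $v_\ell$ of the approximating sequence --- where it comes down to the torsion bound of Corollary~\ref{cor:cs-w-bdd} --- and then to pass to the limit $\ell\to\infty$, the delicate point being that $v_\ell\to v$ only \emph{weakly} in $W^{1,2}(\Om,\mu)$. After rescaling (which costs only a constant depending on $\om,K,\psi$ and the sup-norms) we may assume $-1\le u_1,\dots,u_{m-2}\le 0$, so that Corollary~\ref{cor:cs-w-bdd} applies to $\eta$, and $0\le v\le v_\ell\le w\le 1$; recall that all functions equal $\psi$, and all currents below differ from fixed forms built from $dd^c\psi$ only on $K$. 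For fixed $\ell$ the integration-by-parts formula \eqref{eq:IBP-normal}, applied with $\phi=w-v_\ell$ (which vanishes near $\d\Om$) and $\rho=v_\ell$, gives
\[\notag
\int(w-v_\ell)\,dd^c v_\ell\wed\eta\wed\om^{n-m+1}
=-\int d(w-v_\ell)\wed d^c v_\ell\wed\eta\wed\om^{n-m+1}
+\int(w-v_\ell)\,d^c v_\ell\wed d\om\wed\eta\wed\om^{n-m}.
\]
For bidegree reasons $d^c v_\ell\wed d\om$ and $-dv_\ell\wed d^c\om$ have the same wedge product with the $(n-2,n-2)$-form $\eta\wed\om^{n-m}$ (as in the proof of Lemma~\ref{lem:CS}), so Corollary~\ref{cor:cs-w-bdd}, with $\phi=w-v_\ell$, $h=v_\ell$ and $T=\eta\wed\om^{n-m}$, bounds the last term by $C\bigl(\int(w-v_\ell)\,\eta\wed\om^{n-m+2}\bigr)^{1/2}$. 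Thus the asserted inequality holds with $v_\ell$ in place of $v$, for every $\ell$.

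Now let $\ell\to\infty$. The left-hand side of the $\ell$-th inequality is $-\lc w,v_\ell\rc_\eta+\|v_\ell\|_\eta^2$, while the pointwise identity $\tfrac12\,dd^c v_\ell^2=v_\ell\,dd^c v_\ell+dv_\ell\wed d^c v_\ell$ rewrites its main right-hand term as $\int w\,dd^c v_\ell\wed\eta\wed\om^{n-m+1}-\tfrac12\int dd^c v_\ell^2\wed\eta\wed\om^{n-m+1}+\|v_\ell\|_\eta^2$. The key point is that $\|v_\ell\|_\eta^2$ --- the quantity one cannot control under weak convergence --- cancels, so the $\ell$-th inequality becomes
\[\notag
-\lc w,v_\ell\rc_\eta
\le\int w\,dd^c v_\ell\wed\eta\wed\om^{n-m+1}-\tfrac12\int dd^c v_\ell^2\wed\eta\wed\om^{n-m+1}
+C\Bigl(\int(w-v_\ell)\,\eta\wed\om^{n-m+2}\Bigr)^{1/2},
\]
in which every term converges: $\lc w,v_\ell\rc_\eta\to\lc w,v\rc_\eta$ by Proposition~\ref{prop:weak-conv} (and $v_\ell\to v$ in $L^2$ by dominated convergence); since $\eta$ is $d$- and $d^c$-closed, integrating $dd^c$ by parts onto the fixed smooth factor gives $\int w\,dd^c v_\ell\wed\eta\wed\om^{n-m+1}\to\int w\,dd^c v\wed\eta\wed\om^{n-m+1}$ and $\int dd^c v_\ell^2\wed\eta\wed\om^{n-m+1}=\int v_\ell^2\,\eta\wed dd^c\om^{n-m+1}\to\int dd^c v^2\wed\eta\wed\om^{n-m+1}$, both by dominated convergence against fixed signed measures; and $\int(w-v_\ell)\,\eta\wed\om^{n-m+2}\uparrow\int(w-v)\,\eta\wed\om^{n-m+2}$ by monotone convergence. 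The limit reads
\[\notag
-\lc w,v\rc_\eta
\le\int w\,dd^c v\wed\eta\wed\om^{n-m+1}-\tfrac12\int dd^c v^2\wed\eta\wed\om^{n-m+1}
+C\Bigl(\int(w-v)\,\eta\wed\om^{n-m+2}\Bigr)^{1/2}.
\]

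Combining this last inequality with the energy identity $\tfrac12\int dd^c v^2\wed\eta\wed\om^{n-m+1}=\|v\|_\eta^2+\int v\,dd^c v\wed\eta\wed\om^{n-m+1}$ for bounded $v\in P^*$ (obtained from its smooth version by the same monotone-approximation argument that underlies Section~\ref{ss:sobolev}), and using $\int(w-v)\,dd^c v\wed\eta\wed\om^{n-m+1}=\int w\,dd^c v\wed\eta\wed\om^{n-m+1}-\int v\,dd^c v\wed\eta\wed\om^{n-m+1}$ together with $-\int d(w-v)\wed d^c v\wed\eta\wed\om^{n-m+1}=-\lc w,v\rc_\eta+\|v\|_\eta^2$, one recovers exactly the claimed inequality.

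The step I expect to be the real obstacle is this passage to the limit --- in essence, controlling $\int v_\ell\,dd^c v_\ell\wed\eta\wed\om^{n-m+1}$ in the limit, equivalently establishing the bounded-function energy identity --- precisely because $v_\ell\to v$ only weakly in $W^{1,2}(\Om,\mu)$; the cancellation of $\|v_\ell\|_\eta^2$ is the device that turns the problem into convergences against fixed measures. By contrast, the smooth case and the torsion estimate are immediate from Corollary~\ref{cor:cs-w-bdd} and the bidegree remark.
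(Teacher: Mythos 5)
Your proof takes a genuinely different route from the paper's. The paper keeps the approximation parameters ``off-diagonal'': it first replaces the factor $w-v$ by $w-v_\ell$ (via Proposition~\ref{prop:weak-conv} together with monotone convergence), leaving $d^c v$ and $dd^c v$ at the limit $v$; then, for fixed $\ell$, it approximates those remaining $v$'s by smooth $v_j$'s and integrates by parts. In every appearance of $\lc\cdot,\cdot\rc_\eta$ the weak limit sits in exactly one slot, so weak $W^{1,2}(\Om,\mu)$-convergence is applied term by term and the quantity $\|v_\ell\|_\eta^2$ never enters. You instead prove the ``diagonal'' inequality with $v_\ell$ in all three positions and exploit the cancellation of $\|v_\ell\|_\eta^2$ between the two sides, which is a clean trick and does make the limit passage tractable.

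The one genuine gap is your appeal to the ``energy identity'' $\tfrac12\int dd^c v^2\wed\eta\wed\om^{n-m+1}=\|v\|_\eta^2+\int v\,dd^c v\wed\eta\wed\om^{n-m+1}$ for bounded $v\in P^*$. You say it follows by ``the same monotone-approximation argument that underlies Section~\ref{ss:sobolev},'' but that section only proves $P^*\subset W^{1,2}(\Om,\mu)$ via Mazur's lemma, which gives strong convergence of \emph{convex combinations}, not of the sequence, and in particular does not give $\|v_\ell\|_\eta\to\|v\|_\eta$. Since $v_\ell\to v$ only weakly, a priori one only has $\|v\|_\eta\le\liminf_\ell\|v_\ell\|_\eta$, and the claimed equality is exactly the kind of norm convergence that weak convergence does not provide. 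Fortunately your argument survives without it: passing from your limiting inequality $-\lc w,v\rc_\eta\le\int w\,dd^c v\wed\eta\wed\om^{n-m+1}-\tfrac12\int dd^c v^2\wed\eta\wed\om^{n-m+1}+C(\cdots)^{1/2}$ to the lemma needs only the one-sided inequality $\|v\|_\eta^2\le\tfrac12\int dd^c v^2\wed\eta\wed\om^{n-m+1}-\int v\,dd^c v\wed\eta\wed\om^{n-m+1}$. Its right-hand side equals $\lim_\ell\|v_\ell\|_\eta^2$ (both constituent terms converge, as you note), so the inequality is precisely weak lower semicontinuity of the Hilbert norm. Replace ``identity'' by ``$\le$'' and the reassembly step closes.

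Two minor corrections to the limit computation. The formula $\int dd^c v_\ell^2\wed\eta\wed\om^{n-m+1}=\int v_\ell^2\,\eta\wed dd^c\om^{n-m+1}$ is not correct as written, since $v_\ell^2$ is not compactly supported; split off $\psi^2$ (fixed, independent of $\ell$) and integrate by parts only the compactly supported $v_\ell^2-\psi^2$. The conclusion is unaffected because the boundary contribution is a constant. Likewise, the convergence $\int w\,dd^c v_\ell\wed\eta\wed\om^{n-m+1}\to\int w\,dd^c v\wed\eta\wed\om^{n-m+1}$ is most directly the defining weak convergence of the order-zero currents $dd^c v_\ell\wed\eta\to dd^c v\wed\eta$, combined with the observation that all of these agree with $dd^c\psi\wed\eta$ outside $K$ so that testing against the non-compactly-supported smooth form $w\,\om^{n-m+1}$ is harmless, rather than ``dominated convergence against fixed signed measures.''
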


\begin{proof} Without loss of generality we may assume that $0\leq v, v_\ell, w \leq 1$ and $0\leq u_i \leq 1$. By the weak convergence   in the space $W^{1,2}(\Om, \mu)$ (Propostion~\ref{prop:weak-conv}) and the dominated convergence theorem  it is enough to prove that
$$\begin{aligned}-\int d(w-v_\ell) \wed d^c v \wed \eta\wed \om^{n-m+1} 
&\leq \int(w-v_\ell) dd^c v \wed \eta\wed\om^{n-m+1} \\
&+ C \left( \int(w-v_\ell) \eta\wed \om^{n-m+2}\right)^\frac{1}{2}.
\end{aligned}$$
To this end we use once more the weak convergence property when $\ell$ is fixed.  Note that $w-v_\ell$ is a continuous function whose support is compact in $\Om$. It follows from the weak convergence of currents (of order zero) that
$$
	 \int (w-v_\ell) dd^c v \wed \eta\wed\om^{n-m+1} = \lim_{j\to \infty}  \int (w-v_\ell) dd^c v_j \wed \eta\wed\om^{n-m+1}
$$ 
Now for each $\ell$ and $j$, the integration by parts formula  \eqref{eq:IBP-normal} gives 
$$\begin{aligned}
-\int d(w - v_\ell) \wed d^c v_j \wed \eta \wed \om^{n-m+1} 
&=  \int (w-v_\ell) dd^c v_j \wed \eta\wed\om^{n-m+1} \\
&\quad + \int (w-v_\ell) d^c v_j \wed d\om \wed \eta\wed\om^{n-m}.
\end{aligned}$$
By Corollary~\ref{cor:cs-w-bdd} applied for $\phi = w-v_\ell$ and CLN inequality (Proposition~\ref{prop:CLN-bdd}) for bounded functions,  the second integral on the right hand side is bounded by
$$	
	C \left(\int (w-v_\ell)  \eta \wed \om^{n-m+2}\right)^\frac{1}{2}.
$$
The desired inequality follows by letting $j\to\infty$.
\end{proof}

\begin{lem}\label{lem:w-IBP-b} Let $v,w$ and $\{v_\ell\}$ be as in Lemma~\ref{lem:w-IBP-a}. Let $\rho$ be a bounded $m-\om$-sh in $\Om$ such that $-1 \leq \rho \leq 0$. Then,
$$\begin{aligned}
	\int_\Om (w - v) dd^c \rho \wed \eta\wed\om^{n-m+1}  
&\leq  - \int_{\Om} d(w-v) \wed d^c \rho \wed \eta\wed\om^{n-m+1} \\ 
&\quad + C \left(\int_\Om (w-v)  \wed \eta\wed \om^{n-m+2} \right)^\frac{1}{2}.
\end{aligned}$$
The dependence of $C$ is the same as in the previous lemma.
\end{lem}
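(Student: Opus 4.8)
The plan is to follow the proof of Lemma~\ref{lem:w-IBP-a} almost verbatim, the only new ingredient being an approximation of the merely bounded function $\rho$ by smooth ones. Since $w-v$ and each $w-v_\ell$, together with their differentials, vanish outside $K$, all three integrals in the statement are in fact integrals over $K$; so I would fix a smoothly bounded strictly $m$-pseudoconvex domain $\Om'$ with $K\subset\subset\Om'\subset\subset\Om$ and work on $\Om'$, where by Proposition~\ref{prop:smoothing} one can choose $\rho_k\in C^\infty(\ov{\Om'})$ that are $m-\om$-sh, satisfy $-1\le\rho_k\le0$, and decrease to $\rho$ on $\ov{\Om'}$. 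As usual we also assume $0\le v\le v_\ell\le w\le1$ and $-1\le u_i\le0$.

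First I would prove the estimate in the fully smooth case, namely with $v$ replaced by $v_\ell$ and $\rho$ replaced by $\rho_k$; here there is no need for a further approximation of the current, since $\eta$ already has order zero and $\rho_k$ is smooth. Writing $\phi:=w-v_\ell$, which is smooth, satisfies $0\le\phi\le1$, and vanishes near $\d\Om'$, the integration by parts formula \eqref{eq:IBP-normal} gives
\[\notag
\begin{aligned}
\int\phi\, dd^c\rho_k\wed\eta\wed\om^{n-m+1}
&=-\int d\phi\wed d^c\rho_k\wed\eta\wed\om^{n-m+1} \\
&\quad+\int\phi\, d^c\rho_k\wed d\om\wed\eta\wed\om^{n-m}.
\end{aligned}
\]
Applying Corollary~\ref{cor:cs-w-bdd} to the torsion term with this $\phi$ and with $h=\rho_k+1$ (so that $0\le h\le1$ and $dh=d\rho_k$), followed by the CLN inequality (Proposition~\ref{prop:CLN-bdd}) exactly as in Lemma~\ref{lem:w-IBP-a}, bounds it by $C\left(\int\phi\,\eta\wed\om^{n-m+2}\right)^{1/2}$ with $C$ having the same dependence as in Lemma~\ref{lem:w-IBP-a}; absorbing it into the right-hand side yields the asserted inequality with $(v_\ell,\rho_k)$ in place of $(v,\rho)$.

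The remaining work is to remove the two approximations, and the order matters. I would first let $k\to\infty$ with $\ell$ fixed. By Definition~\ref{defn:w-prod} one has $dd^c\rho_k\wed\eta=dd^c(\rho_k\eta)$, and $\rho_k\eta\to\rho\eta$ in the sense of currents of order zero, so $dd^c\rho_k\wed\eta\wed\om^{n-m+1}\to dd^c\rho\wed\eta\wed\om^{n-m+1}$ weakly as positive measures; since $w-v_\ell$ is continuous with compact support, the left-hand side converges. For the term $\lc w-v_\ell,\rho_k\rc_\eta$, the energy bound $\|\rho_k\|_\eta^2\le\int dd^c(\rho_k+1)^2\wed\eta\wed\om^{n-m+1}\le C$ (obtained from $2\,d\rho_k\wed d^c\rho_k=dd^c(\rho_k+1)^2-2(\rho_k+1)\,dd^c\rho_k$, the positivity of $dd^c\rho_k\wed\eta\wed\om^{n-m+1}$, and the CLN inequality) together with the Mazur-lemma argument from the proof of Proposition~\ref{prop:weak-conv} shows $\rho\in W^{1,2}(\Om',\mu)$ and that $\rho_k$ converges weakly to $\rho$ there, whence $\lc w-v_\ell,\rho_k\rc_\eta\to\lc w-v_\ell,\rho\rc_\eta$; the error term is untouched. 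This gives the inequality with $\rho$ and $v_\ell$. Finally I would let $\ell\to\infty$: the left-hand side and the error term now pair the decreasing sequence $w-v_\ell\downarrow w-v$ against the \emph{fixed} positive measures $dd^c\rho\wed\eta\wed\om^{n-m+1}$ and $\eta\wed\om^{n-m+2}$, hence converge by monotone convergence, while $\lc w-v_\ell,\rho\rc_\eta\to\lc w-v,\rho\rc_\eta$ by the weak convergence of $v_\ell$ to $v$ in $W^{1,2}(\Om',\mu)$ from Proposition~\ref{prop:weak-conv}. This completes the proof.

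The point requiring the most care is carrying out the two limits in the correct order: a varying family of measures (as in the limit $k\to\infty$) may only be tested against the continuous functions $w-v_\ell$, and the only lower semicontinuous limitand $w-v$ must be paired with fixed measures, where monotone convergence is available; reversing the order would require testing a varying measure against $w-v$ and would yield only an inequality in the wrong direction. The secondary point needing attention is that $\rho$ genuinely belongs to the weighted Sobolev space $W^{1,2}(\Om',\mu)$, which is precisely what the uniform energy bound on the $\rho_k$ supplies.
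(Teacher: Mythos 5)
Your outline follows the paper's route closely: smooth both $v$ and $\rho$, integrate by parts with \eqref{eq:IBP-normal}, control the torsion term by Corollary~\ref{cor:cs-w-bdd} plus the CLN inequality, and then pass to the limits in the order you describe (first in $\rho$, then in $v_\ell$). Your remark about why that order is forced is exactly right, and the final $\ell\to\infty$ step via monotone convergence against the fixed measures $dd^c\rho\wedge\eta\wedge\om^{n-m+1}$ and $\eta\wedge\om^{n-m+2}$, together with Proposition~\ref{prop:weak-conv} for the middle term, is also fine.

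There is, however, a genuine gap at the place where you remove the smoothing of $\rho$. You claim the uniform energy bound
$\|\rho_k\|_\eta^2\le\int_{\Om'} dd^c(\rho_k+1)^2\wedge\eta\wedge\om^{n-m+1}\le C$
``by the CLN inequality.'' But Proposition~\ref{prop:CLN-bdd} is a local estimate: it controls $\int_{K'}$ for $K'\subset\subset U\subset\subset\Om'$, with the constant depending on $\|\cdot\|_{L^\infty(U)}$ and on the pair $(K',U)$. It says nothing about the mass on all of $\Om'$, and the $\rho_k$ that Proposition~\ref{prop:smoothing} produces are only smooth on $\ov{\Om'}$ --- they have no prescribed form near $\d\Om'$, so both $\nabla\rho_k$ and the mass of $dd^c(\rho_k+1)^2\wedge\eta\wedge\om^{n-m+1}$ near $\d\Om'$ can blow up as $k\to\infty$. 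Consequently the Mazur-lemma argument you invoke does not apply, and the weak $W^{1,2}(\Om',\mu)$-convergence of $\rho_k$ to $\rho$ --- the very thing you need to pass to the limit in $\lc w-v_\ell,\rho_k\rc_\eta$ --- is not justified.

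The paper avoids this by a preliminary modification: since the integrands vanish off $K$, one replaces $\rho$ by $\max\{\rho,A\psi\}$ with $A$ large enough that $A\psi\le-1$ near $K$. This leaves the statement unchanged and places $\rho$ in $P^*(\Om,K',A\psi)$, so that the decreasing smooth approximants $\rho_j$ can also be taken in $P^*$ (equal to the \emph{fixed} smooth function $A\psi$ outside a compact set). Then the total energy splits into a CLN-controlled compact part plus a fixed finite boundary contribution, the uniform bound holds, and Proposition~\ref{prop:weak-conv} applies directly to give the needed weak $W^{1,2}$-convergence. Your argument would work if you either performed this $\max$-replacement first, or alternatively smoothed on a strictly larger domain $\Om''\supset\supset\Om'$ so that the energy over $\Om'$ becomes a CLN estimate on a compact subset of $\Om''$; as written, neither step is present.
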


\begin{proof} Since the supports of the integrands on both sides are contained in $K$, we replace $\rho$ by $\max\{\rho, A\psi\}$ for some $A>0$ large such that $A\psi \leq -1$ on a neighborhood of  $K$.
Thus, we may assume that $\rho \in P^*(\Om, K', A\psi)$ for a compact subset $K \subset K' \subset \subset \Om$.  
By dominated convergence theorem for Radon measures and the weak convergence in Proposition~\ref{prop:weak-conv} it is enough to prove the inequality for $v_\ell$ in the place of $v$, that is
\[\label{eq:IBP-b-1}\begin{aligned}
	\int_\Om (w - v_\ell) dd^c \rho \wed \eta\wed\om^{n-m+1}  
&\leq  - \int_{\Om} d(w-v_\ell) \wed d^c \rho \wed \eta\wed\om^{n-m+1} \\ 
&\quad + C \left(\int_\Om (w-v_\ell) \eta \wed \om^{n-m+2} \right)^\frac{1}{2}.
\end{aligned}\]
For a fixed $\ell$ the function  $w-v_\ell$ is continuous, with its support  in $\Om$. 
We can apply the limit argument once more to assume that  $\rho$ also is smooth as follows.
By the approximation theorem for $m-\om$-sh functions we can find a decreasing sequence $\{\rho_j\}_{j\geq 1}\subset P^*$ of smooth $m-\om$-sh functions such that $\rho_j\downarrow \rho$. We may  assume that $-A \leq \rho_j \leq 0$. Then,
$$
	dd^c\rho_j \wed \eta \wed \om^{n-m+1} \to dd^c \rho \wed \eta \wed \om^{n-m+1}
$$
weakly as measures. So, if the inequality holds for $\rho_j$, then so it does for $\rho$. 

Using again the integration by parts formula \eqref{eq:IBP-normal} we have
$$\begin{aligned}
	\int_\Om (w - v_\ell) dd^c \rho_j \wed \eta\wed\om^{n-m+1}  
&=  - \int_{\Om} d(w-v_\ell) \wed d^c \rho_j \wed \eta\wed\om^{n-m+1} \\ 
&\quad + \int_\Om (w-v_\ell)  d^c\rho_j\wed d\om \wed\eta  \wed \om^{n-m}.
\end{aligned}$$
As in the proof of Lemma~\ref{lem:w-IBP-a}, by an application of Cauchy-Schwarz inequality (Corollary~\ref{cor:cs-w-bdd}) and then the CLN inequality for the second integral on the right hand side, we conclude that  the inequality  holds for $\rho_j$ and so does  \eqref{eq:IBP-b-1}. \end{proof}

Lastly we can state the  important estimate for functions in $P^*$ and for  $\eta = dd^c u_1 \wed \cdots \wed dd^c u_{m-2}$,
 where $u_1,..,u_{m-2} \in P^*$ without strict positive assumption on $\eta$.

 \begin{lem}\label{lem:w-L-key}   Let $v,w$ and $\{v_\ell\}$ be as in Lemma~\ref{lem:w-IBP-a}. Let $\rho$ be a bounded $m-\om$-sh in $\Om$ such that $-1 \leq \rho \leq 0$. Then,
$$\begin{aligned}
	\int_\Om (w-v) dd^c \rho \wed \eta \wed \om^{n-m+1} 
&\leq C \left( \int_\Om (w-v) dd^c v\wed \eta\wed \om^{n-m+1} \right)^\frac{1}{2} \\
&\quad + C \left( \int_\Om (w-v) \eta\wed\om^{n-m+2} \right)^\frac{1}{2} \\
&\quad + C \left( \int_\Om (w-v) \eta\wed\om^{n-m+2} \right)^\frac{1}{4},
\end{aligned}$$
with the same dependence of $C$ as in previous lemmata.
\end{lem}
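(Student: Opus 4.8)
The plan is to combine the integration-by-parts inequalities of Lemmas~\ref{lem:w-IBP-a} and~\ref{lem:w-IBP-b} with the Cauchy--Schwarz inequality for the positive semidefinite bilinear form $\langle v,w\rangle_\eta=\int dv\wed d^cw\wed\eta\wed\om^{n-m+1}$ attached to the positive current $\eta\wed\om^{n-m}$ (Lemma~\ref{lem:w-p-functions}); this form is symmetric, since only the $(1,1)$-component of $dv\wed d^cw$ survives the wedge with the $(n-1,n-1)$-current $\eta\wed\om^{n-m+1}$, so Cauchy--Schwarz is available. Since Lemmas~\ref{lem:w-IBP-a} and~\ref{lem:w-IBP-b} require the strict positivity \eqref{eq:add-a}, I would first replace each $u_i$ by $u_i+\de|z|^2$, prove the inequality for the resulting $\eta_\de$, and then let $\de\to 0$: by multilinearity of the wedge product $\eta_\de$ is a polynomial in $\de$ whose coefficients are fixed currents of order zero, so all the integrals below converge as $\de\to0$, and the constants provided by those two lemmas are $\de$-independent. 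Writing $h:=w-v\ge 0$ and $\mu(h):=\int_\Om h\,\eta\wed\om^{n-m+2}$, Lemma~\ref{lem:w-IBP-b} gives
\begin{equation*}
	\int_\Om h\,dd^c\rho\wed\eta\wed\om^{n-m+1}\ \le\ -\langle h,\rho\rangle_\eta + C\,\mu(h)^{\frac12},
\end{equation*}
so it remains to bound $-\langle h,\rho\rangle_\eta\le\|h\|_\eta\,\|\rho\|_\eta$.

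To control $\|\rho\|_\eta$ by a constant depending only on the data, as in the proof of Lemma~\ref{lem:w-IBP-b} I would replace $\rho$ by $\max\{\rho,A\psi\}\in P^*$ and then add a constant to make it a nonnegative function $\tilde\rho$. Since $t\mapsto t^2$ is convex and increasing on $[0,\infty)$, the function $\tilde\rho^2$ is again $m$-$\om$-sh, and from $d\rho\wed d^c\rho=d\tilde\rho\wed d^c\tilde\rho=\tfrac12 dd^c\tilde\rho^2-\tilde\rho\,dd^c\tilde\rho$ together with $\tilde\rho\ge 0$ and $dd^c\tilde\rho\wed\eta\wed\om^{n-m+1}\ge 0$ one obtains $\|\rho\|_\eta^2\le\tfrac12\int dd^c\tilde\rho^2\wed\eta\wed\om^{n-m+1}\le C$, the last bound coming from the CLN inequality (Proposition~\ref{prop:CLN-bdd}) on a compact neighbourhood of $K$ and the fact that off such a neighbourhood the integrand is a fixed smooth measure. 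Hence $-\langle h,\rho\rangle_\eta\le C\|h\|_\eta$.

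It remains to estimate $\|h\|_\eta^2=\langle h,w\rangle_\eta-\langle h,v\rangle_\eta$. Lemma~\ref{lem:w-IBP-a} bounds the second term:
\begin{equation*}
	-\langle h,v\rangle_\eta\ \le\ \int_\Om h\,dd^c v\wed\eta\wed\om^{n-m+1} + C\,\mu(h)^{\frac12}.
\end{equation*}
For the first term, $w$ is smooth, so after approximating $v$ by $v_\ell\downarrow v$ (exactly as in the proof of Lemma~\ref{lem:w-IBP-a}, using the weak convergence in $W^{1,2}(\Om,\mu)$ of Proposition~\ref{prop:weak-conv} to pass to the limit) the integration-by-parts formula \eqref{eq:IBP-normal} gives
\begin{equation*}
	\langle h,w\rangle_\eta\ =\ -\int_\Om h\,dd^c w\wed\eta\wed\om^{n-m+1} + \int_\Om h\,d^c w\wed d\om\wed\eta\wed\om^{n-m};
\end{equation*}
here the first integral is $\le 0$ because $w$ is $m$-$\om$-sh and $h\ge 0$, while the second is at most $C\,\mu(h)$ because the bounded smooth form $d^c w\wed d\om$ satisfies $|d^c w\wed d\om\wed\eta\wed\om^{n-m}|\le C\,\om^2\wed\eta\wed\om^{n-m}$ pointwise. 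Thus $\|h\|_\eta^2\le\int_\Om h\,dd^c v\wed\eta\wed\om^{n-m+1}+C\,\mu(h)+C\,\mu(h)^{\frac12}$, and taking square roots (with $\sqrt{a+b+c}\le\sqrt a+\sqrt b+\sqrt c$) yields
\begin{equation*}
	\|h\|_\eta\ \le\ \Big(\int_\Om h\,dd^c v\wed\eta\wed\om^{n-m+1}\Big)^{\frac12} + C\,\mu(h)^{\frac12} + C\,\mu(h)^{\frac14}.
\end{equation*}
Combining this with $-\langle h,\rho\rangle_\eta\le C\|h\|_\eta$ and the estimate from Lemma~\ref{lem:w-IBP-b}, and finally letting $\de\to 0$, produces exactly the three terms in the statement, the last with exponent $1/4$.

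As in the two preceding lemmas, the main obstacle is not any single estimate but the bookkeeping needed to make every integration by parts legitimate when $v$, $\rho$ and the potentials $u_i$ defining $\eta$ are merely bounded and the background current $\eta\wed\om^{\bullet}$ is only of order zero (hence not closed): each such step has to be run through smooth decreasing approximations and the weak convergence in the weighted Sobolev space $W^{1,2}(\Om,\mu)$ of Proposition~\ref{prop:weak-conv}, and one must verify that no constant depends on the regularising parameter $\de$ so that the strict-positivity hypothesis \eqref{eq:add-a} can be dispensed with at the very end.
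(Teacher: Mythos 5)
Your proposal is correct, and it rests on the same scaffolding as the paper's argument: $\de$-regularization of $\eta$, the Cauchy--Schwarz inequality for the positive semidefinite form $\lc\cdot,\cdot\rc_\eta$, the CLN bound for $\|\rho\|_\eta$ (the paper's constant $C_1$), and the weak $W^{1,2}(\Om,\mu)$-convergence of Proposition~\ref{prop:weak-conv} to justify each integration by parts for merely bounded potentials. Where you depart is in the way $\|h\|_\eta^2$ is estimated. The paper integrates $dh_\ell\wed d^ch_\ell\wed T$ by parts to produce $-h_\ell dd^c h_\ell\wed T\le h_\ell dd^c v_\ell\wed T$ and a torsion term $\int h_\ell\, d^c h_\ell\wed d\om\wed\eta\wed\om^{n-m}$ — the latter requires a second Cauchy--Schwarz (since $d^cv_\ell$ is not under pointwise control) — and then, because the first term still carries $v_\ell$ inside $dd^c$, it goes through \eqref{eq:L-key-5}--\eqref{eq:L-key-6} (a further use of Lemma~\ref{lem:w-IBP-b} plus $v_\ell\geq v$) before passing $\ell\to\infty$. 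You instead split $\|h\|_\eta^2=\lc h,w\rc_\eta-\lc h,v\rc_\eta$, feed $-\lc h,v\rc_\eta$ directly into Lemma~\ref{lem:w-IBP-a} (which already lands on $\int h\,dd^cv\wed\eta\wed\om^{n-m+1}$ with $v$ rather than $v_\ell$), and, crucially, exploit that $w$ is \emph{smooth} to bound the remaining torsion term $\int h\,d^c w\wed d\om\wed\eta\wed\om^{n-m}$ by $C\mu(h)$ pointwise, using positivity of $\eta\wed\om^{n-m}$ rather than a second Cauchy--Schwarz. This removes the intermediate step \eqref{eq:L-key-5}--\eqref{eq:L-key-6} entirely and makes the source of the exponent $1/4$ transparent (it is literally $\sqrt{C\mu(h)^{1/2}}$). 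Both routes arrive at the same three-term bound; yours is tighter bookkeeping made possible by the standing smoothness of $w$, whereas the paper's is more symmetric between $w$ and $v_\ell$ and so transfers more readily to the later variant (Remark~\ref{rmk:continuity-w}) where $w$ is relaxed to $P^*$.
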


\begin{proof} Observe that it is enough to prove the inequality for $u_i:= u_i + \de |z|^2$ and then let $\de\to 0^+$. Therefore without loss of generality we assume that  $\eta$ satisfies  conditions \eqref{eq:add-a} and \eqref{eq:trace-m-1}. As in the proof of Lemma~\ref{lem:w-IBP-b}  we may assume that $\rho \in P^*$ and it is smooth.  It follows from \eqref{eq:IBP-b-1} that
\[\label{eq:L-key-0}\begin{aligned}
	\int_\Om (w - v_\ell) dd^c \rho \wed \eta\wed\om^{n-m+1}  
&\leq  - \int_{\Om} d(w-v_\ell) \wed d^c \rho \wed \eta\wed\om^{n-m+1} \\ 
&\quad + C \left(\int_\Om (w-v_\ell) \eta \wed \om^{n-m+2} \right)^\frac{1}{2}.
\end{aligned}\]
It remains to bound the first integral on the right hand side. Using the Cauchy-Schwarz inequality (Lemma~2.3) for $T = \eta\wed\om^{n-m+1}$ we get 
\[\label{eq:L-key-1}\begin{aligned}
	 \left| \int_{\Om} d(w-v_\ell) \wed d^c \rho \wed T\right|^2
&\leq  C_1  \int_{\Om} d(w-v_\ell) \wed d^c (w-v_\ell) \wed T,\\ 
\end{aligned}\]
where $C_1$ is the bound for 
$$
	C  \int_{K} d\rho  \wed d^c \rho \wed T
$$
and so the dependence of $C_1$ is as in the statement which is  a simple consequence of  the CLN inequality. 

Next,  we have
\[\label{eq:L-key-2}\begin{aligned}
\int_\Om d(w-v_\ell) \wed d^c (w-v_\ell) \wed T 
&	=  -\int_\Om (w-v_\ell) dd^c(w-v_\ell) \wed T \\
&	\quad + \int_\Om (w-v_\ell) d^c (w-v_\ell) \wed d\om \wed \eta \wed \om^{n-m}. \\
\end{aligned}
\]
Let us estimate the last integral. The Cauchy-Schwarz inequality and  the CLN inequality imply that
\[\label{eq:L-key-3}\begin{aligned}
&\left|\int_\Om (w-v_\ell) d^c (w-v_\ell) \wed d\om \wed \eta\wed \om^{n-m} \right|^2 \\
&	\leq C_2  \int_\Om (w-v_\ell) \eta\wed\om^{n-m+2},\\
\end{aligned}\]
where $C_2$ is the bound for
$$
	\int_\Om (w-v_\ell) d (w-v_\ell)\wed d^c (w-v_\ell) \wed \eta\wed \om^{n-m+1}.
$$
Notice also that as $w-v_\ell\geq 0$, 
\[\label{eq:L-key-4}
	- (w-v_\ell) dd^c (w-v_\ell) \wed T \leq (w -v_\ell) \wed dd^c v_\ell \wed T.
\]
Combining \eqref{eq:L-key-0}, \eqref{eq:L-key-1}, \eqref{eq:L-key-2}, \eqref{eq:L-key-3} and \eqref{eq:L-key-4} we get
\[\label{eq:L-key-1234}
\begin{aligned}
	\int_\Om (w-v_\ell) dd^c \rho \wed \eta\wed\om^{n-m+1} 
&	\leq C \left(\int_\Om(w-v_\ell) dd^c v_\ell \wed T\right)^\frac{1}{2} \\
&	\quad + C \left( \int_\Om (w-v_\ell) \eta\wed \om^{n-m+2}\right)^\frac{1}{2}\\
&	\quad + C \left( \int_\Om (w-v_\ell) \eta\wed \om^{n-m+2}\right)^\frac{1}{4}.
\end{aligned}
\]
Notice that  by smooth sequence $\rho_j \downarrow \rho$ we obtain this inequality for $\rho\in P^*$.
To finish the proof we need to know that  passing to the limit as  $\ell \to \infty$ we get the inequality in the statement.
This is known for the limits of all integrals above except for the first integral on the right hand side. 
To handle this,  we use the fact that  $v_\ell \geq v$. Then
\[\label{eq:L-key-5}
	\int_\Om (w-v_\ell) dd^c v_\ell \wed \eta\wed\om^{n-m+1} \leq \int_\Om (w-v) dd^c v_\ell \wed \eta\wed\om^{n-m+1}. 
\]
Applying Lemma~\ref{lem:w-IBP-b} for the right hand side, we have
\[\label{eq:L-key-6}\begin{aligned}
	\int_\Om (w-v) dd^c v_\ell \wed \eta \wed \om^{n-m+1} 
&\leq  - \int_\Om d(w-v)\wed d^c v_\ell \wed \eta\wed\om^{n-m+1} \\
&\quad + C  \left( \int_\Om (w-v_\ell) \tau \wed \om^{n-m+2} \right)^\frac{1}{2},
\end{aligned}\]
Combining three inequalities \eqref{eq:L-key-1234}, \eqref{eq:L-key-5} and \eqref{eq:L-key-6}, we obtain
$$\begin{aligned}
\int_\Om (w-v_\ell) dd^c\rho \wed \eta\wed \om^{n-m+1}
&\leq	 C \left(- \int_\Om d(w-v)\wed d^c v_\ell \wed \eta\wed\om^{n-m+1} \right)^\frac{1}{2}	 \\
&+\quad 	C  \left( \int_\Om (w-v_\ell) \eta \wed \om^{n-m+2} \right)^\frac{1}{2} \\
&+\quad 	C  \left( \int_\Om (w-v_\ell) \eta \wed \om^{n-m+2} \right)^\frac{1}{4}.
\end{aligned}
$$
Letting $\ell \to \infty$ and invoking Proposition~\ref{prop:weak-conv}  gives
$$\begin{aligned}
\int_\Om (w-v) dd^c\rho \wed \eta\wed \om^{n-m+1}
&\leq	 C \left(- \int_\Om d(w-v)\wed d^c v \wed \eta\wed\om^{n-m+1} \right)^\frac{1}{2}	 \\
&+\quad 	C  \left( \int_\Om (w-v) \eta \wed \om^{n-m+2} \right)^\frac{1}{2}\\
&+\quad 	C  \left( \int_\Om (w-v) \eta \wed \om^{n-m+2} \right)^\frac{1}{4}.
\end{aligned}
$$
Now the desired estimate follows from Lemma~\ref{lem:w-IBP-a}.
\end{proof}

\begin{remark}\label{rmk:continuity-w} In Lemmas~\ref{lem:w-IBP-a}, \ref{lem:w-IBP-b}, \ref{lem:w-L-key} we only need to assume that $w$ is continuous
after using one more approximation argument.  In fact, it is enough to assume $w\in P^*$.
\end{remark}

For further applications we need the following lemma.

\begin{lem}\label{lem:mass-convergence} Let $u$ be a bounded $m-\om$-sh. Suppose $\{u_j\}_{j\geq 1}$ is a decreasing sequence of smooth $m-\om$-sh functions with $\|u_j\|_{L^\infty} \leq 1$ such that  $u_j \downarrow u$ point-wise. Assume also that all $u_j=u$  on a neighborhood of $\d\Om$.  Let $-1 \leq \rho \leq 0$ be  $m-\om$-sh functions. Then, for every $0\leq s \leq m-1$,
\[\notag\label{eq:mass-conv1}
	\lim_{j\to +\infty} \sup_\rho \left\{ \int_\Om (u_j -u) (dd^c \rho)^s \wed \om^{n-s} \right\} =0.
\]
\end{lem}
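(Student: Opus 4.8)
\emph{Proof proposal.} The plan is to peel off the factors $dd^c\rho$ one at a time using the integration-by-parts inequality of Lemma~\ref{lem:w-L-key}, until the quantity is dominated by a finite sum of powers of the \emph{$\rho$-free} integrals $\int_\Omega(u_j-u)(dd^c u)^t\wedge\omega^{n-t}$, each of which tends to $0$ as $j\to\infty$ by monotone convergence (the measure being fixed and of finite mass on $\mathrm{supp}\,(u_j-u)$), so that the bound is uniform in $\rho$.

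First I would reduce, by the Localization Principle together with a gluing, to the situation where $\Omega$ is a ball, $\psi$ is a fixed smooth strictly psh defining function with $\psi\in P^*$, the compact $K=\mathrm{supp}\,(u_j-u)$ is independent of $j$, one has $u,\rho\in P^*(\Omega,K,\psi)$ (replacing $\rho$ by $\max\{\rho,A\psi\}$ exactly as in the proof of Lemma~\ref{lem:w-IBP-b}), the $u_j$ are smooth members of $P^*$, and $-1\le u\le u_j\le 0$, $-1\le\rho\le0$; write $h_j:=u_j-u\ge0$, so $h_j\downarrow 0$. For $0\le p\le m-1$ the measures $(dd^c\rho)^p\wedge\omega^{n-p}$ and $(dd^c u)^p\wedge\omega^{n-p}$ are well defined and positive by Lemma~\ref{lem:w-p-functions}, with finite mass on $K$ by Proposition~\ref{prop:CLN-bdd}.

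The key step is a one-step reduction: for $1\le s\le m-1$ and $t\ge0$ with $s+t\le m-1$, writing $\Xi=(dd^c u)^t$, there is a constant $C=C(n,m,\omega,K,\psi)$, \emph{independent of $j$ and of $\rho$} (since $\|\rho\|_{L^\infty}\le1$), with
\[
\int_\Omega h_j\,\Xi\wedge(dd^c\rho)^s\wedge\omega^{n-t-s}\ \le\ C\big(A^{1/2}+B^{1/2}+B^{1/4}\big),
\]
where $A=\int_\Omega h_j(dd^c u)^{t+1}\wedge(dd^c\rho)^{s-1}\wedge\omega^{n-t-s}$ and $B=\int_\Omega h_j(dd^c u)^{t}\wedge(dd^c\rho)^{s-1}\wedge\omega^{n-t-s+1}$. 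To prove this, use that $c\,\omega\le dd^c\psi\le C\,\omega$ near $K$; telescoping these inequalities and wedging into the positive current $\Xi\wedge(dd^c\rho)^s\wedge\omega^{n-m}$ (positive because $t+s\le m-1$, by Lemma~\ref{lem:w-p-functions}) together with positive $(1,1)$-forms gives
\[
\Xi\wedge(dd^c\rho)^s\wedge\omega^{n-t-s}\ \le\ C\,dd^c\rho\wedge\eta\wedge\omega^{n-m+1},\qquad \eta:=(dd^c u)^t\wedge(dd^c\rho)^{s-1}\wedge(dd^c\psi)^{m-1-s-t},
\]
where $\eta$ is a wedge of exactly $m-2$ functions from $P^*$. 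Now apply Lemma~\ref{lem:w-L-key} with this $\eta$, with $w=u_j$, $v=u$ and $v_\ell=u_{j+\ell}$ (so $w-v=h_j$ and $v_\ell\le w$, $v_\ell\downarrow u$), and bound $(dd^c\psi)^{m-1-s-t}\le C\,\omega^{m-1-s-t}$ in the three resulting integrals by the same telescoping-and-wedging argument; this turns them into $C\,A$, $C\,B$, $C\,B$, completing the step. Note that $A$ and $B$ contain one fewer factor $dd^c\rho$ and still satisfy the constraint.

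Finally I would iterate this: starting from $\int_\Omega h_j(dd^c\rho)^{s}\wedge\omega^{n-s}$ (the case $t=0$) and applying the step, the number of factors $dd^c\rho$ drops by one each time, so after at most $s\le m-1$ steps one gets
\[
\int_\Omega h_j(dd^c\rho)^{s}\wedge\omega^{n-s}\ \le\ C\sum_{t=0}^{s}\Big(\int_\Omega h_j(dd^c u)^{t}\wedge\omega^{n-t}\Big)^{\varepsilon_t}
\]
with exponents $\varepsilon_t\in(0,1]$ (products of $\tfrac12$'s and $\tfrac14$'s, bounded below by $4^{-m}$) and $C$ independent of $j$ and $\rho$ (the branching factor is at most $3$ per step). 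Each integral on the right is $\int h_j$ against the fixed finite positive measure $(dd^c u)^t\wedge\omega^{n-t}$ with $h_j\downarrow 0$, hence tends to $0$ as $j\to\infty$; so the right-hand side tends to $0$ uniformly in $\rho$, which is the claim. The main difficulty is the bookkeeping forced by the Hermitian, non-K\"ahler setting: since $(dd^c\rho)^s$ alone need not be a positive current, every comparison of $\omega$-powers with $(dd^c\psi)$-powers has to be carried out after wedging into a genuinely positive current $(dd^c u)^t\wedge(dd^c\rho)^s\wedge\omega^{n-m}$, with careful tracking of bidegrees; and one must check that the constant produced by Lemma~\ref{lem:w-L-key} depends on $\rho$ only through $\|\rho\|_{L^\infty}$ — which is precisely what lets the iteration yield a bound uniform in $\rho$. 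The need to keep the approximating sequence $\{v_\ell\}$ smooth is the reason for choosing $w=u_j$ and $v_\ell=u_{j+\ell}$ among the given functions rather than an arbitrary smoothing of $u$.
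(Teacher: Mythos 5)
Your proof is correct and takes essentially the same route as the paper's: reduce to the class $P^*$ (modifying $\rho$ by $\max\{\rho,A\psi\}$), then iterate Lemma~\ref{lem:w-L-key} to trade each factor $dd^c\rho$ for a $dd^c u$, arriving at a bound by powers of the $\rho$-free integrals $\int(u_j-u)(dd^c u)^t\wedge\omega^{n-t}$, each of which tends to $0$ by dominated convergence. The only difference is that you make explicit the bookkeeping that the paper leaves implicit — in particular the insertion of $(dd^c\psi)^{m-1-s-t}$ so that the $\eta$ handed to Lemma~\ref{lem:w-L-key} really has $m-2$ factors from $P^*$, and the treatment of all $0\leq s\leq m-1$ rather than just $s=m-1$ — which is the right way to make the paper's one-line ``apply repeatedly'' precise.
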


\begin{proof} Let $K \subset \subset \Om$ be the compact set such that $u_j = u$ on $\Om\setminus K$ for all $j$. First we can modify $\rho$ outside $K$ by $A\psi$ without changing the integral value. Hence, we may assume that $\rho \in P^*$. Then,  applying repeatedly Lemma~\ref{lem:w-L-key}  to replace $dd^c\rho$ by $dd^cu$ we  get
$$
	\int_{\Om} (u_j-u) (dd^c\rho)^{m-1} \wed \om^{n-m+1} \leq C \sum_{s=1}^{m-1} [e_{(0,0,s)}]^{\frac{1}{2^{s}}},
$$
where 
$$
	e_{(0,0,s)} = \int_\Om (u_j -u) (dd^cu)^s \wed \om^{n-s}.
$$
Thus, the proof of the lemma follows by the dominated convergence theorem applied to each term on the right hand side.
\end{proof}

We get immediate consequences by the classical arguments. Let us define the $(m-1)$-capacity in the class of $m-\om$-sh functions. For a Borel set $E\subset \Om$,
$$
	cap_{m-1} (E) = \sup \left\{ \int_E H_{m-1} (\rho) : -1\leq \rho \leq 0,\; \rho \text{ is } m-\om\text{-sh in } \Om\right\}.
$$

\begin{cor} If $\{u_j\}_{j\geq 1}$ is a uniformly bounded decreasing sequence of smooth $m-\om$-sh functions such  that $u_j \downarrow u$ ($m-\om$-sh function). Then, $u_j \to u$ in $cap_{m-1}(\bullet)$.
\end{cor}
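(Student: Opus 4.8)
The statement to prove is that if $\{u_j\}$ is a uniformly bounded decreasing sequence of smooth $m-\om$-sh functions with $u_j \downarrow u$, then $u_j \to u$ in $cap_{m-1}(\bullet)$, i.e.,
$$
	cap_{m-1}(\{u_j > u + \de\}) \to 0 \quad \text{as } j\to\infty
$$
for every $\de>0$. The plan is to reduce this, via the localization principle, to the situation handled by Lemma~\ref{lem:mass-convergence}, and then to invoke a Chebyshev-type inequality to pass from convergence of masses to convergence of capacities.

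First I would fix $\de>0$ and a relatively compact open set, reducing by the localization principle (and a partition-of-unity / exhaustion argument) to the case where all the functions are defined on a ball, are uniformly bounded (say $-1\leq u_j, u \leq 0$ after rescaling), and all $u_j = u$ on a neighborhood of the boundary; here the gluing property is used to modify the functions near the boundary without changing them on a fixed compact set containing the region of interest. Next, for any competitor $\rho$ in the definition of $cap_{m-1}$, that is $-1\leq\rho\leq 0$ and $\rho$ is $m-\om$-sh, observe that on the set $E_j := \{u_j > u + \de\}$ we have $u_j - u > \de$, so by Chebyshev's inequality
$$
	\de \int_{E_j} H_{m-1}(\rho) \leq \int_\Om (u_j - u)\, H_{m-1}(\rho) = \int_\Om (u_j - u)\,(dd^c\rho)^{m-1}\wed \om^{n-m+1}.
$$
Taking the supremum over all such $\rho$ and applying Lemma~\ref{lem:mass-convergence} with $s = m-1$ gives
$$
	\de\, cap_{m-1}(E_j) \leq \sup_\rho \int_\Om (u_j - u)\,(dd^c\rho)^{m-1}\wed\om^{n-m+1} \longrightarrow 0
$$
as $j\to\infty$, which is exactly the required conclusion.

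The only subtle point—and the step I expect to be the main obstacle—is the reduction at the boundary. Lemma~\ref{lem:mass-convergence} requires that $u_j = u$ on a neighborhood of $\d\Om$, but the hypothesis of the Corollary gives an arbitrary decreasing sequence on a ball. This is precisely where one must use the gluing lemma: replace $u$ near the boundary by $A\psi$ for a suitable defining function and constant $A$, and correspondingly modify each $u_j$, so that the modified sequence agrees with the modified limit near $\d\Om$ while remaining unchanged (hence still decreasing to $u$) on the fixed compact set where the capacity is being measured. Since capacity is computed by testing against competitors supported throughout $\Om$, one must also check that passing to a relatively compact subdomain only decreases the capacity, which follows from monotonicity of $cap_{m-1}$ under restriction. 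Once this bookkeeping is in place, the rest is the mechanical Chebyshev argument above, and the convergence $e_{(0,0,s)}\to 0$ supplied by Lemma~\ref{lem:mass-convergence} does all the analytic work.
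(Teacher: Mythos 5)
Your proof is correct and reconstructs exactly what the authors intend by ``classical arguments'': the Chebyshev/Markov inequality applied to the set $\{u_j-u>\de\}$, followed by Lemma~\ref{lem:mass-convergence} with $s=m-1$, together with the localization principle to reduce to functions equal near the boundary. The paper gives no written proof of this corollary, but its later proof of Proposition~\ref{prop:decreasing-seq-cap} (the $m$-capacity analogue) runs the same Markov-plus-mass-lemma argument, only phrased as a contradiction, so your direct version is equivalent.
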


\begin{cor} \label{cor:w-quasi-continuity} Every bounded $m-\om$-sh function is quasi-continuous with respect to $cap_{m-1}(\bullet)$. \end{cor}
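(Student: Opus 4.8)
The plan is to run the classical Bedford--Taylor quasi-continuity argument for the capacity $cap_{m-1}$, the only genuinely new ingredient being Lemma~\ref{lem:mass-convergence}. Quasi-continuity is a local property, so by the Localization Principle I may assume that $u$ is a bounded $m-\om$-sh function on a ball $B$ with $-1\le u\le 0$; covering $\Om$ by countably many balls, proving the statement on each, and taking for the exceptional set the union of the local ones --- whose $cap_{m-1}$ stays small by the countable subadditivity that is immediate from the definition of $cap_{m-1}$ as a supremum of integrals of positive measures --- then yields the global statement. On the ball $B$, using Proposition~\ref{prop:smoothing} together with the gluing lemma, I pick smooth $m-\om$-sh functions $u_j$ with $-1\le u_j\le 0$, $u_j\downarrow u$ pointwise, and $u_j=u$ near $\d B$, so that Lemma~\ref{lem:mass-convergence} is applicable.

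By Lemma~\ref{lem:mass-convergence} with $s=m-1$, together with Chebyshev's inequality and the positivity $u_j-u\ge 0$, for every $\delta>0$
$$
cap_{m-1}\big(\{u_j-u>\delta\}\big)\ \le\ \frac1\delta\,\sup_{-1\le\rho\le0}\int_B (u_j-u)\,H_{m-1}(\rho)\ \longrightarrow\ 0\qquad(j\to\infty),
$$
which is the quantitative form of the Corollary preceding the statement. Now, given $\veps>0$, for each $k\ge 1$ choose $j_k$ with $cap_{m-1}(\{u_{j_k}-u>1/k\})<\veps\,2^{-k}$ and put $U=\bigcup_{k\ge1}\{u_{j_k}-u>1/k\}$. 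Each set $\{u_{j_k}-u>1/k\}$ is open, because $u_{j_k}$ is continuous and $u$ is upper semi-continuous so that $u_{j_k}-u$ is lower semi-continuous; hence $U$ is open and $cap_{m-1}(U)<\veps$ by subadditivity. On $B\setminus U$ one has $0\le u_{j_k}-u\le 1/k$ for every $k$, i.e. $u_{j_k}\to u$ uniformly on $B\setminus U$, so $u|_{B\setminus U}$ is continuous as a uniform limit of continuous functions. Letting $\veps\to 0$ proves quasi-continuity on $B$, and the first paragraph upgrades this to $\Om$.

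I do not expect a substantive obstacle here: the analytic content has already been captured by Lemma~\ref{lem:mass-convergence} (through the integration-by-parts inequalities of Lemmas~\ref{lem:w-IBP-a}--\ref{lem:w-L-key} and the weighted Sobolev space of Section~\ref{ss:sobolev}), and what remains is the standard Egorov-type extraction. The only points that need a little care are preparatory: arranging, via the Localization Principle and a gluing, that the smooth approximants $u_j$ may be taken to coincide with $u$ near $\d B$ so that Lemma~\ref{lem:mass-convergence} applies verbatim, and observing that the exceptional set can be chosen open (so that no passage to an outer version of $cap_{m-1}$ is needed), together with the routine countable subadditivity of the capacity.
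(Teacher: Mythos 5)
Your proof is correct and supplies exactly the classical Bedford--Taylor argument that the paper leaves implicit (the text merely says these are ``immediate consequences by the classical arguments'', and the proof of Theorem~\ref{thm:quasi-continuity} confirms this is the intended route): Markov's inequality together with Lemma~\ref{lem:mass-convergence} to get convergence in $cap_{m-1}$, then the Egorov-type extraction using the openness of $\{u_{j_k}-u>1/k\}$ and countable subadditivity, and localization via the Localization Principle. The only point worth stating a bit more carefully is the preparatory gluing that lets you take the smooth approximants to coincide with $u$ near $\d B$ --- after replacing $u$ by $\max\{u,A\psi\}$ one should pass to a regularized maximum of $u_j-\eps_j$ and $A\psi$ (with $\eps_j\downarrow0$ chosen via Dini) to keep the approximants smooth and still decreasing --- but this is standard and in line with the paper's own treatment in Remark~\ref{rmk:continuous-case}.
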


The quasicontinuity in $(m-1)$-capacity allows us to obtain the weak convergence for the wedge product of $m-\om$-sh  bounded functions (comp.  \cite[Proposition~3.2]{BT87}).  

\begin{prop}\label{prop:w-convergence-m} Let $0\leq p\leq m-1$. Let $v, u_1,...,u_{p}$ be uniformly bounded $m-\om$-sh functions in $\Om$.  Let $\{v_\ell\}_{\ell\geq 1}$, $\{u_s^j\}_{j\geq 1}$ be decreasing sequences of smooth $m-\om$-sh functions such that 
$v_\ell \downarrow v$ and $u_s^j \downarrow u_s$ for each $1\leq s\leq p$. Then
$$
	v_j dd^c u_1^{j} \wed \cdots \wed dd^c u_{p}^j \to v dd^c u_1 \wed \cdots \wed dd^c u_{p}
$$
as $j\to\infty$ in the sense of currents of order zero. Consequently, 
$$
	dd^c v_j  \wed dd^c u_1^j \wed \cdots \wed dd^c u_{p}^j \wed \om^{n-p}\to dd^c v  \wed dd^c u_1 \wed \cdots \wed dd^c u_{p} \wed \om^{n-p}
$$
in the sense of currents.
\end{prop}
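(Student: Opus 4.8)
\emph{Plan of proof.} The strategy is to follow the classical Bedford--Taylor scheme \cite{BT82, BT87}, with the integration by parts for closed positive currents (unavailable here) replaced by the uniform mass estimate of Lemma~\ref{lem:mass-convergence} and the quasi-continuity of Corollary~\ref{cor:w-quasi-continuity}. First I would reduce, using the Localization Principle, the gluing property, and Proposition~\ref{prop:smoothing}, to the case where $\Om$ is a ball carrying a smooth strictly $m-\om$-sh defining function $\psi_0$, $-1\leq v,v_\ell,u_s,u_s^j\leq 0$, and all the approximating sequences coincide with $\psi_0$ near $\d\Om$ (so that Lemma~\ref{lem:mass-convergence} applies). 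Then I would argue by induction on $p$, proving the slightly stronger statement that $w^j\,dd^c a_1^j\wed\cdots\wed dd^c a_p^j\to w\,dd^c a_1\wed\cdots\wed dd^c a_p$ as currents of order zero for \emph{every} uniformly bounded $m-\om$-sh multiplier $w$ and functions $a_1,\dots,a_p$ with smooth decreasing approximants. For $p=0$ this is $w^j\to w$ in $L^1_{\mathrm{loc}}(\Om)$, which is immediate from the dominated convergence theorem.

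For the inductive step, assume the statement for $p-1$. Applying it with multiplier $w=u_p$ to $u_{p-1},\dots,u_1$, then applying $dd^c$ (which is continuous on currents) and invoking Definition~\ref{defn:w-prod}, gives the multiplier--free convergence
$$T_j:=dd^c u_1^j\wed\cdots\wed dd^c u_p^j\ \longrightarrow\ T:=dd^c u_1\wed\cdots\wed dd^c u_p$$
as currents of order zero. Writing $v_jT_j-vT=(v_j-v)T_j+v(T_j-T)$ and testing against a continuous form $\psi$ with support in a compact $K\subset\subset\Om$, the Chern--Levine--Nirenberg type estimate of \cite{KN3} bounds the coefficients of $T_j$ pointwise by $C(dd^c b_j)^p\wed\om^{n-p}$, where $b_j:=\tfrac1p(u_1^j+\cdots+u_p^j)$ is $m-\om$-sh with values in $[-1,0]$; hence, since $v_j-v\geq 0$,
$$\big|\langle(v_j-v)T_j,\psi\rangle\big|\ \leq\ C\|\psi\|_K\int_\Om(v_j-v)\,(dd^c b_j)^p\wed\om^{n-p}\ \leq\ C\|\psi\|_K\sup_{-1\leq\rho\leq0}\int_\Om(v_j-v)\,(dd^c\rho)^p\wed\om^{n-p},$$
which tends to $0$ by Lemma~\ref{lem:mass-convergence} (here $p\leq m-1$).

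It remains to prove $v(T_j-T)\to0$, and this is the step that genuinely needs quasi-continuity. Given $\veps>0$, pick by Corollary~\ref{cor:w-quasi-continuity} an open $U$ with $cap_{m-1}(U)<\veps$ and a continuous $g$ with $g=v$ on $\Om\setminus U$ and $\|g\|_{L^\infty}\leq\|v\|_{L^\infty}$. Then $\langle g(T_j-T),\psi\rangle\to0$ by the multiplier--free convergence just established, while the remaining contributions are carried by $U$ and are bounded by a constant times $\|T_j\|(U)+\|T\|(U)$, the total variations of the coefficients over $U$. The key point is the uniform estimate $\|T_j\|(U)\leq C\,cap_{m-1}(U)$: bounding the coefficients of $T_j$ first by $(dd^c b_j)^p\wed\om^{n-p}$, then (using that $\psi_0$ is strictly $m-\om$-sh) by the mixed mass $c^{-(m-1-p)}(dd^c b_j)^p\wed(dd^c\psi_0)^{m-1-p}\wed\om^{n-m+1}$, and finally, by the mixed-versus-pure inequality from the proof of Proposition~\ref{prop:CLN-bdd}, by $C\,H_{m-1}\big(p\,b_j+(m-1-p)\psi_0\big)$, which by the very definition of $cap_{m-1}$ integrates over $U$ to at most $C\,cap_{m-1}(U)$; the bound $\|T\|(U)\leq C\,cap_{m-1}(U)$ then follows by lower semicontinuity of mass on open sets. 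Letting $j\to\infty$ and then $\veps\to0$ closes the induction. Finally, the ``consequently'' part follows by applying $dd^c$ to the first convergence (legitimate since each $v_j,u_s^j$ is smooth, so that $dd^c(v_j\,dd^c u_1^j\wed\cdots\wed dd^c u_p^j)=dd^c v_j\wed dd^c u_1^j\wed\cdots\wed dd^c u_p^j$), identifying the limit through Definition~\ref{defn:w-prod}, and wedging with the appropriate power of $\om$.

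I expect the main obstacle to be this last step: without the integration by parts available for closed positive currents in \cite{BT82}, the convergence $v(T_j-T)\to0$ must be squeezed entirely out of quasi-continuity together with the uniform-in-$j$ comparison of the masses of the merely order-zero, non-closed currents $T_j$ with $cap_{m-1}$. Arranging the induction with the extra multiplier, so that the simultaneous limit is reconciled with the iterated wedge product of Definition~\ref{defn:w-prod}, is the other point requiring care.
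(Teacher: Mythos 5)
Your proposal is correct and follows essentially the same scheme as the paper's proof: reduce by localization to $\Om$ a ball with all functions in the class $P^*$, argue by induction on $p$ (the multiplier-free convergence $S^j \to S$ coming from the inductive hypothesis plus continuity of $dd^c$), decompose $v_j S^j - vS = (v_j - v)S^j + v(S^j - S)$, and handle $v(S^j - S)$ via quasi-continuity with respect to $cap_{m-1}$ together with the domination of the coefficients of $S^j$ (resp.\ $S$) by $(dd^c b_j)^p\wed\om^{n-p}$ from \cite[Corollary~2.4]{KN3}. The one small divergence is in the treatment of $(v_j - v)S^j$: you invoke Lemma~\ref{lem:mass-convergence} directly (a sup-over-$\rho$ energy estimate), whereas the paper splits this integral over $\Om\setminus G$ and $G$, uses Dini's theorem on $\Om\setminus G$ together with the CLN bound, and the $cap_{m-1}$ bound on $G$; both routes are sound and use the same underlying quantitative inputs. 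You also spell out the step, which the paper leaves implicit, of promoting the $p$-mass $(dd^c b_j)^p\wed\om^{n-p}$ on an open set to a $cap_{m-1}$-bound by padding with powers of $dd^c\psi_0$ for the strictly psh defining function and applying a mixed-versus-pure inequality; that is exactly what is needed for the asserted bound $\int_G \wt S^j\wed\om^{n-p}\lesssim cap_{m-1}(G)$, so making it explicit is a welcome clarification.
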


\begin{proof} By the  localization principle we may assume $\Om$ is a ball and  all functions belong to $P^*$. Without loss of generality we may assume that $-1\leq v_j, u^j_s , v, u \leq 0$. Denote
$S^j = dd^c u_1^{j} \wed \cdots \wed dd^c u_{p}^j$ and $S = dd^c u_1 \wed \cdots \wed dd^c u_{p}$. Both of them are currents of order zero. Let $\chi$ be a test form with $\supp \chi = K$. We need to show that
$$
	\lim_{j\to \infty}\left| \int  (v_j S^j  - v S) \wed \chi \right| = 0.
$$
We argue by induction over $p$. It is obvious for $p=0$. Assume that it is true for $p-1 \leq m-2$. This means that
$$
	u_1^j dd^c u_2^j \wed \cdots \wed dd^c u^j_p \to u_1 dd^c u_2 \wed \cdots \wed dd^c u_p
$$
in the  sense of currents of order zero. Hence,
$$
	S^j = dd^c [u_1^j dd^c u_2^j \wed \cdots \wed dd^c u^j_p] \to  dd^c [u_1 dd^c u_2 \wed \cdots \wed dd^c u_p] = S.
$$
Furthermore, by \cite[Corollary~2.4]{KN3}, for a test form $\phi$, we have
\[\label{eq:w-conv-m}
	|S^j \wed \phi | \leq C \|\phi\|_K\wt S^j \wed \om^{n-p}, 
\]
where $\wt S^j = (dd^c \sum_{s=1}^{p} u_s^j)^p$ and $\|\phi\|_K$ is the $C^0$-norm of $\phi$ on $K$. Note that $\wt S^j \to \wt S = (dd^c\sum_{s=1}^p u_s)^p$ as the weak convergence holds for the  wedge product of currents associated to $p$ bounded functions. Letting $j\to \infty$ one obtains
\[\label{eq:w-conv-m-0}
	|S \wed \phi| \leq C \|\phi\|_{K} \wt S\wed \om^{n-p}.
\]

We are ready to show that the conclusion holds for $p \leq m-1$. Notice that the uniform constants $C$ below depend additionally on the uniform norm of coefficients of $\chi$.

Let $\veps>0$.  By the quasi-continuity with respect to $(m-1)$-capacity in Corollary~\ref{cor:w-quasi-continuity}, we can find an open set $G$ such that $cap_{m-1}(G)<\veps$ and $u_s^j, u_s, v$ are continuous on $\Om\setminus G$. Note that  $\Om\setminus G$ is compact in $\Om$. It follows from Dini's theorem that $u^j_s \to u_s$ uniformly as $j\to \infty$  on that set.  We first write
\[\label{eq:w-conv-m-1}
	\int (v_j S^j - vS) \wed \chi = \int (v_j - v) S^j \wed \chi+ \int v (S^j -S) \wed \chi.
\]
For the first term on the right, it follows from \eqref{eq:w-conv-m} that
\[\label{eq:w-conv-m-2}
	\left|\int (v_j -v) S^j \wed \chi \right| \leq C \left(  \int_{\Om\setminus G} (v_j-v) \wt S^j \wed \om^{n-p} + n^{p} cap_{m-1} (G) \right),
\]
where we used the fact that 
$$
	\int_G (v_j-v)  \wt S^j \wed \om^{n-p} \leq n^p cap_{m-1} (G).
$$ 
Next, we estimate the second integral in \eqref{eq:w-conv-m-1}. Let $-1\leq \wt v \leq 0$ be a continuous extension of $v$ from $\Om\setminus G$ to $\Om$. By induction hypothesis we know that 
$S^j\to S$ weakly, as currents. So, 
\[\label{eq:w-conv-m-3}
	\lim_{j\to \infty} \int \wt v (S^j -S) \wed \chi  =0.
\]
Moreover, similarly as above
$$
	\left|\int_G \wt v S^j \wed \chi \right| \leq C n^p cap_{m-1} (G),
$$
and by \eqref{eq:w-conv-m-0}
$$
	\left| \int_G \wt v S \wed \chi \right| \leq  C \left| \int_G  \wt S \wed \om^{n-p} \right| \leq Cn^p cap_{m-1} (G).
$$
Then,
\[\label{eq:w-conv-m-4} 
	\left| \int v (S^j -S) \wed \chi \right| \leq \left| \int \wt v (S^j -S)\wed \chi \right| + 4 C n^pcap_{m-1} (G).
\]

Combining \eqref{eq:w-conv-m-1}, \eqref{eq:w-conv-m-2}, \eqref{eq:w-conv-m-3} and \eqref{eq:w-conv-m-4} we conclude that
$$
	\lim_{j\to \infty} \left| \int (v_j S^j - vS) \wed \chi  \right| \leq C \veps.
$$
This holds for arbitrary $\veps$, so the proof of the proposition follows.
\end{proof}

\begin{remark} The proposition showed that the wedge product of currents associated to  continuous $m-\om$-sh functions given in \cite[Proposition~2.11]{KN3} coincides with the one in Definition~\ref{defn:w-prod}.
\end{remark}

\begin{cor} The wedge products $$dd^c u_1 \wed \cdots \wed dd^c u_{m-1}, \quad dd^c u_1 \wed \cdots \wed dd^c u_m \wed \om^{n-m}$$  are symmetric with respect to bounded $m-\om$-sh functions $u_1,...,u_{m-1}, u_{m}$.
\end{cor}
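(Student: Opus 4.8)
The plan is to reduce the asserted symmetry to the trivial commutativity of wedge products of \emph{smooth} forms, using the weak convergence under decreasing sequences (Proposition~\ref{prop:w-convergence-m}) as the bridge between the inductively defined wedge product of Definition~\ref{defn:w-prod} and a diagonal limit of smooth approximants.

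First I would fix bounded $m-\om$-sh functions $u_1,\dots,u_m$; since equality of currents is a local statement, by the localization principle I may assume $\Om$ is a ball and all $u_s\in P^*$, and then pick decreasing sequences of smooth $m-\om$-sh functions $u_s^j\downarrow u_s$. The first main step is to establish, for every permutation $\sigma$ of $\{1,\dots,m-1\}$,
$$
	dd^c u_{\sigma(1)}\wed\cdots\wed dd^c u_{\sigma(m-1)} \;=\; \lim_{j\to\infty} dd^c u_{\sigma(1)}^j\wed\cdots\wed dd^c u_{\sigma(m-1)}^j ,
$$
where the right-hand limit is diagonal, i.e.\ with the \emph{same} index $j$ in every factor. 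To obtain this I would apply the first assertion of Proposition~\ref{prop:w-convergence-m} with $p=m-2$, $v=u_{\sigma(m-1)}$ and the approximating sequences $\{u_{\sigma(s)}^j\}$ (which still decrease to $u_{\sigma(s)}$), getting $u_{\sigma(m-1)}^j\, dd^c u_{\sigma(1)}^j\wed\cdots\wed dd^c u_{\sigma(m-2)}^j \to u_{\sigma(m-1)}\, dd^c u_{\sigma(1)}\wed\cdots\wed dd^c u_{\sigma(m-2)}$ in the sense of currents of order zero, and then applying $dd^c$, which is weakly continuous on currents, together with Definition~\ref{defn:w-prod} to identify the limit.

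The second step is then immediate: for each fixed $j$ the form $dd^c u_{\sigma(1)}^j\wed\cdots\wed dd^c u_{\sigma(m-1)}^j$ is a product of smooth closed $(1,1)$-forms, hence does not depend on $\sigma$, so its diagonal limit does not depend on $\sigma$; this gives the symmetry of $dd^c u_1\wed\cdots\wed dd^c u_{m-1}$. For the wedge product with the background form I would repeat the argument verbatim, now applying the first assertion of Proposition~\ref{prop:w-convergence-m} with $p=m-1$, $v=u_{\sigma(m)}$ (for $\sigma$ a permutation of $\{1,\dots,m\}$), then applying $dd^c$ and wedging with the fixed smooth form $\om^{n-m}$ — both operations are weakly continuous — and identifying the limit as $dd^c u_{\sigma(1)}\wed\cdots\wed dd^c u_{\sigma(m)}\wed\om^{n-m}$ via Definitions~\ref{defn:w-prod} and~\ref{defn:hes-operator}; since for each $j$ the approximant equals $dd^c u_1^j\wed\cdots\wed dd^c u_m^j\wed\om^{n-m}$ independently of $\sigma$, passing to the limit yields the symmetry.

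The main obstacle is already contained in Proposition~\ref{prop:w-convergence-m}: the subtle point is that Definition~\ref{defn:w-prod} constructs the wedge product through an \emph{iterated} limit in a prescribed order of the indices, and one must know that this coincides with the \emph{diagonal} limit of the smooth approximants — which is precisely what the weak convergence theorem provides. Once that is granted, no estimates remain; the symmetry is a soft consequence of the commutativity of smooth forms together with the continuity of $dd^c$ and of $\wed\,\om^{n-m}$ for the weak topology of currents.
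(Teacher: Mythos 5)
Your argument is correct and is the one the paper intends: Proposition~\ref{prop:w-convergence-m} identifies the iterated‑limit wedge product of Definition~\ref{defn:w-prod} with the diagonal limit of smooth approximants, which is manifestly symmetric in the factors. The localization to a ball and the weak continuity of $dd^c$ and of wedging with the fixed form $\om^{n-m}$ are exactly the remaining soft steps.
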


We can now extend the generalized Cauchy-Schwarz inequality in Lemma~2.4 and Corollary~2.5 to the case of bounded functions. We state here a simple version which is essential to prove the quasi-continuity later on. This is a stronger version of Corollary~\ref{cor:cs-w-bdd}. Let $-1\leq u_1,...,u_{m-1} \leq 0$ be bounded $m-\om$-sh functions in $\Om$. Denote
\[\label{eq:tau-cs}
	\tau = dd^c u_1 \wed \cdots \wed dd^c u_{m-1}.
\]
\begin{cor} \label{cor:cs-bdd}Let $0\leq \phi \leq 1$ be a continuous function such that $\supp\phi = K \subset\subset \Om$. Let $0\leq h\leq 1$ be a smooth $m-\om$-sh function. There exists a constant $C = C (\om, K)$ such that
$$
	\left| \int \phi dh \wed d^c \om \wed \tau \wed \om^{n-m-1}\right|^2 \leq C \int \phi \; \wt\tau \wed \om^{n-m+1},
$$
where $\wt\tau = \left(dd^c \sum_{s=1}^{m-1} u_s \right)^{m-1}$.
\end{cor}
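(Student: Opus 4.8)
The plan is to deduce the estimate from its smooth counterpart, Corollary~\ref{cor:CS}(b), by an approximation argument, in the same spirit in which Corollary~\ref{cor:cs-w-bdd} was obtained from Lemma~\ref{lem:CS-classic}; the additional ingredient is the weak convergence of mixed wedge products established in Proposition~\ref{prop:w-convergence-m}.

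First I would fix decreasing sequences $u_s^j\downarrow u_s$ of smooth $m-\om$-sh functions with $-1\leq u_s^j\leq 0$ (Proposition~\ref{prop:smoothing}); after replacing $u_s^j$ by $u_s^j+\de_j|z|^2$ for a sequence $\de_j\downarrow 0$ (the new functions still decrease to $u_s$, and stay uniformly bounded) one may moreover assume $dd^c u_s^j\in\Ga_m(\om)$ strictly, so that Corollary~\ref{cor:CS}(b) applies verbatim. Writing $\tau^j=dd^c u_1^j\wed\cdots\wed dd^c u_{m-1}^j$ and $\wt\tau^j=(dd^c\sum_s u_s^j)^{m-1}$, an application of Corollary~\ref{cor:CS}(b) with $\ga_i=dd^c u_i^j$ and with both functions in that corollary taken to equal $\sqrt\phi$ (so that their product is $\phi$ and each of their squares is $\phi$) gives, since $\sum_i\ga_i=dd^c\sum_s u_s^j$,
$$\left|\int\phi\, dh\wed d^c\om\wed\tau^j\wed\om^{n-m-1}\right|^2\leq C\int\phi\, dh\wed d^c h\wed\wt\tau^j\wed\om^{n-m}\cdot\int\phi\,\wt\tau^j\wed\om^{n-m+1}.$$
The first integral on the right is bounded uniformly in $j$: since $h$ and $h^2$ are $m-\om$-sh and $\wt\tau^j\wed\om^{n-m}$ is a positive form, one has $2\, dh\wed d^c h\wed\wt\tau^j\wed\om^{n-m}=(dd^c h^2-2h\, dd^c h)\wed\wt\tau^j\wed\om^{n-m}\leq dd^c h^2\wed\wt\tau^j\wed\om^{n-m}$, so with $0\leq\phi\leq 1$ and the CLN inequality (Proposition~\ref{prop:CLN-bdd}, whose constant depends only on $K,\om$ and on $\|\sum_s u_s^j\|_{L^\infty}\leq m-1$) one obtains $\int\phi\, dh\wed d^c h\wed\wt\tau^j\wed\om^{n-m}\leq\tfrac12\int_K dd^c h^2\wed\wt\tau^j\wed\om^{n-m}\leq C$ with $C$ independent of $j$. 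Hence $\left|\int\phi\, dh\wed d^c\om\wed\tau^j\wed\om^{n-m-1}\right|^2\leq C\int\phi\,\wt\tau^j\wed\om^{n-m+1}$.

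It then remains to let $j\to\infty$. By Proposition~\ref{prop:w-convergence-m} (applied with the constant function in the role of $v$), $\tau^j\to\tau$ and $\wt\tau^j\to\wt\tau$ in the sense of currents of order zero; together with the uniform local mass bounds furnished by the CLN inequality, this convergence --- a priori stated against smooth test forms --- extends to the continuous, compactly supported form $\phi\cdot dh\wed d^c\om\wed\om^{n-m-1}$ on the left-hand side and to the continuous, compactly supported function $\phi$ tested against the positive measures $\wt\tau^j\wed\om^{n-m+1}$ on the right-hand side. Passing to the limit in the inequality of the previous paragraph yields the asserted bound. I expect the point requiring care to be precisely this last passage to the limit: upgrading the weak convergence of the order-zero currents $\tau^j$ from smooth to merely continuous test forms, which is exactly where the uniform CLN mass bound --- rather than just weak convergence against smooth forms --- is genuinely used; the $\de_j$-perturbation reconciling $dd^c u_s^j\in\overline{\Ga_m(\om)}$ with the open cone $\Ga_m(\om)$, together with the fact that the constant in Corollary~\ref{cor:CS}(b) is uniform (hence independent of $\de_j$), is routine.
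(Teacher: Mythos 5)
Your proof is correct and takes essentially the same route as the paper: smooth decreasing approximations $u_s^j\downarrow u_s$, apply Corollary~\ref{cor:CS}(b) with $\ga_s=dd^cu_s^j$ and both test functions equal to $\sqrt\phi$, bound the gradient integral on the right via the identity $2\,dh\wed d^ch=dd^ch^2-2h\,dd^ch\leq dd^ch^2$ together with the CLN inequality (this is precisely the argument of Corollary~\ref{cor:cs-w-bdd}, which the paper cites rather than repeats), and then pass to the limit using Proposition~\ref{prop:w-convergence-m}. The only cosmetic differences are that you add an explicit $\de_j|z|^2$ perturbation to place $dd^cu_s^j$ in the open cone $\Ga_m(\om)$ (the paper relies instead on Remark~\ref{rmk:Garding}, that estimates for $\Ga_m(\om)$-forms transfer to $dd^cu$ for smooth $m$-$\om$-sh $u$), and that you spell out the upgrade of weak convergence from smooth to continuous test forms via the uniform CLN mass bounds, a point the paper leaves implicit.
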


\begin{proof} Notice that the left hand side is well-defined because the coefficients of $S$ are signed  Radon measures. Let $\{u^j_s\}_{j\geq 1}$ be a sequence of  smooth $m-\om$-sh functions such that    $u_s^j \downarrow u_s$ for each $1\leq s\leq  m-1$. We may assume $-1\leq u_s^j \leq 0$. Denote $$
	S^{j} = dd^c u^{j}_1 \wed \cdots dd^c u^{j}_{m-1}, 
	\quad \wt S^j = (dd^c u_1^j + \cdots + dd^c u_{m-1}^k)^{m-1}
$$
Applying Corollary 2.5 for $\ga_s = dd^c u^{j}_s$, $s =1,...,p$, we get 
$$\begin{aligned}
&	\left |\int \phi\; dh \wed d^c \om \wed S^{j}  \wed  \om^{n-m-1} \right|^2 \\
&\leq 	C\int \phi \;dh\wed d^c h \wed (\sum_{s=1}^{m-1} \ga_s)^{m-1} \wed \om^{n-m}  \times  \int \phi\;\wt{S}^{j} \wed \om^{n-m+1}.
\end{aligned}$$
As in the proof of Corollary~\ref{cor:cs-w-bdd},  the CLN inequality implies that the first integral inequality on the right hand side is bounded by a constant $C = C(\om, K)$. By Proposition~\ref{prop:w-convergence-m} the last integral converges to
$\int\; \phi \; \wt\tau \wed \om^{n-m+1}.$ Therefore, the proof of the corollary follows.
\end{proof}

\section{Quasi-continuity}
Having  the Hessian measure  defined for bounded $m-\om$-sh functions, we can introduce the $m$-capacity (cf. \cite{BT82}): for a Borel subset $E \subset \Om$,
\[
	cap_{m} (E) = \sup \left\{ \int_E (dd^c\rho)^{m} \wed \om^{n-m} : \rho \text{ is } m-\om \text{-sh in }\Om, -1 \leq \rho \leq 0\right\}.
\]
Here in fact $cap_m(E) = cap_m(E,\Om)$ but we shall often suppress $\Om$ in the notation if the  domain is fixed.
Then, this is an inner capacity, namely,
$$	cap_m(E) = \sup\{cap_m(K): K \text{ is compact subset of } E\}.
$$

\begin{prop} \label{prop:cap-properties}Let $\Om$ be a open set in $\bC^n$ and $cap_m(E) = cap_m(E,\Om)$. Then,
\begin{itemize}
\item
[(a)] If $E_1 \subset E_2$, then $cap_m(E_1) \leq cap_m(E_2)$.
\item
[(b)] If $E \subset \Om_1 \subset \Om_2$, then $cap_m(E,\Om_2) \leq cap_m(E, \Om_1)$.
\item
[(c)] $cap_m( \cup_j E_j) \leq \sum_{j} cap_m(E_j)$.
\item
[(d)] If $E_1\subset E_2 \subset \cdots $ are Borel set in $\Om$ and $E:= \cup_j E_j$, then $cap_m(E) = \lim_j cap_m(E_j)$.
\end{itemize}
\end{prop}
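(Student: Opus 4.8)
The plan is to reduce everything to elementary properties of positive measures. For each fixed bounded $m-\om$-sh function $\rho$ with $-1\le\rho\le 0$, Theorem~\ref{thm:w-m-functions} tells us that $\mu_\rho:=(dd^c\rho)^m\wed\om^{n-m}$ is a bona fide positive Radon measure, and the Hessian operator is \emph{local} in $\rho$ (it is built by iterated $dd^c$'s and weak limits of currents of order zero). With this in hand $cap_m(E,\Om)=\sup_\rho\mu_\rho(E)$ is a supremum over a family of positive Borel measures, and parts (a)--(c) are immediate.

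First I would dispatch (a): since $\mu_\rho\ge 0$, the inclusion $E_1\subset E_2$ gives $\mu_\rho(E_1)\le\mu_\rho(E_2)$ for every admissible $\rho$, and taking the supremum over $\rho$ yields $cap_m(E_1)\le cap_m(E_2)$. For (b): if $\rho$ is $m-\om$-sh on $\Om_2$ with $-1\le\rho\le0$, then its restriction to $\Om_1$ is an admissible competitor for $cap_m(E,\Om_1)$, and by locality of the Hessian operator the value $\mu_\rho(E)$ is unchanged when computed in $\Om_1$ (here $E\subset\Om_1$ is used); hence the supremum defining $cap_m(E,\Om_1)$ runs over a class containing every competitor for $cap_m(E,\Om_2)$, which gives $cap_m(E,\Om_2)\le cap_m(E,\Om_1)$. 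For (c): fix an admissible $\rho$; countable subadditivity of the positive measure $\mu_\rho$ gives $\mu_\rho(\cup_j E_j)\le\sum_j\mu_\rho(E_j)\le\sum_j cap_m(E_j)$, and taking the supremum over $\rho$ finishes it.

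The only part with any content is (d). The inequality $\lim_j cap_m(E_j)\le cap_m(E)$ is immediate from (a). For the reverse, I would fix $t<cap_m(E)$, choose an admissible $\rho$ with $\mu_\rho(E)>t$, and invoke continuity from below of the positive Radon measure $\mu_\rho$ along $E_j\uparrow E$ to obtain $\mu_\rho(E_{j_0})>t$ for some $j_0$, whence $cap_m(E_{j_0})\ge\mu_\rho(E_{j_0})>t$; letting $t\uparrow cap_m(E)$ concludes. I do not expect a genuine obstacle here: the single point that must be checked with care is that every object involved --- in particular $(dd^c\rho)^m\wed\om^{n-m}$ for merely bounded $\rho$ --- is an honest positive Radon measure, so that the standard measure-theoretic facts (Borel-regularity, countable subadditivity, continuity from below) apply verbatim; this is precisely what the earlier Theorem~\ref{thm:w-m-functions} supplies. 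The inner-regularity identity recorded just before the proposition could alternatively be used to reduce (d) to compact sets, but it is not needed for the argument above.
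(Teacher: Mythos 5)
The paper states Proposition~\ref{prop:cap-properties} without proof, so there is no argument to compare against; your proposal supplies the standard justification and it is correct. All four items reduce to the fact, guaranteed by Theorem~\ref{thm:w-m-functions}, that for each admissible $\rho$ the Hessian $H_m(\rho)=(dd^c\rho)^m\wedge\om^{n-m}$ is a genuine positive Borel measure, and that this measure is local in $\rho$ (which is what makes (b) go through: the restriction of a competitor on $\Om_2$ is a competitor on $\Om_1$ producing the same integral over $E\subset\Om_1$). Items (a) and (c) are then monotonicity and countable subadditivity of the individual measures, stable under taking a supremum over $\rho$; and for (d) your use of continuity from below of $\mu_\rho$ along $E_j\uparrow E$, followed by the supremum over $\rho$ and $t\uparrow cap_m(E)$, is exactly the right mechanism and handles the possibly infinite case as well. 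This is the same elementary argument as for the classical Monge--Amp\`ere capacity in Bedford--Taylor.
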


\begin{defn}[Convergence in capacity] A sequence of Borel functions  $u_j$ in $\Om$ is said to converge in capacity (or in $cap_m(\bullet)$) to $u$ if for any $\de>0$ and $K \subset\subset \Om$,
$$
	\lim_{j\to \infty} cap_m(K \cap |u_j-u| \geq \de) =0.
$$
\end{defn}

\begin{prop}\label{prop:decreasing-seq-cap} If $\{u_j\}_{j\geq 1}$ be a uniformly bounded sequence of continuous $m-\om$-sh  functions that  decreases to a bounded $m-\om$-sh function $u$ in $\Om$. Then, $u_j$ converges to $u$ in $cap_{m}(\bullet)$.
\end{prop}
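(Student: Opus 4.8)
The plan is to deduce the result from the integral inequality of Proposition~\ref{prop:cap-integral}, which was designed precisely in order to trade the top power $(dd^c\rho)^m$ against the \emph{fixed} measures $(dd^c u)^s\wed\om^{n-s}$, $0\le s\le m$. First I would reduce to a convenient setting. Convergence in $cap_m(\bullet)$ is a local property: by the subadditivity and the monotonicity in the domain recorded in Proposition~\ref{prop:cap-properties} it suffices to verify it on the members of an open cover of $\Om$. Hence, by the Localization Principle, we may assume that $\Om$ is a ball and that all the $u_j$, and therefore also $u$, coincide on a fixed neighborhood of $\d\Om$; after an affine normalization we may also assume $-1\le u\le u_j\le 0$. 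A ball is strictly $m$-pseudoconvex, so smooth decreasing approximants are available by Proposition~\ref{prop:smoothing}, and consequently, via Remarks~\ref{rmk:continuous-case} and~\ref{rmk:bounded-case}, Proposition~\ref{prop:cap-integral} is at our disposal for the pair $v=u$ (bounded) and $w=u_j$ (continuous) and for every bounded $m-\om$-sh $\rho$ with $-1\le\rho\le 0$, with a constant $C=C(\om,n,m)$ independent of $\rho$ and of $j$.

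Now fix $\de>0$ and a compact set $K\subset\subset\Om$, and put $E_j=K\cap\{u_j-u\ge\de\}$. Since $h_j:=u_j-u\ge 0$, the sequence $h_j$ decreases, and $h_j\to 0$ pointwise, the sets $E_j$ decrease and $\bigcap_j E_j=\emptyset$. On $\Om$ we have the pointwise bound $\mathbf{1}_{E_j}\le\de^{-(3m+1)}h_j^{3m+1}$, so for any admissible $\rho$, writing $\ga=dd^c\rho$,
\[\notag
	\int_{E_j}(dd^c\rho)^m\wed\om^{n-m}\le\de^{-(3m+1)}\int_\Om h_j^{3m+1}\ga^m\wed\om^{n-m}=\de^{-(3m+1)}\,e_{(3m,m,0)}.
\]
Applying Proposition~\ref{prop:cap-integral} with $q=3m$ (to the pair $w=u_j$, $v=u$) and recalling that $e_{(0,0,s)}=\int_\Om h_j\,(dd^c u)^s\wed\om^{n-s}$, we get $e_{(3m,m,0)}\le C\sum_{s=0}^m\int_\Om h_j\,(dd^c u)^s\wed\om^{n-s}$; taking the supremum over all competitors $\rho$,
\[\notag
	cap_m(E_j)\le\frac{C}{\de^{3m+1}}\sum_{s=0}^m\int_\Om (u_j-u)\,(dd^c u)^s\wed\om^{n-s}.
\]

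To conclude, let $j\to\infty$. For each $0\le s\le m$ the measure $(dd^c u)^s\wed\om^{n-s}$ is a well-defined positive Radon measure (Lemma~\ref{lem:w-p-functions} and Theorem~\ref{thm:w-m-functions}), and it is finite on the one fixed compact subset of $\Om$ containing the supports of all the $h_j$, by the CLN inequality (Proposition~\ref{prop:CLN-bdd}). Since $0\le u_j-u\le 1$ decreases to $0$ pointwise, the dominated convergence theorem gives $\int_\Om(u_j-u)(dd^c u)^s\wed\om^{n-s}\to 0$ for every $s$. Hence $cap_m(E_j)\to 0$, and as $\de>0$ and $K$ were arbitrary this is exactly the assertion that $u_j\to u$ in $cap_m(\bullet)$.

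The step that requires care is invoking Proposition~\ref{prop:cap-integral} for non-smooth data: one must approximate $u$, $u_j$ and $\rho$ by smooth decreasing $m-\om$-sh sequences on $\ov\Om$ (using strict $m$-pseudoconvexity of the ball), verify — as in Remark~\ref{rmk:continuous-case}, by establishing uniform convergence to zero near $\d\Om$ rather than exact agreement — that the boundary terms arising in the integration by parts vanish in the limit, and then pass to the limit using the weak convergence under decreasing sequences (Proposition~\ref{prop:w-convergence-m}) together with the CLN inequality (Proposition~\ref{prop:CLN-bdd}), keeping the constant $C$ uniform in $\rho$ and $j$ throughout. Granting this, the two displays above are the whole proof.
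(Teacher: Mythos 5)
Your argument is correct in outline, and it takes a genuinely different route from the one in the paper. The paper proves the proposition by first establishing Lemma~\ref{lem:mass-convergence-m} (via iterated application of the "integration-by-parts inequality" Lemma~\ref{lem:L-key}, which produces a bound with fractional exponents $\sum_s [e_{(0,0,s)}]^{1/2^s}$) and then applying Markov's inequality with the first power of $u_j-u$, in a proof-by-contradiction. You instead apply Chebyshev with the power $3m+1$ and invoke Proposition~\ref{prop:cap-integral} directly, getting the cleaner linear bound $e_{(3m,m,0)}\le C\sum_s e_{(0,0,s)}$. In the paper Proposition~\ref{prop:cap-integral} is reserved for the comparison of capacity with measures of relative extremal functions (Lemma~\ref{lem:cap-formula}); your observation is that it can also serve as the engine for convergence in capacity. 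Both routes end with dominated convergence against the fixed family of measures $(dd^cu)^s\wed\om^{n-s}$, $0\le s\le m$. The trade-off is that the paper's Lemma~\ref{lem:L-key} is proved once and for all for the exact mixed case it needs (continuous $w$, bounded $v$, bounded $\rho$), whereas Proposition~\ref{prop:cap-integral} is stated only for smooth data, so you inherit the extra work of extending it.

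One caution on that extension, because as written there is a latent circularity. You cite Remark~\ref{rmk:bounded-case} in support of using Proposition~\ref{prop:cap-integral} for a bounded $v$; but that remark is a forward reference whose realization is Corollary~\ref{cor:bounded-case}, and the proof of Corollary~\ref{cor:bounded-case} uses Lemma~\ref{lem:bounded-decreasing}, which uses quasi-continuity with respect to $cap_m$ (Theorem~\ref{thm:quasi-continuity}) --- which is deduced from the very proposition you are proving. So Corollary~\ref{cor:bounded-case} must not be used as a black box here. What you actually need is weaker and is not circular: the inequality of Proposition~\ref{prop:cap-integral} for the specific triple $w=u_j$ continuous, $v=u$ bounded with $v=w$ near $\d\Om$, and $\rho$ bounded. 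This can be established using only the Section~3 material (as you sketch): approximate $v,w,\rho$ by smooth decreasing sequences; since $v$ agrees with the continuous $w$ near $\d\Om$, the differences $w_\ell-v_\ell$ tend to zero uniformly near $\d\Om$ and the boundary terms in the integration by parts vanish, while $\rho$ can be modified to equal $A\psi$ near $\d\Om$ as in the proof of Lemma~\ref{lem:w-IBP-b}; then pass to the limit using Proposition~\ref{prop:w-convergence-m} (which relies only on $(m-1)$-quasi-continuity, available before the present proposition) together with the CLN inequality, handling the left-hand side via the lower-bound device $w_\ell-v_\ell\ge w-v_k$ for $\ell\ge k$. If you state this carefully, replacing the appeal to Remark~\ref{rmk:bounded-case} by that direct argument, the proof is complete and self-contained.
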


The proof of this proposition will need the improved versions of Lemma~\ref{lem:w-IBP-a}, \ref{lem:w-IBP-b}, \ref{lem:w-L-key} in which $\eta \wed \om$ is replaced by $$\tau = dd^c u_1 \wed \cdots dd^c u_{m-1},$$
where $u_1,...,u_{m-1} \in P^*$ in \eqref{eq:tau} and the positivity assumption \eqref{eq:add} is satisfied. This is done thanks to  Corollary~\ref{cor:cs-bdd}. Since the proofs follow line by line those from the previous section after
 replacing $\eta\wed \om$ by $\tau$ and applying Corollary~\ref{cor:cs-bdd} instead of  Corollary~\ref{cor:cs-w-bdd}, we only state the lemmas.

\begin{lem}\label{lem:IBP-a} Let $\{v_\ell\}_{\ell\geq 1}$ be a sequence of smooth functions from $P^*$ and let $w\in P^*$ be smooth. Assume $v_\ell\leq w$ and $v_\ell \downarrow v \in P^*$. Then,
$$\begin{aligned}
	-\int d(w-v) \wed d^c v \wed \tau\wed \om^{n-m} 
&\leq \int(w-v) dd^c v \wed \tau\wed\om^{n-m} \\
&+ C \left( \int(w-v) \tau\wed \om^{n-m+1}\right)^\frac{1}{2},
\end{aligned}$$
where  the constant $C$ depends on $\om, K, \psi$ and the uniform norm of functions, but it is independent of  $\de$ in \eqref{eq:add}.
\end{lem}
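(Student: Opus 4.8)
The plan is to reproduce, essentially verbatim, the proof of Lemma~\ref{lem:w-IBP-a}, with the positive current $\eta\wed\om^{n-m+1}$ replaced throughout by $\tau\wed\om^{n-m}$ and with the Cauchy--Schwarz estimate of Corollary~\ref{cor:cs-bdd} used in place of Corollary~\ref{cor:cs-w-bdd} to absorb the torsion error. First I would normalize so that $0\le v,v_\ell,w\le 1$ and $0\le u_1,\dots,u_{m-1}\le 1$. Since $\tau$ satisfies the positivity assumption \eqref{eq:add} (secured, if necessary, by replacing each $u_i$ with $u_i+\de|z|^2$), the weighted Sobolev space $W^{1,2}(\Om,\mu_{m-1})$ with trace measure $\mu_{m-1}=\tau\wed\om^{n-m+1}$ is available, and Proposition~\ref{prop:weak-conv} gives $v_\ell\to v$ weakly in it; one takes care that the constant below does not involve the parameter $\de$, so that the final estimate passes to the limit $\de\to0^+$. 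Combining the weak convergence with dominated convergence (the coefficients of $\tau$ being signed Radon measures by Lemma~\ref{lem:w-p-functions}), it suffices to prove the inequality with $v_\ell$ substituted for $v$ in the left-hand integral.

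With $\ell$ fixed, the function $w-v_\ell$ is continuous and compactly supported in $\Om$, so by the weak convergence of currents of order zero $dd^cv_j\wed\tau\wed\om^{n-m}\to dd^cv\wed\tau\wed\om^{n-m}$ coming from Proposition~\ref{prop:w-convergence-m} one may also replace $v$ by a smooth $v_j$ from the approximating sequence on the right and send $j\to\infty$ at the end. For smooth $v_j$ the integration-by-parts identity for currents of order zero --- the analogue of \eqref{eq:IBP-normal} obtained by the same computation with $\eta\wed\om^{n-m+1}$ replaced by $\tau\wed\om^{n-m}$ --- reads
$$-\int d(w-v_\ell)\wed d^cv_j\wed\tau\wed\om^{n-m}=\int(w-v_\ell)\,dd^cv_j\wed\tau\wed\om^{n-m}+\int(w-v_\ell)\,d^cv_j\wed d\om\wed\tau\wed\om^{n-m-1}.$$
Applying Corollary~\ref{cor:cs-bdd} with $\phi=w-v_\ell$ and $h=v_j$ and then the CLN inequality (Proposition~\ref{prop:CLN-bdd}), the last integral is bounded by $C\big(\int(w-v_\ell)\,\wt\tau\wed\om^{n-m+1}\big)^{1/2}$ with $C=C(\om,K)$, where $\wt\tau=\big(dd^c\sum_{s=1}^{m-1}u_s\big)^{m-1}$ is again a wedge product of $m-1$ bounded $m-\om$-sh functions of the same type as $\tau$; this is precisely the error term recorded in the statement, and it is finite and uniformly controlled by CLN. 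Letting $j\to\infty$ and then $\ell\to\infty$ (again via Proposition~\ref{prop:weak-conv} and dominated convergence), and finally $\de\to0^+$, yields the asserted inequality.

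The only genuinely non-routine point --- the reason this lemma is isolated from its K\"ahler counterpart --- is the control of the torsion term $\int(w-v_\ell)\,d^cv_j\wed d\om\wed\tau\wed\om^{n-m-1}$: because $\tau\wed\om^{n-m-1}$ is in general not a positive form, the classical Cauchy--Schwarz inequality does not apply directly, and one must invoke the refined version Corollary~\ref{cor:cs-bdd}, which rests on Lemma~\ref{lem:CS} and Corollary~\ref{cor:CS} and ultimately on the elementary-symmetric-polynomial inequalities of Section~\ref{ssec:positive-cone}; the price paid is the appearance of the combined current $\wt\tau$ in place of $\tau$ on the right. Everything else --- the two nested approximations $v_\ell\downarrow v$ and $v_j\downarrow v$, the integration by parts for order-zero currents, and the limit passages --- is routine once Propositions~\ref{prop:weak-conv} and \ref{prop:w-convergence-m} are in hand.
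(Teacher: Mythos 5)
Your proposal is correct and is exactly the route the paper takes: it explicitly declines to write out a proof and instead directs the reader to repeat the proof of Lemma~\ref{lem:w-IBP-a} line by line with $\eta\wed\om$ replaced by $\tau$ and with Corollary~\ref{cor:cs-bdd} substituted for Corollary~\ref{cor:cs-w-bdd}, which is precisely your argument. Your observation that the Cauchy--Schwarz step actually produces $\wt\tau=(dd^c\sum_s u_s)^{m-1}$ rather than $\tau$ on the right is correct and worth noting --- the paper's statement of the lemma uses $\tau$ loosely here, and the downstream applications (where all $u_s$ coincide) make the two differ only by a combinatorial constant.
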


\begin{lem}\label{lem:IBP-b} Let $v,w$ and $\{v_\ell\}$ be as in Lemma~\ref{lem:IBP-a}. Let $\rho$ be a bounded $m-\om$-sh in $\Om$ such that $-1 \leq \rho \leq 0$. Then,
$$\begin{aligned}
	\int_\Om (w - v) dd^c \rho \wed \tau\wed\om^{n-m}  
&\leq  - \int_{\Om} d(w-v) \wed d^c \rho \wed \tau\wed\om^{n-m} \\ 
&\quad + C \left(\int_\Om (w-v)  \wed \tau\wed \om^{n-m+1} \right)^\frac{1}{2}.
\end{aligned}$$
The dependence of $C$ is the same as in the previous lemma.
\end{lem}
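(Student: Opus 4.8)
The plan is to transcribe the proof of Lemma~\ref{lem:w-IBP-b} almost verbatim, replacing the current $\eta\wed\om$ appearing there by the current $\tau$ of \eqref{eq:tau} and the Hilbert space $W^{1,2}(\Om,\mu)$ by $W^{1,2}(\Om,\mu_{m-1})$, where $\mu_{m-1}=\tau\wed\om^{n-m+1}$. As a preliminary reduction, since $w-v$ is supported in $K$, I would modify $\rho$ outside a neighbourhood of $K$ so that $\rho\in P^*$, i.e. $\rho=A\psi$ near $\d\Om$ with $-A\le\rho\le0$; this does not affect any of the integrals. I may also assume $0\le v,v_\ell,w\le1$ and $0\le u_i\le1$. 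Under the positivity assumption \eqref{eq:add}, $\mu_{m-1}$ dominates $c_0\be^n$ and $P^*\subset W^{1,2}(\Om,\mu_{m-1})$ by Proposition~\ref{prop:weak-conv}, so the limiting scheme of Section~\ref{ss:sobolev} is available in this weighted space.

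First I would reduce the inequality to the case where $v_\ell$ replaces $v$: the terms $\int(w-v_\ell)\,dd^c\rho\wed\tau\wed\om^{n-m}$ and $\int(w-v_\ell)\,\tau\wed\om^{n-m+1}$ pass to the limit by dominated convergence ($w-v_\ell\downarrow w-v$, all currents of order zero), while $\int d(w-v_\ell)\wed d^c\rho\wed\tau\wed\om^{n-m}=\lc w-v_\ell,\rho\rc_\tau\to\lc w-v,\rho\rc_\tau$ by the weak convergence $v_\ell\rightharpoonup v$ in $W^{1,2}(\Om,\mu_{m-1})$ (Proposition~\ref{prop:weak-conv}) together with $\rho\in P^*\subset W^{1,2}(\Om,\mu_{m-1})$. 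Fixing $\ell$, the function $w-v_\ell$ is continuous with compact support, and a second limiting argument — along a smooth decreasing sequence $\rho_j\downarrow\rho$ in $P^*$ (Proposition~\ref{prop:smoothing}), using $dd^c\rho_j\wed\tau\wed\om^{n-m}\to dd^c\rho\wed\tau\wed\om^{n-m}$ weakly (Proposition~\ref{prop:w-convergence-m}) and $\rho_j\rightharpoonup\rho$ in $W^{1,2}(\Om,\mu_{m-1})$ — reduces us to $\rho$ smooth. For smooth $\rho$, the integration-by-parts identity for currents of order zero (the analogue of \eqref{eq:IBP-normal} obtained by replacing $\eta\wed\om$ with $\tau$, valid because $\tau$ is closed) reads
$$\begin{aligned}\int_\Om(w-v_\ell)\,dd^c\rho\wed\tau\wed\om^{n-m}&=-\int_\Om d(w-v_\ell)\wed d^c\rho\wed\tau\wed\om^{n-m}\\&\quad+c\int_\Om(w-v_\ell)\,d^c\rho\wed d\om\wed\tau\wed\om^{n-m-1},\end{aligned}$$
so everything comes down to estimating the torsion term $I:=\int_\Om(w-v_\ell)\,d^c\rho\wed d\om\wed\tau\wed\om^{n-m-1}$.

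To bound $I$, observe that $\tau\wed\om^{n-m-1}$ is only a current of order zero — one power of $\om$ short of the positive $(n-1,n-1)$-form $\al^{n-1}$ attached to $\tau$ by Michelsohn — so the classical Cauchy–Schwarz inequality is not available; instead I would invoke the generalized Cauchy–Schwarz inequality of Corollary~\ref{cor:cs-bdd} with $\phi=w-v_\ell$ and with $h=(\rho+A)/A\in[0,1]$ a smooth $m-\om$-sh function (noting, as in the proof of Lemma~\ref{lem:w-IBP-b}, that $d^c\rho\wed d\om$ and $dh\wed d^c\om$ agree up to sign and a constant when paired against an order-zero current), combined with the CLN inequality (Proposition~\ref{prop:CLN-bdd}) to bound $\int_K d\rho\wed d^c\rho\wed\tau\wed\om^{n-m}$ by a constant independent of $\de$. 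This yields $|I|\le C\big(\int_\Om(w-v_\ell)\,\tau\wed\om^{n-m+1}\big)^{1/2}$ with $C$ depending only on $\om,K,\psi$ and the uniform norms; combining with the displayed identity and letting $j\to\infty$ and then $\ell\to\infty$ completes the proof. I expect this Cauchy–Schwarz bound on $I$ to be the only real difficulty: it is precisely where the non-closedness and non-positivity of $\tau\wed\om^{n-m-1}$ enter, and where the replacement of Corollary~\ref{cor:cs-w-bdd} by Corollary~\ref{cor:cs-bdd} — the one genuinely new ingredient compared with Lemma~\ref{lem:w-IBP-b} — has to be arranged so that the error is controlled by the trace measure $\tau\wed\om^{n-m+1}$ itself. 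Everything else is a routine transcription of the argument for Lemma~\ref{lem:w-IBP-b}.
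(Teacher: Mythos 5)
Your proposal is correct and is essentially the paper's own proof: the paper explicitly states that Lemmas~\ref{lem:IBP-a}--\ref{lem:L-key} are obtained by repeating the arguments of Lemmas~\ref{lem:w-IBP-a}--\ref{lem:w-L-key} line by line with $\eta\wed\om$ replaced by $\tau$ and Corollary~\ref{cor:cs-bdd} used in place of Corollary~\ref{cor:cs-w-bdd}, which is exactly what you carry out, including the correct identification of the torsion term as the only place where the non-positivity of $\tau\wed\om^{n-m-1}$ forces the generalized Cauchy--Schwarz inequality. The only minor point worth noting is that Corollary~\ref{cor:cs-bdd} controls that term by $\big(\int(w-v_\ell)\,\wt\tau\wed\om^{n-m+1}\big)^{1/2}$ with $\wt\tau=(dd^c\sum_s u_s)^{m-1}$ rather than by $\tau$ itself; this matches the paper's own bookkeeping and is harmless in the intended application, where all the $u_i$ coincide and $\wt\tau$ is a dimensional constant times $\tau$.
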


Notice that as in Lemma~\ref{lem:w-L-key} the strict positivity assumption \eqref{eq:add} is not needed  in the following statement.

\begin{lem}\label{lem:L-key}   Let $v,w$ and $\{v_\ell\}$ be as in Lemma~\ref{lem:IBP-a}. Let $\rho$ be a bounded $m-\om$-sh in $\Om$ such that $-1 \leq \rho \leq 0$. Then,
$$\begin{aligned}
	\int_\Om (w-v) dd^c \rho \wed \tau \wed \om^{n-m} 
&\leq C \left( \int_\Om (w-v) dd^c v\wed \tau\wed \om^{n-m} \right)^\frac{1}{2} \\
&\quad + C \left( \int_\Om (w-v) \tau\wed\om^{n-m+1} \right)^\frac{1}{2} \\
&\quad + C \left( \int_\Om (w-v) \tau\wed\om^{n-m+1} \right)^\frac{1}{4}
\end{aligned}$$
With the same dependence of $C$ as in previous lemmata.
\end{lem}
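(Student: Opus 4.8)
The plan is to transcribe the proof of Lemma~\ref{lem:w-L-key}, replacing $\eta\wed\om^{n-m+1}$ by $\tau\wed\om^{n-m}$ throughout and using the sharper torsion estimate Corollary~\ref{cor:cs-bdd} in place of Corollary~\ref{cor:cs-w-bdd}. First I would reduce to the case where $\tau$ satisfies the strict positivity \eqref{eq:add}: replacing each $u_i$ by $u_i+\de|z|^2$ and letting $\de\to0^+$ at the end is harmless, since all the integrals occurring are continuous under this perturbation by the weak convergence Proposition~\ref{prop:w-convergence-m}. With this reduction the weighted Sobolev space $W^{1,2}(\Om,\mu_{m-1})$, $\mu_{m-1}=\tau\wed\om^{n-m+1}$, is available and contains $v$, $w$, $\rho$ and every $v_\ell$. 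As in Lemma~\ref{lem:IBP-b} I would also reduce to $\rho\in P^*$ smooth, first replacing $\rho$ by $\max\{\rho,A\psi\}$ (which changes none of the integrals, all supported in $K$) and then approximating by a smooth decreasing sequence $\rho_j\downarrow\rho$, whose Hessian products converge weakly by Proposition~\ref{prop:w-convergence-m}.

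The core estimate is carried out with $v_\ell$ in place of $v$. From Lemma~\ref{lem:IBP-b},
\[
\int_\Om (w-v_\ell)\,dd^c\rho\wed\tau\wed\om^{n-m}\le -\int_\Om d(w-v_\ell)\wed d^c\rho\wed\tau\wed\om^{n-m}+C\Big(\int_\Om (w-v_\ell)\,\tau\wed\om^{n-m+1}\Big)^{1/2},
\]
I would estimate the first term on the right by the pointwise Cauchy--Schwarz inequality for the positive $(n-1,n-1)$-form $T:=\tau\wed\om^{n-m}$, producing a factor $\big(\int d\rho\wed d^c\rho\wed T\big)^{1/2}$ — a constant with the stated dependence, since $\rho$ is smooth and $T\wed\om$ has finite mass on $K$ by the CLN inequality (Proposition~\ref{prop:CLN-bdd}) — times $\big(\int d(w-v_\ell)\wed d^c(w-v_\ell)\wed T\big)^{1/2}$. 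Integrating by parts in the latter integral produces a torsion term $\int(w-v_\ell)\,d^c(w-v_\ell)\wed d\om\wed\tau\wed\om^{n-m-1}$, which Corollary~\ref{cor:cs-bdd} together with the CLN inequality bounds by $C\big(\int(w-v_\ell)\,\tau\wed\om^{n-m+1}\big)^{1/2}$, plus the term $-\int(w-v_\ell)\,dd^c(w-v_\ell)\wed T\le\int(w-v_\ell)\,dd^c v_\ell\wed T$, using $0\le w-v_\ell$ and $dd^c w\wed T\ge0$. Collecting these and taking square roots yields the $\tau$-analogue of \eqref{eq:L-key-1234}, in which the quartic root arises from taking a square root of the already square-rooted torsion bound.

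The delicate point, and the main obstacle, is the passage $\ell\to\infty$ in the term $\big(\int(w-v_\ell)\,dd^c v_\ell\wed T\big)^{1/2}$, where $v_\ell$ sits simultaneously in the coefficient and inside $dd^c v_\ell$, so weak convergence of $dd^c v_\ell$ alone is not enough. As in Lemma~\ref{lem:w-L-key}, the resolution is to use $v_\ell\ge v$ to replace $\int(w-v_\ell)\,dd^c v_\ell\wed T$ by $\int(w-v)\,dd^c v_\ell\wed T$, then apply Lemma~\ref{lem:IBP-b} with $v_\ell$ (normalized to lie in $[-1,0]$) in the role of $\rho$ to trade $dd^c v_\ell$ for $-\,d(w-v)\wed d^c v_\ell\wed\tau\wed\om^{n-m}$ plus a lower-order term, and only afterwards let $\ell\to\infty$, invoking the weak convergence $v_\ell\to v$ in $W^{1,2}(\Om,\mu_{m-1})$ (Proposition~\ref{prop:weak-conv}) together with dominated convergence for the fixed Radon measures $dd^c\rho\wed\tau\wed\om^{n-m}$ and $\tau\wed\om^{n-m+1}$. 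Feeding the resulting bound for $-\int d(w-v)\wed d^c v\wed\tau\wed\om^{n-m}$ into Lemma~\ref{lem:IBP-a} produces the three stated terms; a last approximation $\rho_j\downarrow\rho$ removes the smoothness of $\rho$, and, exactly as in Remark~\ref{rmk:continuity-w}, one more approximation step lets $w$ be merely continuous, i.e.\ any element of $P^*$.
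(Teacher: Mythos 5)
Your proposal is correct and matches the paper's own argument, which explicitly states that the proofs of Lemmas~\ref{lem:IBP-a}, \ref{lem:IBP-b}, \ref{lem:L-key} follow line by line from those of Lemmas~\ref{lem:w-IBP-a}, \ref{lem:w-IBP-b}, \ref{lem:w-L-key} after replacing $\eta\wed\om$ by $\tau$ and Corollary~\ref{cor:cs-w-bdd} by Corollary~\ref{cor:cs-bdd}. You reproduce every structural step of that transcription, including the reduction to strictly positive $\tau$, the Cauchy--Schwarz split, the handling of the coupled term $\int(w-v_\ell)\,dd^c v_\ell\wed T$ via $v_\ell\ge v$ and Lemma~\ref{lem:IBP-b}, the passage to the limit via weak $W^{1,2}(\Om,\mu_{m-1})$-convergence, and the final application of Lemma~\ref{lem:IBP-a}; one small caveat is that the bound for the constant $C_1=\int_K d\rho\wedge d^c\rho\wedge T$ should be obtained via $2\,d\rho\wedge d^c\rho\wedge T\le dd^c(\rho+1)^2\wedge T$ and CLN so that it depends only on sup-norms, not on the gradient of $\rho$ as your phrasing suggests, but the paper's own proof is equally terse on this point.
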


\begin{remark}\label{rmk:continuity} In Lemmas~\ref{lem:IBP-a}, \ref{lem:IBP-b}, \ref{lem:L-key} it is enough  to assume that $w\in P^*$ after using one more approximation argument.
\end{remark}

Thanks to Lemma~\ref{lem:L-key}, we can easily prove the last case $s=m$ of Lemma~\ref{lem:mass-convergence}.

\begin{lem}\label{lem:mass-convergence-m} Let $u$ be a bounded $m-\om$-sh. Suppose $\{u_j\}_{j\geq 1}$ is a decreasing sequence of smooth $m-\om$-sh functions with $\|u_j\|_{L^\infty} \leq 1$ such that  $u_j \downarrow u$ point-wise. Assume also that all $u_j=u$  on a neighborhood of $\d\Om$.  Let $-1 \leq \rho \leq 0$ be  $m-\om$-sh functions. Then, 
\[
	\lim_{j\to +\infty} \sup_\rho \left\{ \int_\Om (u_j -u) (dd^c \rho)^m \wed \om^{n-m} \right\} =0.
\]
\end{lem}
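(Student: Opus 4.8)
The plan is to reduce the case $s=m$ to the already-established Lemma~\ref{lem:mass-convergence} (which covers $0\le s\le m-1$) by running the same kind of integration-by-parts ``descent'' that proves Proposition~\ref{prop:cap-integral}, but now at the level of bounded functions, using Lemma~\ref{lem:L-key} in place of the smooth integration-by-parts estimates. First I would, by the localization principle, assume $\Om$ is a ball, and (modifying $\rho$ outside a fixed compact set $K$ by $A\psi$ without changing the integral) assume $\rho\in P^*$ and $-1\le\rho\le 0$; likewise normalize $0\le u\le u_j\le 1$. Write $h_j=u_j-u\ge 0$, which vanishes near $\d\Om$ and decreases to $0$ pointwise.

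The main step is to estimate $\int_\Om h_j\,(dd^c\rho)^m\wed\om^{n-m}$ by repeatedly applying Lemma~\ref{lem:L-key}, each time trading one factor of $dd^c\rho$ for a factor of $dd^c u$ (using $w-v\rightsquigarrow h_j$, $v\rightsquigarrow u$, and $\eta$ or $\tau$ being a wedge of $m-1$ copies built from $\rho$'s and $u$'s). Concretely, starting from $\tau=(dd^c\rho)^{m-1}$ we get
$$
\int_\Om h_j\,(dd^c\rho)^m\wed\om^{n-m}\le C\Big(\int_\Om h_j\,dd^c u\wed(dd^c\rho)^{m-1}\wed\om^{n-m}\Big)^{1/2}+C\big(\textstyle\int_\Om h_j\,(dd^c\rho)^{m-1}\wed\om^{n-m+1}\big)^{1/2}+C\big(\cdots\big)^{1/4},
$$
where the last two terms already involve only $m-1$ copies of $dd^c\rho$ with an extra $\om$, hence are controlled by $\sup_\rho\int_\Om h_j\,H_{m-1}(\rho)\to 0$ via Lemma~\ref{lem:mass-convergence} (case $s=m-1$, applied to the current $(dd^c\rho)^{m-1}\wed\om$). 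Iterating Lemma~\ref{lem:L-key} on the first term $m-1$ more times successively lowers the number of $dd^c\rho$ factors, each iteration producing a bounded geometric-like sum of terms of the shape $\big(\int_\Om h_j\,(dd^c u)^{a}\wed(dd^c\rho)^{b}\wed\om^{n-a-b}\big)^{1/2^{k}}$ with $a+b<m$, plus ``trace'' terms with an extra $\om$. Exactly as in Lemma~\ref{lem:mass-convergence}, after the full descent every surviving term is of the form $[\,e\,]^{1/2^{k}}$ with $e$ a fixed multiple of one of the quantities $\sup_\rho\int_\Om h_j\,(dd^c u)^{i}\wed(dd^c\rho)^{j}\wed\om^{n-i-j}$, $i+j\le m-1$, or $\int_\Om h_j\,(dd^c u)^s\wed\om^{n-s}$, $s\le m$.

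To close, note that each of those quantities tends to $0$ as $j\to\infty$ uniformly in $\rho$: the ones with at least one $dd^c\rho$ and an extra $\om$ are $\le$ the $cap_{m-1}$-type bounds handled by Lemma~\ref{lem:mass-convergence}; the purely $u$-terms $\int_\Om h_j\,(dd^c u)^s\wed\om^{n-s}$, $s\le m$, go to $0$ by the monotone/dominated convergence theorem for the Radon measure $H_s(u)$ together with the weak convergence of $H_s(u_j)$ to $H_s(u)$ (Theorem~\ref{thm:w-m-functions} and Proposition~\ref{prop:w-convergence-m}), since $h_j\downarrow 0$ and $\int_\Om h_j\,H_s(u)\to 0$. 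Taking the supremum over $\rho$ of the finite sum of fractional powers and then $j\to\infty$ gives the claim.

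The part I expect to be most delicate is bookkeeping the iterated application of Lemma~\ref{lem:L-key}: one must check that at each stage the ``mixed'' currents $(dd^c u)^{a}\wed(dd^c\rho)^{b}$ still satisfy the hypotheses needed to invoke Lemma~\ref{lem:L-key} (i.e.\ can be written as a wedge of $m-1$ bounded $m-\om$-sh potentials with the auxiliary factor $\om^{n-m+1}$, after the usual $\de|z|^2$-regularization so that \eqref{eq:add} holds and then letting $\de\to0$), and that the exponents $1/2^{k}$ and the accumulating constants $C$ stay uniform in $\rho$ and independent of $j$ — this is exactly the role played by the remark that $C$ in Lemmas~\ref{lem:IBP-a}--\ref{lem:L-key} does not depend on $\de$. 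Once that is in place, the convergence to $0$ is immediate from Lemma~\ref{lem:mass-convergence} and the dominated convergence theorem, so the genuinely new content is only this combinatorial descent, which mirrors the proof of Proposition~\ref{prop:cap-integral} one level up.
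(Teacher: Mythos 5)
Your proposal is correct and follows exactly the route the paper intends: the paper gives no written proof of Lemma~\ref{lem:mass-convergence-m}, merely noting that, ``thanks to Lemma~\ref{lem:L-key},'' the case $s=m$ is obtained the same way Lemma~\ref{lem:mass-convergence} was proved from Lemma~\ref{lem:w-L-key} --- namely by modifying $\rho$ to lie in $P^*$, then iterating the integration-by-parts estimate to trade factors of $dd^c\rho$ for factors of $dd^c u$ until one arrives at a sum of the quantities $[e_{(0,0,s)}]^{1/2^s}$, and concluding by dominated convergence. Your presentation is essentially that descent; the only cosmetic difference is that you invoke Lemma~\ref{lem:mass-convergence} to kill the ``trace'' terms (those carrying an extra $\om$) rather than carrying them through the full descent, which is an entirely equivalent bookkeeping choice, and you correctly identify the two points that need care (that the intermediate mixed currents $dd^c u_1\wed\cdots\wed dd^c u_{m-1}$ with $u_i\in\{u,\rho\}\subset P^*$ remain admissible for Lemma~\ref{lem:L-key}, and that the constants are uniform in $\rho$, $j$ and the $\de$-regularization).
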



\begin{proof}[End of proof of Proposition~\ref{prop:decreasing-seq-cap}]  Because of (b) and (c) in Proposition~\ref{prop:cap-properties}  we can assume that $\Om$ is a ball and all functions are equal near the boundary. Let $\de>0$. We wish to show that 
$$
	\lim_{j\to \infty}  cap_m (\{u_j-u>\de\}) =0.
$$
Argument by contradiction. Suppose that the statement were not true. Then, there would exist $\veps>0$, a subsequence $\{u_{j_s}\} \subset \{u_j\}$ and  a sequence of $m-\om$-sh functions $\rho_{j_s}$ with $-1 \leq \rho_{j_s} \leq 0$ such that  
$$
	\limsup_{j_s\to +\infty} \int_{\{u_{j_s} -u>\de\}} H_m(\rho_{j_s}) \geq \veps.
$$
On the other hand, by Markov's inequality,
$$	
	\int_{\{u_{j_s}-u>\de\}} H_m(\rho_{j_s}) \leq \frac{1}{\de} \int_\Om (u_{j_s}-u) H_m(\rho_{j_s}). 
$$
Lemma~\ref{lem:mass-convergence-m} shows  that the right hand side converges to zero. This leads to a contradiction. Thus, the proof of proposition is completed.
\end{proof}

\begin{thm} \label{thm:quasi-continuity} Let $\Om$ be a bounded open set in $\bC^n$. Let $u$ be a $m-\om$-sh function in $\Om$. Then, for every $\veps>0$, there exists an open subset $U \subset \Om$ with $cap_m(U,\Om) < \veps$ such that $u$ restricted to $\Om\setminus U$ is continuous.
\end{thm}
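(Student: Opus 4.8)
The plan is to run the Bedford--Taylor argument: approximate $u$ from above by smooth $m-\om$-sh functions, invoke Proposition~\ref{prop:decreasing-seq-cap} to get convergence in $cap_m$, and then take the union of the ``bad sets'' of a fast enough subsequence. Since quasi-continuity is local --- if each point of $\Om$ has a ball neighbourhood $B\subset\subset\Om$ on which the conclusion holds with $cap_m(\cdot,B)$, then, covering $\Om$ by countably many such balls, Proposition~\ref{prop:cap-properties}(b),(c) yields it on $\Om$ --- the Localization Principle together with the gluing Lemma let us assume that $\Om=B$ is a ball and that, after replacing $u$ by $\max(u,A\psi)$ for a defining function $\psi$ of $B$ and $A$ large, $u$ coincides with a fixed smooth strictly $m-\om$-sh function on a neighbourhood $C$ of $\d B$; this modification changes $u$ only on a compact subset of $B$ (harmless for a local statement), makes $u$ continuous on $C$, and gives $\{u<-s\}\subset\subset B$ for all large $s$.

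\emph{The bounded case.} Assume in addition $-1\le u\le0$. By Proposition~\ref{prop:smoothing} there are $m-\om$-sh $u_j\in C^\infty(\ov B)$ with $u_j\downarrow u$, and by Proposition~\ref{prop:decreasing-seq-cap} $u_j\to u$ in $cap_m(\bullet)$. Fix a closed $K\subset\subset B$ with $B\setminus K\subset C$. The functions $u_j-u$ are lower semi-continuous, so the sets $\{u_j-u>2^{-k}\}$ are open, and for each $k$ we may choose $j_k$ with $cap_m\big(K\cap\{u_{j_k}-u>2^{-k}\}\big)<\veps\,2^{-k}$. Set $U:=\bigcup_{k\ge1}\big(\mathrm{int}\,K\cap\{u_{j_k}-u>2^{-k}\}\big)$, an open set with $cap_m(U)<\veps$. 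On $B\setminus U$: near $\d B$ (inside $C\setminus U$) the function $u$ is smooth, hence continuous; on $\mathrm{int}\,K\setminus U$ one has $0\le u_{j_k}-u\le 2^{-k}$ for all $k$, so $u_{j_k}\to u$ uniformly there and $u$ is continuous; since these two relatively open sets cover $B\setminus U$, $u|_{B\setminus U}$ is continuous.

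\emph{The general case.} Allowing $u\equiv-\infty$ somewhere, the key additional fact is
$$\lim_{N\to\infty}cap_m\big(\{u<-N\},B\big)=0 .$$
Granting it, fix $\veps>0$, pick $N$ so large that $cap_m(\{u<-N\})<\veps/2$ and $u^N:=\max(u,-N)$ equals the fixed boundary function near $\d B$; applying the bounded case to $u^N$ gives an open $G$ with $cap_m(G)<\veps/2$ and $u^N|_{B\setminus G}$ continuous. Take $U:=G\cup\{u<-N\}$: then $U$ is open, $cap_m(U)<\veps$ by Proposition~\ref{prop:cap-properties}(c), and on $B\setminus U=(B\setminus G)\cap\{u\ge-N\}$ we have $u=u^N$, so $u|_{B\setminus U}$ is continuous because $\{u\ge-N\}$ is relatively closed in $B$. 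For the displayed limit, note $\{u<-N\}$ is open and relatively compact in $B$ for large $N$, and $-u>N$ there, so $cap_m(\{u<-N\})\le\tfrac1N\sup_{-1\le\rho\le0}\int_{K_0}(-u)(dd^c\rho)^m\wed\om^{n-m}$ for a fixed $K_0\subset\subset B$; it thus suffices to prove the weighted Chern--Levine--Nirenberg bound $\sup_{-1\le\rho\le0}\int_{K_0}(-u)\,H_m(\rho)\le C\big(1+\|u\|_{L^1(B)}\big)$, which follows, as in the proof of Proposition~\ref{prop:CLN-bdd}, by integrating by parts to transfer one factor $dd^c\rho$ onto $-u$ and bounding $\int dd^cu\wed(dd^c\rho)^{m-1}\wed\om^{n-m+1}$, the torsion terms $d\om\wed d^c(\cdot)$ being absorbed through the Cauchy--Schwarz inequalities of Sections~\ref{ssec:positive-cone}--\ref{ssec:integral-smooth-fct} and the boundary terms being finite because the competitor $\rho$ may be taken equal to $A_0\psi$ near $\d B$.

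I expect the main obstacle to be precisely this last estimate. The bounded case is a faithful transcription of Bedford--Taylor once Proposition~\ref{prop:decreasing-seq-cap} is available (the only care being to keep the bad sets relatively compact in $B$), but the weighted CLN bound underlying $cap_m(\{u<-N\})\to0$ cannot rely on Stokes' theorem for closed positive currents: in the Hermitian setting the relevant integrations by parts generate the non-closed torsion currents $dd^c\om$ and $d\om\wed d^c\om$, so the argument must be carried out, as in Section~\ref{ssec:integral-smooth-fct}, through currents of order zero and the generalized Cauchy--Schwarz inequalities, now with the genuinely unbounded weight $-u$.
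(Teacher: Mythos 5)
Your handling of the bounded case is precisely the paper's route: localize and glue, smooth approximation from Proposition~\ref{prop:smoothing}, convergence in $cap_m$ from Proposition~\ref{prop:decreasing-seq-cap}, then the Bedford--Taylor ``fast subsequence'' union of open bad sets. That part is correct. The structure of your reduction of the unbounded case is also the same as the paper's: both rest on showing $cap_m(\{u<-N\},B)\to 0$ (the paper's \eqref{eq:cap-sublevel-set}) and then applying the bounded case to $\max(u,-N)$.

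The gap is in your proposed proof of $cap_m(\{u<-N\})\to 0$. You claim a weighted CLN bound $\sup_{-1\le\rho\le 0}\int_{K_0}(-u)\,H_m(\rho)\le C(1+\|u\|_{L^1})$, obtained ``as in Proposition~\ref{prop:CLN-bdd}'' by integrating by parts and absorbing torsion via the Cauchy--Schwarz inequalities. But when you move $dd^c\rho$ across $(-u)\,\om^{n-m}$ in the Hermitian setting, the cross terms are $\int\rho\,d(-u)\wedge d^c\om\wedge(dd^c\rho)^{m-1}\wedge\om^{n-m-1}$, and Lemma~\ref{lem:CS} bounds their square by a multiple of
\[
\int \rho^2\,du\wedge d^cu\wedge(dd^c\rho)^{m-1}\wedge\om^{n-m}\;\cdot\;\int(dd^c\rho)^{m-1}\wedge\om^{n-m+1}.
\]
The first factor is a gradient-energy integral of $u$. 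For an \emph{unbounded} $m$-$\om$-sh $u$ this need not be finite (already $u=\log|z|$ on a ball in $\bC^n$, $m\ge 2$, has $\int du\wedge d^cu\wedge\om^{n-1}=\infty$), and the only tools the paper offers for such gradient integrals --- Lemma~\ref{lem:CLN-grad} and the Sobolev-space machinery of Section~3.2 --- are set up for weights $h$ with $0\le h\le 1$. Truncating $u$ does not save the estimate because Proposition~\ref{prop:CLN-bdd} only gives bounds that grow like $(1+\|u_N\|_{L^\infty})^p\sim N^p$. So your weighted CLN claim is not established by the route you describe, and in the Hermitian setting it is precisely the obstacle that the paper's architecture is designed to avoid.

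The paper sidesteps this by never putting the unbounded $u$ under the integral sign: it writes $\cO_j=\{u<-j\}\cap V$, works with the \emph{relative extremal function} $u_{\cO_j}\in[-1,0]$ (which is automatically bounded and satisfies $u_{\cO_j}\ge\max(u/j,-1)$), and uses Lemma~\ref{lem:cap-formula} to bound $cap_m(\cO_j)$ by the energies $\int_\Om(-u_{\cO_j})H_s(u_{\cO_j})$, which go to zero because $u_{\cO_j}\uparrow 0$ and the increasing convergence theorem applies. That is the forward reference in the paper's one-line proof (``whose proof will use only bounded functions''): Lemma~\ref{lem:cap-formula} is itself built on Corollary~\ref{cor:bounded-case} and bounded quasi-continuity, which you have by the first half of the argument, so there is no circularity. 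To repair your proposal you would need either to prove the weighted CLN by a genuinely different method or, as the paper does, to reformulate the sublevel-set estimate entirely in terms of bounded extremal functions.
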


\begin{proof} The result is local and moreover, it can be reduced to the case of bounded functions by  the property \eqref{eq:cap-sublevel-set} whose proof will use only bounded functions. We can use the classical argument in \cite[Theorem~3.5]{BT82} since  there exists a sequence of smooth $m-\om$-sh functions that decrease to $u$ point-wise (Proposition~\ref{prop:smoothing}) and our capacity is  subadditive by Proposition~\ref{prop:cap-properties}. 
\end{proof}

We obtain the convergence in capacity for monotone sequences of uniformly bounded functions. In particular, the smoothness assumption in Proposition~\ref{prop:decreasing-seq-cap} can be relaxed  by yet another approximation. We recall first  a classical result in measure theory,  giving a short proof for the reader's convenience.

\begin{lem} \label{lem:basic-closed} Let $\mu_j$ be a sequence of positive Radon measures with compact support in $\Om$. Assume that $\mu_j$ converges weakly to $\mu$.  Let $\Om \supset\supset F_1 \supset F_{2} \supset \cdots$ be a sequence of decreasing  closed subsets in $\Om$ satisfying 
$$ \lim_{j\to \infty} \mu( F_j) = 0.$$
Then, 
$
	\lim_{j\to +\infty} \mu_j(F_j) =0.
$
\end{lem}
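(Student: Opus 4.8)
The plan is to reduce the statement to the standard upper semicontinuity of the mass on compact sets under weak convergence of measures. First I would fix $\veps>0$. Since the closed sets $F_j$ are decreasing with $\mu(F_j)\to 0$, there is an index $j_0$ such that $\mu(F_{j_0})<\veps$. Because $\Om\supset\supset F_1$, every $F_j$ is a closed subset of a relatively compact set, hence compact; in particular $F_{j_0}$ is a compact subset of $\Om$. For all $j\geq j_0$ we have $F_j\subset F_{j_0}$, so $\mu_j(F_j)\leq \mu_j(F_{j_0})$, and the whole lemma follows once we show that $\limsup_{j\to\infty}\mu_j(F_{j_0})\leq \mu(F_{j_0})$.

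To prove this inequality I would invoke outer regularity of the Radon measure $\mu$. Given $\veps'>0$, choose an open set $U$ with $F_{j_0}\subset U\subset\subset\Om$ and $\mu(U)<\mu(F_{j_0})+\veps'$ (possible since $F_{j_0}$ is compact in $\Om$). By Urysohn's lemma pick $\chi\in C_c(\Om)$ with $0\leq\chi\leq 1$, $\chi\equiv 1$ on $F_{j_0}$ and $\supp\chi\subset U$. Then $\mu_j(F_{j_0})\leq\int\chi\,d\mu_j$ for every $j$, and weak convergence gives $\lim_{j\to\infty}\int\chi\,d\mu_j=\int\chi\,d\mu\leq\mu(U)<\mu(F_{j_0})+\veps'$. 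Hence $\limsup_{j\to\infty}\mu_j(F_{j_0})\leq\mu(F_{j_0})+\veps'$, and letting $\veps'\to 0$ yields $\limsup_{j\to\infty}\mu_j(F_{j_0})\leq\mu(F_{j_0})<\veps$.

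Combining the two steps, $\limsup_{j\to\infty}\mu_j(F_j)\leq\limsup_{j\to\infty}\mu_j(F_{j_0})<\veps$ for every $\veps>0$; since $\mu_j(F_j)\geq 0$, this forces $\lim_{j\to\infty}\mu_j(F_j)=0$.

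I do not expect any genuine obstacle: the only point requiring care is the upper semicontinuity $\limsup_j\mu_j(K)\leq\mu(K)$ on compact sets, which is precisely where outer regularity of $\mu$ and the choice of the cutoff $\chi$ are needed — weak convergence by itself only gives $\int f\,d\mu_j\to\int f\,d\mu$ for $f\in C_c(\Om)$, so one cannot test directly against the (non-continuous) indicator function of $F_{j_0}$.
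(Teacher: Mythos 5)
Your proof is correct and follows exactly the same approach as the paper's: reduce to $\mu_j(F_j)\leq\mu_j(F_{j_0})$ for $j\geq j_0$ and invoke upper semicontinuity $\limsup_j\mu_j(F_{j_0})\leq\mu(F_{j_0})$ under weak convergence. The only difference is that you spell out the standard Urysohn/outer-regularity argument for that upper semicontinuity, which the paper simply cites implicitly via "the weak convergence implies."
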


\begin{proof} 
Fix $\veps>0$. By the assumption there exists $j_0>0$ such that $\mu(F_{j_0}) < \veps$.  Using the inclusions we have
$
	\mu_j(F_j) \leq \mu_j(F_{j_0}), 
$ for $ j>j_0.$
The weak convergence implies
$$
	 \limsup_{j\to \infty} \mu_j (F_j) \leq \limsup_{j\to\infty} \mu_j(F_{j_0})  \leq \mu (F_{j_0}) < \veps.
$$
It follows that
$$
	0\leq \liminf_{j\to \infty} \mu_j(F_j) \leq \limsup_{j\to \infty} \mu_j(F_j) \leq \veps.
$$
This holds for every $\veps>0$. Thus, the conclusion follows.
\end{proof}
 
\begin{cor} \label{cor:monotonicity} Let $\Om$ be a bounded open set in $\bC^n$. Let $\{u_j\}_{j\geq 1}$ be a uniformly bounded and monotone sequence of $m-\om$-sh functions that either $u_j \downarrow u$ point-wise or $u_j \uparrow u$ almost everywhere for a bounded $m-\om$-sh function $u$ in $\Om$. Then, $u_j$ converges to $u$ in $m-$capacity.
\end{cor}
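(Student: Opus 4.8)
The plan is to treat the two monotonicity regimes separately. In both the starting point is the same reduction, and the substance of the argument is to upgrade the quasi-continuity statement of Theorem~\ref{thm:quasi-continuity} into a uniform (Dini-type) convergence off a set of small capacity.

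\emph{Reduction.} Convergence in $cap_m(\bullet)$ is tested on compact sets, so fix $K\subset\subset\Om$ and $\de>0$. Covering $K$ by finitely many balls relatively compact in $\Om$ and invoking the subadditivity and the comparison property of the capacity (Proposition~\ref{prop:cap-properties}(b),(c)), we may assume $\Om$ is a ball; after rescaling, that $-1\le u_j,u\le 0$. The goal is then to show $cap_m\big(K\cap\{|u_j-u|\ge\de\}\big)\to 0$.

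\emph{The decreasing case.} Here $u_j-u\ge 0$. Fix $\veps>0$. Applying Theorem~\ref{thm:quasi-continuity} to $u$ and to each $u_j$ and using countable subadditivity (Proposition~\ref{prop:cap-properties}(c)), choose an open $G$ with $cap_m(G)<\veps$ on whose complement $u$ and all the $u_j$ are continuous. On the compact set $K\setminus G$ we have $u_j\downarrow u$ with every function continuous, so Dini's theorem gives uniform convergence there; hence $(K\setminus G)\cap\{u_j-u\ge\de\}=\emptyset$ for large $j$, which forces $K\cap\{u_j-u\ge\de\}\subset G$ and therefore $cap_m(K\cap\{u_j-u\ge\de\})\le cap_m(G)<\veps$ by monotonicity. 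Since $\veps$ was arbitrary, this case is finished.

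\emph{The increasing case, and the main obstacle.} Now $u_j\uparrow u$ a.e. Let $g=\lim_j u_j$ be the pointwise limit. Then $g^*$ is $m-\om$-sh and equals $g$ outside a Lebesgue-null set, so $g^*=g=u$ a.e. and hence $g^*=u$ everywhere since two $m-\om$-sh functions that agree a.e. agree everywhere (\cite[Corollary~9.7]{GN18}); in particular $u_j\le g\le u$ everywhere and $0\le u-u_j\downarrow u-g$. Choose $G$ as in the decreasing case, so that $u$ and all $u_j$ are continuous on $\Om\setminus G$; then on the compact $K\setminus G$ the continuous functions $u-u_j$ decrease to the upper semicontinuous function $u-g$, so $F_j:=(K\setminus G)\cap\{u-u_j\ge\de\}$ is a decreasing sequence of compacta with $\bigcap_jF_j=F_\infty:=(K\setminus G)\cap\{u-g\ge\de\}$, and $F_\infty$ is Lebesgue-null because $g=u$ a.e. As before $cap_m(K\cap\{u-u_j\ge\de\})\le cap_m(F_j)+\veps$, so everything reduces to $cap_m(F_j)\to 0$. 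I would argue this by contradiction: otherwise there are $\veps_0>0$, a subsequence, and $m-\om$-sh functions $\rho_k$ with $-1\le\rho_k\le 0$ and $\int_{F_{j_k}}H_m(\rho_k)\ge\veps_0$; by the CLN inequality (Proposition~\ref{prop:CLN-bdd}) the masses of $H_m(\rho_k)$ on a fixed neighbourhood of $K$ are uniformly bounded, so a further subsequence converges weakly to a positive Radon measure $\mu$, and Lemma~\ref{lem:basic-closed} applied to the decreasing closed sets $F_{j_k}$ yields $\int_{F_{j_k}}H_m(\rho_k)\to 0$ provided one knows $\mu(F_\infty)=0$, a contradiction. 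The genuinely hard point is exactly this $\mu(F_\infty)=0$: since $\int_U H_m(\rho)\le cap_m(U)$ for every open $U$, the weak limit satisfies $\mu(E)\le cap_m^*(E)$ for all Borel $E$, so it suffices to know $cap_m^*(F_\infty)=0$; and this holds because $F_\infty$ lies in the negligible set $\{g<g^*\}$, i.e. the increasing case rests on the vanishing of the outer $m$-capacity of negligible sets. This is the crux; modulo it, combining the two cases gives the corollary.
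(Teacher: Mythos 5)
Your decreasing case is correct and in fact more direct than the paper's: there is no need for a contradiction argument or for extracting a weak limit $\mu$ of $\chi H_m(\rho_j)$ and invoking Lemma~\ref{lem:basic-closed}, since Dini on the compact set $K\setminus G$ immediately gives $K\cap\{u_j-u\ge\de\}\subset G$ for large $j$, hence $cap_m(K\cap\{u_j-u\ge\de\})<\veps$ by monotonicity. The paper treats both monotone regimes by the same contradiction/weak-limit scheme; for the decreasing case your version is a genuine simplification.

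For the increasing case you have correctly isolated a subtlety that the paper's own write-up appears to gloss over. After quasi-continuity, $u_j$ increases to $u$ only almost everywhere, so on $\Om\setminus G$ its pointwise limit is $g=\sup_ju_j\le u$ and not $u$ itself; Dini therefore does not give uniform convergence of $u_j$ to $u$, and $F_j$ need not be eventually empty — one is left with $\bigcap_jF_j\subset\{g<g^*\}$, which is Lebesgue-null but not obviously $\mu$-null. Your reduction to $\mu(F_\infty)=0$ via $\mu(E)\le cap_m^*(E)$, valid for any weak limit of $\chi H_m(\rho_k)$ with $-1\le\rho_k\le 0$, is the right diagnosis. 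However, the step you then rely on — that negligible sets have zero outer $m$-capacity — is exactly Theorem~\ref{thm:NP} combined with Proposition~\ref{prop:polar-set-characterization}, proved only in Section~7 on top of Lemma~\ref{lem:cap-formula}, Lemma~\ref{lem:bounded-increasing} and Corollary~\ref{cor:bounded-case}. You flag this candidly as "the crux", but as written the increasing case is reduced to a non-trivial forward reference rather than proved. Even granting the forward reference, one must also check that the chain feeding Theorem~\ref{thm:NP} does not circle back to the present corollary; in particular Lemma~\ref{lem:bounded-increasing} is only sketched ("similar proof") in the paper, and if its own increasing-case argument uses Dini in the same way, it would inherit the same gap. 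So the proposal correctly identifies the structure and sharpens the paper's discussion, but it is not a self-contained proof at this point in the development.
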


\begin{proof} 
The localization principle applies in both cases, we can assume that $\Om$ is a ball and  $u_j$'s are equal to  a fixed smooth psh function in a neighborhood of the boundary. Hence, $u- u_j =0$ on $\Om\setminus K$ for a fixed compact subset $K$.  Let $\de>0$, we wish to show that 
$$
	\lim_{j\to \infty} cap_m(|u-u_j| \geq \de) =0.
$$
Arguing by contradiction, suppose that this were not true. Then, there would exist $
\veps>0$ and a sequence of $m-\om$-sh functions $\rho_j$ with $-1\leq \rho_j\leq 0$ such that
$$
	\limsup_{j\to \infty} \int_{\{|u-u_j| \geq \de\}} H_m(\rho_j) \geq \veps.
$$
Fix a cut-off function $\chi$ with compact support in $\Om$ and equal $1$ on $K$. By the CLN inequality for bounded functions (Proposition~\ref{prop:CLN-bdd}) we may assume that $\mu_j:=\chi H_m(\rho_j)$ converges weakly to a positive Radon measure $\mu$.  

We use the quasi-continuity. Find an open set $G\subset \Om$ such that $cap_m(G)$  $ \leq \veps/2$ and the restrictions of $u_j, u$ to  $\Om\setminus G$ are continuous functions. Since $u_j$ is a monotone sequence, it follows from Dini's lemma that it converges uniformly on $\Om\setminus G$.  In particular, the sets $F_j:= \{|u-u_j| \geq \de\} \setminus G$ are closed in $\Om$ and satisfy $\lim_{j\to\infty}\mu(F_j)=0$. 
Hence, 
$$
	\int_{\{|u-u_j|\geq \de\}} H_m(\rho_j) \leq \int_{F_j} \chi H_m(\rho_j) + cap_m(G) \leq \mu_j(F_j) + \veps/2.
$$
Letting $j\to \infty$ and using Lemma~\ref{lem:basic-closed} this leads to a contradiction.
\end{proof}

\section{Weak convergence}

\subsection{Convergence theorems for decreasing sequences}

Let $\Om$ be a bounded open set in $\bC^n$. We have continuity of wedge products of $m-\om$-sh functions under decreasing sequences of bounded functions.

\begin{lem} \label{lem:bounded-decreasing} Let $v, u_1,..., u_p$, $1\leq p\leq m$, be a bounded $m-\om$-sh functions in $\Om$. Let $\{v^j\}_{j\geq 1}$ and $\{u^j_s\}_{j\geq 1}$ be uniformly bounded sequences of $m-\om$-sh such that $v^j\downarrow v$ and $u^j_s\downarrow u_s$ as $j \to +\infty$,  for each $s=1,...,p$. Then,
\begin{itemize}
\item
[(a)]  $\lim_{j\to\infty} \Lc(u^j_1,...,u^j_p) = \Lc(u_1,...,u_p)$;
\item
[(b)]
$
	\lim_{j\to +\infty} v^j \Lc(u_1^j,...,u_p^j) =v \Lc(u_1,...,u_p);
$
\end{itemize}
where the convergence is understood in the sense of currents of order zero.
\end{lem}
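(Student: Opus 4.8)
The plan is to prove (a) and (b) together by induction on $p$, bootstrapping from the smooth case already treated in Proposition~\ref{prop:w-convergence-m}. By the localization principle we may assume that $\Om$ is a ball, that all functions lie in a class $P^*$, and, after rescaling, that $-1\le v,v^j,u_s,u_s^j\le 0$. For $p=0$ the statement is just $v^j\to v$ in $L^1_{\rm loc}$, which is dominated convergence. For the inductive step, I would first note that for $p\le m-1$ it is convenient to run the induction on the order-zero currents $dd^c u_p^j\wed\cdots\wed dd^c u_1^j$ themselves (and on $v^j$ times them), the statements of (a) and (b) following afterwards by wedging with the smooth form $\om^{n-m}$. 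Then (a) at level $p$ is a formal consequence of (b) at level $p-1$: since
$$
	dd^c u_p^j\wed\cdots\wed dd^c u_1^j=dd^c\big[\,u_p^j\,\big(dd^c u_{p-1}^j\wed\cdots\wed dd^c u_1^j\big)\,\big]
$$
(these currents being well defined by Lemma~\ref{lem:w-p-functions} and Theorem~\ref{thm:w-m-functions}), the bracket converges in the sense of currents of order zero by the inductive hypothesis applied with $u_p^j$ in the role of the extra bounded factor, and $dd^c$ is continuous on distributions.

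The substance is (b) at level $p$, granted (a) at level $p$. The structural point is that the coefficients of the (in general non-closed, non-positive) current $S_p^j:=dd^c u_p^j\wed\cdots\wed dd^c u_1^j$ are signed Radon measures dominated, by \cite[Corollary~2.4]{KN3}, by the positive trace measures
$$
	\wt S_p^j:=\big(dd^c\textstyle\sum_{s=1}^{p} u_s^j\big)^{p}\wed\om^{n-p},
$$
which have uniformly bounded local mass by the CLN inequality (Proposition~\ref{prop:CLN-bdd}) and which, by (a) at level $p$, converge weakly to $\wt S_p$. Fixing a test form $\phi$ with $\supp\phi=K$, I would split
$$
	\lc v^j S_p^j-vS_p,\,\phi\rc=\lc(v^j-v)S_p^j,\,\phi\rc+\lc v\,(S_p^j-S_p),\,\phi\rc
$$
and estimate each term. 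Given $\veps>0$, quasi-continuity (Theorem~\ref{thm:quasi-continuity}) provides an open set $G$ with $cap_m(G)<\veps$ off which $v,v^j,u_s,u_s^j$ are all continuous; by Dini's theorem $v^j\downarrow v$ uniformly on the compact set $K\setminus G$. The first term is then bounded by $C\|\phi\|_K\int_K(v^j-v)\,\wt S_p^j\le C\|\phi\|_K\big(\,\sup_{K\setminus G}(v^j-v)\cdot\wt S_p^j(K)+2\,\wt S_p^j(G)\,\big)$, whose first summand tends to $0$. For the second term, taking a continuous $[-1,0]$-valued extension $\wt v$ of $v|_{\Om\setminus G}$, one has $\lc\wt v\,(S_p^j-S_p),\phi\rc\to0$ by weak convergence, while the remainder is supported in $G$ and bounded by $C\|\phi\|_K\big(\wt S_p^j(G)+\wt S_p(G)\big)$.

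Everything thus reduces to the uniform estimate $\wt S_p^j(G)\le C\,cap_m(G)$ for open $G\subset\subset\Om$, and this is the step I expect to be the main difficulty. I would prove it by polarization: write $(dd^c\sum_s u_s^j)^p=p^p\big(dd^c\tfrac1p\sum_s u_s^j\big)^p$, dominate $\om^{n-p}=\om^{m-p}\wed\om^{n-m}\le C\,(dd^c\phi_0)^{m-p}\wed\om^{n-m}$ for a plurisubharmonic $\phi_0\in[-1,0]$ with $dd^c\phi_0\ge c_0\om$, and wedge these inequalities together (legitimate, since all factors involved are (strongly) positive), to get $\wt S_p^j\le C\,(dd^cW^j)^m\wed\om^{n-m}=C\,H_m(W^j)$ for the single $m-\om$-sh function $W^j:=\tfrac1m\big(\sum_{s=1}^{p}u_s^j+(m-p)\phi_0\big)$, which satisfies $-1\le W^j\le0$. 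The definition of $cap_m$ then gives $\wt S_p^j(G)\le C\,cap_m(G)$, and the portmanteau theorem transfers the same bound to the fixed measure $\wt S_p$. Combining all the estimates yields $\limsup_{j\to\infty}|\lc v^jS_p^j-vS_p,\phi\rc|\le C\|\phi\|_K\veps$, and letting $\veps\to0$ closes the induction. Two minor remarks: (a) and (b) must be carried together because (a) at level $p$ is fed by (b) at level $p-1$; and the case $p=m$ needs no extra care, since there $S_m^j\wed\om^{n-m}$ is already a positive measure (Theorem~\ref{thm:w-m-functions}) and the same, in fact simpler, reasoning applies.
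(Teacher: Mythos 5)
Your plan is essentially correct, but it takes a genuinely different route from the paper, so the comparison is worth spelling out. For part (a) the paper does \emph{not} bootstrap from (b) at level $p-1$: it introduces a double approximation $u_s^{j,\de}$, writes the difference $\Lc(u_1^{j,\de},\ldots)-\Lc(u_1^{\de},\ldots)$ as a telescoping sum $\sum_s dd^c(u_s^{j,\de}-u_s^\de)\wed T_s\wed\om^{n-m}$, integrates by parts against a cutoff, applies \cite[Corollary~2.4]{KN3} to reduce to a single positive measure $(dd^c\rho^{j,\de})^{p-1}\wed\om^{n-p+1}$, and then uses quasi-continuity and Dini to kill it. For part (b) the paper uses a different and slightly slicker device: by monotonicity $v^j\geq v$ it shows directly that every weak limit $\Te$ of $v^j\Lc_p(u^j)$ satisfies $\Te\leq v\Lc_p(u)$, and only the reverse inequality is proved with quasi-continuity and a continuous extension $g$ of $v|_F$. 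Your scheme (derive (a) at level $p$ from (b) at level $p-1$ via continuity of $dd^c$ on distributions plus the CLN mass bounds, then prove (b) at level $p$ by a direct split as in Proposition~\ref{prop:w-convergence-m}) is coherent, self-contained, and in fact slightly more economical: you avoid the telescoping/integration-by-parts computation for (a) entirely, and you make explicit the comparison $\wt S_p^j(G)\leq C\,cap_m(G)$ for $p<m$, which the paper leaves implicit in both this proof and the proof of Proposition~\ref{prop:w-convergence-m}.

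One justification in your key estimate needs to be repaired, although the conclusion is correct. You claim the inequality $(dd^c\sum_s u_s^j)^p\wed\om^{n-p}\leq C\,(dd^c\sum_s u_s^j)^p\wed(dd^c\phi_0)^{m-p}\wed\om^{n-m}$ is obtained by ``wedging inequalities between (strongly) positive factors.'' But for $m<n$ a $m-\om$-sh function can have $dd^c$ with negative eigenvalues, so $(dd^c\sum_s u_s^j)^p$ is not a (strongly) positive $(p,p)$-form and one cannot wedge inequalities of $(n-p,n-p)$-forms with it and keep the sign. The correct argument is G\aa rding's inequality (Remark~\ref{rmk:Garding}): write $dd^c\phi_0 = c_0\om + \eta'$ with $\eta'\geq 0$ and expand $(dd^c\phi_0)^{m-p}$; each resulting term $(dd^c\sum_s u_s^j)^p\wed(\eta')^k\wed\om^{m-p-k}\wed\om^{n-m}$ is a product of exactly $m$ forms in $\Ga_m(\om)$ (note $\eta',\om\in\Ga_m(\om)$) wedged with $\om^{n-m}$, hence pointwise nonnegative, and the $k=0$ term gives exactly $c_0^{m-p}(dd^c\sum_s u_s^j)^p\wed\om^{n-p}$. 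With that fix your polarization bound $\wt S_p^j\leq C\,H_m(W^j)$ and hence $\wt S_p^j(G)\leq C\,cap_m(G)$ holds, and the rest of your argument goes through.
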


\begin{proof}  (a) By the localization principle we may assume that all functions are defined in a ball $\Om$ and they are equal to a fixed smooth psh function $\psi$ outside $\Om'\subset\subset \Om$.  Let $\{u_s^{j,\de}\}_{\de>0}$  be decreasing sequences of smooth $m-\om$-sh functions such that $u_s^{j,\de}\downarrow u_s^j$ as $\de\to 0$. Similarly, let $\{u_s^\de\}_{\de>0}$ be approximating sequences for $u_s$. We may assume that all involved functions are negative and of uniform norm less than one.

Let $\chi$ be a test form whose $\supp \chi =K \subset\subset \Om$. We consider the difference
$$
 M_{(j,\de)} = \int \chi \left[ \Lc(u_1^{j,\de},...,u_p^{j,\de}) -  \Lc(u_1^{\de},...,u_p^{\de}) \right].
$$
By Proposition~\ref{prop:w-convergence-m}  
$\lim_{\de\to 0}  \Lc(u_1^{\de},...,u_p^{\de}) =  \Lc(u_1,...,u_p).$
So, the proof is completed as soon as we  show  that
$$
	\lim_{j\to \infty} \lim_{\de\to 0} |M_{(j,\de)}| =0.
$$
We claim that 
\[\label{eq:bdd-decreasing-1}
	|M_{(j,\de)}| \leq C \sum_{s=1}^p \int_K |u_s^{j,\de} - u_s^\de| (dd^c \rho^{j,\de})^{p-1} \wed \om^{n-p+1},
\]
where $\rho^{j,\de} = \frac{1}{2p}\sum_{s=1}^{p} (u^{j,\de}_s + u_{s}^\de)$. In fact, 
$$
	[\Lc(u_1^{j,\de},...,u_p^{j,\de}) - \Lc(u_1^\de,...,u_p^\de)]= \sum_{s=1}^pdd^c (u^{j,\de}_{s} -u^\de_s) \wed T_s \wed \om^{n-m},
$$
where $$T_s(j,\de) =  dd^c u_{1}^{j,\de} \wed \cdots \wed dd^c u_{s-1}^{j,\de} \wed dd^c u_{s+1}^\de \wed \cdots \wed dd^c u_{p}^\de.$$ 
(Here one should use the obvious modifications for $s=1$ and $s=p$.) 
 Notice that $T_s$ is a smooth closed $(p-1,p-1)$-form. 
By integration by parts,
$$
	\int \chi  \om^{n-m} \wed dd^c (u_s^{j,\de} - u_s^\de) \wed T_p   = \int (u_s^{j,\de} - u_s^\de) dd^c (\chi \om^{n-m}) \wed T_s.
$$
Since $0\leq p-1 \leq m-1$, it follows from  \cite[Corollary~2.4]{KN3} that
$$
	| dd^c (\chi \om^{n-m}) \wed T_s | \leq 2^mC [dd^c (u_1^{j,\de} +\cdots + u_{p}^\de)]^{p-1} \wed \om^{n-p+1}
$$
for a uniform constant $C$ depending only on $\om$ and $\chi$. This proved the claim \eqref{eq:bdd-decreasing-1}.

At this point we no longer have  the continuity of $u_s^j$ and $u_s$, however, we can make use of the quasi-continuity. 
Let $\veps>0$. Find an open set $G$ such that all $u^{j,\de}_s$, $u^{j}_s$ and also $u^j_s, u_s$ are continuous on $\Om\setminus G$ and $cap_m(G)<\veps$. We know that $\Om\setminus G$ is compact in $\Om$ and by Dini's theorem for $s=1,...,p$ we have $u_s^{j,\de} \to u_s^j$ and $u^\de_s \to u_s$ uniformly as $\de\to 0$ on that set. Therefore,
$$\begin{aligned}
	\int_K |u_s^{j,\de} - u_s^\de| (dd^c \rho^{j,\de})^{p-1} \wed \om^{n-p+1}
& \leq  \int_{\Om\setminus G} |u_s^{j,\de} - u_s^\de| (dd^c \rho^{j,\de})^{p-1} \wed \om^{n-p+1} \\ 
&\quad	+ cap_m(G).
\end{aligned}$$
To estimate 
the integral on the right hand side we use
$$\begin{aligned}
&	 \int_{\Om\setminus G} |u_s^{j,\de} - u_s^\de| (dd^c \rho^{j,\de})^{p-1} \wed \om^{n-p+1}\\ &\leq   \int_{\Om\setminus G} (u_s^{j,\de} - u^j_s)  (dd^c \rho^{j,\de})^{p-1} \wed \om^{n-p+1} \\
&\quad + \int_{\Om\setminus G} (u_s^{\de} - u_s)  (dd^c \rho^{j,\de})^{p-1} \wed \om^{n-p+1} \\
&\quad+  \int_{\Om\setminus G} (u_s^{j} - u_s)  (dd^c \rho^{j,\de})^{p-1} \wed \om^{n-p+1}.
\end{aligned}$$
By the uniform convergence and then the CLN inequality (Proposition~\ref{prop:CLN-bdd}) the first two terms go to zero as $\de\to 0$. 
Moreover,  $\rho^{j,\de}$ is a sequence of smooth $m-\om$-sh functions decreasing to $\rho^j = \frac{1}{2p} \sum_{s=1}^p (u^j_s + u_s)$ as $\de\to 0$. This implies by Proposition~\ref{prop:w-convergence-m} that $\Lc_{p-1}(\rho^{j,\de})$ converges weakly to $\Lc_{p-1}(\rho^j)$ as $\de\to 0$. Combining this  with the continuity on $\Om\setminus G$ we get
$$
	\lim_{\de \to 0} \int_{\Om\setminus G} (u^{j}_s - u_s) \Lc_{p-1}(\rho^{j,\de}) = \int_{\Om\setminus G} (u^{j}_s - u_s) \Lc_{p-1}(\rho^{j}).
$$
It follows that 
$$
\lim_{\de\to 0} |M_{(j,\de)}| \leq \int_{\Om\setminus G} (u^{j}_s - u_s) \Lc_{p-1}(\rho^{j}) +\veps. 
$$
Letting $j\to +\infty$ we have by the  uniform convergence,
$$
		\lim_{j\to \infty} \lim_{\de\to 0} |M_{(j,\de)}| \leq \veps.
$$
Since $\veps>0$ is arbitrary, the proof is completed.

(b)
 For simplicity we assume that $u_1 =\cdots u_p = u$ and also
 $\{u_s^j\} = \{u^j\}$. The general case follows in the same way. Then we write 
$$
	\Lc_p(u) =\Lc_p(u,...,u).
$$
Since $v^j$ decreases to $v$ and $\Lc_p(u^j)$ converges weakly to $\Lc_p(u)$ thanks to (a), any weak limit $\Te$ of the sequence $v^j \Lc_p(u^j)$ satisfies 
$$
 \Te \leq v \Lc_p(u).
$$
Hence, $v \Lc_p(u) - \Te$ is a positive current. In particular, $$v\Lc_p(u) \wed \om^{m-p} -\Te\wed \om^{m-p} = v H_p(u) - \Te\wed \om^{m-p}$$ is a positive Radon measure. To show the converse let $\chi \geq 0$ be a test function in $\Om$, we will show that
$$
	\int \chi \Te \wed \om^{m-p} \geq  \int \chi v H_p(u).
$$
In fact, since $v^j H_p(u^j)$ converges weakly to $\Te\wed\om^{m-p}$, it is enough to show
\[\label{eq:converse-mass-ineq}
 	\lim_{j\to +\infty}\int \chi v^j H_p(u^j) \geq \int \chi v H_p(u).
\]
Let $\veps>0$ and choose an open set $G\subset \Om$ such that $cap_m(G, \Om) \leq \veps$ and $v, v_s$ are all continuous on $F = \Om\setminus G$. Since $v$ is continuous on $F$, there is a continuous extension $g$ to $\Om$ such that $v = g$ on $F$ with the same uniform norm. 
Without loss of generality we also assume that $0\leq v, v_s, g \leq 1$. Hence,
$$\begin{aligned}
	\int \chi v H_p(u) 
&\leq \int_F \chi v H_p(u) + cap_m(G) \\
&= \int_F \chi g H_p(u) + cap_m(G) \\
&\leq \int \chi g H_p(u) + cap_m(G) \\
&= \lim_{j\to \infty} \int \chi g H_p(u^j) + cap_m(G) \\
&\leq \lim_{j\to\infty} \int_F \chi g H_p(u^j) + 2cap_m(G).
\end{aligned}$$
By Dini's theorem $v^j$ converges to $v=g$ uniformly on $F$, so the last integral does not exceed
$$
	\lim_{j\to \infty} \int_F \chi v^j H_p(u^j) + \veps \leq \lim_{j\to \infty} \int \chi v^j H_p(u^j) + \veps.
$$
Therefore, we have proved that
$$
	\int \chi v H_p(u) 
\leq \lim_{j\to \infty} \int \chi v^j H_p(u^j) + 2 cap_m(G) + \veps .
$$
Since $cap_m(G) \leq \veps$ and $\veps>0$ is arbitrary, the proof of the inequality \eqref{eq:converse-mass-ineq} follows, so does the one of the lemma.
\end{proof}

\begin{cor} Let $u,v$ be bounded $m-\om$-sh functions and $T = dd^c v_1 \wed \cdots dd^c v_{m-p} \wed \om^{n-m}$ for bounded $m-\om$-sh functions $v_1,...,v_{m-p}$, where $1\leq p\leq m$. Then,
$$
	{\bf 1}_{\{u<v\}} (dd^c \max\{u,v\})^p \wed T= {\bf 1}_{\{u<v\}} (dd^c v)^p\wed T.
$$
Consequently,
$$
	(dd^c \max\{u,v\})^p \wed T \geq {\bf 1}_{\{u\geq v\}} (dd^c u)^p\wed T+ {\bf 1}_{\{u<v\}} (dd^cv)^p\wed T.
$$
\end{cor}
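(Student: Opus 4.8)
The strategy is the standard Bedford–Taylor localization argument, adapted to the order-zero current setting developed above. First I would reduce to the case $u \leq v$ everywhere (so the claim $\mathbf{1}_{\{u<v\}}(dd^c\max\{u,v\})^p\wedge T = \mathbf{1}_{\{u<v\}}(dd^c v)^p\wedge T$ becomes a statement about the open set $\{u<v\}$, on which $\max\{u,v\}=v$) by replacing $u$ with $\max\{u,v-\varepsilon\}$ or similar — but actually the cleanest route is to work directly. The key point is that the identity is local on the open set $\{u<v\}$, and there $\max\{u,v\}$ agrees with $v$ in a neighborhood of each point. The obstacle is that the wedge product here is defined inductively via $dd^c(w\,S)$ rather than being manifestly local, so one must show that equality of two bounded $m$-$\omega$-sh functions on an open set forces equality of the associated order-zero currents there.

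First I would establish the following locality statement: if $w_1, w_2$ are bounded $m$-$\omega$-sh functions with $w_1 = w_2$ on an open set $G$, then $dd^c w_1 \wedge S = dd^c w_2 \wedge S$ on $G$ for any wedge product $S$ of the type considered, and inductively $(dd^c w_1)^p\wedge T = (dd^c w_2)^p\wedge T$ on $G$. This follows from Definition~\ref{defn:w-prod} together with Proposition~\ref{prop:w-convergence-m}: approximating by smooth decreasing sequences $w_i^\delta\downarrow w_i$ and $u_s^\delta\downarrow u_s$, the smooth forms agree only in the limit, but on $G$ one can choose the approximants so that the currents $T^{j_p\cdots j_1}$ from \eqref{eq:inductive-limit-notation} converge to the same limit — more precisely, $dd^c w_1 = dd^c w_2$ as currents on $G$ already at the level of Lemma~\ref{lem:zero-order-1}, and then wedging with $S$ (a well-defined order-zero current) and applying the weak continuity from Proposition~\ref{prop:w-convergence-m} propagates the equality. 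Then, since $\{u<v\}$ is open and $\max\{u,v\}=v$ on it, applying this with $w_1 = \max\{u,v\}$, $w_2 = v$, $G = \{u<v\}$ gives the first identity after multiplying both sides by $\mathbf{1}_{\{u<v\}}$.

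For the consequence, I would argue that on the open set $\{u>v\}$ one similarly has $(dd^c\max\{u,v\})^p\wedge T = (dd^c u)^p\wedge T$, hence
$$
	(dd^c\max\{u,v\})^p\wedge T \geq \mathbf{1}_{\{u>v\}}(dd^c u)^p\wedge T + \mathbf{1}_{\{u<v\}}(dd^c v)^p\wedge T,
$$
with equality off the ``contact set'' $\{u=v\}$; the inequality becomes the asserted one after noting that on $\{u=v\}$ both $u$ and $v$ equal $\max\{u,v\}$, so $\mathbf{1}_{\{u=v\}}(dd^c u)^p\wedge T = \mathbf{1}_{\{u=v\}}(dd^c\max\{u,v\})^p\wedge T$ by the same locality statement applied with $G$ a neighborhood where... — here one must be slightly careful since $\{u=v\}$ need not be open. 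The standard fix: replace $u$ by $u_k = u + 1/k$ and $v$ by $v$, so that $\{u_k < v\} \uparrow \{u<v\}$ and $\{u_k > v\}\downarrow \{u\geq v\}$, use Corollary~\ref{cor:monotonicity} (convergence in capacity of $\max\{u_k,v\}\downarrow\max\{u,v\}$) together with Lemma~\ref{lem:bounded-decreasing} to pass the measure identities to the limit, and observe that the contact sets shrink to capacity-null boundary pieces. The resulting limit inequality is exactly the claimed one since the positive current $(dd^c\max\{u,v\})^p\wedge T$ assigns nonnegative mass to $\{u\geq v\}\setminus\{u>v\}$.

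The main obstacle I anticipate is the locality statement in the second paragraph: because the inductive wedge product $dd^c(w\,S)$ involves $dd^c$ applied \emph{after} multiplying by the (possibly non-local-in-$w$) potential, one cannot naively restrict. The resolution is that $dd^c w$ as a current of order zero (Lemma~\ref{lem:zero-order-1}) \emph{is} local — if $w_1=w_2$ near a point then their distributional $dd^c$ agree near that point — and the subsequent wedgings preserve this because Proposition~\ref{prop:w-convergence-m} gives weak convergence of the approximating smooth wedge products, which one can localize with a cutoff supported in $G$. Once this is in hand the rest is the classical Bedford–Taylor bookkeeping with $u_k = u+1/k$ and capacity convergence, for which all the needed tools (Lemma~\ref{lem:bounded-decreasing}, Corollary~\ref{cor:monotonicity}, quasi-continuity) are already available.
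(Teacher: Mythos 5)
Your argument rests on the assertion that $\{u<v\}$ is open, and this is false for general bounded $m$-$\om$-sh $u,v$: such functions are only upper semicontinuous, so $u-v$ is a difference of two u.s.c.\ functions and $\{u-v<0\}$ need not be open (nor is $\{u>v\}$; it is the level set $\{u<c\}$ for a \emph{constant} $c$ that is open). Consequently the locality lemma cannot be applied directly with $G=\{u<v\}$, and the first displayed identity does not follow from the step as written. You do name the right repair tools at the end — the shift $u_k=u+1/k$, monotone convergence (Lemma~\ref{lem:bounded-decreasing}, Corollary~\ref{cor:monotonicity}), and quasi-continuity (Theorem~\ref{thm:quasi-continuity}) — but you deploy them only to deal with $\{u=v\}$ in the ``consequence,'' whereas the openness problem already breaks the main identity; moreover the shifted sets $\{u_k<v\}$ and $\{u_k>v\}$ are likewise not Euclidean-open, so that substitution alone does not produce an open set to feed into the locality lemma.

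The argument the paper has in mind (adapting Guedj--Zeriahi's Theorem 3.27, which it cites) resolves precisely this obstacle: first prove the identity when $v$ is continuous, so that $\{u<v\}$ \emph{is} open and your locality argument for the inductive wedge product applies verbatim; then, for general bounded $v$, approximate from above by continuous $m$-$\om$-sh $v_\ell\downarrow v$ and pass to the limit using decreasing-sequence weak convergence together with quasi-continuity, using at each stage that the Hessian measures do not charge sets of arbitrarily small $cap_m$ to control the mismatch between $\{u<v_\ell\}$ and $\{u<v\}$. Your locality statement for $(dd^c w)^p\wed T$ on genuinely open sets, and the monotone-convergence ingredients, are correct and exactly what is needed; the missing piece is the quasi-continuity-based reduction that converts the non-open set $\{u<v\}$ into something to which the local identity can be applied.
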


\begin{proof} Given the weak convergence results under decreasing sequences in the above lemma, the proof of \cite[Theorem~3.27]{GZ-book} can be easily adapted to the current case. 
\end{proof}

We conclude this section by going back to the extensions of results in Section~\ref{ssec:integral-smooth-fct}, noted in Remark~\ref{rmk:bounded-case}. We give here only the most important statement that will be used later.

 \begin{cor}\label{cor:bounded-case} Let $\Om\subset\subset \bC^n$ be strictly $m$-pseudoconvex. Let $-1 \leq v \leq w\leq 0$ be bounded $m-\om$-sh functions such that $\lim_{z\to \d\Om}(w-v)=0$.  Let $\rho$ be a bounded $m-\om$-sh function such that $-1 \leq \rho \leq 0$. There is a constant $C = C(\om,n,m)$ such that
 $$
 	\int_\Om (w-v)^{3m} (dd^c\rho)^m \wed \om^{n-m} \leq C \sum_{s=0}^m \int_\Om (w-v) (dd^cv)^s\wed \om^{n-s}.
 $$
 \end{cor}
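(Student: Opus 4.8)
The plan is to deduce the bounded case from its smooth counterpart, Proposition~\ref{prop:cap-integral} (together with Remark~\ref{rmk:continuous-case}), by a decreasing approximation and a passage to the limit governed by the weak-convergence results for currents of order zero of the previous section (Lemma~\ref{lem:bounded-decreasing}) and by quasi-continuity (Theorem~\ref{thm:quasi-continuity}). First I would exhaust $\Om$ by the relatively compact strictly $m$-pseudoconvex subdomains $\Om_c=\{\psi<c\}$, $c\uparrow 0$; by monotone convergence on the left-hand side, and since all the masses over $\Om_c$ are finite by the Chern--Levine--Nirenberg inequality (Proposition~\ref{prop:CLN-bdd}), it suffices to prove the estimate over each $\Om_c$ with constant independent of $c$. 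Fixing such a $c$ and $\veps>0$, set $w_\veps:=\max\{w-2\veps,v\}$: this is again bounded $m-\om$-sh, $v\le w_\veps\le w$, $0\le w-w_\veps\le 2\veps$, and $w_\veps=v$ on a neighbourhood of $\d\Om$ (as $w-v\to 0$ there), hence on a neighbourhood of $\d\Om_c$ once $c$ is close to $0$. From $(a+b)^{3m}\le 2^{3m-1}(a^{3m}+b^{3m})$ and $w_\veps-v\le w-v$, the estimate for the pair $(v,w_\veps)$ over $\Om_c$ implies the one for $(v,w)$ over $\Om_c$ up to an additive error $C\veps^{3m}\int_{\Om_c}H_m(\rho)$, which tends to $0$ as $\veps\to0$. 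So I may assume that $v=w$ in a neighbourhood of $\d\Om_c$; renaming $w_\veps$ as $w$ and $\Om_c$ as $\Om$, we are reduced to this case.

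Next I would choose, via Proposition~\ref{prop:smoothing}, smooth $m-\om$-sh functions $v_j\downarrow v$, $w_j\downarrow w$, $\rho_j\downarrow\rho$ on $\ov\Om$, normalised so that $-1\le v_j,w_j,\rho_j\le 0$ (up to a harmless $o(1)$), and replace $w_j$ by $\max\{w_j,v_j\}$ (or a regularised maximum, if smoothness is wanted) so that $v_j\le w_j$. Since $v=w$ near $\d\Om$, the differences $w_j-v_j$ tend to $0$ uniformly near $\d\Om'$ for a subdomain $\Om'$ with $v\equiv w$ on $\Om\setminus\Om'$; carrying out the integration-by-parts chain of Lemmata~\ref{lem:case1}--\ref{lem:case3} with the boundary integrals over $\d\Om'$ retained and controlled exactly as in Remark~\ref{rmk:continuous-case} gives, for each $j$,
\[
\int (w_j-v_j)^{3m}(dd^c\rho_j)^m\wed\om^{n-m}\ \le\ C\sum_{s=0}^m\int (w_j-v_j)(dd^c v_j)^s\wed\om^{n-s},
\]
with $C=C(\om,n,m)$ independent of $j$.

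I would then let $j\to\infty$. On the right-hand side, Lemma~\ref{lem:bounded-decreasing}(b), applied with outer function $w_j$ and then $v_j$ and inner functions the $v_j$'s, yields $w_j(dd^cv_j)^s\wed\om^{n-s}\to w(dd^cv)^s\wed\om^{n-s}$ and $v_j(dd^cv_j)^s\wed\om^{n-s}\to v(dd^cv)^s\wed\om^{n-s}$ as currents of order zero, whence $\int (w_j-v_j)(dd^cv_j)^s\wed\om^{n-s}\to\int (w-v)(dd^cv)^s\wed\om^{n-s}$. On the left-hand side, $(dd^c\rho_j)^m\wed\om^{n-m}\to (dd^c\rho)^m\wed\om^{n-m}$ weakly by Lemma~\ref{lem:bounded-decreasing}(a), while $(w_j-v_j)^{3m}\ge 0$ converges to $(w-v)^{3m}$ uniformly off an open set of arbitrarily small $m$-capacity (quasi-continuity of $v$ and $w$, then Dini); the one-sided argument from the proof of Lemma~\ref{lem:bounded-decreasing}(b), which uses only such uniform convergence, then gives $\int (w-v)^{3m}H_m(\rho)\le\liminf_{j}\int (w_j-v_j)^{3m}H_m(\rho_j)$. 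Combining, letting $j\to\infty$ then $\veps\to0$, and using $\int_{\Om_c}(w-v)(dd^cv)^s\wed\om^{n-s}\le\int_\Om(w-v)(dd^cv)^s\wed\om^{n-s}$, completes the proof.

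The main obstacle is the second step: the smooth approximants of $v$ and of $w$ need not agree near the boundary, only their limits do, so Proposition~\ref{prop:cap-integral} cannot be quoted as a black box; one has to repeat its integration-by-parts computations on a subdomain $\Om'$ where $v\equiv w$, keep the boundary integrals over $\d\Om'$, and show that they vanish in the limit by means of the CLN inequality for bounded functions together with the uniform smallness of $w_j-v_j$ near $\d\Om'$ --- which is exactly what the boundary reduction of the first step is designed to furnish, in the spirit of Remark~\ref{rmk:continuous-case}. A secondary delicate point is the left-hand-side limit, where only the one-sided estimate $\liminf_j\int(w_j-v_j)^{3m}H_m(\rho_j)\ge\int(w-v)^{3m}H_m(\rho)$ is available and its proof, as in Lemma~\ref{lem:bounded-decreasing}(b), genuinely uses quasi-continuity rather than mere weak convergence of the measures $H_m(\rho_j)$.
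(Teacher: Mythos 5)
Your proposal follows the paper's strategy quite closely (cut $w$ off via $w_\veps=\max\{w-\veps,v\}$, reduce to the case $v=w$ near the boundary, approximate by smooth decreasing sequences, apply the smooth integration-by-parts estimates, and pass to the limit by quasi-continuity and Lemma~\ref{lem:bounded-decreasing}). The minor variations—exhausting $\Om$ by the $\Om_c$, using $(a+b)^{3m}\le 2^{3m-1}(a^{3m}+b^{3m})$ instead of monotone convergence in $\veps$, and settling for a one-sided $\liminf$ on the left-hand side—are all sound, and the last one is in fact all that is needed.

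However, there is a genuine gap at the pivotal step. You write: ``Since $v=w$ near $\d\Om$, the differences $w_j-v_j$ tend to $0$ uniformly near $\d\Om'$.'' This does not follow. On $\Om\setminus\Om'$ both $w_j$ and $v_j$ decrease to the \emph{same} function $v=w$, but that common limit is only a bounded $m-\om$-sh function, hence merely upper semicontinuous. Dini's theorem requires the limit to be \emph{continuous} to upgrade pointwise monotone convergence to uniform convergence on compacts; without that, $w_j-v_j$ need not go to $0$ uniformly near $\d\Om'$, and the boundary terms $E_1,E_2$ of Remark~\ref{rmk:continuous-case}, which are controlled precisely by the uniform smallness of $h_j=w_j-v_j$ on a neighbourhood of $\d\Om'$, are not seen to vanish. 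The paper closes this gap with an extra, essential reduction that is missing from your proposal: after arranging $v=w$ on $\Om\setminus\Om'$, it \emph{further modifies} $v$ and $w$ on $\Om\setminus\Om'$ so that both equal a fixed smooth $m-\om$-sh function $\psi$ there (via the gluing lemma, taking $A\psi$ with $A$ large and then using the domination principle). Once $v=w=\psi$ is smooth—in particular continuous—on $\Om\setminus\Om'$, Dini's theorem does apply and yields the needed uniform convergence of $v_j,w_j$ to $\psi$, hence of $w_j-v_j$ to $0$, on compacts near $\d\Om'$. You should insert this smooth-tail modification before invoking the argument of Remark~\ref{rmk:continuous-case}; as it stands, the claim of uniform convergence near $\d\Om'$ is unjustified.
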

\begin{proof} Let us replace $w$ by $w_\veps = \max\{w-\veps, v\}$ for $\veps>0$, so that $w_\veps = v$ in a neighborhood of $\d\Om$. If we could prove the inequality for $w_\veps$ and $v$, then by letting $\veps\to 0$, the domination convergence theorem would imply the required inequality.  
Let $\Om'\subset\subset \Om$ be a smooth subdomain such that $w=v$ on $\Om\setminus \Om'$. Then the integrals on both sides will not change if we modify $v,w$ outside $\Om'$. Hence, we may further assume that $w=v = \psi$ on $\Om\setminus \Om'$ with $\psi$  a smooth $m-\om$-sh defining function for $\Om$.

Using the quasi-continuity it is easy to see from Lemma~\ref{lem:bounded-decreasing}-(b) that for smooth decreasing sequences $w_j \downarrow w$, $v_j \downarrow v$ and $\rho_j \downarrow \rho$ we have
$$
	\lim_{j\to \infty} \int_\Om (w_j -v_j)^{3m} H_m(\rho_j) = \int_\Om (w -v)^{3m} H_m(\rho), 
$$
and for $0\leq s\leq m$,
$$
	\lim_{j\to \infty} \int_\Om (w_j -v_j)H_s(v_j) = \int_\Om (w -v) H_s(v).
$$
Therefore, it is enough to prove the inequality for smooth functions $ v_j \leq w_j \leq 0$ and $-1\leq \rho\leq 0$. Notice that $w_j \to w$ and $v_j \to v$  uniformly on $\Om\setminus \Om'$ (this is the reason why we modify $w, v$ near the boundary). Thus, we can follow the argument in Remark~\ref{rmk:continuous-case} and conclude that the extra terms will vanish after passing to the limit as $j\to +\infty$. Hence, the proof for the bounded functions case follows.
\end{proof}

\subsection{Convergence theorems for increasing sequences}

With a similar proof as that of Lemma~\ref{lem:bounded-decreasing} we get
\begin{lem} \label{lem:bounded-increasing}Let $1\leq p\leq m$ and $v, u_1,...,u_p$ be  bounded $m-\om$-sh functions.
Suppose that $\{v^j\}_{j\geq 1}$, $\{u_s^j\}_{j\geq 1}$ are uniformly bounded increasing sequences of $m-\om$-sh functions such that  $v^j\uparrow v$ and $u^j_s \uparrow u_s$ (almost everywhere) as $j\to \infty$ for $s=1,...,p$. Then,
\[
	\lim_{j\to +\infty} v^j \Lc(u_1^j,...u_p^j) = v \Lc(u_1,...,u_p)
\] 
in the sense of currents of order zero.
\end{lem}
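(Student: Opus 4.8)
The plan is to reduce Lemma~\ref{lem:bounded-increasing} to the decreasing case (Lemma~\ref{lem:bounded-decreasing}) by a standard "diagonal" comparison, exploiting the localization principle to work on a ball where all functions agree with a fixed smooth psh function $\psi$ near the boundary. First I would fix a test form $\chi$ with $\supp\chi = K\subset\subset\Om$ and, for each fixed $j$, introduce smooth decreasing approximants $u_s^{j,\de}\downarrow u_s^j$ and $v^{j,\de}\downarrow v^j$ as $\de\to 0$, all uniformly bounded (say with values in $[-1,0]$). By Proposition~\ref{prop:w-convergence-m} one has $v^{j,\de}\Lc(u_1^{j,\de},\dots,u_p^{j,\de})\to v^j\Lc(u_1^j,\dots,u_p^j)$ as $\de\to 0$ in the sense of currents of order zero, so it suffices to control the limit as $j\to\infty$ of the quantities obtained after this first passage.

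Next I would estimate the difference $M_j = \int\chi\big[v^{j}\Lc(u_1^j,\dots,u_p^j) - v\Lc(u_1,\dots,u_p)\big]$, or rather its smooth-approximant version, by expanding telescopically: write $v^j\Lc(u_1^j,\dots,u_p^j) - v\Lc(u_1,\dots,u_p)$ as a sum of one term carrying the difference $v^j-v$ and $p$ terms each carrying a single factor $dd^c(u_s^j - u_s)$ multiplied by a mixed wedge product of the remaining (smooth, positive) forms and $\om^{n-m}$. Integration by parts moves $dd^c$ off the difference $u_s^j-u_s$ onto $\chi\om^{n-m}$ times that mixed product; using \cite[Corollary~2.4]{KN3} the resulting smooth form is dominated in modulus by $C\,(dd^c\rho^{j})^{p-1}\wed\om^{n-p+1}$ for $\rho^j$ an average of all the functions involved. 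This yields a bound of $M_j$ by $C\sum_s\int_K |u_s^j - u_s|\,(dd^c\rho^j)^{p-1}\wed\om^{n-p+1}$ together with a term $\int_K |v^j-v|\,\Lc_{p-1}(\rho^j)\wed\om$ of the same shape. The key difference from the decreasing case is the sign in the comparison: since $u_s^j\uparrow u_s$, we have $u_s - u_s^j\ge 0$, so no absolute values are lost and the monotone convergence theorem (plus the dominated convergence theorem against the capacity-controlled measures) applies directly to each integral.

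The remaining work is exactly as in the proof of Lemma~\ref{lem:bounded-decreasing}: fix $\veps>0$, use quasi-continuity (Theorem~\ref{thm:quasi-continuity}) to find an open set $G$ with $cap_m(G)<\veps$ outside which all the relevant functions are continuous, apply Dini's theorem on the compact set $\Om\setminus G$ to get uniform convergence of $u_s^{j,\de}\to u_s^j$ and of the averages, split each integral over $G$ and $\Om\setminus G$, bound the $G$-part by $cap_m(G)$ via the CLN inequality (Proposition~\ref{prop:CLN-bdd}), and use weak convergence of $\Lc_{p-1}$ of the smooth decreasing approximants (Proposition~\ref{prop:w-convergence-m}) together with uniform convergence on $\Om\setminus G$ to pass to the limit first in $\de$ and then in $j$. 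Letting $\veps\to 0$ finishes the argument. I expect the main obstacle to be purely bookkeeping: the increasing sequences $u_s^j$ are only known to converge \emph{almost everywhere}, so one must be careful that the equality of $m-\om$-sh functions agreeing a.e.\ (cited earlier via \cite[Corollary~9.7]{GN18}) is invoked at the right moments, and that the upper semicontinuous regularization does not spoil the monotonicity needed for Dini's theorem; apart from that the proof is a faithful transcription of Lemma~\ref{lem:bounded-decreasing} with signs reversed, which is why the statement can be given with "a similar proof" and only the essential changes spelled out.
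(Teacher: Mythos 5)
Your high-level plan --- localize to a ball, telescope, integrate by parts, dominate via \cite[Corollary~2.4]{KN3}, control the remainder by quasi-continuity and Dini --- is exactly the adaptation the paper suggests ("with a similar proof as that of Lemma~\ref{lem:bounded-decreasing}"), and the algebraic steps carry over with the sign reversed, as you say.

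The one place where your argument is not yet a proof is precisely the point you flag at the end: the final Dini step for the $j$-limit. In the decreasing case one needs, after passing to $\de\to 0$, that $u_s^j\to u_s$ uniformly on the compact set $\Om\setminus G$; this is supplied by Dini because $u_s^j\downarrow u_s$ pointwise everywhere and all the functions are continuous on $\Om\setminus G$. In the increasing case you only have $u_s^j\uparrow u_s$ Lebesgue-almost everywhere, that is, pointwise only off the $m$-negligible set $N_s = \{\sup_j u_s^j < u_s\}$. Dini's theorem genuinely fails under merely a.e. pointwise convergence (an increasing sequence of continuous functions can converge a.e. on a compact set to a continuous limit without converging uniformly, and $\sup_j u_s^j$ is only lower semicontinuous on $\Om\setminus G$ so one cannot apply Dini to it either). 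Nor does the monotone convergence theorem help directly, because the measures $\Lc_{p-1}(\rho^j)\wed\om^{n-p+1}$ vary with $j$ and you do not yet know they put no mass on $N_s$. To salvage the Dini step one would absorb $N_s$ into the exceptional set $G$, which requires knowing that $cap_m^*(N_s)$ is arbitrarily small. But that fact is the substance of Proposition~\ref{prop:polar-set-characterization} and Theorem~\ref{thm:NP}, which in the paper appear after this lemma and whose proofs rely on it (via Corollary~\ref{cor:inner-zero-cap} and Lemma~\ref{lem:cap-formula}). So as written your proposal contains the same gap the paper's terse "similar proof" leaves open, and a careful write-up would either have to supply an argument that the exceptional set has negligible mass against the measures $\Lc_{p-1}(\rho^j)$ without invoking the polar/negligible equivalence, or reorganize the dependency so that the negligibility of $N_s$ in $cap_m^*$ is available before this lemma is used.

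Concretely, what is missing is a justification of the passage
$$\lim_{j\to\infty}\int_{\Om\setminus G}(u_s - u_s^j)\,\Lc_{p-1}(\rho^{j})\wed\om^{n-p+1}=0,$$
given only that $u_s^j$ increases to $u_s$ off a Lebesgue-null negligible set, that the $u_s^j$ and $u_s$ are continuous on $\Om\setminus G$, and that the masses of $\Lc_{p-1}(\rho^j)$ are uniformly bounded. One might hope to use Lemma~\ref{lem:basic-closed} applied to the decreasing closed sets $F_j = \{u_s - u_s^j\ge\veps'\}\cap(\Om\setminus G)$, but that reduces the question to showing that $\cap_j F_j\subset N_s$ is $\Lc_{p-1}(\rho)$-null, which is again the polar/negligible issue. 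Your instinct that "no absolute values are lost" and that some form of monotone convergence should help is a good one, but it does not by itself close this circle; the write-up should either confront it head on or explicitly restrict the scope of what is claimed.
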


\begin{cor}\label{cor:inner-zero-cap} Let $\Om$ be a bounded open set. Let  $\cU_m$ be a  uniformly bounded family of $m-\om$-sh functions  in $\Om$. Denote $v(x) = \sup\{v_\al(x): v_\al \in \cU_m\}$. Then, the set 
$$
	N:=	\{v <v^*\}
$$
has zero measure with respect to any measure $\Lc(u_1,...,u_m) = dd^c u_1 \wed \cdots \wed dd^c u_m\wed \om^{n-m}$, where $u_i$'s are bounded $m-\om$-sh functions. In particular, $cap_m(N,\Om) =0$.
\end{cor}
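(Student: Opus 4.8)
The plan is to follow the classical pluripotential-theoretic argument (as in Bedford–Taylor and its exposition in \cite{BT82}, \cite{GZ-book}), which splits into two parts: first reducing the measure-theoretic statement for a fixed $\Lc(u_1,\dots,u_m)$ to a countable supremum, and then converting that into a capacity statement.

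First I would reduce to the case where $\cU_m$ is countable and the sup is increasing. By a standard Choquet-type lemma (see \cite[Proposition~3.12]{GZ-book}), there is a countable subfamily $\{v_j\}_{j\geq1}\subset\cU_m$ whose upper envelope $w=\sup_j v_j$ satisfies $w^*=v^*$ almost everywhere with respect to the Lebesgue measure; replacing $v_j$ by $\max\{v_1,\dots,v_j\}$ we may assume $v_j\uparrow w$ pointwise with $w\le v$ and $w^*=v^*$. Since $w^*$ is $m-\om$-sh and $m-\om$-sh functions that agree almost everywhere agree everywhere \cite[Corollary~9.7]{GN18}, we in fact have $v^*=w^*$ everywhere, and hence $\{v<v^*\}\subset\{w<w^*\}$ up to the Lebesgue-null set $\{v<v^*\}\cap\{v=w\}^c$ which, being $m-\om$-sh-null, also carries no mass for $\Lc(u_1,\dots,u_m)$ (the coefficients of this current are absolutely continuous with respect to no Lebesgue measure in general, so one must be slightly careful here — see the obstacle below). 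Thus it suffices to prove $\Lc(u_1,\dots,u_m)(\{w<w^*\})=0$ for the monotone sequence $w_j=\max\{v_1,\dots,v_j\}\uparrow w$.

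Next I would apply the increasing-convergence theorem, Lemma~\ref{lem:bounded-increasing}, to the decreasing sequence obtained from the $w_j$. The point is that $w^*=(\sup_j w_j)^*$ is itself $m-\om$-sh, and one compares the masses. Fix bounded $m-\om$-sh functions $u_1,\dots,u_m$ and set $\mu=\Lc(u_1,\dots,u_m)$. One shows $\int_{\{w<w^*\}}d\mu=0$ by testing against $w^*-w_j\geq 0$: by Lemma~\ref{lem:bounded-increasing}, for $w^*$ and the increasing sequence $w_j$ one can compare $\int (w^*-w_j)\,d\mu$ with quantities that tend to $0$. More precisely, since $w_j\uparrow w=w^*$ almost everywhere (Lebesgue), monotone/dominated convergence gives $\int(w^*-w_j)\,dV\to 0$; combined with the weak convergence of the relevant currents from Lemma~\ref{lem:bounded-increasing} and the quasi-continuity (Theorem~\ref{thm:quasi-continuity}), one gets $\int(w^*-w_j)\,d\mu\to 0$. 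But $w^*-w\geq \de$ on $\{w^*-w\geq\de\}$ and $w^*-w_j\geq w^*-w$, so $\de\,\mu(\{w^*-w\geq\de\})\leq\int(w^*-w_j)\,d\mu\to 0$, forcing $\mu(\{w^*-w\geq\de\})=0$ for every $\de>0$, hence $\mu(\{w<w^*\})=0$. Taking $u_1=\cdots=u_m=\rho$ with $-1\le\rho\le 0$ arbitrary $m-\om$-sh and passing to the supremum in the definition of $cap_m$ then yields $cap_m(N,\Om)=0$ (using that $cap_m$ is an inner capacity and $N$ is Borel).

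The main obstacle I anticipate is the interplay between "Lebesgue-null" and "$\mu$-null" for the non-closed, order-zero currents at hand: unlike in the Kähler case, $\Lc(u_1,\dots,u_m)$ need not be dominated by a power of the capacity in an obvious way, and the negligible set $N$ is only known a priori to be Lebesgue-null. The resolution is precisely to route everything through quasi-continuity (Theorem~\ref{thm:quasi-continuity}) and the convergence-in-capacity statements (Corollary~\ref{cor:monotonicity}), exactly as one needs them to make the weak-convergence step in Lemma~\ref{lem:bounded-increasing} bite against the indicator of $\{w<w^*\}$; on the complement of a small-capacity open set $G$ the functions $w_j, w^*$ are continuous and Dini's theorem gives uniform convergence, while on $G$ the total mass is controlled by $cap_m(G)$ via the CLN inequality (Proposition~\ref{prop:CLN-bdd}). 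The bookkeeping of these two regimes, letting $cap_m(G)\to 0$ at the end, is the only genuinely delicate point; the rest is the standard argument transcribed to this setting, and it can be cited essentially verbatim from \cite[proof of the analogous statement]{GZ-book} once Lemma~\ref{lem:bounded-increasing} and Theorem~\ref{thm:quasi-continuity} are in hand.
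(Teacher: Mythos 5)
Your proposal follows essentially the same route as the paper: Choquet's lemma reduces to an increasing sequence $v_j\uparrow w$ with $w=v^*$ a.e., Lemma~\ref{lem:bounded-increasing} gives the weak convergence $v_j\,\Lc(u_1,\dots,u_m)\to v^*\,\Lc(u_1,\dots,u_m)$, and a limit argument then yields $\Lc(u_1,\dots,u_m)(N)=0$; the paper simply compares $\int_K(v^*-v_j)\Lc$ with $\int_K(w-v_j)\Lc$ directly, whereas you use a Markov-type inequality on $\{w^*-w\ge\de\}$, which is an equivalent repackaging. Two small points: the inclusion $\{v<v^*\}\subset\{w<w^*\}$ is in fact exact (since $w\le v$ and $w^*=v^*$), so your caveat about a residual Lebesgue-null set is unnecessary; and the weak convergence from Lemma~\ref{lem:bounded-increasing} only forces $\int(w^*-w_j)\,d\mu\to 0$ after multiplying by a compactly supported cutoff, so the Markov step should be run on each compact $K\subset\Om$ before invoking inner regularity of $\mu$ and the inner nature of $cap_m$, exactly as the paper does.
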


\begin{proof} By Choquet's lemma we can reduce the argument  to the case when $\cU_m$ is an increasing sequence $\{v_j\}_{j\geq 1}$ with $w = \sup_j v_j$ and $N = \{w< v^*\}$. 
It follows from the proof of \cite[Corollary 9.9]{GN18} that $w = v^*$ almost everywhere.
Therefore, Lemma~\ref{lem:bounded-increasing} implies 
$v_j \Lc(u_1,...,u_m) $ converges weakly to $v^* \Lc(u_1,...,u_m) $. Then, the positive currents $(v^*- v_j) \Lc(u_1,...,u_m)$ converge weakly to zero and hence, for any compact set $K\subset \Om$, 
$$
	\lim_{j\to\infty} \int_K (v^*- v_j) \Lc(u_1,...,u_m) =0.
$$
By monotone convergence theorem, 
$
	\lim_{j\to \infty}	\int_{K} (w-v_j) \Lc(u_1,...,u_m) =0.
$
Therefore,  $\int_K (v^*-w)\Lc(u_1,...,u_m)=0$. In other words, $v^*= w$ a.e on $K$ with respect to $\Lc(u_1,...,u_m)$.  The last conclusion follows from the inner regularity of capacity.
\end{proof}

\section{Comparison principle}

Let $\Om$ be a bounded open set which is relatively compact  in a strictly $m$-pseudoconvex bounded domain $D$ in $\bC^n$. 
Fix a constant ${\bf B}$   such that on $\ov \Om$,
$$
	- \bb \om^2 \leq dd^c \om \leq \bb \om^2, \quad
	-\bb \om^3 \leq d\om \wed d^c \om \leq \bb \om^3.
$$
Let $\rho$ be a strictly psh function sasifying $\rho \leq 0$  and $dd^c \rho \geq \om$ in $D$. In this section we assume all function are defined in $D$ which means that they can be approximated by a decreasing sequence of smooth $m-\om$-sh functions in a neighborhood of $\ov\Om$.
\begin{thm}\label{thm:CP1}
Let $u,v$ be bounded $m-\om$-sh functions in $\Om$ such that $d = \sup_{\Om} (v-u) >0,$ and $\liminf_{z\to \d\Om} (u-v)(z) \geq 0$.
Fix $0< \veps < \min\{\frac{1}{2}, \frac{d}{2 \|\rho\|_\infty}\}$. Let us denote for $0< s< \veps_0:=\veps^3/16 \bb,$
$$
	U(\veps,s):=\{u<(v+\veps\rho) + S(\veps) +s \}, \quad\text{where }
	S(\veps)= \inf_\Om[ u - (v+\veps\rho)].
$$
Then,
$$
	\int_{U(\veps,s)} H_m (v+\veps\rho)  \leq 
	\left( 1+ \frac{C s}{\veps^m} \right) \int_{U(\veps,s)} H_m(u), 
$$
where $C$ is a uniform constant depending on $m,n,\om$.
\end{thm}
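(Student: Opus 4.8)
The plan is to compare the two masses over $U:=U(\veps,s)$ by inserting the maximum of $u$ with a translate of $\psi:=v+\veps\rho$, reducing matters to a comparison of \emph{total} masses over $\Om$, and then estimating the torsion error coming from the fact that $\om$ is not closed. Note that $\psi$ is $m-\om$-sh, since $\rho$ is psh and hence $m-\om$-sh. First I would record the elementary set-theoretic facts: since $\rho\le 0$ and $d=\sup_\Om(v-u)>0$, one has $-d\le S(\veps)\le -d+\veps\|\rho\|_\infty<-d/2<0$ by the choice of $\veps$; and near $\d\Om$, $u-\psi\ge u-v$, whose $\liminf$ is $\ge 0>S(\veps)+s$ once $s$ is small, so $U=\{u<\psi+S(\veps)+s\}$ is open and relatively compact in $\Om$ for $0<s<\veps_0$ (after enlarging ${\bf B}$ if necessary). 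Put $\varphi:=\max\{u,\psi+S(\veps)+s\}$. Then $\varphi$ is $m-\om$-sh in $\Om$, $\varphi=u$ on a neighbourhood of $\d\Om$, $0\le\varphi-u\le s$ with $\supp(\varphi-u)\subset\ov U$, and $\varphi=\psi+S(\veps)+s$ on $U$. By the locality of the Hessian operator for bounded $m-\om$-sh functions (the Corollary after Lemma~\ref{lem:bounded-decreasing}), ${\bf 1}_U H_m(\psi)+{\bf 1}_{\Om\setminus U}H_m(u)\le H_m(\varphi)$, so $\int_U H_m(v+\veps\rho)\le\int_\Om H_m(\varphi)-\int_{\Om\setminus U}H_m(u)$, and it suffices to prove $\int_\Om H_m(\varphi)\le\int_\Om H_m(u)+C\veps^{-m}s\int_U H_m(u)$.

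For the mass comparison I would telescope $H_m(\varphi)-H_m(u)=\sum_{k=0}^{m-1}dd^c(\varphi-u)\wed(dd^c\varphi)^k\wed(dd^cu)^{m-1-k}\wed\om^{n-m}$. Each $(dd^c\varphi)^k\wed(dd^cu)^{m-1-k}$ is a closed current of order zero, so integration by parts — valid for the bounded, compactly supported $\varphi-u$ after approximating $\varphi$ and $u$ by decreasing smooth $m-\om$-sh sequences and invoking Lemma~\ref{lem:bounded-decreasing}, exactly as in the integration-by-parts lemmas of Sections~2 and 3 — moves $dd^c$ onto $\om^{n-m}$ and gives $\int_\Om\big(H_m(\varphi)-H_m(u)\big)=\sum_{k=0}^{m-1}\int_\Om(\varphi-u)\,(dd^c\varphi)^k\wed(dd^cu)^{m-1-k}\wed dd^c(\om^{n-m})$. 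Using $0\le\varphi-u\le s$, $\supp(\varphi-u)\subset\ov U$, the identity $dd^c\varphi=dd^c\psi$ on $U$, and the torsion estimate $\ga^{m-1}\wed dd^c(\om^{n-m})\le C{\bf B}\,\ga^{m-1}\wed\om^{n-m+1}$ against products of cone elements (as in \eqref{eq:T4-rough}), this is at most $C{\bf B}\,s\sum_{k=0}^{m-1}\int_U(dd^c\psi)^k\wed(dd^cu)^{m-1-k}\wed\om^{n-m+1}$.

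It remains to bound these mixed torsion masses by $C\veps^{-m}\int_U H_m(u)$, which is the technical heart. On $U$ one has $\veps\,dd^c\rho=dd^c\psi-dd^cv$ and $dd^c\rho\ge\om$, so replacing one factor $\om$ by $dd^c\rho$ (legitimate since the remaining product $(dd^c\psi)^k\wed(dd^cu)^{m-1-k}\wed\om^{n-m}$ is a positive current of the form $\Lc_{m-1}$) and then dropping the term $(dd^c\psi)^k\wed(dd^cu)^{m-1-k}\wed dd^cv\wed\om^{n-m}$, which is $\ge 0$ by G\aa rding positivity of products in $\ov{\Ga_m(\om)}$ (Remark~\ref{rmk:Garding}, Lemma~\ref{lem:w-p-functions}), yields as measures on $U$ the inequality $(dd^c\psi)^k\wed(dd^cu)^{m-1-k}\wed\om^{n-m+1}\le\veps^{-1}(dd^c\psi)^{k+1}\wed(dd^cu)^{m-1-k}\wed\om^{n-m}$, whose right side is a mixed Hessian measure $\Lc_m$ with $k+1$ entries $\psi$ and $m-1-k$ entries $u$. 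One then bounds each $\int_U\Lc_m(\psi^{[j]},u^{[m-j]})$ by trading one $\psi$-slot for a $u$-slot at a time: the difference of two consecutive such integrals equals $\int_U dd^c g\wed(dd^c\psi)^{j-1}\wed(dd^cu)^{m-j}\wed\om^{n-m}$ with $g:=\psi-u+S(\veps)+s$, and since $0\le g\le s$ on $U$ and $g=0$ on $\d U$, integration by parts leaves a boundary term of the correct sign together with a torsion term of the shape $\int_U d^cg\wed(dd^c\psi)^{j-1}\wed(dd^cu)^{m-j}\wed d\om\wed\om^{n-m-1}$; the Cauchy--Schwarz inequalities of Section~2 (Lemma~\ref{lem:CS}, Corollary~\ref{cor:CS}), the bound $\int_U dg\wed d^cg\wed(\cdots)\le Cs$ times the masses at hand coming from $0\le g\le s$, and the same device $\om\le dd^c\rho$ converting leftover $\om$-factors into $\psi$-factors at cost $\veps^{-1}$, control this term; because $s<\veps_0=\veps^3/16{\bf B}$ the resulting errors are absorbable by a bootstrap. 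Iterating down to $j=0$ gives $\int_U\Lc_m(\psi^{[j]},u^{[m-j]})\le C\veps^{-(m-1)}\int_U H_m(u)$, whence $\int_U H_m(v+\veps\rho)-\int_U H_m(u)\le C{\bf B}s\cdot\veps^{-1}\cdot C\veps^{-(m-1)}\int_U H_m(u)=C\veps^{-m}s\int_U H_m(u)$, the asserted estimate.

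The hard part is exactly this last bookkeeping. Every integration by parts in the Hermitian setting spins off $d\om$ and $d\om\wed d^c\om$ terms and simultaneously raises the power of $\om$ by one, so the torsion errors cascade through all $m$ levels of the telescoping; keeping them under control while ensuring the $\veps$-dependence is no worse than $\veps^{-m}$ and the dependence on $s$ stays linear is what forces both the precise threshold $\veps_0$ and the repeated appeal to the cone estimates of Section~2. A secondary, but purely routine, nuisance is justifying the integration-by-parts identities for the bounded non-smooth functions $\varphi$, $u$, $g$, which one handles throughout by the approximation and weak-convergence results of Sections~2 and 3.
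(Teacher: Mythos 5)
Your proposal takes a genuinely different route from the paper. The paper's proof is a two-stage affair: the inequality is established for \emph{smooth} $u,v$ by citing Lemmas~3.8--3.10 of \cite{GN18}, and then the passage to bounded $u,v$ is done by the Bedford--Taylor approximation scheme (\cite{BT82}, Theorem~4.1; \cite{K05}, Theorem~1.16) using quasi-continuity. You instead attempt a self-contained direct proof for bounded functions via the telescope-and-integration-by-parts strategy, with the torsion controlled by \eqref{eq:T4-rough} and Cauchy--Schwarz, and with the factor $\veps^{-m}$ produced by repeatedly trading $\om$ for $\veps^{-1}(dd^c\psi - dd^cv)$. The high-level skeleton -- defining $\varphi=\max\{u,\psi+S(\veps)+s\}$, using the maximum principle corollary to compare $\int_U H_m(\psi)$ with $\int_\Om H_m(\varphi)$, telescoping, and estimating torsion by Cauchy--Schwarz -- is sound and is essentially what underlies \cite{GN18} in the smooth case.

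However, there is a genuine gap precisely at what you identify as ``the technical heart.'' You assert $\int_U\Lc_m(\psi^{[j]},u^{[m-j]})\le C\veps^{-(m-1)}\int_U H_m(u)$ and propose to obtain it by an iteration with a bootstrap, but you do not establish the recursive inequality that the iteration must close on, and the claimed mechanism is not rigorous for bounded functions. Two concrete problems: (a) your iteration integrates $dd^c g\wed T\wed\om^{n-m}$ over $U$ with a ``boundary term of the correct sign'' over $\d U$; but $U=\{u<\psi+S(\veps)+s\}$ is only a quasi-open set when $u,\psi$ are merely bounded, the function $g$ vanishes on $\d U$ only in a capacity sense, and the Stokes-type identity with a surface integral over $\d U$ has no literal meaning here. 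The rigorous substitute -- either approximating by smooth decreasing sequences and sublevel sets (as the paper does via \cite{BT82}) or replacing the integration over $U$ by one over $\Om$ with the compactly supported $\varphi-u$ -- produces additional torsion terms of its own which you do not track. (b) The Cauchy--Schwarz step gives $|I|$ bounded by $\big(\int dg\wed d^cg\wed(\cdots)\wed\om^{n-m}\big)^{1/2}\big(\int(\cdots)\wed\om^{n-m+1}\big)^{1/2}$, where the second factor is exactly the family of quantities you started with (before the $\om\le dd^c\rho$ conversion) and the first, after an integration by parts of $dg\wed d^cg=\tfrac12 dd^cg^2-gdd^cg$, again produces torsion terms that cascade. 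Whether this recursion closes with $\veps$-powers summing to precisely $\veps^{-m}$, and with $s$ appearing linearly so that $s<\veps_0=\veps^3/(16\bb)$ is the right smallness, is the whole content of \cite[Lemmas~3.8--3.10]{GN18} and is not demonstrated by your sketch -- it is asserted. As it stands, the proposal replaces the paper's citation of that black box by a claim that the iteration ``is absorbable by a bootstrap,'' which is a restatement of the goal rather than a proof.
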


\begin{proof} If $u, v$ are smooth, then the proof follows from \cite[Lemmas~3.8, ~3.9 and 3.10]{GN18}. To pass from the smooth case to the bounded case we use the quasi-continuity of $m-\om$-sh functions and the argument as the one in \cite[Theorem~4.1]{BT82} (see also \cite[Theorem~1.16]{K05}). The proof is readily adaptable with obvious changes of notation. Here we only indicate the points of difference that we need to take care of. Firstly, replacing $u$ by $u+\de$ with $\de>0$ and then letting $\de\downarrow 0$ we may assume that $\{u<v\} \subset \subset \Om' \subset\subset \Om$ and $u \geq v + \de$ on $\Om\setminus \Om'$. By restricting $u,v$ to a smaller domain we may assume that $u, v$ are defined in a neighborhood of $\ov\Om$.

Let $\{u_k\}_{k\geq 1}, \{v_j\}_{j\geq 1}$ be sequences of smooth $m-\om$-sh functions in a neighborhood of $\ov\Om$ (Proposition~\ref{prop:smoothing}) such that $u_k \downarrow u$ and $v_j \downarrow v$ point-wise in $\ov\Om$. Denote $d_{jk} = \sup_{\ov\Om} (v_j-u_k)$. Then, for $j \geq k >0$ large we have
$$
   	d_{jk} \geq d/2 >0.
$$
In fact, for small $\eps>0$   there exits $x\in \Om$ such that $d-\eps \leq v(x) -u(x)$. So, for $k>k_0$ large enough, $$d-2\eps \leq v(x) - u_k(x) \leq v_j - u_k \leq d_{jk}.$$ We get the desired inequality by letting $j\to \infty$ and then $\eps\to 0$.
Next, since $u \geq v+\de$ on a compact set $K= \ov\Om\setminus \Om'$, we have $u_k \geq v + \de$ for every $k\geq 1$. Since $u_k$ is continuous, by Hartogs' lemma for $\om$-sh functions \cite[Lemma~9.14]{GN18}, there is $j_k \geq k>0$ large enough such that for $j \geq j_k$,
$$
	v_j + \de \leq u_k \quad\text{on } K.
$$
Thus, there exist subsequences of  $\{u_k\}$ and $\{v_j\}$, which can be used in the argument from \cite[Theorem~4.1]{BT82}.
\end{proof}

\begin{cor}\label{cor:DP}\label{cor:CP2} Let $u,v$ be bounded $m-\om$-sh functions in a neighborhood of $\ov\Om$ such that $\liminf_{z\to \d\Om}(u-v)(z) \geq 0$. Assume that $H_m(v)\geq H_m(u)$ in $\Om$. Then, $u\geq v$ on $\Om$.
\end{cor}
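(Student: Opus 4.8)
The plan is to argue by contradiction, using Theorem~\ref{thm:CP1} in place of the usual Bedford--Taylor comparison principle: in the Hermitian setting $dd^c\om$ and $d\om\wed d^c\om$ are nonzero, so the standard integration-by-parts identity is unavailable, and Theorem~\ref{thm:CP1} is precisely the device that absorbs this torsion into the error factor $1+Cs/\veps^m$ on the admissible range $0<s<\veps_0=\veps^3/16\bb$. Suppose, for contradiction, that $d:=\sup_\Om(v-u)>0$. Replacing $u$ by $u+d/4$ — which changes neither $H_m(u)$ nor the sign of $\liminf_{z\to\d\Om}(u-v)$, and still leaves $\sup_\Om(v-u)>0$ — we may assume in addition that $\{u<v\}\subset\subset\Om$; shrinking $\Om$ slightly we may also assume $u,v$ are bounded $m-\om$-sh in a neighbourhood of $\ov\Om$.

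The algebraic input is the strict positivity of $\rho$. Since $dd^c\rho\ge\om$, the function $v+\veps\rho$ is $m-\om$-sh for $\veps\in(0,1)$, and expanding $(dd^cv+\veps\,dd^c\rho)^m\wed\om^{n-m}$ and discarding the mixed terms (each a positive current by Lemma~\ref{lem:w-p-functions}) gives
\[\notag
	H_m(v+\veps\rho)\ \ge\ H_m(v)+\veps^m(dd^c\rho)^m\wed\om^{n-m}\ \ge\ H_m(u)+\veps^m\om^n ,
\]
using $H_m(v)\ge H_m(u)$ and $(dd^c\rho)^m\wed\om^{n-m}\ge\om^m\wed\om^{n-m}=\om^n$. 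Now fix $\veps\in\bigl(0,\min\{\tfrac12,\tfrac{d}{2\|\rho\|_\infty}\}\bigr)$, so that $\sup_\Om(v+\veps\rho-u)>d/2>0$; then Theorem~\ref{thm:CP1} applies to the pair $(u,v)$ and yields, for $0<s<\veps_0$,
\[\notag
	\int_{U(\veps,s)}H_m(v+\veps\rho)\ \le\ \Bigl(1+\tfrac{Cs}{\veps^m}\Bigr)\int_{U(\veps,s)}H_m(u).
\]
For $s<\min\{\veps_0,d/2\}$ one has $S(\veps)+s<0$, hence $U(\veps,s)\subset\{u<v+\veps\rho\}\subset\{u<v\}\subset\subset\Om$, a fixed compact set, so Proposition~\ref{prop:CLN-bdd} bounds $\int_{U(\veps,s)}H_m(u)$ by a constant $C'$ independent of $s$. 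Subtracting $\int_{U(\veps,s)}H_m(u)$ from both sides of the last display and inserting the pointwise inequality above,
\[\notag
	\veps^m\!\int_{U(\veps,s)}\!\om^n\ \le\ \int_{U(\veps,s)}\!\bigl(H_m(v+\veps\rho)-H_m(u)\bigr)\ \le\ \tfrac{Cs}{\veps^m}\!\int_{U(\veps,s)}\!H_m(u)\ \le\ \tfrac{CC's}{\veps^m}.
\]
Letting $s\downarrow0$, the decreasing sets $U(\veps,s)$ shrink to the contact set $E:=\{u=v+\veps\rho+S(\veps)\}$ (recall $u\ge v+\veps\rho+S(\veps)$ by the definition of $S(\veps)$), so $\int_E\om^n=0$. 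On the other hand $w:=v+\veps\rho+S(\veps)$ is $m-\om$-sh with $u\ge w$ on $\Om$ and $\inf_\Om(u-w)=0$, an infimum not approached near $\d\Om$; as in the classical argument of \cite[Theorem~4.1]{BT82} and \cite[Theorem~1.16]{K05} — via the sub-mean value inequality for the $m-\om$-sh function $w$ together with the upper semicontinuity of $u$ — the set $E$, being the limit of the sublevel sets $U(\veps,s)$, must carry positive Lebesgue mass. This contradiction forces $d\le0$, i.e.\ $u\ge v$ on $\Om$.

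I expect the entire difficulty of the statement to be concentrated in Theorem~\ref{thm:CP1}; granted that, the corollary is the standard perturbation-by-$\veps\rho$ argument, whose only delicate point is the bookkeeping that keeps every perturbed sublevel set $U(\veps,s)$ inside one fixed compact subset of $\Om$, so that the Chern--Levine--Nirenberg bound of Proposition~\ref{prop:CLN-bdd} remains available — which is exactly why $u$ is translated by $d/4$ at the outset.
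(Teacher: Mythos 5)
Your argument diverges from the paper's exactly where the paper introduces a \emph{multiplicative} perturbation $\tilde v=(1+a)v$, and that is where your proof develops a genuine gap. From only the additive bound $H_m(v+\veps\rho)\ge H_m(u)+\veps^m\om^n$ together with Theorem~\ref{thm:CP1} you get $\veps^m|U(\veps,s)|_\om\le \frac{Cs}{\veps^m}\int_{U(\veps,s)}H_m(u)$, which tends to $0$ as $s\downarrow0$ but is never $0$ for any fixed $s>0$. You therefore do not obtain $|U(\veps,s)|_\om=0$ for a non-empty quasi-open set; you only obtain $|E|_\om=0$ for the contact set $E=\bigcap_{s>0}U(\veps,s)$.

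The final step — that $E$ ``must carry positive Lebesgue mass'' via a sub-mean value argument — is not a valid general principle and the references you cite do not contain it. The infimum of $u-(v+\veps\rho)$ can be attained on a set of measure zero (even a single point) for generic pairs of usc potentials; nothing about the sub-mean value inequality for the minorant $w=v+\veps\rho+S(\veps)$ together with the upper semicontinuity of $u$ forces the contact set to be fat, since $u-w$ is a difference of usc functions with no one-sided continuity you could exploit. Indeed \cite[Theorem~4.1]{BT82} and \cite[Theorem~1.16]{K05} never analyse the contact set; they pass to $\tilde u=u+\de$ and $\tilde v=(1+a)v$ with $\de,a>0$ small. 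The paper's own proof does the same: writing $H_m(\tilde v+\veps\rho)\ge(1+a)^mH_m(u)+\veps^mH_m(\rho)$ and inserting it into Theorem~\ref{thm:CP1} yields
\[\notag
\veps^m\int_{U(\veps,s)}H_m(\rho)\le\Bigl[\bigl(1+\tfrac{Cs}{\veps^m}\bigr)-(1+a)^m\Bigr]\int_{U(\veps,s)}H_m(u)\le 0
\]
for any \emph{fixed} $s>0$ small enough that $(1+a)^m\ge1+Cs/\veps^m$; hence $|U(\veps,s)|_\om=0$ directly for that non-empty quasi-open set, with no passage to the limit $s\to0$ and no claim about the contact set. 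To repair your proof, replace the additive shift $u+d/4$ by the pair $(u+\de,(1+a)v)$ and run the argument at a fixed $s$; the rest of your computation (the CLN bound and the pointwise comparison $H_m(v+\veps\rho)\ge H_m(u)+\veps^m\om^n$) is sound but becomes unnecessary once the multiplicative factor is in place.
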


\begin{proof} Arguing by contradiction, suppose that $\sup_{\Om}(v-u) = d>0$. Hence, there exist $ \de, a >0$ so small that $\sup_{\Om} [(1+ a) v - (u+\de)] >d/2$ and $\liminf_{z\to \d\Om} [(u+\de) - (1+ a)v](z) \geq 0$. Applying Theorem~\ref{thm:CP1} for $\wt u = u+\de$ and $\wt v = (1+a)v$, we have for $0<s<\veps_0$,
$$
	\int_{U(\veps,s)} H_m(\wt v+\veps\rho) \leq \left( 1+ \frac{C s}{\veps^m} \right) \int_{U(\veps,s)} H_m(u).
$$
Observe that $$H_m(\wt v+\veps\rho) \geq (1+ a)^m H_m(v) + \veps^m H_m(\rho) \geq (1+ a)^m H_m(u) +  \veps^m H_m(\rho).$$ Hence,  we derive from the above inequality that
$$
	\veps^m \int_{U(\veps,s)} H_m(\rho) \leq 0
$$
for $s>0$ so small that $(1+ a)^m \geq 1+ Cs/\veps^m$. Therefore, the Lebesgue measure of  $U(\veps, s)$ is zero. This is impossible as it  is non-empty quasi-open set for $0<s<\veps_0$.
\end{proof}

The above argument also gives

\begin{cor}[domination principle] Let $u, v$ be bounded $m-\om$-sh such that $\limsup_{z\to \d\Om}|u(z) - v(z)| =0$ and $\int_{\{u<v\}}H_m(u) =0$. Then, $u\geq v$ in $\Om$.
\end{cor}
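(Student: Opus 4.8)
The plan is to mimic the contradiction argument used for Corollary~\ref{cor:CP2}, replacing its use of the global inequality $H_m(v)\geq H_m(u)$ by the present weaker hypothesis $\int_{\{u<v\}}H_m(u)=0$, which I will feed directly into the conclusion of Theorem~\ref{thm:CP1}. So I would argue by contradiction: if $u\geq v$ fails somewhere in $\Om$, then $d:=\sup_\Om(v-u)>0$; since $\limsup_{z\to\d\Om}|u-v|=0$ gives $\liminf_{z\to\d\Om}(u-v)\geq0$, all hypotheses of Theorem~\ref{thm:CP1} are in force (as usual the standing assumption of the section reduces us to $u,v$ defined near $\ov\Om$). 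Fix $0<\veps<\min\{\frac{1}{2},\frac{d}{2\|\rho\|_\infty}\}$.

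The first step — and the only genuinely new point — is the observation that, for small $s>0$, the quasi-open set $U(\veps,s)=\{u<(v+\veps\rho)+S(\veps)+s\}$ appearing in Theorem~\ref{thm:CP1} is contained in $\{u<v\}$. Indeed, since $0\leq-\veps\rho\leq\veps\|\rho\|_\infty$, we have $S(\veps)=\inf_\Om[u-(v+\veps\rho)]\leq-d+\veps\|\rho\|_\infty<-\frac{d}{2}<0$, so $S(\veps)+s<0$ whenever $s<\frac{d}{2}$; combined with $\veps\rho\leq0$ this forces $u<v$ on $U(\veps,s)$. Hence, for $0<s<\min\{\veps_0,\frac{d}{2}\}$ with $\veps_0=\veps^3/16\bb$,
$$
\int_{U(\veps,s)}H_m(u)\leq\int_{\{u<v\}}H_m(u)=0,
$$
and therefore Theorem~\ref{thm:CP1} gives $\int_{U(\veps,s)}H_m(v+\veps\rho)\leq\left(1+\frac{Cs}{\veps^m}\right)\int_{U(\veps,s)}H_m(u)=0$.

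The second step is to turn this vanishing into a contradiction. Expanding $(dd^cv+\veps\,dd^c\rho)^m\wed\om^{n-m}$ by multilinearity and symmetry of the wedge product of bounded $m-\om$-sh functions, each mixed term equals $\veps^k\Lc_m(\rho,\dots,\rho,v,\dots,v)$ and is a positive measure (Theorem~\ref{thm:w-m-functions}); hence $H_m(v+\veps\rho)\geq\veps^m(dd^c\rho)^m\wed\om^{n-m}$, and since $dd^c\rho\geq\om$ as smooth forms, $(dd^c\rho)^m\wed\om^{n-m}\geq\om^n$, a measure comparable to the Lebesgue measure on $\ov\Om$. So $\int_{U(\veps,s)}H_m(v+\veps\rho)=0$ forces $U(\veps,s)$ to have Lebesgue measure zero. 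But $U(\veps,s)$ is a nonempty quasi-open set — nonempty because, by the definition of $S(\veps)$, there is a point where $u-(v+\veps\rho)<S(\veps)+s$ — and a nonempty quasi-open set has positive Lebesgue measure, which is exactly the contradiction reached at the end of the proof of Corollary~\ref{cor:CP2}. This proves $u\geq v$ in $\Om$.

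Since everything after the inclusion $U(\veps,s)\subseteq\{u<v\}$ is a word-for-word repetition of the proof of Corollary~\ref{cor:CP2}, I do not expect a real obstacle; the one thing that must be stated carefully is precisely that inclusion, which is what transports the hypothesis $\int_{\{u<v\}}H_m(u)=0$ into the estimate of Theorem~\ref{thm:CP1}.
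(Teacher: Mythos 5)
Your proof is correct and fills in exactly what the paper leaves implicit when it writes ``the above argument also gives.'' The decisive observation — that $U(\veps,s)\subseteq\{u<v\}$ once $s<d/2$, which transports the hypothesis $\int_{\{u<v\}}H_m(u)=0$ into the vanishing of the right-hand side of Theorem~\ref{thm:CP1} — is the right one, and it lets you dispense with the $\wt u=u+\de$, $\wt v=(1+a)v$ perturbation used in Corollary~\ref{cor:CP2} (which was needed there only to manufacture a strict coefficient gap $(1+a)^m>1+Cs/\veps^m$; here the right-hand side is already zero, so the conclusion $\int_{U(\veps,s)}H_m(\rho)=0$ and hence zero Lebesgue measure of the nonempty quasi-open set $U(\veps,s)$ follows at once).
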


\section{Polar sets and negligible sets}

In this section we study the polar sets  and negligible sets of $m-\om$-sh functions. We obtain here results analogous to those in pluripotential theory from \cite{BT82}. Let us first give the definitions.

\begin{defn}[$m$-polar sets] A set $E$ in $\bC^n$ is $m$-polar if for each $z\in E$ there is an open set $z\in U$ and a $m-\om$-sh function $u$ in $U$ such that $E \cap U \subset \{u=-\infty\}$.
\end{defn}

Let $\{u_\al\}$ be a family of $m-\om$-sh functions in $\Om$ which is locally bounded from above. Then, the function 
$$
	u(z) = \sup_\al u_\al (z)
$$
need not be $m-\om$-sh, but its upper semicontinuous regularization
$$
	u^*(z) = \lim\sup_{x \to z} u(x) \geq u(z)
$$
is $m-\om$-sh (see \cite[Proposition~2.6-(c)]{GN18}). A set of the form
\[\label{eq:defn-negligible}
	N = \{z\in \Om: u(z)<u^*(z)\}
\]
is called {\em $m$-negligible}.

Notice that a $n$-polar/$n$-negligible set is pluripolar/negligible. Clearly a pluripolar set  is a $m$-polar (or $m$-negligible sets) for every $1\leq m \leq n$.  More generally, a $m$-polar (resp. $m$-negligible) is a $(m-1)$-polar (resp. $(m-1)$-negligible) set. 
An effective way to study these sets is by extremal functions.

\begin{defn} Let $E$ be a subset of a bounded open set $\Om \subset \bC^n$. We define
$$
	u_E =	u_{E,\Om} = \sup\{ v(x) : v \text{ is } m-\om\text{-sh in }\Om, \;u\leq 0, \; u\leq -1 \text{ on }E  \}
$$
\end{defn}

By Choquet's lemma $u_E$ is the limit of an increasing sequence of $m-\om$-sh functions. It follows from \cite[Corollary~9.9]{GN18} that $u_E^*$ is $m-\om$-sh and $u_E = u_E^*$ almost everywhere. Moreover, $u_E^*\equiv 0$ if and only if there exists an increasing sequence of $m-\om$-sh functions  $\{v_j\}_{j\geq 1}$ satisfying
\[\label{eq:zero-ext-function}	
	v_j \leq 0, \quad v_j \leq -1 \text{ on } E, \quad \int_\Om |v_j| dV_{2n} \leq 2^{-j}.
\]

\begin{lem}\label{lem:properties-ext-fct} Let $\Om$ be bounded open set in $\bC^n$. Then
\begin{itemize}
\item
[(i)] If $E_1 \subset E_2$, then $u_{E_2} \leq u_{E_2}$.
\item
[(ii)] If $E\subset \Om_1 \subset \Om_2$, then $u_{E,\Om_2} \leq u_{E,\Om_1}$.
\item
[(iii)] Let $K_j$ be non-increasing sequence of compact subset in $\Om$ and $K= \cap_j K_j$. Then, $u_{K_j}^*$ increases almost everywhere to $u_K^*$.
\item
[(iv)]If $u_{E_j}^* \equiv 0$ and $E = \cup_{j=1}^\infty E_j$, then $u_E^*\equiv 0$.

Suppose moreover that  $\Om$ is strictly $m$-pseudoconvex. Then

\item
[(v)] If $E \subset\subset \Om$ , then $\lim_{z\to \d\Om} u_{E}^* =0$.
\item
[(vi)] For every set $E\subset\Om$,  $H_m(u_E^*) \equiv 0 \quad\text{ on }\Om \setminus \ov E.$
\end{itemize}
\end{lem}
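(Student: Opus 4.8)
The plan is to obtain (i), (ii) and (v) directly from the definition of $u_E$, to prove (iii) and (iv) by the usual monotonicity-plus-compactness and $L^1$-summation arguments, and to prove (vi) by a balayage argument, which is the only nontrivial part.

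\emph{Items (i), (ii), (v).} If $E_1\subset E_2$, then every $v$ admissible for $u_{E_2}$ (i.e. $m$-$\om$-sh, $v\le 0$, $v\le -1$ on $E_2$) is admissible for $u_{E_1}$, so $u_{E_2}\le u_{E_1}$ (the statement should read $u_{E_2}\le u_{E_1}$). If $E\subset\Om_1\subset\Om_2$, the restriction to $\Om_1$ of any $v$ admissible for $u_{E,\Om_2}$ is admissible for $u_{E,\Om_1}$, whence $u_{E,\Om_2}\le u_{E,\Om_1}$ on $\Om_1$. For (v), write $\Om=\{\rho<0\}$ with $\rho\in C^2(\ov\Om)$ strictly $m$-$\om$-sh; since $\ov E$ is compact in $\Om$ the number $c:=-\max_{\ov E}\rho$ is positive, so $\rho/c$ is admissible for $u_E$ and $u_E^*\ge \rho/c$; since $\rho\to 0$ at $\d\Om$ and $u_E^*\le 0$, this forces $\lim_{z\to\d\Om}u_E^*=0$.

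\emph{Items (iii), (iv).} By (i) the sequence $u_{K_j}^*$ is nondecreasing and bounded above by $u_K^*$ (as $K\subset K_j$), so $(\sup_j u_{K_j}^*)^*\le u_K^*$. Conversely, let $v$ be admissible for $u_K$ and $\veps>0$; the open set $\{v<-1+\veps\}$ contains $K=\bigcap_j K_j$, hence contains $K_j$ for all large $j$ (the compact sets $K_j\setminus\{v<-1+\veps\}$ decrease to $\emptyset$), and then $v-\veps$ is admissible for $u_{K_j}$, so $v-\veps\le u_{K_j}\le\sup_j u_{K_j}^*$; letting $\veps\to 0$, taking the supremum over $v$ and then the u.s.c.\ regularization gives $u_K^*\le(\sup_j u_{K_j}^*)^*$. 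As $\sup_j u_{K_j}^*$ is an increasing limit of $m$-$\om$-sh functions it equals its regularization a.e.\ by \cite[Corollary~9.9]{GN18}, which is (iii). For (iv) we use \eqref{eq:zero-ext-function}: for each $i$ and $j$, the hypothesis $u_{E_i}^*\equiv 0$ provides an $m$-$\om$-sh $w_i^{(j)}\le 0$ with $w_i^{(j)}\le-1$ on $E_i$ and $\int_\Om|w_i^{(j)}|\,dV_{2n}\le 2^{-i-j}$; the decreasing partial sums $\sum_{i=1}^N w_i^{(j)}$ are bounded in $L^1$ by $2^{-j}$, hence decrease to an $m$-$\om$-sh $\phi_j\le 0$ with $\int_\Om|\phi_j|\le 2^{-j}$ and $\phi_j\le -1$ on $E=\bigcup_i E_i$; replacing $\phi_j$ by $\max\{\phi_1,\dots,\phi_j\}$ (whose $L^1$ norm is still $\le 2^{-j}$) gives an increasing sequence verifying \eqref{eq:zero-ext-function}, so $u_E^*\equiv 0$.

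\emph{Item (vi), the main obstacle.} Fix a ball $B$ with $\ov B\subset\Om\setminus\ov E$; taking $B$ small it is strictly $m$-pseudoconvex. For $v$ admissible for $u_E$ let $\tilde v$ be its balayage in $B$, the u.s.c.\ regularization of $\sup\{\phi\ m\text{-}\om\text{-sh in }\Om:\ \phi\le v\text{ on }\Om\setminus B\}$. As in \cite{BT82} one checks that $\tilde v$ is $m$-$\om$-sh in $\Om$, $\tilde v=v$ on $\Om\setminus B$, $v\le\tilde v\le 0$, and $H_m(\tilde v)=0$ in $B$, the last using solvability of the homogeneous Dirichlet problem on $B$ with continuous data (Proposition~\ref{prop:smoothing} and \cite{CP22}) together with the Perron-envelope technique. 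Since $E\cap B=\emptyset$ we have $\tilde v=v\le -1$ on $E$, so $\tilde v$ is again admissible for $u_E$, giving $v\le\tilde v\le u_E\le u_E^*$. The operation $v\mapsto\tilde v$ is monotone, so applying Choquet's lemma to produce an increasing sequence of admissible $v_k$ with $(\sup_k v_k)^*=u_E^*$ yields an increasing sequence $\tilde v_k$ with $v_k\le\tilde v_k\le u_E^*$, whence $\tilde v_k\uparrow u_E^*$ a.e.; the convergence theorem for increasing sequences (Lemma~\ref{lem:bounded-increasing}) then gives $H_m(u_E^*)=\lim_k H_m(\tilde v_k)=0$ in $B$, and letting $B$ exhaust $\Om\setminus\ov E$ proves (vi). The point requiring care is the construction of the balayage and the verification that it kills $H_m$ despite only u.s.c.\ boundary data; I expect this to be routine given the results quoted above, noting that balayaging the admissible competitors $v$ (rather than $u_E^*$ itself) is what keeps everything admissible.
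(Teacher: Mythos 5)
Your proof is correct and follows essentially the same route as the paper for every item: (i), (ii), (v) are read off the definition, (iii) uses the compactness/FIP argument plus \cite[Corollary~9.9]{GN18}, (iv) uses the $L^1$-characterization \eqref{eq:zero-ext-function} (the paper sums once to get a single competitor of small $L^1$ norm for each $\veps$, while you explicitly build the increasing sequence of the characterization -- same essence), and (vi) is the classical balayage the paper invokes in one line. The only place you expand substantially is (vi), where you spell out the Perron-envelope/monotonicity/increasing-convergence chain that the paper summarizes as "a classical balayage argument"; your version, balayaging the \emph{competitors} rather than $u_E^*$ itself so that admissibility is preserved, is the standard correct way to handle the merely u.s.c.\ boundary data, and is consistent with what the paper intends.
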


\begin{proof} The properties (i) and (ii) are obvious from the definition, and also $\lim_{j} u_{K_j} \leq u_{K}^*$ in (iii). To prove the reverse inequality let $v$ be a $m-\om$-sh with $v\leq 0$ and $u\leq -1$ on $K$. For $\veps>0$, the open set $U_\veps = \{u<-1+\veps\}$ contains $K$. Hence, $K_j \subset U_\veps$ for $j$ large enough. So, $v-\veps \leq u_{K_j}^*$. Taking supremum over all such functions $v$ we get $u_K -\veps \leq u:=\lim_j u_{K_j}$. Letting $\veps\to 0$ we obtain the conclusion. Notice again that the statement that $u= u^*$ almost everywhere follows from \cite[Corollary~9.9]{GN18}.

(iv) Let $\veps>0$. By \eqref{eq:zero-ext-function} we can choose a sequence $v_j\leq 0$, $v_j\leq -1$ on $E_j$ and $\int_\Om |v_j| dV_{2n} \leq \veps 2^{-j}$. Then, $v=\sum_{j} v_j$ is a $m-\om$-sh function  satisfying $v\leq 0$, $v\leq -1$ on $E$ and $\int_\Om |v| dV_{2n} \leq \veps$. Hence, $u_E^* \equiv 0$.

(v) Let $\psi$ be a strictly $m-\om$-sh defining function of $\Om$. Then, for $A>1$ large enough, $A\psi \leq u_E^*$. This finishes the proof.

(vi) Given the unique continuous solution of the Dirichlet problem for the homogeneous Hessian equation \cite[Theorem~3.15]{GN18} in small balls,  the result follows from a classical balayage argument.
\end{proof}

The outer capacity $cap_m^*(\bullet)$  is defined as follows.

\[
	cap_m^*(E) = \inf \left\{ cap_m(U): E \subset U, \; U \subset \Om \text{ is open}\right\}.
\]
Then, we have basic properties which follow easily from the corresponding ones of the capacity $cap_m$.

\begin{prop} Let $\Om$ be a bounded open set in $\bC^n$. Then,
\begin{itemize}
\item
[(i)] $cap_m^*(E_1) \leq cap_m^*(E_2)$ if $E_1\subset E_2\subset \Om$;
\item
[(ii)] $cap_m^*(E_1, \Om_1) \geq cap_m^*(E, \Om_2)$ if $E\subset \Om_1\subset \Om_2$;
\item
[(iii)] $cap_m^*(\cup_j E_j) \leq \sum_j cap_m^*(E_j)$.
\end{itemize}
\end{prop}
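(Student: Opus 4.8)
The plan is to derive all three statements directly from the corresponding properties of the inner capacity $cap_m$ in Proposition~\ref{prop:cap-properties}, working only from the definition $cap_m^*(E)=\inf\{cap_m(U): E\subset U\subset\Om,\ U \text{ open}\}$.

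First, for (i): if $E_1\subset E_2$, then any open $U$ with $E_2\subset U$ also contains $E_1$, so the infimum defining $cap_m^*(E_1)$ ranges over a larger collection of admissible sets than the one defining $cap_m^*(E_2)$; hence $cap_m^*(E_1)\le cap_m^*(E_2)$. Next, for (ii): let $U$ be open with $E\subset U\subset\Om_1$. Then $U\subset\Om_2$ as well, so $U$ is also admissible in the infimum defining $cap_m^*(E,\Om_2)$, and Proposition~\ref{prop:cap-properties}-(b) gives $cap_m(U,\Om_2)\le cap_m(U,\Om_1)$. Therefore $cap_m^*(E,\Om_2)\le cap_m(U,\Om_1)$, and taking the infimum over all such $U$ yields $cap_m^*(E,\Om_2)\le cap_m^*(E,\Om_1)$.

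Finally, for (iii): we may assume every $cap_m^*(E_j)$ is finite, otherwise the right-hand side is $+\infty$ and there is nothing to prove. Fix $\veps>0$ and, for each $j$, pick an open set $U_j$ with $E_j\subset U_j\subset\Om$ and $cap_m(U_j)\le cap_m^*(E_j)+\veps 2^{-j}$. Then $U:=\bigcup_j U_j$ is open, contains $\bigcup_j E_j$, and the subadditivity from Proposition~\ref{prop:cap-properties}-(c) gives
$$
	cap_m^*\Big(\bigcup_j E_j\Big)\le cap_m(U)\le\sum_j cap_m(U_j)\le\sum_j cap_m^*(E_j)+\veps .
$$
Letting $\veps\to0$ completes the proof. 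No genuine difficulty arises here; the only points needing a moment's attention are the reuse of a single competitor $U$ in the larger domain in (ii) and the geometric-series bookkeeping in (iii).
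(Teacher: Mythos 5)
Your proof is correct and is exactly the elementary argument the paper alludes to when it says these properties "follow easily from the corresponding ones of the capacity $cap_m$" (the paper gives no written proof). The three steps — shrinking the competitor class in (i), reusing a competitor in the larger domain together with Proposition~\ref{prop:cap-properties}-(b) in (ii), and the $\veps 2^{-j}$ covering argument with Proposition~\ref{prop:cap-properties}-(c) in (iii) — are all as intended.
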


\begin{lem}\label{lem:cap-formula} Let $\Om\subset\subset \bC^n$ be a strictly $m$-pseudoconvex domain.  Let $E\subset \subset\Om$ a Borel subset.
$$
	\int_\Om H_m(u_E^*) \leq cap_m^*(E) \leq C \sum_{s=0}^m  \int_\Om (-u_E^*) H_s(u_E^*). 
$$
\end{lem}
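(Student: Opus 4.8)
The plan is to establish the two inequalities separately. For the lower bound, I would use the definition of the extremal function together with property (v) of Lemma~\ref{lem:properties-ext-fct}. The key point is that for any relatively compact open set $U$ with $E\subset U\subset\subset\Om$, the function $u_U^*$ is a competitor in the definition of $cap_m(U)$: indeed $-1\le u_U^*\le 0$ (here one uses that $A\psi$ for suitable $A$ forces $u_U^*\ge -1$, since $\psi$ is bounded below on $\ov\Om$ -- actually one normalizes so that $-1\le u_U^*$). Then by property (vi), $H_m(u_U^*)$ is supported on $\ov U$, and by the maximality of relative extremal functions one gets $\int_U H_m(u_U^*)\ge \int_\Om H_m(u_E^*)$ once one knows $u_E^*\ge u_U^*$ (from monotonicity (i)) and applies the comparison/domination machinery (Corollary~\ref{cor:CP2}) on $\Om$; passing to the infimum over $U\supset E$ gives $\int_\Om H_m(u_E^*)\le cap_m^*(E)$. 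A cleaner route: for every open $U\supset E$, $cap_m(U)\ge \int_U H_m(u_U^*)\ge\int_\Om H_m(u_E^*)$ using that $u_E^*\ge u_U^*$ pointwise a.e.\ and that the total Hessian mass is monotone under this (which follows from the comparison principle applied with equal boundary behavior near $\d\Om$, guaranteed by (v)). Taking the infimum over $U$ finishes this half.

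For the upper bound I would first reduce to compact sets. Since both $cap_m^*$ and the right-hand side behave well under the approximation $K_j\downarrow E$ through compact (or the reverse for open neighborhoods), and using property (iii) of Lemma~\ref{lem:properties-ext-fct} that $u_{K_j}^*\uparrow u_E^*$ a.e., together with Corollary~\ref{cor:monotonicity} (convergence in capacity under monotone limits) and Lemma~\ref{lem:bounded-increasing}, it suffices to bound $cap_m(K)$ for $K$ compact. Fix $\rho$ an $m-\om$-sh function with $-1\le\rho\le 0$; I must estimate $\int_K H_m(\rho)$. The idea is to compare $\rho$ with $u_K^*$ on the quasi-open level sets. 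On $K$ one has (essentially by definition, up to a quasi-continuity argument) $u_K^*\le -1\le\rho-$something, so $\rho - u_K^*\ge$ a positive constant on $K$ off a small set; more precisely set $v=u_K^*$, $w=0$ and invoke Corollary~\ref{cor:bounded-case} with this $\rho$: since $-1\le v\le w=0$ are bounded $m-\om$-sh with $w-v=-v=-u_K^*\to 0$ at $\d\Om$ by property (v),
$$
	\int_\Om (-u_K^*)^{3m} H_m(\rho) \le C\sum_{s=0}^m \int_\Om (-u_K^*) H_s(u_K^*).
$$
On $K$ (up to a set of arbitrarily small capacity, by quasi-continuity of $u_K^*$) we have $-u_K^*\ge 1-\veps$, so $(-u_K^*)^{3m}\ge (1-\veps)^{3m}$ there, whence $\int_K H_m(\rho)\le (1-\veps)^{-3m} C\sum_{s=0}^m\int_\Om(-u_K^*)H_s(u_K^*) + (\text{small capacity error})$. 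Taking the supremum over $\rho$ and letting $\veps\to 0$, then passing from $K$ back to $E$ via the monotone approximation, yields $cap_m^*(E)\le C\sum_{s=0}^m\int_\Om(-u_E^*)H_s(u_E^*)$.

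The step I expect to be the main obstacle is handling the small exceptional set in the quasi-continuity argument cleanly: one needs that the "capacity error" coming from the set where $-u_K^*<1-\veps$ is genuinely controlled by $cap_m$ itself (so it can be absorbed), which requires a careful bootstrapping since the quantity being estimated is $cap_m(K)$. The standard device is: the set $\{-u_K^*<1-\veps\}$ is open and contains no point of $K$ only in an approximate sense, so instead one works with $\{-u_K^*\le 1-\veps\}\cap K$ and uses that $cap_m(\{u_K^*>-1+\veps\})\to 0$ as the sublevel sets shrink, which is exactly the convergence in capacity of $\max\{u_K^*,-1+\veps\}$ as $\veps\to 0$ (Corollary~\ref{cor:monotonicity}). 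Also, verifying the endpoint regularity $\lim_{z\to\d\Om}(w-v)=0$ needed to apply Corollary~\ref{cor:bounded-case} relies precisely on property (v), so the strict $m$-pseudoconvexity hypothesis is used essentially. Finally, justifying that $\int_\Om H_m(u_E^*)$ (a priori only over $\Om$) equals $\int_{\ov E} H_m(u_E^*)$ uses property (vi), which makes the lower-bound comparison with $cap_m(U)$ for $U\downarrow E$ go through.
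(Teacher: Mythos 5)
Your upper‐bound idea is essentially the right one (apply Corollary~\ref{cor:bounded-case} with $v=u_E^*$, $w=0$, $\rho$ arbitrary with $-1\le\rho\le 0$, then take the supremum), but both halves of your argument have structural problems when compared with what actually works.

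\textbf{Lower bound.} Your ``cleaner route'' claims $cap_m(U)\ge\int_U H_m(u_U^*)\ge\int_\Om H_m(u_E^*)$ for open $U\supset E$. The first inequality is vacuous: for an \emph{open} $U$, $u_U\equiv -1$ on $U$, hence $u_U^*=u_U=-1$ on $U$, so $dd^cu_U^*=0$ and $\int_U H_m(u_U^*)=0$. The whole $H_m(u_U^*)$-mass sits on $\d U$, outside the region over which $cap_m(U)$ integrates. Your first (vaguer) route has the same problem, and additionally the claimed ``total-mass monotonicity from $u_E^*\ge u_U^*$'' is exactly the sort of statement that is not free in the Hermitian setting. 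The paper instead proceeds in three layers: (a) for \emph{compact} $K$, Lemma~\ref{lem:properties-ext-fct}(vi) gives $\supp H_m(u_K^*)\subset K$, so $\int_\Om H_m(u_K^*)=\int_K H_m(u_K^*)\le cap_m(K)\le cap_m^*(K)$ (this works because $u_K^*$ is \emph{not} constantly $-1$ on $K$); (b) for open $G$, exhaust from inside by compacts $K_j\uparrow G$, use that $u_{K_j}^*\downarrow u_G$ and the decreasing convergence theorem (Lemma~\ref{lem:bounded-decreasing}) to pass to the limit, giving $\int_\Om H_m(u_G)\le cap_m(G)=cap_m^*(G)$; (c) for a general Borel $E$, build decreasing open $O_j\supset E$ with $cap_m(O_j)\to cap_m^*(E)$, take the Choquet sequence $v_j\uparrow u_E$ with $v_j\le-1$ on $E$, and set $G_j=O_j\cap\{v_j<-1+1/j\}$; then $E\subset G_j\subset O_j$, $cap_m(G_j)\to cap_m^*(E)$, and $u_{G_j}\uparrow u_E$ a.e., so the increasing convergence theorem finishes. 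You need something like step (c); your proposal never addresses how to pass from compact/open to arbitrary Borel $E$ on the lower-bound side.

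\textbf{Upper bound.} Your reduction ``to compact sets'' is the wrong direction for an \emph{outer} capacity: $cap_m^*(E)$ is an infimum over \emph{open} $U\supset E$, so to bound it above you must compare with open sets, not compacts inside $E$. Moreover, working with a compact $K$ forces you into the quasi-continuity bootstrap you yourself flag as ``the main obstacle'': on $K$ you only have $u_K^*=-1$ quasi-everywhere, not everywhere, and the exceptional set is an open set whose capacity is precisely what you are trying to control. The paper sidesteps this entirely: using the same $G_j$ gadget as in (c), the problem is reduced to an \emph{open} set $G\subset\subset\Om$, where $u_G=u_G^*\equiv -1$ on $G$ with no exceptional set, so the pointwise inequality $\int_G H_m(\rho)\le\int_\Om(-u_G)^{3m}H_m(\rho)$ is immediate. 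Then Corollary~\ref{cor:bounded-case} with $w=0$, $v=u_G$ gives the bound, the supremum over $\rho$ is taken, and the increasing convergence $u_{G_j}\uparrow u_E$ a.e.\ (together with Lemma~\ref{lem:bounded-increasing}) transports it to $E$. Your instinct to use Corollary~\ref{cor:bounded-case} is correct and property (v) is indeed what furnishes the boundary decay hypothesis, but the approximation architecture around it needs to be the open-set $G_j$ construction, not compact inner approximation plus a capacity-error patch.
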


\begin{proof} We prove first the left hand side inequality. Assume that $E= \ov E$ is compact. The property Lemma~\ref{lem:properties-ext-fct}-(vi) implies $$\int_\Om H_m(u_K^*) = \int_K H_m(u_K^*) \leq cap_m(K) \leq cap_m^*(K).$$
Assume $E=G$ is an open subset. We can find an increasing sequence of compact sets  $K_j$ such that $\cup_j K_j = G$. It is easy to see that $u_{K_j}^*$ decreases to $u_G= u_G^*$ on $\Om$. Hence,  by the  weak convergence theorem for decreasing sequences, $H_m(u_{K_j}^*) \to H_m(u_G)$ weakly. This implies
$$
	\int_\Om H_m(u_G) = \lim_{j\to\infty} \int_\Om H_m(u_{K_j}^*) \leq \lim_{j\to\infty} cap_m(K_j) \leq cap_m(G).
$$
Since $cap_m(G) =cap_m^*(G)$, the conclusion follows.

Now let $E$ be a Borel subset. By definition there exists a sequence of open sets  $\{O_j\}$ in $\Om$ containing $E$ such that $cap_m^*(E) = \lim_j cap_m(O_j)$. Replacing $O_j$ by $\cap_{1\leq s \leq j} O_j$ we may assume that $\{O_j\}_{j\geq 1}$ is decreasing. Moreover, by Choquet's lemma there exists an increasing sequence $\{v_j\}$ of negative $m-\om$-sh functions in $\Om$ such that $v_j =-1$ on $E$ and $\lim_j v_j = u_E$ almost everywhere on $\Om$. Set $G_j = O_j \cap \{v_j <-1+1/j\}$. Then, $E\subset G_j \subset O_j$ and 
$$
	v_j -1/j \leq u_{G_j} \leq u_E.
$$
So, $\lim_{j\to \infty} cap_m(G_j) = cap_m^*(E)$ and $u_{G_j}$ increases to $u_E$ almost everywhere on $\Om$. Therefore, bythe  weak convergence for increasing sequences (Lemma~\ref{lem:bounded-increasing}), 
$$
	\int_\Om H_m(u_E^*) = \lim_{j\to\infty} \int_\Om H_m(u_{G_j}) \leq \lim_{j\to\infty}cap_m(G_j) = cap_m^*(E). 
$$
Thus, the proof of left hand side inequality  is completed.

Next we prove the other one. Let $E \subset\subset \Om$ be a Borel subset and consider the sets $G_j$ defined as above. Then, $\lim_j cap_m(G_j) = cap_m^*(E)$. We also have for $0\leq s \leq m$,
$$
	\lim_{j\to \infty} \int_\Om (-u_{G_j}) H_s(u_{G_j}) = \int_\Om (-u_{E}^*) H_s(u_{E}^*)
$$
by the weak convergence for increasing sequence again. Thus, it is enough to prove the inequality for $E=G \subset\subset \Om$ an open subset. 

To this end let $-1 \leq \rho \leq 0$ be a $m-\om$-sh function in $\Om$. Since $G \subset \{u_G=-1\}$ and $u_G = u_G^*$ it follows that  for $q\geq 1$,
$$
	\int_G H_m(\rho) \leq \int (- u_G)^q H_m(\rho).
$$ 
Applying Corollary~\ref{cor:bounded-case} for $v=0$ and $u = u_G$ we get
$$
	\int_\Om (-u_G)^{3m} H_m(\rho) \leq C \sum_{s=0}^m  \int_\Om (-u_G) H_s(u_G).
$$
Taking supremum over all such functions $\rho$, we get the desired inequality.
\end{proof}

\begin{remark}\label{rmk:zero-capacity-compact} For a compact set $K$ in a strictly $m$-pseudoconvex domain $\Om$,  $cap(K,\Om) =0$ if and only if  $cap_m^*(K,\Om)=0$.
\end{remark}

\begin{prop}\label{prop:polar-set-characterization} In a strictly $m$-pseudoconvex domain $\Om$ the following are equivalent:
\begin{itemize}
\item
[(a)] $u_{E,\Om}^*=0$. 
\item
[(b)] $E\subset \{u=-\infty\}$ for a $m-\om$-sh function $u<0$ in $\Om$. 
\item
[(c)] $cap_m^*(E,\Om) =0$.
\end{itemize}
\end{prop}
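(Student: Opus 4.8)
The plan is to prove the cycle (a)$\Rightarrow$(b)$\Rightarrow$(a) directly, and then (a)$\Leftrightarrow$(c) using the quantitative Lemma~\ref{lem:cap-formula} together with the comparison principle. First I would reduce to the case $E\subset\subset\Om$: exhausting $\Om$ by strictly $m$-pseudoconvex domains $\Om_k\subset\subset\Om_{k+1}$ and setting $E_k=E\cap\Om_k$, property (a) for $E$ follows from (a) for all the $E_k$ by Lemma~\ref{lem:properties-ext-fct}(iv), property (c) by countable subadditivity of $cap_m^*$, and property (b) by taking a suitably convergent series of the local potentials. So assume $E\subset\subset\Om$ and fix a negative $m-\om$-sh defining function $\psi$ of $\Om$.

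For (a)$\Rightarrow$(b): if $u_{E,\Om}^*\equiv0$, then by \eqref{eq:zero-ext-function} there is a sequence of $m-\om$-sh functions $v_j\le0$ with $v_j\le-1$ on $E$ and $\int_\Om|v_j|\,dV_{2n}\le 2^{-j}$. Put $u:=\psi+\sum_{j\ge1}v_j$. The partial sums decrease and stay bounded in $L^1(\Om)$, so $u$ is $m-\om$-sh and not $\equiv-\infty$; moreover $u\le\psi<0$ in $\Om$, and $u\equiv-\infty$ on $E$ because every $v_j\le-1$ there. This is (b).

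For (b)$\Rightarrow$(a): given $u<0$ $m-\om$-sh with $E\subset\{u=-\infty\}$, set $O_j=\{u<-j\}$; these are open, contain $E$, and decrease, so the relative extremal functions $u_{O_j}$ increase in $j$ (Lemma~\ref{lem:properties-ext-fct}(i)). Since $\tfrac{1}{j}u\le0$ and $\tfrac{1}{j}u\le-1$ on $O_j$, one has $\tfrac{1}{j}u\le u_{O_j}$, hence $\sup_j u_{O_j}\ge\sup_j\tfrac{1}{j}u=0$ at every point where $u>-\infty$, that is, almost everywhere. As $u_{O_j}\le0$ this forces $\sup_j u_{O_j}=0$ a.e., so its upper semicontinuous regularization is $\equiv0$ (a $m-\om$-sh function agrees with its regularization a.e.\ by \cite[Corollary~9.9]{GN18}). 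Finally $E\subset O_j$ gives $u_{O_j}\le u_E$ for all $j$, whence $u_E\ge0$ a.e.\ and $u_E^*\equiv0$; note that this direction, and hence the equivalence (a)$\Leftrightarrow$(b), uses no pseudoconvexity of $\Om$.

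For (a)$\Leftrightarrow$(c) I would invoke Lemma~\ref{lem:cap-formula}. If $u_E^*\equiv0$, the right-hand side there, $C\sum_{s=0}^m\int_\Om(-u_E^*)H_s(u_E^*)$, vanishes, so $cap_m^*(E)=0$. Conversely, if $cap_m^*(E)=0$, the left-hand inequality of Lemma~\ref{lem:cap-formula} gives $\int_\Om H_m(u_E^*)=0$, hence $H_m(u_E^*)\equiv0$ in $\Om$; combining $u_E^*\to0$ at $\partial\Om$ (Lemma~\ref{lem:properties-ext-fct}(v)) with the domination principle (the corollary following Corollary~\ref{cor:CP2}) applied to the pair $(u_E^*,0)$ yields $u_E^*\ge0$, so $u_E^*\equiv0$. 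I expect this last implication to be the only real obstacle: it is where the technical heart of the paper enters, through the integral estimate underlying Lemma~\ref{lem:cap-formula} (itself built on Proposition~\ref{prop:cap-integral}/Corollary~\ref{cor:bounded-case}) and through the comparison principle; the remaining implications are soft manipulations of extremal functions.
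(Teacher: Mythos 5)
Your proof is correct, and three of the four implications follow the paper's approach: (a)\,$\Rightarrow$\,(b) via \eqref{eq:zero-ext-function}, (b)\,$\Rightarrow$\,(a) by comparing $u/j$ with $u_E$ (though your detour through the sublevel sets $O_j$ is unnecessary — the paper notes $u/j \le u_E$ directly), and (c)\,$\Rightarrow$\,(a) via the left inequality in Lemma~\ref{lem:cap-formula} plus the domination principle. The genuine difference is how you close the cycle: you prove (a)\,$\Rightarrow$\,(c) directly by observing that $u_E^*\equiv 0$ forces the right-hand side of Lemma~\ref{lem:cap-formula} to vanish, which is a real shortcut. The paper instead proves (b)\,$\Rightarrow$\,(c): starting from a potential $u$ with $E\subset\{u=-\infty\}$, it considers the sublevel sets $\mathcal O_j=\{u<-j\}\cap V$, applies the right-hand side of Lemma~\ref{lem:cap-formula} to the \emph{open} sets $\mathcal O_j$, and shows $\int(-u_{\mathcal O_j})H_s(u_{\mathcal O_j})\to 0$ by a separate $|\rho|$-splitting argument, concluding $cap_m(\mathcal O_j)\to 0$. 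Your route is shorter and logically complete for this proposition; what the paper's longer route buys is the standalone fact \eqref{eq:cap-sublevel-set} — that $cap_m(\{u<-j\}\cap V)\to 0$ for any $m$-$\om$-sh $u$ — which is invoked earlier in the proof of Theorem~\ref{thm:quasi-continuity} to pass from bounded to unbounded functions, so the paper needs to establish it somewhere and does so here; your argument would leave that fact unproven elsewhere.
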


\begin{proof} (a) $\Rightarrow$ (b)  follows from the property \eqref{eq:zero-ext-function} by setting $u= \sum_{j\geq 1} v_j$. Conversely, $E \subset \{v=-\infty\}$, where $v<0$ and $m-\om$-sh, implies $u_E \geq v/j$ for $j=1,2...$. So $u_E =0$ outside $\{v=-\infty\}$ whose Lebesgue measure is zero. Hence, $u_E^*=0$ by \cite[Corollary~9.7]{GN18}. The implication (c) $\Rightarrow$ (a) follows from the fact that $H_m(u_E^*) \equiv 0$ and the domination principle (Corollary~\ref{cor:DP}) if $E$ is relatively compact in $\Om$. The general case follows from the countable subadditivity of $cap_m^*$ and the corresponding property of $u_E^*$ above.

 To prove $(b)\Rightarrow (c)$ 
let us fix an open subset $V\subset\subset\Om$ and denote $\cO_j = \{u<-j\} \cap V$.  Let $\veps>0$. We wish to find  an open subset $E\subset G\subset\Om$ with $cap_m(G) <\veps$. 
Indeed, we have $0\geq u_{\cO_j} \geq \max\{u/j, -1\}$, where $u_{\cO_j}$ is the relative extremal function. Then, $u_{\cO_j} \uparrow 0$ a.e on $\Om$ by using $\om$-subharmonicity. 
Now the right hand side inequality in Lemma~\ref{lem:cap-formula} gives
$$
	cap_m(\cO_j) \leq C \sum_{s=0}^m e_{(0,0,s)},
$$
where $e_{(0,0,s)} = \int_\Om (-u_{\cO_j}) H_s(u_{\cO_j})$. Applying the weak convergence theorem for increasing convergence sequences we get that $H_s(u_{\cO_j}) \to 0$ weakly in $\Om$, $1\leq s\leq m$. Furthermore, for $s=m$ and $s=0$,
$$
	\lim_{j\to \infty} \int_\Om (-u_{\cO_j}) H_m(u_{\cO_j}) =0 = \lim_{j\to \infty} \int_\Om (-u_{\cO_j}) \om^{n}.
$$
Now we claim that for $1\leq s \leq m-1$, 
\[\label{eq:mass-zero}
	\lim_{j\to \infty} \int_\Om (-u_{\cO_j}) H_s(u_{\cO_j}) =0.
\]
Assume this is true for a moment and let us finish the proof. The above facts imply that
\[\label{eq:cap-sublevel-set}
	\lim_{j\to\infty} cap_m(\cO_j) =0.
\]
Take a sequence of  open sets  $V_s$ exhausting $\Om$.   Choose
$\cO_{j_s} = \{u<-j_s\} \cap V_s$ such that $cap_m(\cO_{j_s}) < \veps/2^s$. Define $G = \cup_{s\geq 1}  \cO_{j_s}$ which is an open set containing $E$ and which has capacity less than $\veps$.

Finally, let us verify \eqref{eq:mass-zero}. Let $\rho$ be strictly $m-\om$-sh defining function for $\Om$. Since $\cO_j \subset V \subset \subset \Om$, we have $u_{\cO_j} \geq u_V \geq A\rho$ for a constant $A>0$ depending only on $V, \Om$ by the proof of Lemma~\ref{lem:properties-ext-fct}-(v).
Hence, $u_{\cO_j}$'s can be extended to a neighborhood $\wt \Om$ of $\ov\Om$ by $A\rho$ (see e.g., \eqref{eq:extension}).
By the CLN inequality there is a uniform constant $C = C(\Om, \wt\Om)$ such that for every $j\geq 1$,
$$
	\int_{\Om} H_s(u_{\cO_j}) \leq C.
$$
Then, for a fixed $\veps>0$,
$$\begin{aligned}
	\int_\Om (-u_{\cO_j}) H_s (u_{\cO_j}) 
&\leq A \int_{\Om} |\rho| H_s(u_{\cO_j}) \\
&\leq A \veps \int_{\{|\rho| < \veps\}} H_s(u_{\cO_j}) +  A\int_{\{|\rho|\geq \veps\}} H_s(u_{\cO_j}) \\
&\leq AC \veps +  A\int_{\{|\rho|\geq \veps\}} H_s(u_{\cO_j}).
\end{aligned} $$
Since $H_s (u_{\cO_j})\to 0$  weakly  as $j\to \infty$ and $\{|\rho|\geq \veps\} \subset \Om$ is compact, letting $j\to\infty$ we get
$$
	\lim_{j\to\infty} \int_\Om (-u_{\cO_j}) H_s (u_{\cO_j})  \leq AC \veps.
$$
This holds for arbitrary $\veps>0$, where $A,C$ are uniform constants independent of $\veps$. Hence, the proof of \eqref{eq:mass-zero} is completed.
\end{proof}

\begin{thm} \label{thm:NP} $m$-negligible sets are $m$-polar. 
\end{thm}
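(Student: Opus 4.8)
The plan is to reduce the statement, via the localization principle and Choquet's lemma, to showing that a set of the form $N=\{u<u^*\}$ has zero \emph{outer} $m$-capacity, and then to invoke the implication $(c)\Rightarrow(b)$ of Proposition~\ref{prop:polar-set-characterization} (applied on small strictly $m$-pseudoconvex balls) to conclude that $N$ is $m$-polar. Concretely: let $N$ be $m$-negligible. Since $m$-polarity is local and a countable union of $m$-polar sets is $m$-polar (Lemma~\ref{lem:properties-ext-fct}(iv) together with Proposition~\ref{prop:polar-set-characterization}), I may work in a fixed small ball $\Om\subset\bC^n$, taken strictly $m$-pseudoconvex, and by Choquet's lemma assume $N=\{u<u^*\}$ with $u=\sup_j u_j$ for an increasing sequence $\{u_j\}$ of $m-\om$-sh functions. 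Replacing $u_j$ by $\max\{u_j,-k\}$, noting that $N=\bigcup_k N_k$, where $N_k$ is the negligible set of the $k$-th (uniformly bounded) truncated family, and using once more that a countable union of $m$-polar sets is $m$-polar, I reduce to the case $-1\le u_j\le0$, $u_j\uparrow u$, $N=\{u<u^*\}\subset\subset\Om$, with all $u_j$ equal to a strictly $m-\om$-sh defining function near $\d\Om$ (after a further application of the localization reductions).

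In this setting Corollary~\ref{cor:inner-zero-cap} applies directly and shows that $N$ is null for every measure $\Lc(w_1,\dots,w_m)=dd^cw_1\wed\cdots\wed dd^cw_m\wed\om^{n-m}$ with $w_i$ bounded $m-\om$-sh; in particular $cap_m(N,\Om)=0$. It remains to upgrade this inner vanishing to $cap_m^*(N,\Om)=0$. For this I would argue that $cap_m$ is a Choquet capacity: it is monotone, countably subadditive and continuous along increasing sequences (Proposition~\ref{prop:cap-properties}), and its continuity along decreasing sequences of compact sets follows from the monotone continuity of relative extremal functions in Lemma~\ref{lem:properties-ext-fct}(iii), the two-sided estimate of Lemma~\ref{lem:cap-formula}, and the weak continuity of mixed Hessian measures under increasing sequences (Lemma~\ref{lem:bounded-increasing}), along the lines of Bedford--Taylor. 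Since $N=\{u<u^*\}$ is a Borel set ($u$ is Borel and $u^*$ is upper semicontinuous), Choquet's capacitability theorem then gives $cap_m^*(N)=\sup\{cap_m(K):K\subset N\ \text{compact}\}=cap_m(N)=0$. Equivalently: every compact $K\subset N$ has $cap_m(K)=0$, hence $cap_m^*(K)=0$ by Remark~\ref{rmk:zero-capacity-compact}, hence $K$ is $m$-polar, and capacitability propagates this to $N$.

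Finally, $cap_m^*(N)=0$ together with Proposition~\ref{prop:polar-set-characterization} produces an $m-\om$-sh function $v<0$ on $\Om$ with $N\subset\{v=-\infty\}$, so $N$ is $m$-polar, which is the assertion of Theorem~\ref{thm:NP}.

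I expect the delicate point to be precisely the passage from $cap_m(N)=0$ to $cap_m^*(N)=0$. Because of the torsion terms, Lemma~\ref{lem:cap-formula} yields only a two-sided \emph{estimate} (with a multiplicative constant) between $cap_m^*$ and the energies $\int(-u_E^*)H_s(u_E^*)$, not the clean identity $cap_m^*(K)=\int H_m(u_K^*)$ available in the K\"ahler/Euclidean case; so establishing that $cap_m$ is a genuine Choquet capacity — or, alternatively, running the Bedford--Taylor argument directly with the explicit shrinking open neighbourhoods $\{u_j<\phi_\nu-\tfrac1{2k}\}\supset\{u\le u^*-\tfrac1k\}$, where $\phi_\nu\downarrow u^*$ are the continuous $m-\om$-sh approximants of Proposition~\ref{prop:smoothing}, and estimating their $m$-capacities by the comparison principle of Theorem~\ref{thm:CP1} as in the proof of $(b)\Rightarrow(c)$ of Proposition~\ref{prop:polar-set-characterization} — is where the real work lies. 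The remaining steps are routine bookkeeping.
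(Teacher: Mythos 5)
Your skeleton is right: localize to a strictly $m$-pseudoconvex ball, apply Choquet's lemma and truncation, get $cap_m(N)=0$ from Corollary~\ref{cor:inner-zero-cap}, and characterize polarity via Proposition~\ref{prop:polar-set-characterization}. But the step you yourself flag as ``where the real work lies'' is a genuine gap, not merely a delicacy. Choquet capacitability would require the decreasing-compacts axiom $cap_m\bigl(\bigcap_j K_j\bigr)=\lim_j cap_m(K_j)$, which is nowhere established in the paper, and your own observation explains why the classical route fails: Lemma~\ref{lem:cap-formula} gives only $\int_\Om H_m(u_K^*)\le cap_m^*(K)\le C\sum_{s=0}^m\int_\Om(-u_K^*)H_s(u_K^*)$ with a constant $C>1$, so passing $u_{K_j}^*\uparrow u_K^*$ through these estimates (via Lemma~\ref{lem:properties-ext-fct}(iii) and Lemma~\ref{lem:bounded-increasing}) yields $\lim_j cap_m(K_j)\le C\sum_s\int_\Om(-u_K^*)H_s(u_K^*)$, which does not bound $cap_m(K)$. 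So ``capacitability propagates this to $N$'' cannot simply be asserted.

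The paper avoids capacitability entirely with a covering argument, and the missing idea in your proposal is to use quasi-continuity \emph{inside} the proof of this theorem rather than just behind Corollary~\ref{cor:inner-zero-cap}. Fix a compact $K\subset\Om$; by countable subadditivity of $cap_m^*$ it suffices to show $cap_m^*(E\cap K)=0$. By Theorem~\ref{thm:quasi-continuity} choose an open $G$ with $cap_m(G)<\veps$ on whose complement $F=\Om\setminus G$ all $u_j$ are continuous; then $u=\sup_j u_j$ is lower semi-continuous on $F$, while $u^*$ is always upper semi-continuous, so for rationals $r<t$ the sets $K_{rt}=K\cap F\cap\{u\le r<t\le u^*\}$ are \emph{compact} and $(E\cap K)\setminus G\subset\bigcup_{r<t}K_{rt}$. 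Truncating by $u_c=\max\{u,c\}$ with $c$ a lower bound for $u$ on $K\cap F$ (available because $u\ge u_1$ and $u_1$ is continuous there), one has $K_{rt}\subset K'_{rt}\subset\{u_c<u_c^*\}$, so $cap_m(K'_{rt})=0$ by Corollary~\ref{cor:inner-zero-cap}, hence $cap_m^*(K'_{rt})=0$ by Remark~\ref{rmk:zero-capacity-compact} precisely because $K'_{rt}$ is compact. Countable subadditivity and letting $\veps\to0$ finish. In short: realize the negligible set, up to a set of small capacity, as a countable union of compacta on which inner and outer capacity agree; this makes the passage to $cap_m^*$ elementary and renders any Choquet-capacity axiom beyond those in Proposition~\ref{prop:cap-properties} unnecessary.
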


\begin{proof} The result is local, so we may assume that all functions are defined on a bounded strictly $m$-pseudoconvex domain $\Om$. Thanks to the characterization in Proposition~\ref{prop:polar-set-characterization} it is enough to show that a negligible set $E$ has outer capacity zero. 
Let $\{u_j\}$ be the sequence in the definition of the negligible set and  put $u=\sup_j u_j$. By Choquet's lemma we may assume  this is an increasing sequence. Let $\veps>0$. By quasi-continuity we can find an open set $G\subset \Om$ such that $cap_m(G)<\veps$ and $u, u_j$'s  are continuous on $F:=\Om\setminus G$.   

Since $cap_m^*(\bullet)$ is countably subadditive, it is enough to show that $$cap_m^*(E \cap K)=0$$ for a fixed compact subset $K\subset \Om$. Observe that for all rational numbers $r<t$, the sets 
$$	K_{rt} =K \cap F \cap \{u \leq r <t \leq u^*\}
$$
are compact, because $u$ is lower semi-continuous in $K\cap F$ and $u^*$ is upper-semi continuous. Also, $(K \cap E) \setminus G$ is contained in the countable union of such compact sets. Thus, by countable subadditivity, it remains to verify $cap_m^*(K_{rt})=0$. 

Since $u$ is lower semi-continuous on $K$, there exists a constant $c$ such that $u\geq c$ on $K$. Denote $u_c=\max\{u,c\}$ and notice that $K_{rt} \subset K'_{rt}$, where
$$
 K'_{rt} = K \cap F \cap \{u_c \leq r<t\leq u_c^*\}
$$
 are compact sets. Since $\{u_c<u_c^*\}$ has inner capacity $cap_m$ zero, we have $cap_m(K'_{rt})=0$.  This implies that  $cap_m^*(K'_{rt})=0$ by Remark~\ref{rmk:zero-capacity-compact}.
\end{proof}

We have the following analogue of Josefson's theorem whose proof is the same as the one of \cite[Theorem~1.23]{K05}.

\begin{thm} For any $m$-polar subset $E$ of $\bC^n$, there exists a $m-\om$-sh function $h$ on $\bC^n$ such that $E \subset \{h = -\infty\}$.
\end{thm}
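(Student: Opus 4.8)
The plan is to rerun the proof of the plurisubharmonic analogue, \cite[Theorem~1.23]{K05}, replacing ``psh'' by ``$m-\om$-sh'' and ``pluripolar'' by ``$m$-polar'' and feeding in the potential theory developed above, in particular Proposition~\ref{prop:polar-set-characterization}. Two elementary remarks make the transition painless for a general Hermitian $\om$: every positive $(1,1)$-form lies in $\ov{\Ga_m(\om)}$, because $\Ga_n$ is exactly the positive orthant and $\Ga_n\subset\Ga_m$; consequently every ball $B(0,R)$ is strictly $m$-pseudoconvex (with defining function $|z|^2-R^2$), and, by G\aa rding's characterization (Remark~\ref{rmk:Garding}) together with approximation, every plurisubharmonic function on $\bC^n$ --- e.g.\ $\log^+|z|$ and $A|z|^2$ --- is $m-\om$-sh.

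\emph{Reduction.} Since $\bC^n$ is Lindel\"of and $m-\om$-subharmonicity is local, $E=\bigcup_{k\ge1}E_k$ with each $E_k$ an $m$-polar set contained in $B(0,k)$. It suffices to construct, for each $k$, a function $u_k$ that is $m-\om$-sh on $\bC^n$, satisfies $u_k\le\log^+|z|$ on $\bC^n$ and $\int_{B(0,k+2)}|u_k|\,dV\le1$, and such that for every $M$ there is an open set $\Oc_{k,M}\supset E_k$ with $u_k\le-M$ on $\Oc_{k,M}$. Indeed, set $h:=\sum_k2^{-k}u_k$. For a fixed $N$ all but finitely many terms satisfy $\int_{B(0,N)}2^{-k}|u_k|\,dV\le2^{-k}$, so the series converges in $L^1_{\rm loc}(\bC^n)$; its partial sums are $m-\om$-sh and bounded above by $\log^+|z|$, so $h$ agrees almost everywhere with an $m-\om$-sh function, and the neighbourhood bounds on the $u_k$ force this function to be $-\infty$ on each $E_k$, hence on $E$.

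\emph{Local potential and globalization.} Fix $k$ and write $K:=\ov{E_k}\subset B(0,k+1)\subset\subset\Om:=B(0,k+2)$. Since $K$ is $m$-polar, Proposition~\ref{prop:polar-set-characterization} gives $u_{K,\Om}^*\equiv0$, and \eqref{eq:zero-ext-function} yields $m-\om$-sh functions $v_j$ on $\Om$ with $-1\le v_j\le0$, $v_j\le-1$ on $K$ and $\int_\Om|v_j|\,dV\le2^{-j}$. Put $s_N:=\sum_{j=1}^Nv_j$ and $v_N':=\max\{s_N,-N\}$; these are $m-\om$-sh on $\Om$, decrease with $N$, satisfy $-N\le v_N'\le0$, $v_N'=-N$ on $K$ and $\int_\Om|v_N'|\,dV\le1$. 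By upper semicontinuity the set $W_M:=\{v_1<-\tfrac12\}\cap\dots\cap\{v_{2M}<-\tfrac12\}$ is open, contains $K$, and $v_N'<-M$ on $W_M$ whenever $N\ge2M$. Fixing $B':=B(0,k+\tfrac32)$, define
\[
 \phi_N:=\Big(\sup\big\{\phi:\ \phi\ \text{is }m-\om\text{-sh on }\bC^n,\ \phi\le v_N'\ \text{on }B',\ \phi\le\log^+|z|\ \text{on }\bC^n\big\}\Big)^*.
\]
The constant $-N$ is admissible, so the family is nonempty and locally bounded above; hence $\phi_N$ is $m-\om$-sh on $\bC^n$ with $-N\le\phi_N\le\log^+|z|$ and $\phi_N\le v_N'$ on $B'$, and the families decrease with $N$, so $\phi_N\downarrow u_k$. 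As every admissible $\phi$ is $\le v_N'<-M$ on $W_M\cap B'$ when $N\ge2M$, we obtain $u_k\le-M$ on the neighbourhood $W_M\cap B'$ of $K$ for every $M$.

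\emph{Main obstacle.} What remains, and is the crux, is that $u_k\not\equiv-\infty$: this amounts to a bound $\int_\Om|\phi_N|\,dV\le C$ uniform in $N$, after which monotone convergence gives $\int_\Om|u_k|\,dV\le C$ and (rescaling the $v_j$) $\le1$. Exactly as in \cite[Theorem~1.23]{K05}, the idea is that the global envelope $\phi_N$ cannot be more negative on $\Om$ than $v_N'$ forces it to be, while $\int_\Om|v_N'|\,dV\le1$ uniformly; quantitatively one bounds $\int_\Om|\phi_N|$ by $\int_\Om|v_N'|$ using the Hermitian capacity comparison (Lemma~\ref{lem:cap-formula}) and the stability inequality (Corollary~\ref{cor:bounded-case}). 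Granting this, $u_k$ is $m-\om$-sh on $\bC^n$, bounded above by $\log^+|z|$, $\le-M$ on a neighbourhood of $E_k$ for every $M$, and small in $L^1(B(0,k+2))$ --- precisely the properties used in the reduction --- so $h=\sum_k2^{-k}u_k$ completes the proof.
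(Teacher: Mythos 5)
Your proposal reproduces the overall strategy of \cite[Theorem~1.23]{K05}, which is exactly what the paper does (the paper's ``proof'' consists only of the pointer to that theorem), and you correctly set up the reduction to compacts in balls, the construction of the local potentials $v'_N$ via Proposition~\ref{prop:polar-set-characterization} and \eqref{eq:zero-ext-function}, and the global envelope $\phi_N$. The early remarks (every ball is strictly $m$-pseudoconvex for any Hermitian $\om$ because $\Ga_n\subset\Ga_m$; psh functions such as $\log^+|z|$ and $A|z|^2$ are $m$-$\om$-sh) are correct and needed for the globalization.

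However there is a genuine gap at exactly the point you flag: the uniform-in-$N$ bound $\int_\Om|\phi_N|\,dV\le C$ is the entire content of Josefson's theorem, and you do not prove it, only gesture at Lemma~\ref{lem:cap-formula} and Corollary~\ref{cor:bounded-case}. In the psh case the corresponding estimate exploits the identity $cap(K,\Om)=\int_\Om (dd^c u_K^*)^n$ together with a comparison of relative extremal functions across nested domains; in the Hermitian setting Lemma~\ref{lem:cap-formula} gives only the two-sided comparison $\int_\Om H_m(u_E^*)\le cap_m^*(E)\le C\sum_{s=0}^m\int_\Om(-u_E^*)H_s(u_E^*)$, and Corollary~\ref{cor:bounded-case} carries the exponent $3m$ on $(w-v)$ on the left-hand side and the sum over $s$ on the right. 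You would need to trace the K05 chain of inequalities through these weaker, lossy Hermitian substitutes and check that the constants remain uniform in $N$ (in particular that the extra lower-order terms $\int(-u)\,H_s(u)$ for $0\le s<m$ can be absorbed, as is done in the proof of Proposition~\ref{prop:polar-set-characterization} via a compactly supported weak convergence argument). The claim is surely true — otherwise the paper's statement would be wrong — but as written your proposal leaves the crux unverified, whereas a reader of the paper is expected to actually carry out that verification when reconstructing the K05 argument.
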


\section{Dirichlet problem in domains in $\bC^n$}
\label{sec:DP}

Let $\Om$ be a bounded strictly $m$-pseudoconvex domain in $\bC^n$.  
The comparison principle in Corollary~\ref{cor:DP} coupled with  the proof of \cite[Lemma~3.13]{GN18} gives the following stability estimate for the complex Hessian equation:

\begin{prop}\label{prop:stability-Lp}
Let $u,v \in C^0(\ov\Om)$ be $m-\om$-sh in $\Om$ and satisfy
$$
	H_m(u) = f dV_{2n}, \quad  H_m (v) = g dV_{2n}
$$
with $0\leq f,g \in L^p(\Om)$ and $p>n/m$. Then
$$
	\|u-v\|_{L^\infty} \leq \sup_{\d\Om} |u-v| + C \|f-g\|_{L^p(\Om)}^\frac{1}{m},
$$
where $C = C(m,n,p,\Om)$.
\end{prop}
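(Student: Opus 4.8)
The plan is to reproduce the proof of Ko\l odziej's second stability theorem, in the form of \cite[Lemma~3.13]{GN18}, using the comparison principle of Corollary~\ref{cor:DP} in place of the one employed there. Write $m_0=\sup_{\d\Om}|u-v|$ and $A=\|f-g\|_{L^p(\Om)}^{1/m}$; by the symmetric roles of $u$ and $v$ it is enough to prove $v-u\le m_0+CA$ on $\Om$.

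First I would introduce an auxiliary potential absorbing the defect of the densities. Since $(f-g)_+\in L^p(\Om)$ with $p>n/m$, the Dirichlet problem
\[
	H_m(h)=(f-g)_+\,dV_{2n}\ \text{ in }\Om,\qquad h=0\ \text{ on }\d\Om,
\]
has a solution $h\in C^0(\ov\Om)$, $h\le 0$, satisfying Ko\l odziej's a priori bound $\|h\|_{L^\infty(\Om)}\le C\,\|(f-g)_+\|_{L^p(\Om)}^{1/m}\le CA$ with $C=C(m,n,p,\Om)$; this quantitative estimate is the only nonelementary ingredient. Then $v+h$ is a bounded $m-\om$-sh function, and, expanding $(dd^cv+dd^ch)^m\wed\om^{n-m}$ and discarding the nonnegative mixed terms (Lemma~\ref{lem:w-p-functions}), one obtains the superadditivity
\[
	H_m(v+h)\ \ge\ H_m(v)+H_m(h)\ =\ g\,dV_{2n}+(f-g)_+\,dV_{2n}\ \ge\ f\,dV_{2n}\ =\ H_m(u).
\]
Moreover $h\to 0$ at $\d\Om$ and $\liminf_{z\to\d\Om}(u-v)(z)\ge -m_0$, so $\liminf_{z\to\d\Om}\big(u-(v+h-m_0)\big)\ge 0$. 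Hence Corollary~\ref{cor:DP}, applied to $u$ and the bounded $m-\om$-sh function $v+h-m_0$ after a routine localisation, gives $u\ge v+h-m_0$ on $\Om$, that is $v-u\le m_0-h\le m_0+CA$. Interchanging $u$ and $v$ and repeating the argument with $(g-f)_+$ in place of $(f-g)_+$ yields $u-v\le m_0+CA$, and together the two inequalities are the assertion of the proposition.

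The main obstacle is the quantitative estimate $\|h\|_{L^\infty}\le C\|(f-g)_+\|_{L^p}^{1/m}$ for the auxiliary equation: this is Ko\l odziej's volume--capacity argument, and in the present Hermitian framework it rests precisely on the potential theory developed earlier in the paper (the Hessian measure of a bounded $m-\om$-sh function, the capacity $cap_m$, quasi-continuity, and the capacity--mass comparisons such as Lemma~\ref{lem:cap-formula} and Corollary~\ref{cor:bounded-case}). An alternative route that stays entirely inside this paper is to avoid re-solving a PDE and run the capacity iteration directly on $u$ and $v$: from the refined comparison inequality of Theorem~\ref{thm:CP1}, together with $H_m(v+\veps\rho)\ge H_m(v)+\veps^m H_m(\rho)\ge g\,dV_{2n}+c\,\veps^m\,dV_{2n}$, one bounds the Lebesgue measure of the sublevel sets $U(\veps,s)$ by $\big(\|f-g\|_{L^p}/(c\veps^m)\big)^p$ after letting $s\downarrow 0$, and then feeds this into the capacity estimates; the analytic core is the same.
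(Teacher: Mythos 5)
Your main route reproduces what the paper has in mind: the one-line proof in the paper refers to the argument of \cite[Lemma~3.13]{GN18}, which is exactly the auxiliary-function scheme you describe — solve $H_m(h)=(f-g)_+\,dV_{2n}$ with zero boundary data, invoke the $L^\infty$ a~priori bound $\|h\|_{L^\infty}\le C\|(f-g)_+\|_{L^p}^{1/m}$, observe that the mixed terms in the expansion of $(dd^c(v+h))^m\wedge\om^{n-m}$ are nonnegative (so $H_m(v+h)\ge H_m(v)+H_m(h)$), and then conclude with Corollary~\ref{cor:DP}. The role of Corollary~\ref{cor:DP} (rather than the GN18 comparison principle) is precisely what the sentence ``coupled with the proof of [GN18, Lemma~3.13]'' in the paper is flagging, and you use it at the right place. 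Your alternative sketch (running the capacity iteration of Theorem~\ref{thm:CP1} directly on the sublevel sets $U(\veps,s)$) is also a correct and, if anything, more self-contained variant, since it side-steps solving any new Dirichlet problem.

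One point you should make explicit to keep the logic clean within this paper: the existence of the auxiliary $h$ for a general $L^p$ right-hand side is Theorem~\ref{thm:Dirichlet-Lp}, which is stated \emph{after} and proved \emph{using} Proposition~\ref{prop:stability-Lp}, so invoking it here would be circular. There is no real difficulty — either cite the $L^\infty$ estimate and existence directly from [GN18] or [KN3] as prior results (which is what the paper implicitly does), or replace $h$ by solutions $h_j$ with smooth right-hand sides $\rho_j\ge (f-g)_+$, $\rho_j\to (f-g)_+$ in $L^p$, supplied by Collins--Picard, use $h_j$ in the comparison step and let $j\to\infty$ — but as written the reader could wonder about the order of dependence. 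The remaining details (that $v+h-m_0$ is bounded $m-\om$-sh, that constants do not affect $H_m$, and that the ``routine localisation'' to the hypotheses of Corollary~\ref{cor:DP} is harmless) are all fine.
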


Let $\psi \in C^\infty(\d\Om)$. Given a smooth  positive function $f\in C^\infty(\ov\Om,\bR)$, there is always a smooth $m-\om$-sh subsolution $\ul u \in C^\infty(\ov\Om)$,  that is
\[
	H_m(\ul u) \geq f(z), \quad \ul u =\psi \quad\text{on }\d\Om.
\]
 The stability estimate and an easy approximation argument implies that  we can solve the Dirichlet problem  when the right hand side in $L^p$, $p>n/m$ after invoking the solution  for the  smooth data due to Collins and Picard \cite{CP22}.

\begin{thm}\label{thm:Dirichlet-Lp} Let $0\leq f \in L^p(\Om)$ for some $p>n/m$. Suppose  $\vphi \in C^0(\d\Om)$.  Then there exists a unique continuous $m-\om$-sh functions $u\in C^0(\ov\Om)$  solving the Dirichlet problem
$$
	H_m(u) = f\om^n, \quad
	u = \vphi \text{ on } \d\Om.
$$
\end{thm}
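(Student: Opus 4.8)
The plan is to solve the Dirichlet problem first for smooth, strictly positive right-hand sides and smooth boundary data by invoking the smooth solvability result of Collins and Picard \cite{CP22}, and then to pass to the limit with the help of the stability estimate in Proposition~\ref{prop:stability-Lp}.

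The first step is to reconcile the two normalisations: writing $\om^n = J\,dV_{2n}$ with $J \in C^\infty(\ov\Om)$ bounded between two positive constants, the equation $H_m(u) = f\om^n$ is the same as $H_m(u) = (fJ)\,dV_{2n}$, and $fJ \in L^p(\Om)$ with $\|fJ\|_{L^p} \le C\|f\|_{L^p}$, so Proposition~\ref{prop:stability-Lp} applies to continuous solutions of this equation. Uniqueness then follows immediately: if $u, v \in C^0(\ov\Om)$ are $m-\om$-sh in $\Om$ with $H_m(u) = H_m(v) = f\om^n$ and $u = v$ on $\d\Om$, then
$$
	\|u - v\|_{L^\infty} \le \sup_{\d\Om}|u-v| + C\,\|fJ - fJ\|_{L^p(\Om)}^{1/m} = 0 .
$$

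For existence I would choose $f_j \in C^\infty(\ov\Om)$ with $f_j > 0$ and $f_j \to f$ in $L^p(\Om)$, and $\varphi_j \in C^\infty(\d\Om)$ with $\varphi_j \to \varphi$ uniformly on $\d\Om$. Since $\Om$ is strictly $m$-pseudoconvex, each smooth datum $(f_j,\varphi_j)$ admits a smooth $m-\om$-sh subsolution, as recorded in the discussion preceding the statement, so \cite[Theorem~1.1]{CP22} provides $u_j \in C^\infty(\ov\Om)$, $m-\om$-sh in $\Om$, with $H_m(u_j) = f_j\om^n$ and $u_j = \varphi_j$ on $\d\Om$. Applying Proposition~\ref{prop:stability-Lp} to the pair $u_j, u_k$ gives
$$
	\|u_j - u_k\|_{L^\infty} \le \sup_{\d\Om}|\varphi_j - \varphi_k| + C\,\|f_jJ - f_kJ\|_{L^p(\Om)}^{1/m},
$$
and the right-hand side tends to $0$ as $j,k \to \infty$; hence $\{u_j\}$ is a Cauchy sequence in $C^0(\ov\Om)$ converging uniformly to some $u \in C^0(\ov\Om)$, and $u = \lim_j \varphi_j = \varphi$ on $\d\Om$.

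It remains to check that $u$ solves the equation. As a uniform limit of continuous $m-\om$-sh functions, $u$ is continuous, and the defining inequalities $dd^c u \wed \ga_1 \wed \cdots \wed \ga_{m-1} \wed \om^{n-m} \ge 0$ pass to the distributional limit, so $u$ is $m-\om$-sh in $\Om$. Because the $u_j$ are smooth $m-\om$-sh functions converging uniformly to $u$, the very definition of the Hessian measure of a continuous $m-\om$-sh function (cf. \eqref{eq:intro:weak-limit} and \cite[Proposition~2.11]{KN3}) gives $H_m(u_j) \to H_m(u)$ weakly; on the other hand $f_j \to f$ in $L^1_{\rm loc}(\Om)$, so $H_m(u_j) = f_j\om^n \to f\om^n$ weakly, and uniqueness of weak limits yields $H_m(u) = f\om^n$. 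The genuine analytic content is entirely imported here — the smooth solvability of \cite{CP22} and the stability estimate Proposition~\ref{prop:stability-Lp}, which itself rests on the comparison principle Corollary~\ref{cor:DP} — and the argument above is only the routine approximation bookkeeping; the one place to be careful is matching the $\om^n$ and $dV_{2n}$ normalisations so that Proposition~\ref{prop:stability-Lp} can be invoked, together with having smooth subsolutions for the approximate data, both handled above using strict $m$-pseudoconvexity of $\Om$. Accordingly I do not expect any real obstacle.
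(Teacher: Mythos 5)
Your argument is correct and follows essentially the same route the paper has in mind: invoke Collins--Picard for smooth $(f_j,\varphi_j)$ (using the smooth subsolution on a strictly $m$-pseudoconvex domain), then use Proposition~\ref{prop:stability-Lp} to get a uniform Cauchy sequence, and finally pass to the limit in the Hessian operator via the uniform-convergence stability of $H_m$ on continuous $m$-$\om$-sh functions. The paper leaves this as an ``easy approximation argument,'' and your write-up simply fills in that bookkeeping, including the harmless $\om^n$ versus $dV_{2n}$ normalization.
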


Now we wish to solve the equation with the right hand side just being a positive Radon measure assuming the existence of a subsolution. 
We use recent ideas from \cite{KN22}, however  several steps require  very different proofs.

Assume $\wt \Om$ is a neighborhood of $\ov\Om$. Let us define a slightly modified  Cegrell class
\[\label{eq:Cegrell-class} 
	\wt \cE_0(\Om) = \left\{ u \text{ is bounded and } \om-m\text{-sh in }\wt\Om:  \lim_{z\to \d\Om} u(z) =0\right\}. 
\]
The set $\wt \Om$ is suppressed in this notation.
By the CLN inequality for $u\in \wt\cE_0(\Om)$ we have  $\int_\Om H_m(u) <+\infty$. 
We  introduce this modified class  to control  the integrals of  the wedge products of currents associated to bounded $m-\om$-sh functions. 

Now we follow the steps in \cite{KN22}. 
The first one  corresponds to \cite[Lemma~2.1]{KN22} which in turn was inspired by \cite[Lemma~5.2]{Ce98}. 

\begin{lem}\label{lem:L1-convergence}
Let $\la$ be a finite positive Radon measure on $\Om$ which vanishes on $m$-polar sets. Let $\{u_j\}_{j\geq 1} \subset \wt\cE_0(\Om)$ be a uniformly bounded in $\wt \Om$ sequence that converges $dV$-a.e to $u \in \wt\cE_0(\Om)$. Then, there exists a subsequence $u_{j_s}$ such that 
$$
	\lim_{j_s\to \infty} \int_\Om u_{j_s} d\la= \int_\Om u d\la.
$$
\end{lem}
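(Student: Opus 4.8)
The plan is to prove the two inequalities $\limsup_s\int_\Om u_{j_s}\,d\la\le\int_\Om u\,d\la$ and $\liminf_s\int_\Om u_{j_s}\,d\la\ge\int_\Om u\,d\la$ for a suitable subsequence. After the usual reductions --- localizing so that all functions are $m-\om$-sh on a fixed neighborhood of $\ov\Om$, and rescaling so that $-1\le u_j,u\le 0$ --- the starting point is a quantitative form of the hypothesis: since $\la$ is finite and vanishes on $m$-polar sets, for every $\veps>0$ there is $\de>0$ such that $cap_m(E,\Om)<\de$ implies $\la(E)<\veps$. I would prove this by contradiction: otherwise there are open sets $G_k\subset\Om$ with $cap_m(G_k)<2^{-k}$ and $\la(G_k)\ge\veps_0$; then $\bigcup_{k\ge N}G_k$ is open with capacity at most $2^{1-N}$, so $E:=\bigcap_N\bigcup_{k\ge N}G_k$ has $cap_m^*(E)=0$ and hence is $m$-polar by Proposition~\ref{prop:polar-set-characterization}, whence $\la(E)=0$; but $\bigcup_{k\ge N}G_k\downarrow E$ and $\la$ is finite, so $\la(E)=\lim_N\la\bigl(\bigcup_{k\ge N}G_k\bigr)\ge\veps_0$, a contradiction.

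For the upper bound I would first pass to a subsequence (not relabeled) along which $\phi_N:=(\sup_{j\ge N}u_j)^{*}$ decreases to $u$. This is legitimate: $\sup_{j\ge N}u_j\downarrow\limsup_j u_j$, which equals $u$ almost everywhere by hypothesis, so the decreasing limit of the $m-\om$-sh functions $\phi_N$ is $m-\om$-sh and equal to $u$ a.e., hence equal to $u$ everywhere by \cite[Corollary~9.7]{GN18}. In particular $\limsup_j u_j\le u$ everywhere, so, since all $u_j\le 0$ and $\la(\Om)<\infty$, the reverse Fatou lemma yields $\limsup_j\int_\Om u_j\,d\la\le\int_\Om\limsup_j u_j\,d\la\le\int_\Om u\,d\la$. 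I would also note that $\phi_N\to u$ in $cap_m$ by Corollary~\ref{cor:monotonicity}.

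For the lower bound I would extract a further subsequence $\{u_{j_s}\}$ that converges to $u$ in $cap_m$ on compact subsets of $\Om$, by squeezing it between two monotone sequences: from above by the $\phi_N$ above, and from below by $g_N:=\bigl(\sup\{v:\ v\ \text{is}\ m-\om\text{-sh in}\ \Om,\ v\le u_{j_s}\ \text{for every}\ s\ge N\}\bigr)^{*}$. Since the constant $-1$ belongs to this family, $g_N$ is a well-defined $m-\om$-sh function with $-1\le g_N\le u_{j_s}\le\phi_N$ for all $s\ge N$; the sequence $(g_N)$ is increasing, one has $g_N\le u$ by \cite[Corollary~9.7]{GN18} (because $g_N=\sup\{v\}\le\liminf_s u_{j_s}=u$ a.e.), and the key point is that $\lim_N g_N=u$ almost everywhere. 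Granting this, Corollary~\ref{cor:monotonicity} gives $g_N\to u$ in $cap_m$, and, together with $\phi_N\to u$ in $cap_m$ and the sandwich $g_N\le u_{j_s}\le\phi_N$, this forces $u_{j_s}\to u$ in $cap_m$ on compacta. Then, given $\veps>0$, I would use inner regularity of $\la$ to pick a compact $K\subset\Om$ with $\la(\Om\setminus K)<\veps$; the quantitative absolute continuity from the first paragraph together with $cap_m\bigl(K\cap\{|u_{j_s}-u|\ge\eta\}\bigr)\to 0$ gives $\la\bigl(K\cap\{|u_{j_s}-u|\ge\eta\}\bigr)\to 0$ for every $\eta>0$, whence $\limsup_s\int_K|u_{j_s}-u|\,d\la\le\eta\,\la(\Om)$ and so $\int_K|u_{j_s}-u|\,d\la\to 0$; since also $\bigl|\int_{\Om\setminus K}(u_{j_s}-u)\,d\la\bigr|\le 2\la(\Om\setminus K)<2\veps$ and $\veps$ is arbitrary, $\int_\Om u_{j_s}\,d\la\to\int_\Om u\,d\la$, which with the upper bound completes the proof.

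The hard part is the claim that the lower envelopes $g_N$ recover $u$ --- equivalently, that some subsequence converges to $u$ in $cap_m$ on compacta. Mere $dV$-a.e.\ (equivalently, $L^1_{\mathrm{loc}}$) convergence of a uniformly bounded sequence of $m-\om$-sh functions does not by itself give convergence in capacity, and controlling the sets where $u_{j_s}$ dips well below $u$ needs the pluripotential machinery developed above, in particular the energy bound of Corollary~\ref{cor:bounded-case} and the comparison/domination principle (Corollary~\ref{cor:DP}), exactly as in \cite{Ce98,KN22}. Everything else --- reverse Fatou, the capacity/measure splitting, continuity of the finite measure $\la$ from above, and the passage between $m$-polar sets and sets of zero outer capacity via Proposition~\ref{prop:polar-set-characterization} --- is soft.
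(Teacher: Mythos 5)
Your upper-bound argument (reverse Fatou via $\phi_N=(\sup_{j\ge N}u_j)^*$) is correct, but the lower bound has a genuine gap. The claim that the lower envelopes $g_N$ increase a.e.\ to $u$ is never established, and for a general subsequence it is false. Take $u_j$ built from $\max\{j^{-1}\log|z-p_j|,-1\}$, adjusted to lie in $\wt\cE_0(\Om)$, with $\{p_j\}$ dense in a ball $B\subset\subset\Om$; then $u_j\to 0$ $dV$-a.e., but for every $N$ one has $g_N\le\inf_{s\ge N}u_{j_s}\le -1$ on the open dense (in $B$) union of tiny balls around $\{p_{j_s}:s\ge N\}$, so $g_N^*\le -1$ throughout $B$ and $g_N$ cannot increase to $u\equiv 0$ --- even though the conclusion of the lemma obviously holds in this example because the balls shrink so fast. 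Thus the sandwich $g_N\le u_{j_s}\le\phi_N$ does not in general force convergence in capacity, and you cannot pass through Corollary~\ref{cor:monotonicity} this way. You flag ``$g_N$ recovers $u$'' as the hard step and point to \cite{Ce98,KN22}, but those sources do not use a lower envelope; citing them does not fill the hole, since the thing your proof would actually have to supply --- how to thin the subsequence so that the lower envelopes recover $u$, if that is even possible --- is the substance of the lemma.

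There is also a smaller gap at the start: the negation of your quantitative absolute-continuity statement produces arbitrary Borel sets of small $cap_m$, which you immediately replace by \emph{open} sets $G_k$ of small $cap_m$. That replacement requires Choquet capacitability of $cap_m$, which the paper does not establish (it remarks explicitly that the usual extremal-function formulae for open and compact capacities are unavailable in the Hermitian Hessian setting); the argument should be phrased in terms of $cap_m^*$, which is the quantity Proposition~\ref{prop:polar-set-characterization} actually controls. For comparison, the paper's own proof is a one-sentence reduction: once Theorem~\ref{thm:NP} shows that $m$-negligible sets are $m$-polar, the proof of \cite[Lemma~2.1]{KN22} (Cegrell's scheme from \cite[Lemma~5.2]{Ce98}) applies without change, and that scheme is built on the upper envelopes $\phi_N$ together with the fact that $\{\sup_{j\ge N}u_j<\phi_N\}$ is negligible, hence polar, hence $\la$-null --- lower envelopes play no role there.
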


\begin{proof} By the comparison principle, all functions are negative on $\Om$. The proof of \cite[Lemma~2.1]{KN22} is applicable provided that the $m$-negligible sets are $m$-polar and this is the content of Theorem~\ref{thm:NP}.
\end{proof}

Applying the lemma twice for the sequences $\{u_j\},\max\{u_j,u\}$ and combining with the identity $2\max\{u_j,u\} = u_j+u+ |u_j-u|$ we easily get a corollary.

\begin{cor} \label{cor:L1-convergence}
Let $\la$ and $\{u_j\}_{j\geq 1}$ be as in Lemma~\ref{lem:L1-convergence}. Then, there exists a subsequence, still denoted by $\{u_j\}$, such that
$
	\lim_{j\to \infty} \int_\Om |u_j-u| d\la =0.
$
\end{cor}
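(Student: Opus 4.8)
The plan is to follow the recipe indicated just before the statement: invoke Lemma~\ref{lem:L1-convergence} twice and linearize the absolute value. First I would record that all functions in sight are $\leq 0$: by the comparison principle every element of $\wt \cE_0(\Om)$ is nonpositive (this is already used in the proof of Lemma~\ref{lem:L1-convergence}), so $u_j\leq 0$, $u\leq 0$, and the uniform bound gives $-M\leq u_j\leq 0$ for all $j$ and some $M>0$.

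Next I would check that the sequence $w_j:=\max\{u_j,u\}$ again satisfies the hypotheses of Lemma~\ref{lem:L1-convergence}. Each $w_j$ is $m-\om$-sh in $\wt\Om$, being a maximum of two $m-\om$-sh functions; it is bounded with $-M\leq w_j\leq 0$; and $\lim_{z\to\d\Om}w_j(z)=\max\{0,0\}=0$, so $w_j\in\wt \cE_0(\Om)$. Moreover, since $u_j\to u$ holds $dV$-a.e.\ and $t\mapsto\max\{t,u(x)\}$ is continuous, we get $w_j\to\max\{u,u\}=u$ $dV$-a.e. Hence $\{w_j\}$ is a uniformly bounded sequence in $\wt \cE_0(\Om)$ converging $dV$-a.e.\ to $u\in\wt \cE_0(\Om)$.

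Now I would run Lemma~\ref{lem:L1-convergence} first on $\{u_j\}$ to extract a subsequence $\{u_{j_s}\}$ with $\int_\Om u_{j_s}\,d\la\to\int_\Om u\,d\la$, and then run it again on $\{w_{j_s}\}$ (still in $\wt \cE_0(\Om)$, still converging $dV$-a.e.\ to $u$) to pass to a further subsequence, which I relabel $\{u_j\}$, along which simultaneously $\int_\Om u_j\,d\la\to\int_\Om u\,d\la$ and $\int_\Om \max\{u_j,u\}\,d\la\to\int_\Om u\,d\la$. All integrals involved are finite because the integrands are bounded and $\la$ is a finite measure. Using the pointwise identity $|u_j-u|=2\max\{u_j,u\}-u_j-u$ and integrating against $\la$,
$$
	\int_\Om|u_j-u|\,d\la=2\int_\Om\max\{u_j,u\}\,d\la-\int_\Om u_j\,d\la-\int_\Om u\,d\la,
$$
so the right-hand side tends to $2\int_\Om u\,d\la-\int_\Om u\,d\la-\int_\Om u\,d\la=0$, which is the claim.

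I do not expect a genuine obstacle here: the substantive work is entirely inside Lemma~\ref{lem:L1-convergence}, and everything added above is routine — stability of the class $\wt \cE_0(\Om)$ under the operation $\max$, preservation of $dV$-a.e.\ convergence under $\max$ and under passage to subsequences, and finiteness of the integrals. If anything needs care it is only the double subsequence extraction, which is handled by applying the lemma to $\{u_j\}$ first and then to the tail sequence $\{\max\{u_{j_s},u\}\}$, so that the final relabelled subsequence works for both limits at once.
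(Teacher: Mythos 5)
Your proof is correct and follows exactly the route the paper indicates: apply Lemma~\ref{lem:L1-convergence} first to $\{u_j\}$ and then to $\{\max\{u_j,u\}\}$ (which stays in $\wt\cE_0(\Om)$, remains uniformly bounded, and still converges $dV$-a.e.\ to $u$), then conclude via the pointwise identity $|u_j-u|=2\max\{u_j,u\}-u_j-u$ together with finiteness of $\la$. The nested subsequence extraction and the verification that $\max$ preserves the class and the a.e.\ convergence are exactly the routine points that the paper leaves implicit.
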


The following result is crucial for proving the weak convergence later.

\begin{lem}\label{lem:energy-convergence} Let $d\la$ and $\{u_j\}_{j\geq 1}$ be as in Lemma~\ref{lem:L1-convergence}. Let $\{w_j\}_{j\geq 1} \subset \wt\cE_0(\Om)$ be  uniformly bounded in $\wt \Om$.  Suppose that $w_j$ converges in capacity - $cap_m(\bullet, \Om)$ - to $w\in \wt\cE_0(\Om)$. Then,
$$
	\lim_{j\to \infty} \int_\Om|u-u_j| H_m(w_j) =0.
$$
\end{lem}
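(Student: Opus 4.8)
The plan is to compare $\int_\Om |u-u_j|\,H_m(w_j)$ with the ``frozen'' integral $\int_\Om |u-u_j|\,H_m(w)$: the latter is handled by the $L^1$-convergence already available, and the difference is killed using the convergence in capacity of $\{w_j\}$ together with the order-zero integration-by-parts machinery. First I would normalize. By the comparison principle (Corollary~\ref{cor:DP}) every element of $\wt\cE_0(\Om)$ is $\le 0$, so after rescaling we may assume $-1\le u,u_j,w,w_j\le 0$ on $\Om$. By the very definition of $cap_m(\cdot,\Om)$ this gives $H_m(w_j)\le cap_m$ and $H_m(w)\le cap_m$ as measures on $\Om$; in particular both vanish on $m$-polar sets, and by the CLN inequality for bounded functions (Proposition~\ref{prop:CLN-bdd}) their masses over $\Om$ are bounded by a constant $C_1$ independent of $j$ (here one uses that the functions are bounded on the larger set $\wt\Om\supset\supset\Om$). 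Since $\int_\Om|u-u_j|H_m(w_j)\ge 0$, it suffices to show that every subsequence has a further subsequence along which this integral tends to $0$; so we may pass to subsequences and relabel freely.

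The frozen part is immediate: $\la:=H_m(w)$ is a finite positive Radon measure vanishing on $m$-polar sets, so $(\la,\{u_j\})$ satisfies the hypotheses of Lemma~\ref{lem:L1-convergence}, and so does any subsequence of $\{u_j\}$. Hence Corollary~\ref{cor:L1-convergence} applies along every subsequence; combined with nonnegativity of $\int_\Om|u-u_j|\,d\la$ this forces $\int_\Om|u-u_j|\,H_m(w)\to 0$ along the full relabelled sequence.

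It remains to show $\int_\Om|u-u_j|\,\bigl(H_m(w_j)-H_m(w)\bigr)\to 0$, and this is where $w_j\to w$ in $cap_m$ enters. Writing $|u-u_j|=2\max\{u_j,u\}-u_j-u$ we see that $dd^c|u-u_j|=2\,dd^c\max\{u_j,u\}-dd^cu_j-dd^cu$ is a difference of real $(1,1)$-currents of order zero (Lemma~\ref{lem:zero-order-1}), and telescoping gives
$$
H_m(w_j)-H_m(w)=dd^c(w_j-w)\wed\Bigl(\sum_{i=0}^{m-1}(dd^cw_j)^i\wed(dd^cw)^{m-1-i}\Bigr)\wed\om^{n-m}.
$$
Using the integration by parts for currents of order zero (the prototype being \eqref{eq:IBP-normal}), together with the observation from Remark~\ref{rmk:continuous-case} that the boundary contributions vanish because $u_j-u\to 0$ uniformly near $\d\Om$, and absorbing the non-closed torsion terms $d\om$, $dd^c\om$ via the Cauchy--Schwarz inequalities of Section~\ref{ssec:positive-cone}--\ref{ssec:integral-smooth-fct} (Lemma~\ref{lem:CS}, Corollary~\ref{cor:CS}), one reduces the difference to a finite sum of integrals of the form $\int_\Om(w_j-w)\,dR_j$ where $R_j$ is a real current of order zero whose coefficient measures have total variation dominated by $C\,cap_m$ uniformly in $j$ (by the pointwise domination estimate \cite[Corollary~2.4]{KN3} and Proposition~\ref{prop:CLN-bdd}, since all the potentials involved are uniformly bounded). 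For such an integral, splitting $\Om$ into $\{|w_j-w|\le\eta\}$ and $\{|w_j-w|>\eta\}$ yields the bound $\eta\,C C_1+2C\,cap_m(\{|w_j-w|>\eta\},\Om)$; letting $j\to\infty$, then $\eta\to 0$, and invoking $cap_m(\{|w_j-w|>\eta\})\to 0$ finishes it. Assembling the two parts proves the lemma.

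I expect the integration-by-parts step to be the main obstacle: one must make sense of and integrate by parts in $\int_\Om|u-u_j|\,dd^c(w_j-w)\wed(\cdots)\wed\om^{n-m}$ with three difficulties present simultaneously — the factor $|u-u_j|$ is merely a difference of bounded $m-\om$-sh functions (not smooth, not compactly supported), the intermediate mixed products are not closed currents because $\om$ is only Hermitian, and all estimates must be uniform in $j$. These are precisely the situations for which the order-zero wedge-product and integration-by-parts apparatus of Section~\ref{ss:sobolev} and the Cauchy--Schwarz inequalities of Section~\ref{ssec:positive-cone} were developed, so the remaining work is careful bookkeeping with those tools rather than a new idea; an alternative route, splitting $|u-u_j|$ via the decreasing envelopes $\phi_N=(\sup_{k\ge N}u_k)^*\downarrow u$ for the part $(u_j-u)^+$ and controlling $cap_m(\{u_j<u-\veps\})$ for the part $(u-u_j)^+$ through a lower envelope and the equivalence of negligible and $m$-polar sets (Theorem~\ref{thm:NP}), is also available but seems less clean.
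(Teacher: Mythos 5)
Your overall strategy is sound and genuinely differs from the paper's in two respects, both of which are worth noting. First, the paper does not compare $H_m(w_j)$ directly with the limit $H_m(w)$; instead it does a ``Cauchy'' comparison between $H_m(w_j)$ and $H_m(w_k)$ for $j>k$, fixes $k=k_0$ only at the end, and then applies Corollary~\ref{cor:L1-convergence} with $\la=H_m(w_{k_0})$. Second, and more substantially, the paper does not work with $|u-u_j|$ directly: it splits $|u-u_j|=(\phi_j-u)+(\phi_j-u_j)$ with $\phi_j=\max\{u,u_j\}$. The term $\int(\phi_j-u)H_m(w_j)$ is dispatched immediately because $\phi_j\to u$ \emph{in capacity} (Hartogs plus quasi-continuity), and the uniform mass bound $\int H_m(w_j)\leq C_2$ then does the rest; only the remaining nonnegative piece $h_j=\phi_j-u_j$ is fed into the integration by parts. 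This is cleaner than differentiating $|u-u_j|=2\max\{u,u_j\}-u_j-u$ directly, although as you observe the latter is also a real combination of bounded $m$-$\omega$-sh functions and the order-zero machinery of Section~\ref{ss:sobolev} still applies. Note that the ``alternative route'' you sketch in your last paragraph (decreasing tail envelopes $\phi_N=(\sup_{k\geq N}u_k)^*$) is not what the paper does either; the paper's $\phi_j$ is the pointwise $\max$ with $u$ for the single index $j$.

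One claim in your write-up is imprecise and should be repaired before the argument compiles: ``one reduces the difference to a finite sum of integrals of the form $\int_\Om(w_j-w)\,dR_j$ where $R_j$ is a current of order zero whose coefficient measures have total variation dominated by $C\,cap_m$.'' The zero-order term $\int(w_j-w)\,dd^c|u-u_j|\wedge T_j\wedge\om^{n-m}$ indeed has this shape, and can be bounded by $\int|w_j-w|\,dd^c(2\phi_j+u_j+u)\wedge T_j\wedge\om^{n-m}$, which after splitting on $\{|w_j-w|>\eta\}$ and normalizing the potentials is $\leq C\eta + C'\,cap_m(\{|w_j-w|>\eta\})$. But the torsion terms $d|u-u_j|\wedge d^c\om\wedge(\cdots)$ do \emph{not} reduce to a single integral $\int(w_j-w)\,dR_j$: after Corollary~\ref{cor:CS} one gets a \emph{product} of two integrals, one containing $d|u-u_j|\wedge d^c|u-u_j|$ and the other $[\,dd^c(w_j+w)]^{m-1}\wedge\om^{n-m+1}$, each carrying a factor $|w_j-w|$. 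The correct statement is that one factor is uniformly bounded (via $d|u-u_j|\wedge d^c|u-u_j|\leq C(d\phi_j\wedge d^c\phi_j+du_j\wedge d^cu_j+du\wedge d^cu)$, each term re-expressed as $\tfrac12dd^c\rho^2-\rho\,dd^c\rho$ and then CLN) while the other is small by the same $\eta$-and-capacity split; the product is therefore small. Your intended conclusion is right, but the ``variation $\leq C\,cap_m$'' phrasing flattens a product into a sum, and if taken literally it is false ($cap_m$ is not a measure and $R_j$ is not a single current). Finally, the justification ``boundary contributions vanish because $u_j-u\to0$ uniformly near $\partial\Om$'' is not quite the right invocation: what is used is that every element of $\wt\cE_0(\Om)$ is defined on the neighborhood $\wt\Om$, is continuous up to $\partial\Om$ with value $0$ there, and its smooth decreasing approximants therefore converge uniformly to $0$ on $\partial\Om$ by Dini; it is this, rather than any uniformity in $j$ of $u_j-u$, that kills the boundary integrals after passing to the limit (cf.\ the justification of \eqref{eq:compare-a} in the paper).
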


\begin{remark} Since all functions are uniformly bounded on the fixed neighborhood $\wt\Om$ of $\ov\Om$, with $\|u_j\|_{L^\infty}, \|w_j\|_{L^\infty} \leq A$, 
by Proposition~\ref{prop:CLN-bdd} there exist two positive constants $C_1,C_2$ depending only on sup-norm of $u_j$'s and $w_j$'s (and the domains $\Om,\wt\Om$), such that $$\sup_j \int_\Om H_m(u_j) \leq C_1, \quad \sup_j \int_\Om H_m(w_j) \leq C_2.$$
\end{remark}

\begin{proof} Note that $|u-u_j| = (\max\{u, u_j\} - u_j) + (\max\{u, u_j\} - u)$. Observe first that by the Hartogs lemma and quasi-continuity of $u$ (\cite[Lemma~9.14]{GN18} and Theorem~\ref{thm:quasi-continuity}) $\phi_j := \max\{u, u_j\} \to u$ in capacity. Fix $\veps>0$. We have for $j$ large,
$$\begin{aligned}
	\int _{\Om} (\max\{u, u_j\} -u) H_m(w_j) 
&\leq \int_{\{|\phi_j -u|>\veps\}} H_m(w_j) + \veps \int_{\Om} H_m(w_j) \\
&\leq A^m\; cap_m (|\phi_j -u|>\veps) + C_2 \veps.
\end{aligned}$$
Therefore, $\lim_{j\to \infty}  \int (\phi_j-u) H_m(w_j) =0$. Here and in what follows we drop the domain $\Om$ in the integrals if no confusion arises.

Next, we consider for $j>k$,
$$
\int (\phi_j -u_j) H_m(w_j) - \int (\phi_j - u_j) H_m(w_k) = \int (\phi_j - u_j) dd^c (w_j - w_k) \wed T \wed \om^{n-m} ,
$$
where $T= T(j,k) = \sum_{s=1}^{n-1} (dd^c w_j)^s \wed (dd^c w_k)^{m-1-s}$. Let us write $h_j = \phi_j -u_j$. Now the proof gets more complicated than the one in \cite[Lemma~2.3]{KN22} as the integration by parts produces more terms involving the torsion of $\om$.

By the  integration by parts 
\[\label{eq:compare-a}
\begin{aligned}
&	\int h_j dd^c (w_j - w_k) \wed T \wed \om^{n-m} = \int (w_j - w_k) dd^c (h_j\om^{n-m}) \wed T. 
\end{aligned}
\]
This integration by parts formula is justified by an approximation argument as follows. All functions are continuous on the boundary $\d\Om$ with zero value there, so the approximating sequences of smooth functions  converge to zero uniformly on $\d\Om$. Moreover, the functions are defined and uniformly bounded on a neighborhood $\wt\Om$ so the total masses of wedge products are uniformly bounded by the CLN inequality. Hence, the boundary terms vanish after passing to the limit (see Remark~\ref{rmk:continuous-case} and also \cite[Proposition~3.7]{GZ-book}). 

By a direct calculation, 
\[\label{eq:4-terms}
\begin{aligned}
	dd^c (h_j \om^{n-m}) 
&= dd^c h_j \wed \om^{n-m} + h_j dd^c \om^{n-m} \\
&\quad + (n-m ) [d h_j \wed d^c \om + d\om \wed d^c h_j]\wed \om^{n-m-1}.
\end{aligned}\]
For the first term we obtain a bound
\[\label{eq:first-term}
	\int (w_j-w_k)dd^c h_j \wed \om^{n-m} \wed T \leq  \int |w_j-w_k|dd^c (\phi_j + u_j) \wed T \wed \om^{n-m},
\]
and using inequality \cite[Lemma~2.3]{KN3} for the  second term
\[\label{eq:second-term}
\begin{aligned}
&	\int (w_j-w_k) h_j dd^c\om^{n-m} \wed T \\ &\leq C\int |w_j-w_k|h_j  [dd^c (w_j +w_k)]^{m-1} \wed \om^{n-m+1}.
\end{aligned}\]
Next, since two terms in the bracket of \eqref{eq:4-terms} are mutually conjugate, we only  estimate the first one.  To this end we will use Cauchy-Schwarz' inequality (Corollary~\ref{cor:CS}):
$$\begin{aligned}
&\left|\int (w_j-w_k)d h_j \wed d^c \om \wed \om^{n-m-1} \wed T \right|^2\\ 
&\leq C\int |w_j-w_k| dh_j \wed d^c h_j \wed [dd^c (w_j+w_k)]^{m-1} \wed \om^{n-m} \\
&\qquad\times \int |w_j-w_k| [dd^c (w_j+w_k)]^{m-1} \wed \om^{n-m+1}.
\end{aligned}$$
Let us consider the second factor in the product. Since $\|w_j \|_{L^\infty}, \|u_j\|_{L^\infty} \leq A$ in $\Om$, it follows that 
\[\label{eq:compare-jk}
\begin{aligned}
&	 \int_{\Om} |w_j - w_k| [dd^c (\phi_j + u_j)]^{m-1} \wed \om^{n-m+1} \\
&\leq  A \int_{\{|w_j -w_k| >  \veps\}}  [dd^c (\phi_j + u_j)]^{m-1} \wed \om^{n-m+1} \\ &\quad+ \veps\int_{\{|w_j -w_k| \leq \veps\}}  [dd^c (\phi_j + u_j)]^{m-1} \wed \om^{n-m+1}  \\
&\leq  (2A)^{m} cap_m( |w_j -w_k| >  \veps)  + 	C \veps,
\end{aligned}\]
where the uniform bound for the integral in the third line follows from the CLN inequality as all functions are uniformly bounded in $\wt\Om$. It means that  the left hand side of the inequality \eqref{eq:compare-jk} is less than $2C \veps$ for some $k_0$ and every $j>k\geq k_0$.

As for the first factor in the product we observe that 
$$ 
	dh_j \wed d^c h_j \leq 2 du_j \wed d^c u_j + 2 d\phi_j \wed d^c \phi_j,
$$
and $2 d\phi_j \wed d^c \phi_j = dd^c \phi_j^2 - 2 \phi_j dd^c \phi_j$ and similarly for $u_j$. Therefore, we can apply the  estimate as in \eqref{eq:compare-jk} for this integral. The same for the integrals on the right hand sides of \eqref{eq:first-term} and \eqref{eq:second-term}.

 Thus, 
$$\begin{aligned}
\int (\phi_j -u_j) H_m(w_j) 
&\leq  \int (\phi_j -u_j) H_m(w_k) \\ 
&\quad +	\left|\int (\phi_j -u_j) H_m(w_j) - \int (\phi_j - u_j) H_m(w_k) \right| \\
&\leq  \int (\phi_j -u_j) H_m(w_k)  + 8C \veps \\
&\leq \int |u-u_j| H_m(w_{k}) + 8C \veps.
\end{aligned}$$
Fix $k=k_0$ and apply Corollary~\ref{cor:L1-convergence} for $d\la = H_m(w_{k_0})$ to get that for $j \geq k_1 \geq k_0$
$$
	 \int (\phi_j -u_j) H_m(w_j)  \leq (8C + 1) \veps.
$$
Since $\veps>0$ was arbitrary, the proof of the lemma is completed.
\end{proof}

 Let $\mu$ be a positive Radon measure on a bounded strictly $m$-pseudoconvex domain $\Om$. Assume that there exists a bounded $m-\om$-sh function $\ul u$ in $\Om$ such that 
\[\label{eq:bounded-subsol}
	H_m(\ul u) \geq \mu, \quad \lim_{x\to z \in \d \Om} \ul u(x) = 0.
\]
This function $\ul u$ is called a subsolution for $d\mu$. Our goal is to prove the following.

\begin{thm}\label{thm:bounded-subsolution}
Let $\vphi\in C^0(\d\Om)$ and let $\mu$ be a positive Radon measure in $\Om$. Assume that $\mu$ admits a bounded subsolution $\ul u$ as in \eqref{eq:bounded-subsol}. Then, there exists a unique bounded $m-\om$-sh function $u$ solving  $\lim_{z\to x} u(z) = \vphi(x)$  for  $x\in\d\Om$,
$$
	H_m(u) = \mu \quad\text{in } \Om.
$$
\end{thm}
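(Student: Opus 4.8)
The plan is to follow the classical variational/approximation scheme of Ko\l odziej, adapted to the Hermitian $m$-Hessian setting along the lines of \cite{KN22}, using the convergence lemmas just established. First I would reduce to the case $\vphi\equiv 0$: since $\Om$ is strictly $m$-pseudoconvex, one solves the homogeneous problem $H_m(h)=0$ with $h=\vphi$ on $\d\Om$ (continuous solution from \cite[Theorem~3.15]{GN18}), and replacing $\ul u$ by $\ul u+h$ one reduces to boundary value zero; it suffices then to produce $u$ with $H_m(u)=\mu$ and $\lim_{z\to\d\Om}u=0$ (uniqueness is immediate from the comparison principle, Corollary~\ref{cor:DP}). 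Next I would regularize the measure: write $\mu=f\,dV$ plus a singular part is not available in general, so instead approximate $\mu$ weakly by measures $\mu_j=f_j\om^n$ with $f_j\in C^\infty(\ov\Om)$, $f_j\geq 0$, together with the key constraint that each $\mu_j$ still admits the \emph{same} subsolution, i.e. $H_m(\ul u)\geq \mu_j$; concretely one takes $\mu_j=\min\{\mu * \chi_{1/j},\,H_m(\ul u)\}$ suitably mollified, using that $H_m(\ul u)$ is a fixed finite Radon measure on the neighbourhood $\wt\Om$ so that $\int_\Om\mu_j\leq\int_\Om H_m(\ul u)<\infty$ by the CLN inequality. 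By Theorem~\ref{thm:Dirichlet-Lp} each $\mu_j=f_j\om^n$ has a continuous solution $u_j\in C^0(\ov\Om)$ with $H_m(u_j)=f_j\om^n$ and $u_j=0$ on $\d\Om$, and by the comparison principle $\ul u\leq u_j\leq 0$, so $\{u_j\}\subset\wt\cE_0(\Om)$ and the family is uniformly bounded in $\wt\Om$.

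The heart of the argument is passing to the limit. Using a standard Dirichlet-type monotonicity one can arrange the $\mu_j$ to be increasing (replace $\mu_j$ by $\max$ of finitely many), which by the comparison principle makes $\{u_j\}$ decreasing; then $u:=\lim_j u_j$ is a bounded $m-\om$-sh function with $\ul u\leq u\leq 0$ and $\lim_{z\to\d\Om}u=0$, and $u_j\downarrow u$ pointwise. By Corollary~\ref{cor:monotonicity} the convergence is also in $cap_m(\bullet,\Om)$. Now I claim $H_m(u_j)\to H_m(u)$ weakly. This is not merely Lemma~\ref{lem:bounded-decreasing}(a) applied blindly, because we also want to identify the limit measure as $\mu$; the strategy is: by Lemma~\ref{lem:bounded-decreasing}(a) applied to $u_j\downarrow u$ we already get $H_m(u_j)\to H_m(u)$ weakly as currents, and since $H_m(u_j)=\mu_j\to\mu$ weakly by construction, we conclude $H_m(u)=\mu$. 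If instead one cannot force monotonicity of the $\mu_j$ while keeping the common subsolution, the fallback is the \cite{KN22} route: from weak compactness extract $u_j\to u$ in $L^1_{loc}$ (hence $dV$-a.e.\ along a subsequence, after a Hartogs-lemma upper-regularization), show $u\in\wt\cE_0(\Om)$, and use Lemma~\ref{lem:energy-convergence} together with Corollary~\ref{cor:L1-convergence} to prove $\int_\Om|u-u_j|\,H_m(u_j)\to 0$ and, more importantly, that for any smooth $m-\om$-sh test comparison one has $H_m(u_j)\to H_m(u)$ weakly; here the hypothesis that $\mu$ vanishes on $m$-polar sets enters through the requirement that $\la$ in Lemma~\ref{lem:L1-convergence} charge no $m$-polar set — and this is exactly where Theorem~\ref{thm:NP} (negligible $=$ polar) is indispensable, since $H_m$ of a bounded $m-\om$-sh function vanishes on $m$-polar sets.

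The main obstacle I expect is precisely this last point: guaranteeing that the approximating measures $\mu_j$ can be chosen smooth, weakly convergent to $\mu$, \emph{and} all dominated by a single fixed $H_m(\ul u)$, so that the solutions $u_j$ stay trapped between $\ul u$ and $0$ and their Hessian masses are uniformly bounded. The domination $\mu_j\leq H_m(\ul u)$ is what makes $\{u_j\}\subset\wt\cE_0(\Om)$ with uniform sup-bounds, which is the standing hypothesis of Lemmas~\ref{lem:L1-convergence}, \ref{lem:energy-convergence} and Corollary~\ref{cor:L1-convergence}; losing it would break the whole convergence machinery. A secondary technical nuisance is that $\mu$ need not a priori vanish on $m$-polar sets — but this is forced: wherever $H_m(\ul u)\geq\mu$ and $\ul u$ is bounded $m-\om$-sh, $H_m(\ul u)$ puts no mass on $m$-polar sets (this follows from quasi-continuity, Theorem~\ref{thm:quasi-continuity}, together with the characterization in Proposition~\ref{prop:polar-set-characterization} and the comparison/domination principle), hence neither does $\mu$; so Theorem~\ref{thm:NP} is applicable. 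Once weak convergence $H_m(u_j)\to H_m(u)=\mu$ is in hand, $u$ solves the equation, the boundary values follow from the squeeze $\ul u\leq u\leq 0$, and uniqueness is Corollary~\ref{cor:DP}, completing the proof.
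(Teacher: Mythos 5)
Your proposal correctly identifies the architecture (approximation, capacity convergence, Lemma~\ref{lem:L1-convergence}/Lemma~\ref{lem:energy-convergence}, and the essential role of Theorem~\ref{thm:NP}) and correctly observes that the subsolution forces $\mu$ to vanish on $m$-polar sets. But your primary route has a genuine gap, and your fallback—which is in fact the paper's actual proof—is left at the level of a sketch with the hard reductions missing.

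The main gap is in the monotone-approximation scheme. You want to produce smooth densities $\mu_j=f_j\om^n$ with $\mu_j\uparrow\mu$ and $\mu_j\leq H_m(\ul u)$, so that the $u_j$ decrease and Lemma~\ref{lem:bounded-decreasing}(a) immediately identifies the limit measure. This cannot work in general: each $\mu_j$ is absolutely continuous with respect to Lebesgue measure, and an increasing limit of absolutely continuous measures is still absolutely continuous, so a singular $\mu$ (perfectly allowed here, since $H_m(\ul u)$ can be singular) is simply unreachable from below by smooth densities. The construction $\min\{\mu * \chi_{1/j},\,H_m(\ul u)\}$ is also ill-posed as written, because $H_m(\ul u)$ is a measure, not a function, so the pointwise min is not defined; and mollifying does not preserve the domination $\mu_j\leq H_m(\ul u)$, which is exactly what you need to trap $u_j$ between $\ul u$ and $0$. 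Also, the preliminary reduction to $\vphi\equiv 0$ by subtracting the maximal solution $h$ does not go through because $H_m$ is nonlinear: $H_m(w+h)$ is not $H_m(w)$.

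Your fallback is what the paper actually does, but stated loosely. The paper's proof first carries out a chain of reductions (it suffices to assume $\vphi$ smooth, $\mu$ compactly supported, $\nu=H_m(\ul u)$ compactly supported, and crucially $\ul u$ extended to a bounded $m$-$\om$-sh function on a fixed neighbourhood $\wt\Om$ of $\ov\Om$, see the extension formula \eqref{eq:extension}); these reductions are what make the class $\wt\cE_0(\Om)$ from \eqref{eq:Cegrell-class} and the CLN bounds in $\wt\Om$ available. Then, rather than mollifying $\mu$, the paper writes $\mu=g\,\nu$ via Radon--Nikodym with $0\leq g\leq 1$, approximates the subsolution by smooth $v_j\downarrow\ul u$ on $\wt\Om$, writes $H_m(v_j)=f_j\,dV$, solves $H_m(\cw v_j)=\chi f_j\,dV$ with zero boundary data to get a uniformly bounded sequence $\cw v_j\to\ul u$ in capacity, and then solves $H_m(u_j)=g\chi f_j\,dV$ with boundary $\vphi$. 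The resulting $u_j$ is squeezed between $\psi+\cw v_j$ and the maximal function $h$, hence extendable to $\wt\Om$ and with uniformly bounded total Hessian mass. The identification $H_m(u)=\mu$ is then nontrivial and requires all three energy-convergence statements of the paper's lemma (parts (a),(b),(c) involving $w_s=\max\{u_{j_s},u-1/s\}$) and the \cite{KN22}-type argument, not merely Lemma~\ref{lem:bounded-decreasing} applied to a monotone sequence. So while you have identified the right toolbox, the actual approximation scheme for the data needs to act on the subsolution, not on $\mu$ directly, and the chain of reductions you omit is not optional.
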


We first make some reduction steps. In fact it is enough to prove the statement
 under the following additional assumptions on the measure $d\mu$, the boundary data $\vphi$ and the subsolution $\ul u$.

\begin{lem} We may assume additionally that
\begin{itemize}
\item
[(a)] $\vphi$ is smooth on $\d\Om$;
\item
[(b)] $\mu$ has compact support in $\Om$;
\item
[(c)]  the support of $\nu = (dd^c \ul u)^m\wed \om^{n-m}$ is compact in $\Om$;
\item
[(d)] $\ul u$ can be extended as a $m-\om$-sh function to a neighborhood $\wt\Om$ of $\ov\Om$.
\end{itemize}
\end{lem}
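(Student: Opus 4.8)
\emph{Reduction (a) (smooth boundary data).} The plan is to carry out the four normalizations in the order (a), (b), then (c) and (d) jointly, each step preserving what was gained before. For (a), pick smooth $\vphi_k\downarrow\vphi$ uniformly on $\d\Om$ and grant the theorem for smooth boundary data. Since the boundary value of $\ul u$ is $0$ regardless of $\vphi_k$, the datum $(\mu,\vphi_k)$ still admits the subsolution $\ul u$, so we obtain $u_k$ with $H_m(u_k)=\mu$ and $u_k=\vphi_k$ on $\d\Om$. Let $h\in C^0(\ov\Om)$ solve $H_m(h)=0$, $h=\vphi$ on $\d\Om$ (available by Theorem~\ref{thm:Dirichlet-Lp} with $f\equiv0$); then $H_m(\ul u+h)\ge H_m(\ul u)\ge\mu$, because every mixed product $(dd^c\ul u)^j\wed(dd^c h)^{m-j}\wed\om^{n-m}$ is positive (Lemma~\ref{lem:w-p-functions}, Theorem~\ref{thm:w-m-functions}). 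By the comparison principle (Corollary~\ref{cor:DP}) the sequence $u_k$ is decreasing and $u_k\ge\ul u+h$, so $u:=\lim_k u_k$ is a bounded $m-\om$-sh function; Lemma~\ref{lem:bounded-decreasing} gives $H_m(u)=\lim_k H_m(u_k)=\mu$, and the squeeze $\ul u+h\le u\le u_k$ with $h=\vphi$, $u_k=\vphi_k$ on $\d\Om$ forces $\lim_{z\to x}u(z)=\vphi(x)$. Thus it suffices to treat smooth $\vphi$.

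\emph{Reduction (b) (compactly supported $\mu$).} Take an exhaustion $\Om_j\Subset\Om_{j+1}$ with $\bigcup_j\Om_j=\Om$ and set $\mu_j=\mathbf 1_{\Om_j}\mu$, so that $\ul u$ is a subsolution for each $\mu_j$ (as $\mu_j\le\mu\le H_m(\ul u)$). Granting the theorem for compactly supported right-hand sides, we get $u_j$ with $H_m(u_j)=\mu_j$, $u_j=\vphi$ on $\d\Om$; by Corollary~\ref{cor:DP} the sequence is decreasing and $\ul u+h\le u_j\le h$ (lower bound since $H_m(\ul u+h)\ge\mu\ge\mu_j$, upper bound since $H_m(h)=0\le\mu_j$), hence uniformly bounded. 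Exactly as in (a), Lemma~\ref{lem:bounded-decreasing} together with $\mu_j\uparrow\mu$ shows that $u:=\lim_j u_j$ solves $H_m(u)=\mu$ with the correct boundary values. So we may assume $\supp\mu\Subset\Om$.

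\emph{Reductions (c) and (d) (normalizing the subsolution).} Fix an open $U$ with $\supp\mu\subset U\Subset\Om$, and let $\ul u'$ be the upper semicontinuous regularization of the supremum of all $m-\om$-sh functions $v$ in $\Om$ with $v\le0$ and $v\le\ul u$ on $\ov U$; this $\ul u'$ is again $m-\om$-sh and bounded. Since $\ul u$ is itself such a $v$, $\ul u'\ge\ul u$; since every competitor is $\le\ul u$ on $\ov U$ and $\ul u$ is upper semicontinuous, $\ul u'\le\ul u$, hence $\ul u'=\ul u$, on $\ov U$. The balayage argument used in Lemma~\ref{lem:properties-ext-fct}(vi) gives $H_m(\ul u')\equiv0$ on $\Om\setminus\ov U$, so $\supp H_m(\ul u')\subset\ov U\Subset\Om$ (this is (c)); moreover $H_m(\ul u')=H_m(\ul u)\ge\mu$ on $U\supset\supp\mu$, so $\ul u'$ is still a subsolution, and $\ul u\le\ul u'\le0$ gives $\ul u'\to0$ along $\d\Om$. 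For (d), let $\rho\in C^2$ be a strictly $m-\om$-sh defining function for $\Om$, extended to a neighborhood so that $\wt\Om:=\{\rho<\de\}$ is again strictly $m$-pseudoconvex for small $\de>0$, and write $-\ul u\le M$ on $\Om$ and $-\rho\ge c_0$ on $\ov U$. Then $A\rho$ with $A:=M/c_0$ is one of the competitors above, so $\ul u'\ge A\rho$ on $\Om$; therefore, by the gluing lemma, the function equal to $\ul u'$ on $\Om$ and to $A\rho$ on $\wt\Om\setminus\Om$ is $m-\om$-sh on $\wt\Om$ (the gluing hypothesis reads $\limsup_{z\to x,\,z\in\Om}\ul u'(z)=0\le A\rho(x)=0$ for $x\in\d\Om$, and the resulting maximum equals $\ul u'$ throughout $\Om$ because $A\rho\le\ul u'$ there). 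This extension is bounded, coincides with $\ul u'$ on $\Om$, and is continuous with value $0$ along $\d\Om$, so it lies in $\wt\cE_0(\Om)$; taking it as the new subsolution realizes (a)--(d) simultaneously.

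\emph{Where the difficulty lies.} Everything above should be routine once the machinery of the previous sections is available. The one point needing genuine care is the pair (c)--(d): the modified subsolution must be flattened near $\d\Om$ — both to compactify $\supp H_m$ and to permit a $m-\om$-sh extension to a neighborhood of $\ov\Om$ — yet it must be left untouched on a full neighborhood of $\supp\mu$, since otherwise the inequality $H_m(\ul u')\ge\mu$ could be destroyed. The relative extremal construction, with the rescaled defining function $A\rho$ playing the double role of admissible competitor inside $\Om$ and of exterior gluing piece, is precisely what reconciles these competing requirements.
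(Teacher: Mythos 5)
Your proof is correct and follows essentially the same approach as the paper: reduce first to smooth $\vphi$ and then to compactly supported $\mu$ by monotone approximation together with the comparison principle and the convergence of Hessian measures under decreasing limits, and then replace the subsolution by a balayage envelope whose Hessian measure is concentrated near $\supp\mu$, extending it across $\d\Om$ by gluing with a scaled defining function $A\rho$. The only (minor) variant is in the extension step: you obtain the global lower bound $\ul u'\ge A\rho$ on $\Om$ directly from the observation that $A\rho$ is itself a competitor in the defining envelope, whereas the paper establishes the same bound by a separate application of the domination principle on $\Om\setminus\ov U$; both are valid, and yours is a small simplification rather than a different route.
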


\begin{proof}
{\em Step 1:} {\bf (c)$\Rightarrow$(d)}. Denote $K:= \supp \nu$. Let $\rho$ be a strictly $m-\om$-sh defining function for $\Om$. In particular, $\rho$ is defined in a neighborhood $\wt\Om$ of $\ov\Om$. For $A>0$ (to be chosen), we define
\[\label{eq:extension}
	v = \begin{cases}
\max\{\ul u, A\rho\} \quad\text{in } \Om, \\
A\rho \quad \text{ on } \wt\Om \setminus\Om.
	\end{cases}
\]
Notice that $\lim_{x\to \d\Om} (\ul u -A\rho) =0$, so the function $v$ is well-defined. We claim that for a sufficiently large $A$ we have $\ul u\geq A\rho$ on $\Om$, and then $v$ is a required extension. 
Indeed, 
let $U \subset \subset \Om$ be a neighborhood of $K$. Since $\sup_{\ov U} \rho \leq -\de$ for some $\de>0$ and $\ul u$ is bounded, we can choose $A>0$ large enough so that $v \geq A\rho$ on $\ov U$. Furthermore, $(dd^cv)^m \wed \om^{n-m} \equiv 0$ on $\Om \setminus \ov U$ and $v \geq A\rho$ on the boundary $\d (\Om \setminus \ov U)$. The domination principle implies that $v\geq A\rho$ on $\Om\setminus \ov U$.

\begin{remark} \label{rmk:ext-constant}The constant $A>0$ chosen in this argument depends only on the defining function $\rho$, the support of $\nu$ and the sup-norm of $\ul u$.
\end{remark}

\bigskip
{\em Step 2:} {\bf (b)$\Rightarrow$(c)}.
This follows from the classical balayage argument. However, several ingredients are only available recently. We give a detailed argument.
Assume that $\supp \mu \subset U$ which is an open subset  relatively compact in $\Om$. We define an envelope
$$
	v = \sup \left\{ w: w \text{ is } m-\om \text{-sh in }\Om,\, w \leq \ul u \text{ on }U, \, w\leq 0 \right\}.
$$
It follows from \cite[Proposition~2.6]{GN18} that the upper semicontinuous regularization $v^*\geq v$ is also $m-\om$-sh function in $\Om$. Hence, $v^*=v$ belongs to the family in the definition of the envelope. So $\ul u \leq v$. Thus, $\ul u = v$ on $U$  containing $\supp \mu$. Therefore,
$$
	H_m(v) \geq \mu \quad \text{ in } \Om.
$$
Now we verify that $$H_m(v)\equiv 0\quad \text{on }  \Om \setminus \ov U.$$ 
Let $B(a,r) \subset \subset \Om\setminus \ov U$ be a small ball. By using the solution to the homogeneous Hessian equation (Theorem~\ref{thm:Dirichlet-Lp}) one can find a continuous $m-\om$-sh function $h\geq v$ in $\Om$ which is maximal in $B(a,r)$ in the sense that $H_m(h) \equiv 0$ in $B(a,r)$ and $h = v$ on $\Om \setminus B(a,r)$. 

Observe also $U \subset \Om \setminus B(a,r)$ and the function $h$ is  a candidate in the envelope. So, $h  \leq  v$ in $\Om$. Hence, $h= v$ everywhere. We have  $H_m(v)  \equiv 0$ on $B(a,r)$. The ball is arbitrary so we get the desired property for $v$.

\bigskip

{\em Step 3:} {\bf (b).} If the problem is solvable for measures with compact support, then  it is solvable for a general measure. In fact, 
let $\eta_j \uparrow 1$ be a sequence of cut-off functions. Then, $\eta_j \mu$ admits $\ul u$ as a bounded subsolution. Solve the equation $H_m(w) = \eta_j\mu$ to obtain a bounded $m-\om$-sh function $u_j$ with $u_j=\vphi$ on $\d \Om$. Denote by $h \in C^0(\ov \Om)$ a unique $m-\om$-sh solution to $H_m(h) \equiv 0 \text{ in }\Om$ with $h=\vphi$ on $\d\Om$. Then, 
 $H_m(\ul u+h)\geq H_m(u_j)$.  The domination principle gives 
$$
	\ul u + h \leq u_j \leq h,
$$
and the sequence $u_j$ is decreasing. Define $u = \lim_j u_j$. Then, it is a solution to $H_m(u) =\mu$ with the boundary data $\vphi$.

\bigskip
{\em Step 4:} {\bf (a).} If we can solve the Dirichlet problem for smooth boundary data, then it is solvable for the continuous one. Indeed, let $\vphi_j$ be a sequence of smooth functions decreasing to $\vphi$. Find a bounded $m-\om$-sh function $u_j$ in $\Om$ such that
$$
	H_m(u_j) = \mu, \quad \lim_{z\to x} u_j(z) = \vphi_j (x)\text{ for every }x\in \d\Om.
$$ 
Let $h_j$ be the solution the homogeneous equation $H_m(h_j) \equiv 0$ with the boundary data $\vphi_j$.  By the domination principle, $u_j\geq h_j+\ul u$ and $u_j$ is a decreasing sequence.  
Since $\vphi_j$ is uniformly bounded, so is $h_j$. Therefore, the function $u=\lim_j u_j$ is a required solution to the equation.
\end{proof}

We proceed to prove the theorem under the assumptions (a)-(d).

\begin{proof}
Recall that we assumed that subsolution $\ul u$ is defined in a neighborhood $\wt\Om$  of $\ov\Om$. Hence, $\ul u \in \wt\cE_0(\Om)$ in the sense of \eqref{eq:Cegrell-class}. By Proposition~\ref{prop:smoothing} we can find a decreasing sequence of smooth $m-\om$-sh functions $v_j$ defined in $\wt\Om$ and such that $v_j \downarrow \ul u$ point-wise. Since $\ul u$ is bounded, $\{v_j\}$ is uniformly bounded.
Next, let us write
$$
	H_m(v_j) = f_j dV \quad\text{in } \wt\Om,
$$
where $dV$ is the Euclidean volume form on $\bC^n$. Observe that $v_j$ is no longer zero
 on the boundary of $\Om$, however we can modify it by solving the Dirichlet problem to find $\bar v_j \in  C^0(\ov\Om),$  $m-\om$-sh satisfying
$$
	H_m(\bar v_j)= f_j dV \quad \text{in }\Om, \quad \bar v_j =0 \text{ on } \d \Om.
$$
Since $\ul u$ continuous on $\d\Om$, by Dini's theorem, $v_j$ converges uniformly to $\ul u$ on $\d\Om$. As a consequence of the  stability estimate for the right hand side in $L^p$, we have 
$$\|\bar v_j - v_j\|_{L^\infty(\Om)} \leq \sup_{\d \Om}|\bar v_j -v_j| = \sup_{\d\Om}|v_j|.$$ 
Hence, $\{\bar v_j\}$ is also uniformly bounded on $\ov\Om$. Also by Dini's theorem $v_j\to \ul u$ uniformly on compact sets where $\ul u$ is continuous. By the stability estimate above $\bar v_j \to \ul u$ uniformly on such compact sets. Combining  this with the quasi-continuity of $\ul u$, we get also that 
$$ 
	\bar v_j \to  \ul u  \quad \text{ in capacity as } j\to \infty.
$$
Thus  $\bar v_j$'s have zero boundary values, but in general those are only continuous functions. Note also that $H_m(\bar v_j)$ converges weakly to $\nu$ whose support is compact in $\Om$. Thus, 
$$
	\sup_j \int_\Om H_m(\bar v_j) \leq C.
$$
Moreover  all $\bar v_j$ can be extended so that they form a uniformly bounded sequence in  $\wt\cE_0(\Om)$ as follows. 
Set  $\cw v_j:= \max\{\bar v_j, A\rho\}$ where $A$ is the same as in (8.2).
We verify that 
\[ \label{eq:zero-boundary-sub}\cw v_j  \text{ converges in capacity to }  \ul u \text{ as } j\to \infty.
\]
In fact, we fix a $\veps>0$. Then,
$$
	\{z\in \Om:|\cw v_j - \ul u| >\veps\} = \{z\in \Om: \cw v_j - \ul u >\veps\} \cup \{z\in \Om:\cw v_j -\ul u <-\veps\}.
$$
Note that $A\rho \leq \ul u$ in $\Om$, which implies $\{\cw v_j - \ul u > \veps\} = \{\bar v_j -\ul u >\veps\}$. So, 
$$
	cap_m(\cw v_j -\ul u>\veps) \to 0 \quad \text{as } j\to \infty.
$$
On the other, since $\bar v_j \leq  \cw v_j$ we have
$$
	\{\cw v_j -\ul u <-\veps \} \subset 	\{\bar v_j -\ul u <-\veps \} .
$$
The capacity of the latter set tends to zero, so the same is true for the former.
This finishes the proof of the claim $\cw v_j \to \ul u$ in the capacity $cap_m$.

We are now ready to produce  a sequence of functions whose limit point will be the desired solution.
By the Radon-Nikodym theorem we can write $\mu = g \nu$ for a Borel measurable function $0\leq g\leq 1$. Assume first that $g$ is continuous  (we will relax the assumption on $g$ at the end of the proof) and solve for $u_j \in SH_m(\om) \cap C^0(\ov \Om)$, the equation
$$
	H_m(u_j) = g \chi f_j dV, \quad u_j = \vphi \text{ on } \d \Om,
$$
where $0\leq \chi \leq 1$ is the cut-off function in $\Om$ such that $\chi \equiv 1$ on a neighborhood of $\supp \nu$ and $\supp\chi \subset \subset \Om$.
Set
\[\label{eq:sol}u = (\limsup_{j \to \infty} u_j)^*.
\]
Let us show first that $\{u_j\}$ is uniformly bounded. Indeed, let $\psi$ be a smooth $m-\om$-sh solution to the equation $(dd^c \psi)^m \wed \om^{n-m} = 1$ in $\ov\Om$ with $\psi= \vphi$ on $\d \Om$. Thus, we may assume that $\psi$ is $m-\om$-sh on $\wt\Om$. At this point we used the smoothness of $\vphi$. Furthermore, let  $h \in C^0(\ov\Om)$ be a $m-\om$-sh  solution to $$
	H_m(h) =0 \quad\text{in } \Om, \quad h = \vphi \text{ on } \d\Om.
$$ It follows from the domination principle that
$$
	\psi+ \cw v_j \leq u_j \leq h.
$$
(Here we may increase $A$ so that $\cw v_j = \max\{\bar v_j, A\rho\} = \bar v_j$ in a neighborhood of $\supp \nu$ and $\supp\chi$.)
Note that the above lower and upper bounds of $u_j$ are continuous on the boundary $\d\Om$ and equal to $\vphi$ there. So  $u_j$ and $u$ have the same property. Thus passing to a subsequence we may assume that 
\[\label{eq:improve-subsequence}
	u_j \to u \text{ in } L^1(\Om), \quad u_j \to u \quad\text{a.e. in } dV.
\]

The next step is to prove that $H_m(u) = \mu$.  Observe that $\psi+ \cw v_j$ is defined in the neighborhood $\wt\Om$ of $\ov \Om$.
 This combined with the inequality $\psi+ \cw v_j \leq u_j$  allows  to extend $u_j$ to $\wt\Om$ by setting
$$
	\wt u_j = \begin{cases}
	\max\{u_j, \psi+\cw v_j\}  &\quad\text{on } \Om, \\
	\psi + \cw v_j &\quad\text{on } \wt\Om\setminus \Om.
	\end{cases}
$$
Using again the smoothness of $\vphi$, we can find  $\psi'$  a strictly $m$-sh function on a (possibly smaller) neighborhood $\wt\Om$ of $\ov\Om$ satisfying 
$$
 \psi'=-\vphi \text{ on } \d\Om.
$$
Then, we have clearly
\[\label{eq:zero-boundary-sol}
	\wh u_j := u_j +\psi' \in \wt\cE_0(\Om)
\]
for all $j\geq 1$. Consequently,
we get the  important uniform bound for the total mass of mixed Hessian operators. 

\begin{lem} Let $T_{j,k}= (dd^c \wh u_j)^s \wed (dd^c \cw v_k)^{\ell} \wed \om^{n-s-\ell}$, where $0\leq s +\ell \leq m$. There exists a uniform constant $C$ independent of $j,k$ such that 
$$
	\int_\Om T_{j,k} \leq C.
$$
\end{lem}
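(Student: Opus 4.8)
The plan is to deduce this bound directly from the Chern--Levine--Nirenberg inequality for bounded $m-\om$-sh functions (Proposition~\ref{prop:CLN-bdd}); essentially all the work lies in arranging the geometry so that $\ov\Om$ sits strictly inside the region on which every function involved is defined and uniformly bounded.

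First I would fix an open set $U$ with $\ov\Om \subset\subset U \subset\subset \wt\Om$, which is possible since $\wt\Om$ is a neighborhood of $\ov\Om$. By construction the extended functions $\wh u_j = u_j + \psi'$ and the $\cw v_k$ (extended to $\wt\Om$ by $A\rho$ as in \eqref{eq:extension}) are $m-\om$-sh on $\wt\Om$ and uniformly bounded there: the two-sided bounds $\psi + \cw v_k \leq u_k \leq h$, together with the uniform boundedness of $\psi$, $h$, the $\cw v_k$, $A\rho$ and $\psi'$, furnish a constant $A_0$, independent of $j$ and $k$, with $\|\wh u_j\|_{L^\infty(\wt\Om)} \leq A_0$ and $\|\cw v_k\|_{L^\infty(\wt\Om)} \leq A_0$.

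Next I would observe that $T_{j,k} = (dd^c \wh u_j)^s \wed (dd^c \cw v_k)^\ell \wed \om^{n-s-\ell}$ is a positive current: by Lemma~\ref{lem:w-p-functions} the wedge product of the $s+\ell \leq m$ bounded $m-\om$-sh functions ($s$ copies of $\wh u_j$ and $\ell$ copies of $\cw v_k$) against $\om^{n-m}$ is a positive current, and wedging further with the nonnegative power $\om^{m-s-\ell}$ of the Hermitian form preserves positivity; hence $\int_\Om T_{j,k} \leq \int_{\ov\Om} T_{j,k}$. If $s+\ell \geq 1$, Proposition~\ref{prop:CLN-bdd}(b), applied with compact set $K = \ov\Om$ inside $U \subset\subset \wt\Om$ (and $\wt\Om$ playing the role of the ambient domain, in which the functions are $m-\om$-sh), bounds $\int_{\ov\Om} T_{j,k}$ by
$$
	C\bigl(1 + s\|\wh u_j\|_{L^\infty(U)} + \ell\|\cw v_k\|_{L^\infty(U)}\bigr)^{s+\ell} \leq C(1 + mA_0)^m ,
$$
while for $s+\ell = 0$ one simply has $T_{j,k} = \om^n$ and $\int_\Om \om^n < \infty$. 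Taking the largest of these finitely many bounds over $0 \leq s+\ell \leq m$ gives the desired constant $C$, independent of $j$ and $k$.

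I do not expect a genuine obstacle here; the one point requiring care is that the preceding constructions were arranged precisely so that each function extends to the fixed neighborhood $\wt\Om$ of $\ov\Om$ and remains uniformly bounded there, which is exactly what makes the CLN estimate applicable with a constant that does not see the indices $j$ and $k$.
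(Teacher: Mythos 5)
Your proof is correct and follows essentially the same route as the paper's: identify that all the functions $\wh u_j$ and $\cw v_k$ extend to and are uniformly bounded (independently of $j,k$) on the fixed neighborhood $\wt\Om$ of $\ov\Om$, then invoke the CLN inequality for bounded $m-\om$-sh functions (Proposition~\ref{prop:CLN-bdd}) with $K=\ov\Om$ compactly contained in $\wt\Om$. The only cosmetic slip is the comparison $\psi + \cw v_k \le u_k \le h$, which should read $\psi + \cw v_j \le u_j \le h$; and for the positivity of $T_{j,k}$ in the borderline case $s+\ell=m$ one should cite Theorem~\ref{thm:w-m-functions} rather than Lemma~\ref{lem:w-p-functions}, but neither affects the argument.
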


\begin{proof} All functions belong to $\wt\cE_0(\Om)$  defined on the fixed neighborhood $\wt\Om$ of $\ov\Om$. Thanks to Remark~\ref{rmk:ext-constant} we know that  the extensions of $\cw v_i$ are uniformly bounded on $\wt\Om$. Hence, the extensions of $u_j$ above are uniformly bounded as well. The proof follows from the CLN inequality.
\end{proof}

The next result corresponds to \cite[Lemma~3.5]{KN22}.

\begin{lem} There exists a subsequence $\{u_{j_s}\}$ such that for 
$$
	w_s: = \max\{u_{j_s}, u-1/s\}
$$
the following claims hold
\begin{itemize}
\item
[(a)] $\lim_{s\to \infty} \int_\Om |u_{j_s} - u| H_m(u) =0$;
\item
[(b)] $\lim_{s\to \infty} \int_\Om |u_{j_s} - u| H_m(w_s) =0$;
\item
[(c)] $\lim_{s\to \infty} \int_\Om |u_{j_s} - u| H_m(u_{j_s})=0$.
\end{itemize}
\end{lem}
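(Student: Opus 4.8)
\emph{Plan of proof.} The idea is to assemble the three convergences from the already‑established Lemma~\ref{lem:L1-convergence}, Corollary~\ref{cor:L1-convergence} and Lemma~\ref{lem:energy-convergence}, passing to successively finer subsequences (each conclusion being stable under taking sub‑subsequences, the final common subsequence will work for all three). By \eqref{eq:improve-subsequence} we may already assume $u_j\to u$ in $L^1(\Om)$ and $dV$‑a.e.; by \eqref{eq:zero-boundary-sol} the shifted functions $\wh u_j=u_j+\psi'$ and $\wh u:=u+\psi'$ belong to $\wt\cE_0(\Om)$, and since $|u_j-u|=|\wh u_j-\wh u|$ the cited lemmas may be applied with $\wh u_j,\wh u$ in the role of their $u_j,u$. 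The plan is to prove the statements in the order (a), (c), (b).

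First, (a). The measure $\la:=H_m(u)$ is a finite positive Radon measure by the CLN inequality (Proposition~\ref{prop:CLN-bdd}); since $u$ is bounded, $H_m(u)$ is dominated (after rescaling $u$ to take values in $[-1,0]$) by $cap_m$, and $m$‑polar sets have zero outer capacity by Proposition~\ref{prop:polar-set-characterization}, so $\la$ vanishes on $m$‑polar sets. Corollary~\ref{cor:L1-convergence} applied to $\{\wh u_j\}$, $\wh u$ and $\la$ then produces a subsequence $\{u_{j_s}\}$ with $\int_\Om|u_{j_s}-u|\,H_m(u)\to 0$. Next, (c): writing $\mu=g\nu$ with $0\le g\le1$ we have the pointwise comparison of measures $H_m(u_j)=g\chi f_j\,dV\le \chi f_j\,dV=H_m(\cw v_j)$, and by \eqref{eq:zero-boundary-sub} the sequence $\{\cw v_j\}\subset\wt\cE_0(\Om)$ is uniformly bounded and converges in $cap_m(\bullet,\Om)$ to $\ul u\in\wt\cE_0(\Om)$. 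Applying Lemma~\ref{lem:energy-convergence} with $w_j:=\cw v_{j_s}$ (and $\wh u_{j_s},\wh u$) gives, along a further subsequence, $\int_\Om|u_{j_s}-u|\,H_m(\cw v_{j_s})\to0$, hence $\int_\Om|u_{j_s}-u|\,H_m(u_{j_s})\le\int_\Om|u_{j_s}-u|\,H_m(\cw v_{j_s})\to0$.

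Finally, (b). Here I would not use Lemma~\ref{lem:energy-convergence} directly (the functions $w_s$ do not have zero boundary values, and $H_m(w_s)$ is not evidently the Hessian measure of a capacity‑convergent sequence), but instead exploit the locality of the Hessian operator on $\{u_{j_s}<u-\tfrac1s\}$, i.e.\ the corollary following Lemma~\ref{lem:bounded-decreasing}, applied with $p=m$, $T=\om^{n-m}$, to the pairs $(u-\tfrac1s,u_{j_s})$ and $(u_{j_s},u-\tfrac1s)$: on $\{u_{j_s}<u-\tfrac1s\}$ one gets $H_m(w_s)=H_m(u-\tfrac1s)=H_m(u)$, and on $\{u_{j_s}>u-\tfrac1s\}$ one gets $H_m(w_s)=H_m(u_{j_s})$. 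Splitting $\int_\Om|u_{j_s}-u|\,H_m(w_s)$ over the three sets $\{u_{j_s}<u-\tfrac1s\}$, $\{u_{j_s}>u-\tfrac1s\}$, $\{u_{j_s}=u-\tfrac1s\}$ then bounds it by $\int_\Om|u_{j_s}-u|\,H_m(u)+\int_\Om|u_{j_s}-u|\,H_m(u_{j_s})+\tfrac1s\int_\Om H_m(w_s)$; the first two terms tend to $0$ by (a) and (c), while the last tends to $0$ once one knows $\sup_s\int_\Om H_m(w_s)<\infty$. That uniform mass bound follows from Proposition~\ref{prop:CLN-bdd} after extending each $w_s$ to the fixed neighborhood $\wt\Om$ as in \eqref{eq:extension}, using that $w_s$ is squeezed between $\psi+\cw v_{j_s}$ (resp.\ $u-1$) and $h$, all of which are uniformly bounded on $\wt\Om$. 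Passing to the common subsequence yields (a), (b), (c) simultaneously.

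I expect the main obstacle to be the two reductions that make (b) and (c) fall under the earlier machinery rather than any single long computation: for (c), recognizing and using the domination $H_m(u_j)\le H_m(\cw v_j)$ together with the capacity convergence $\cw v_j\to\ul u$ (so that the technically heavy Lemma~\ref{lem:energy-convergence}, with its torsion terms, applies), and for (b), the locality identities for $H_m(w_s)$ on the level sets of $u_{j_s}-u$ plus the extension‑to‑$\wt\Om$ bookkeeping needed to invoke the CLN mass bound on all of $\Om$. Once these are in place the argument is essentially the same as in \cite[Lemma~3.5]{KN22}.
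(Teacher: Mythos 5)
Your proposal is correct. For parts (a) and (c) it follows the paper's argument very closely: (a) via Corollary~\ref{cor:L1-convergence} applied to the shifted functions $\wh u_j=u_j+\psi'$ and $\la=H_m(u)$ (which vanishes on $m$-polar sets by Proposition~\ref{prop:polar-set-characterization}), and (c) via the pointwise domination $H_m(u_j)=g\chi f_j\,dV\le H_m(\cw v_j)$ together with the capacity convergence $\cw v_j\to\ul u$ and Lemma~\ref{lem:energy-convergence}.

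For (b), however, you take a genuinely different route from the paper. The paper simply applies Lemma~\ref{lem:energy-convergence} again to the shifted maxima: it observes that $\wh w_s=w_s+\psi'\in\wt\cE_0(\Om)$ (since $w_s\to\vphi$ on $\d\Om$, hence $\wh w_s\to 0$ there) and that $\wh w_s\to\wh u$ in capacity, then deduces $\int_\Om|u_{j_s}-u|H_m(\wh w_s)\to 0$ and hence $\int_\Om|u_{j_s}-u|H_m(w_s)\to 0$, using $H_m(w_s)\le H_m(\wh w_s)$. Your stated worry that ``$w_s$ does not have zero boundary values'' and that ``$H_m(w_s)$ is not evidently the Hessian measure of a capacity-convergent sequence'' is therefore unfounded: the same $\psi'$-shift used for $u_j$ and $u$ repairs the boundary value for $w_s$, and $\{|w_s-u|>\de\}$ is eventually contained in $\{\max\{u_{j_s},u\}-u>\de\}$, which has small capacity by the Hartogs argument already used inside the proof of Lemma~\ref{lem:energy-convergence}. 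Instead of this, you reduce (b) to (a) and (c) by means of the locality of the Hessian operator on the sets $\{u_{j_s}<u-1/s\}$ and $\{u_{j_s}>u-1/s\}$ (the corollary to Lemma~\ref{lem:bounded-decreasing}), plus the observation $|u_{j_s}-u|=1/s$ on the remaining level set, and close with the uniform mass bound $\sup_s\int_\Om H_m(w_s)<\infty$ coming from the CLN inequality after the $\psi'$-shift and extension. This is a valid argument, and it has the merit of not re-invoking the technically heavy Lemma~\ref{lem:energy-convergence} for the third time; on the other hand it requires the locality corollary and the separate mass bound, and forces you to prove (c) before (b). The one small point you should make explicit is that the extension to $\wt\Om$ in \eqref{eq:extension} should be applied to $\wh w_s=w_s+\psi'$ (which vanishes on $\d\Om$) rather than to $w_s$ itself (which has boundary value $\vphi$), and then to pass from a bound on $\int_\Om H_m(\wh w_s)$ to one on $\int_\Om H_m(w_s)$ one uses $H_m(w_s)\le H_m(\wh w_s)$, since the cross terms in the binomial expansion of $(dd^c w_s+dd^c\psi')^m\wedge\om^{n-m}$ are nonnegative.
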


\begin{proof} (a) Since $u$ is bounded in $\Om$ by Proposition~\ref{prop:polar-set-characterization} the measure $H_m(u)$ vanishes on $m$-polar sets. Denote by  $\wh u = u+\psi'$, then $\wh u$ is the limit (a.e-$dV$) of the sequence $\{\wh u_j\}_{j\geq 1} \subset \wt\cE_0(\Om)$ from \eqref{eq:zero-boundary-sol}. Thus, $\wh u \in \wt\cE_0(\Om)$. Notice that $\wh u_{j} - u = u_j -u$, so the assumptions of Corollary~\ref{cor:L1-convergence} are satisfied and it concludes the proof of (a). 

Let us prove (b). Clearly, $\wh w_s = w_s + \psi' \in \wt\cE_0(\Om)$. Since $w_s\to u$ in capacity as $s\to\infty$, 
the same convergence holds for  $\wh w_s \to \wh u$. It follows from Lemma~\ref{lem:energy-convergence} that
$$
	0 = \lim_{s\to \infty} \int_\Om |\wh w_s - \wh u| H_m(\wh w_s) \geq \lim_{s\to \infty}\int_\Om |u_{j_s} -u| H_m(w_s).
$$
The proof of (b) finished.

For the last item (c), we use the equation $$H_m(u_{j_s}) = g \chi f_{j_s}dV \leq \chi H_m(\bar v_{j_s}) \leq H_m(\cw v_{j_s}),$$
where the first inequality followed from the fact $0\leq g\leq 1$ and the last one is by $\bar v_j  =\cw v_j$ in neighborhood of $\supp\nu$ which can be taken to be the neighborhood of $\supp\chi$. Taking into account the convergence in capacity of $\cw v_{j_s}$ to $\ul u$, as $j_s\to \infty$, the proof of (c) follows again from Lemma~\ref{lem:energy-convergence}.
\end{proof}

We are in the position to conclude that $u$ from \eqref{eq:sol} is indeed the solution. 

The argument of \cite[Lemma~3.6]{KN22} is readily applicable to conclude that there exists a subsequence $\{u_{j_s}\}_{s\geq 1}$ of $\{u_j\}_{j\geq 1}$ such that 
$$
	H_m(u_{j_s}) \to H_m(u) \quad \text{weakly}.
$$
Hence, if $0\leq g \leq 1$ is a continuous function whose support is compact in $\Om$, then there exists a unique bounded $m-\om$-sh function with $u = \vphi$ on $\d\Om$ and $H_m(u) = g H_m(\ul u)$. The general case  of a  Borel function $0\leq g \leq 1$ follows from the argument in \cite[page 11]{KN22} at the end of the  proof of Theorem~3.1.
\end{proof}

\section{Hessian equations on Hermitian manifolds with boundary}
\label{sec:subsolution}
Let $(\ov M,\om)$ be a smooth compact Hermitian manifold of dimension $n$ with non-empty boundary $\d M$. Then, $\ov M = M \cup \d M$, where $M$ is a complex manifold of dimension $n$. Let $1\leq m\leq n$ be an integer and $\al \in \Ga_m(\om)$ be a real $(1,1)$-form.

Recently  Collins and Picard \cite{CP22} solved the Dirichlet problem in $M$ for the Hessian equation $(\al + dd^c u)^m\wed \om^{n-m} = f\om^n$, for smooth data, assuming the existence of a subsolution. 
The goal of this section is to extend  this result to the case of bounded functions.
The special case of the Monge-Amp\`ere equation was treated in \cite[Theorem~1.2]{KN22}.
The theorem below is also a significant improvement of \cite[Theorem~1.3]{GN18}.

Recall from \cite[Definition~2.4, Lemma~9.10]{GN18} that a function $u: M \to [-\infty,+\infty)$ is called $(\al,m)-\om$-subharmonic if it can be written locally as a sum of a smooth function and a $\om$-sh function, and globally for any collection $\ga_1,...,\ga_{m-1} \in \Ga_m(M,\om)$,
\[\label{eq:m-subharmonic-mfd}
	(\al + dd^c u) \wed \ga_1\wed\cdots\wed \ga_{m-1}\wed \om^{n-m} \geq 0 \quad\text{ on } M
\]
in the weak sense of currents. Denote $SH_{\al,m}(M, \om)$ or $SH_{\al,m}(\om)$ be the set of all $(\al,m)-\om$-sh function on $M$.  

If $\Om$ is a local coordinate chart on $M$ and  $\rho$ is a strictly psh function on $\Om$ such that 
$$dd^c \rho \geq \al \quad \text{on }\Om,$$
then $u+\rho$ is a $m-\om$-sh function on $\Om$. 
Using this fact, we can easily extend the definition of the wedge product
for currents associated to  bounded $(\al,m)-\om$-sh functions  by using partition of unity and the local one (Definition~\ref{defn:w-prod}). Namely, write $\tau = \al -dd^c\rho$ which is a smooth $(1,1)$-form. Then, $\al + dd^c u = dd^c (u+\rho) + \tau$. We define
$$\begin{aligned}
	(\al + dd^c u)^m \wed \om^{n-m} 
&:= 	\sum_{k=0}^m  \binom{m}{k}[dd^c (u+\rho)]^k \wed \tau^{m-k} \wed \om^{n-m}\\ 
&= 	\sum_{k=0}^m \binom{m}{k} \cL_k(u+\rho) \wed \tau^{m-k}. 
\end{aligned}$$
This gives a positive Radon measure on $M$ by the weak convergence theorem. 
Similarly, the wedge product for bounded $(\al,m)-\om$-sh functions $u_1,...,u_m$
$$
	(\al+ dd^cu_1) \wed \cdots \wed (\al + dd^c u_m) \wed \om^{n-m}
$$
is a well-defined positive Radon measure.
Since the definition is local, all local results for $m-\om$-sh functions in a local coordinate chart  transfer to $(\al,m)-\om$-sh functions on the manifold $M$. For simplicity we denote $\al_u := \al + dd^c u$ and
$$
	H_{m,\al}(u)  = (\al + dd^cu)^m \wed \om^{n-m}.
$$

Now, given  a positive Radon measure $\mu$ on $M$ and a continuous boundary data $\vphi\in C^0(\d M,\bR)$ we wish to solve the Dirichlet problem
\[\label{eq:DP-mfd}
\begin{cases}	
	u \in SH_{\al,m}(\om) \cap L^\infty(\ov M),\\
	H_{m,\al}(u)  = \mu,\\
	\lim_{z\to x} u(x) = \vphi(x) \quad\text{for } x\in \d M.
\end{cases}\]

Let us state a general existence result.

\begin{thm}\label{thm:bounded-sub-mfd} Assume there exists a bounded $(\al,m)-\om$-sh function $\ul u$ on $M$ such that $\lim_{z\to x}\ul u(z) = \vphi(x)$ for $x\in\d M$ and 
$
	H_{m,\al}(\ul u) \geq \mu \quad \text{on } M.
$
Then, there is a solution to the Dirichlet problem \eqref{eq:DP-mfd}.
\end{thm}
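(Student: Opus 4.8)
The plan is to reduce the global problem on $M$ to the local Dirichlet problem already solved in Theorem~\ref{thm:bounded-subsolution} via a partition of unity and a balayage (Perron-envelope) argument, exactly mirroring the strategy used there to pass from arbitrary measures to compactly supported ones. First I would fix a finite cover of $\ov M$ by coordinate charts $\Om_i$, each strictly $m$-pseudoconvex and carrying a strictly psh function $\rho_i$ with $dd^c\rho_i\geq \al$, so that on each $\Om_i$ the notion of $(\al,m)$-$\om$-subharmonicity becomes ordinary $m$-$\om$-subharmonicity after adding $\rho_i$. This lets me transfer all the local potential theory---quasicontinuity, the comparison and domination principles, the weak-convergence theorems for monotone sequences, and in particular the local existence Theorem~\ref{thm:bounded-subsolution} for measures admitting a bounded subsolution---to the manifold setting. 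The subsolution $\ul u$ with $H_{m,\al}(\ul u)\geq\mu$ and boundary value $\vphi$ plays the role of the global lower barrier, and the solution $h$ of the homogeneous equation $H_{m,\al}(h)=0$ with $h|_{\d M}=\vphi$ (obtained by the same balayage, starting from $\ul u$) is the upper barrier.

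The core of the argument is the Perron envelope
$$
	u = \sup\bigl\{ w \in SH_{\al,m}(\om)\cap L^\infty(\ov M): w\leq h,\ H_{m,\al}(w)\geq\mu \text{ on } M,\ \limsup_{z\to x} w(z)\leq\vphi(x)\bigr\},
$$
or, more in keeping with the proof of Theorem~\ref{thm:bounded-subsolution}, a two-step construction: first a reduction lemma analogous to the one in Section~\ref{sec:subsolution} showing it suffices to treat $\vphi$ smooth, $\mu$ with compact support in $M$, and $\ul u$ extendable across $\d M$ (here "extendable" means defined on a slightly larger manifold, which is automatic since $\d M\neq\emptyset$ and one can collar-extend); then an approximation $\mu=g\,H_{m,\al}(\ul u)$ with $0\leq g\leq 1$ Borel, $g$ first taken continuous. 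For the local solvability I cover $\supp\mu$ by finitely many charts and patch the local solutions from Theorem~\ref{thm:bounded-subsolution} using the gluing lemma (the manifold analogue of Lemma~1.8 in the excerpt) together with the comparison principle to guarantee the patched function is globally $(\al,m)$-$\om$-sh and satisfies $H_{m,\al}=\mu$ there while remaining maximal off $\supp\mu$. Uniqueness is immediate from the domination principle (Corollary~\ref{cor:DP}, transferred locally and then globally).

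The weak convergence $H_{m,\al}(u_{j_s})\to H_{m,\al}(u)$ along a suitable subsequence is handled exactly as in the proof of Theorem~\ref{thm:bounded-subsolution}: one produces approximants $u_j$ solving $H_{m,\al}(u_j)=g\chi_j H_{m,\al}(\ul u_j)$ with smooth data, establishes uniform bounds $\psi+\cw v_j\leq u_j\leq h$ from the domination principle so that the $u_j$ are uniformly bounded and extendable, invokes Lemma~\ref{lem:L1-convergence} and Corollary~\ref{cor:L1-convergence} (using that $H_{m,\al}(u)$ puts no mass on $m$-polar sets, by Proposition~\ref{prop:polar-set-characterization} and Theorem~\ref{thm:NP}), then Lemma~\ref{lem:energy-convergence} to get $\int_M|u_{j_s}-u|\,H_{m,\al}(w_s)\to 0$ for the truncated maxima $w_s=\max\{u_{j_s},u-1/s\}$, and finally the convexity/interpolation argument of \cite[Lemma~3.6]{KN22} to conclude. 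The passage from continuous $g$ to Borel $g$ is the same as at the end of the proof of Theorem~\ref{thm:bounded-subsolution}.

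The main obstacle I expect is bookkeeping rather than a new idea: making the partition-of-unity reduction genuinely work requires that the local model measures $H_m(\cdot)$ glue consistently across chart overlaps, and that the boundary behaviour near $\d M$ is controlled in a single chart adapted to the boundary (one needs a strictly $m$-pseudoconvex defining-type function for a neighbourhood of $\d M$ in $\ov M$, playing the role of $\psi,\psi'$ in Section~\ref{sec:subsolution}, to build the barriers and to extend functions past $\d M$). Verifying that the Collins--Picard solution \cite{CP22} supplies the needed smooth solvability on each coordinate chart with prescribed smooth boundary data, and that the subsolution hypothesis localizes correctly, is where care is needed; once those are in place the rest of the proof is a transcription of Sections~\ref{sec:DP}--\ref{sec:subsolution} with $\al+dd^c u$ in place of $dd^c u$ and $\ov M$ in place of $\ov\Om$.
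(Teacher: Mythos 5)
Your overall strategy --- Perron envelope over the class of bounded subsolutions with boundary value $\le\vphi$, combined with local solvability from Theorem~\ref{thm:bounded-subsolution} --- is indeed what the paper does. But there is one genuine gap that would sink the argument as written, and one detour that is both unnecessary and hard to make rigorous.

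The gap is in the upper barrier. You propose to take $h$ solving the \emph{nonlinear homogeneous} Hessian equation $H_{m,\al}(h)=0$ with $h|_{\d M}=\vphi$, ``obtained by the same balayage.'' This is circular: solving that problem on $\ov M$ is a special case of the very theorem you are proving, and the balayage argument itself needs an upper barrier to show the envelope has boundary values $\vphi$. The paper sidesteps this by using the \emph{linear} equation $(\al+dd^c h_1)\wed\om^{n-1}=0$, $h_1|_{\d M}=\vphi$, i.e.\ the Dirichlet problem for the $\om$-Laplacian, which is always solvable by classical linear elliptic theory. Any $w\in\cB(\vphi,\mu)$ satisfies $(\al+dd^c w)\wed\om^{n-1}\ge0$, so the maximum principle for the linear operator forces $w\le h_1$, giving the barrier for free. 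You cannot replace this by the nonlinear homogeneous solution without first establishing it exists.

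The detour is in the local solvability and the ``patching.'' You propose to redo the full sequence of reductions (smooth $\vphi$, compactly supported $\mu$, collar extension) on $\ov M$ and then \emph{patch} local solutions from overlapping charts. But two solutions of $H_{m,\al}=\mu$ on overlapping charts with different boundary data need not agree on the overlap, so naive gluing does not produce a global function satisfying the equation; the gluing lemma only preserves $m$-subharmonicity of a $\max$, not the equation. The paper avoids this entirely: the local Dirichlet problem with an $\al$-twist (Lemma~\ref{lem:bounded-ss-local}) is treated as a direct extension of Theorem~\ref{thm:bounded-subsolution} in a single strictly $m$-pseudoconvex chart, and then a \emph{lift} lemma (Lemma~\ref{lem:lift}) shows: for any $v\in\cB(\vphi,\mu)$ and any small coordinate ball $B$, there is $\wt v\in\cB(\vphi,\mu)$ with $\wt v\ge v$ and $H_{m,\al}(\wt v)=\mu$ on $B$. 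Applying this to the envelope $u$ and using its maximality forces $\wt u=u$ on $B$, hence $H_{m,\al}(u)=\mu$ on $B$, and since $B$ is arbitrary the equation holds globally. No gluing of distinct local solutions and no repetition of the reduction machinery on $\ov M$ is needed --- that machinery already did its work once, inside the proof of the local statement.

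So: keep the Perron envelope, replace your upper barrier by the solution of the linear $\om$-Laplace Dirichlet problem, and replace the patching idea by a lift lemma in small coordinate balls plus maximality of the envelope. With those two corrections your outline becomes the paper's proof.
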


\begin{remark} In the general setting the uniqueness is not known, unlike in a bounded strictly $m$-pseudoconvex domain (Theorem~\ref{thm:bounded-subsolution}). On the other hand, if we assume further that  either the manifold  $M$ is Stein, or both $\om$ and $\al$ are closed forms, then the solution will be unique.
\end{remark}

As we use  the Perron envelope method to show the theorem the most important ingredient 
 is the proof of the special case $M\equiv \Om$  a ball in $\bC^n$.

\begin{lem}\label{lem:bounded-ss-local} Let $\vphi\in C^0(\d\Om, \bR)$. Suppose $\mu \leq H_m(v)$ for some bounded $m-\om$-sh function $v$ in $\Om$ with $\lim_{z\to x} v(z) =0$ for $x\in \d\Om$. Then, there exists a unique $(\al,m)-\om$-sh function $u$ in $\Om$ solving
\[\label{eq:DP-local}
	H_{m,\al}(u) = \mu \quad\text{in }\Om, \quad \lim_{z\to x} u (z) = \vphi(x) \text{ for } x\in \d\Om.
\]
\end{lem}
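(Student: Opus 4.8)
The plan is to run the proof of Theorem~\ref{thm:bounded-subsolution} for the twisted operator $H_{m,\al}$ in place of $H_m$. This is legitimate because, as recorded in Section~\ref{sec:subsolution}, the potential theory for $(\al,m)$-$\om$-sh functions is purely local and, after adding a local strictly psh potential $\rho$ with $dd^c\rho\ge\al$, reduces to the theory for $m$-$\om$-sh functions developed in Sections~\ref{ssec:positive-cone}--\ref{sec:DP}: writing $\tau=\al-dd^c\rho$ one has
$$ H_{m,\al}(u)=\sum_{k=0}^m\binom{m}{k}\cL_k(u+\rho)\wed\tau^{m-k}, $$
so the CLN inequality, quasi-continuity, the comparison and domination principles (from \cite{GN18} combined with the quasi-continuity proved here) and the convergence results of Lemma~\ref{lem:L1-convergence}, Corollary~\ref{cor:L1-convergence} and Lemma~\ref{lem:energy-convergence} all carry over. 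The only genuinely new point is to manufacture, from the hypothesis $\mu\le H_m(v)$, a bounded $(\al,m)$-$\om$-subharmonic subsolution for $\mu$ with boundary values $\vphi$.

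To this end, first solve the homogeneous problem $H_{m,\al}(h)=0$ in $\Om$ with $h=\vphi$ on $\d\Om$; on a ball this follows from the smooth solutions of Collins--Picard \cite{CP22} (a subsolution for $f\equiv 0$ being any $(\al,m)$-$\om$-sh function, e.g.\ a large multiple of a defining function) together with a Perron/uniform-approximation argument for continuous $\vphi$, exactly as in Theorem~\ref{thm:Dirichlet-Lp}. Set $\ul u:=v+h$. Since $\al_h:=\al+dd^c h$ and $dd^c v$ both lie in $\ov{\Ga_m(\om)}$ and this cone is convex, $\al+dd^c\ul u=\al_h+dd^c v\in\ov{\Ga_m(\om)}$, so $\ul u$ is $(\al,m)$-$\om$-sh; it is bounded, and $\lim_{z\to x}\ul u(z)=0+\vphi(x)=\vphi(x)$ for $x\in\d\Om$. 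Finally, expanding binomially and using the positivity of mixed wedge products of elements of $\ov{\Ga_m(\om)}$ with $\om^{n-m}$,
$$ H_{m,\al}(\ul u)=(\al_h+dd^c v)^m\wed\om^{n-m}\ \ge\ (dd^c v)^m\wed\om^{n-m}=H_m(v)\ \ge\ \mu, $$
so $\ul u$ is the desired subsolution.

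With $\ul u$ at hand, the conclusion follows by repeating the proof of Theorem~\ref{thm:bounded-subsolution}: one reduces to $\vphi$ smooth and $\mu$, hence $\nu=H_{m,\al}(\ul u)$, of compact support by the same cut-off and balayage steps, extends $\ul u$ to a neighbourhood of $\ov\Om$ via \eqref{eq:extension}, builds the approximating sequence $u_j$ solving $H_{m,\al}(u_j)=g\chi f_j\,dV$ (with $f_j$ the densities of the $H_{m,\al}(\cw v_j)$), traps it between $\psi+\cw v_j$ and the $\al$-harmonic function with data $\vphi$, and passes to the limit using the energy-convergence lemma (whose only non-elementary input is Theorem~\ref{thm:NP}) to get $H_{m,\al}(u)=\mu$ for $u=(\limsup_j u_j)^*$. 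Uniqueness is immediate from the comparison principle for $H_{m,\al}$ (Corollary~\ref{cor:DP} adapted to the twisted operator).

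The main obstacle is the bookkeeping required to certify that every step of Theorem~\ref{thm:bounded-subsolution} survives the presence of $\al$. The structural fact that makes this routine is the stability of $\ov{\Ga_m(\om)}$ under addition of semipositive $(1,1)$-forms — so $\al_u+\al_w$, $\al_u+dd^c\rho$, $\al_h+dd^cv$ all stay in the cone — together with the positivity of the mixed products $\al_{u_1}\wed\cdots\wed\al_{u_k}\wed\ga_1\wed\cdots\wed\ga_{m-k}\wed\om^{n-m}$ for $(\al,m)$-$\om$-sh $u_i$ and $\ga_j\in\Ga_m(\om)$; this is exactly what the comparison/domination principles and the weak-convergence results use, so once it is noted the argument is identical to the case $\al=0$. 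A secondary, purely technical point is the solvability of the homogeneous $\al$-equation with merely continuous boundary data on the ball, which, as indicated, follows from \cite{CP22} by approximation.
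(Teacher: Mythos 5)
Your overall strategy is the same as the paper's: the authors simply assert that the lemma ``is a straightforward extension of Theorem~\ref{thm:bounded-subsolution},'' and your observation that the $(\al,m)$-$\om$-sh theory reduces locally to the $m$-$\om$-sh theory via $u\mapsto u+\rho$ with $dd^c\rho\ge\al$ is exactly what justifies that assertion. You also correctly identify the only genuinely new point, namely that the hypothesis $\mu\le H_m(v)$ must be converted into an $(\al,m)$-$\om$-sh subsolution.

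However, your choice of subsolution, $\ul u:=v+h$ with boundary data $\vphi$, is an unnecessary and slightly awkward detour. The subsolution format \eqref{eq:bounded-subsol} used throughout the proof of Theorem~\ref{thm:bounded-subsolution} requires the subsolution to have boundary data $0$ (the boundary data $\vphi$ of the solution is handled separately via the auxiliary barriers $\psi$ and $h$). In particular, the extension step \eqref{eq:extension}, which you explicitly invoke, relies on $\lim_{z\to\d\Om}(\ul u-A\rho)=0$, and this fails when $\ul u|_{\d\Om}=\vphi\neq 0$. The clean choice is $\ul u:=v$ itself: since $\al\in\Ga_m(\om)$ and $v$ is $m$-$\om$-sh, for all $\ga_1,\dots,\ga_{m-1}\in\Ga_m(\om)$ one has
\[
(\al+dd^cv)\wed\ga_1\wed\cdots\wed\ga_{m-1}\wed\om^{n-m}
=\al\wed\ga_1\wed\cdots\wed\ga_{m-1}\wed\om^{n-m}
+dd^cv\wed\ga_1\wed\cdots\wed\ga_{m-1}\wed\om^{n-m}\ge 0,
\]
so $v\in SH_{\al,m}(\om)$; and by the binomial expansion with nonnegative mixed terms, $H_{m,\al}(v)\ge H_m(v)\ge\mu$, with $\lim_{z\to\d\Om}v=0$. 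This reproduces \eqref{eq:bounded-subsol} verbatim with $H_m$ replaced by $H_{m,\al}$, and then every step of the proof of Theorem~\ref{thm:bounded-subsolution} (reductions (a)--(d), the envelope, the $\wt\cE_0$-energy estimates, Lemma~\ref{lem:energy-convergence}) transfers with only notational changes. As a bonus, using $v$ also avoids your ``secondary technical point'' of solving the homogeneous $(\al,m)$-Dirichlet problem with merely continuous data, which is not actually needed to produce the subsolution (it is of course still needed, as in the original proof, to furnish the upper barrier).

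So the idea is right, but the subsolution you built does not literally fit the extension step you later cite; replacing $v+h$ by $v$ removes the friction and matches the paper's intended argument.
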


The proof of this lemma is a straightforward extension of Theorem~\ref{thm:bounded-subsolution} so we omit the proof.

\begin{proof}[Proof of Theorem~\ref{thm:bounded-sub-mfd}] 
Let us proceed with the proof of the bounded subsolution theorem on $\ov M$. Consider the following set of functions
\[\label{eq:class-B}
	\cB(\vphi,\mu): = \left\{w \in SH_{\al,m}(M, \om) \cap L^\infty(\ov M): H_{m,\al} (w) \geq \mu, w^*_{|_{\d M}} \leq \vphi \right\},
\]
where  $w^*(x) = \limsup_{M \ni z\to x} w(z)$ for every $x\in \d M$.
Clearly, $\ul{u} \in \cB(\vphi, \mu)$. Let us solve the linear PDE finding  $h_1 \in C^0(\ov{M}, \bR)$ such that 
\[\label{eq:omega-laplace}\begin{aligned}
	(\al + dd^c h_1) \wed \om^{n-1} =0, \\
	h_1 = \vphi \quad\text{on } \d M.
\end{aligned}
\]
Since $(\al +dd^c w)\wed \om^{n-1} \geq 0$ for $w \in SH_{\al,m}(M, \om)$, the maximum principle for the Laplace operator with respect to $\om$  gives 
$$
	w \leq h_1 \quad \text{for all } w  \in \cB(\vphi, \mu).
$$
Set
\[ \label{eq:supremum}
	u (z)  = \sup_{w \in \cB(\vphi, \mu)} w (z)\quad \text{for every } z\in M.
\]
Then, by Choquet's lemma and the fact that $\cB(\vphi, \mu)$ satisfies the lattice property, $u = u^* \in \cB(\vphi, \mu)$. Again by the definition of $u$, we have   $\ul{u} \leq u \leq h_1$. It follows that 
\[\label{eq:boundary} \lim_{z \to x} u(z) = \vphi(x) \quad \text{for every } x\in \d M.
\]

\begin{lem}[Lift] \label{lem:lift}  Let $v \in \cB(\vphi, \mu)$. Let $B \subset\subset M$ be a small coordinate ball (a chart biholomorphic to a ball in $\bC^n$). Then, there exists $\wt v \in \cB(\vphi, \mu)$ such that $v \leq \wt v$  and $H_{m,\al} (\wt v) = \mu$ on $B$.
\end{lem}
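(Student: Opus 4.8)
The plan is to use the classical balayage (Perron–Bremermann) construction. First I would fix a coordinate ball $B \subset\subset M$, identified with a ball in $\bC^n$, and choose a strictly psh function $\rho$ on a neighborhood of $\ov B$ with $dd^c\rho \geq \al$ there, so that on $B$ the $(\al,m)$-$\om$-sh function $v$ corresponds to the $m$-$\om$-sh function $v+\rho$ and the measure $\mu$ satisfies $\mu \leq H_{m,\al}(\wt v_0)$ for some bounded subsolution: indeed, by hypothesis $\ul u \in \cB(\vphi,\mu)$, and by taking the max of $v$ with a suitable candidate one may assume $v \geq \ul u$, so $v$ itself satisfies $H_{m,\al}(v)\geq \mu$ on $B$. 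Since $B$ is a ball, Lemma~\ref{lem:bounded-ss-local} applies to the domain $B$: with boundary data $v|_{\d B}$ (which is bounded, though only upper semicontinuous) and right-hand side $\mu|_B$, and with $v$ itself serving as the required bounded subsolution on $B$, we obtain a bounded $(\al,m)$-$\om$-sh function $v_B$ on $B$ solving $H_{m,\al}(v_B)=\mu$ on $B$ with boundary values matching $v$.

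Next I would define the lifted function
\[
	\wt v = \begin{cases} v_B & \text{on } B, \\ v & \text{on } M\setminus B. \end{cases}
\]
By the comparison/domination principle (Corollary~\ref{cor:DP}) applied on $B$, since $H_{m,\al}(v_B)=\mu \leq H_{m,\al}(v)$ on $B$ and the boundary values agree, we get $v_B \geq v$ on $B$, so $\wt v \geq v$ on all of $M$ and in particular $\wt v$ is bounded with $\wt v^*_{|_{\d M}} \leq h_1|_{\d M} = \vphi$ unchanged (as $v$ is unchanged near $\d M$). The gluing lemma (the statement following Proposition~\ref{prop:smoothing}, in the $(\al,m)$-$\om$-sh form) shows $\wt v \in SH_{\al,m}(M,\om)$: one needs $\limsup_{z\to x}v_B(z)\leq v(x)$ for $x\in\d B$, but because $v$ is already $(\al,m)$-$\om$-sh (hence usc) and $v_B$ has the same boundary values as $v$ on $\d B$ from inside, upper semicontinuity gives exactly this. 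Finally $H_{m,\al}(\wt v)\geq\mu$ globally: on $B$ it equals $\mu$ by construction, on $M\setminus\ov B$ it equals $H_{m,\al}(v)\geq\mu$, and across $\d B$ the gluing does not destroy positivity since $\wt v=\max\{v_B, v\}$ locally near $\d B$ once we note $v_B\geq v$ and the measure of the "gluing set'' contributes nonnegatively (using the Corollary after Lemma~\ref{lem:bounded-decreasing} on the behavior of $H_{m,\al}$ under maxima). Hence $\wt v \in \cB(\vphi,\mu)$, which completes the proof.

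The main obstacle I expect is the delicate point that the boundary data $v|_{\d B}$ for the local Dirichlet problem on $B$ is merely bounded and upper semicontinuous, not continuous, so Lemma~\ref{lem:bounded-ss-local} as literally stated (requiring $\vphi\in C^0(\d\Om)$) does not apply verbatim. This is handled exactly as in the classical pluripotential-theoretic balayage argument (cf.\ \cite[Theorem~1.16]{K05} or the proof of Theorem~\ref{thm:bounded-subsolution} above): one approximates $v|_{\d B}$ from above by a decreasing sequence of continuous functions $\vphi_k$, solves the Dirichlet problem on $B$ for each $\vphi_k$ (with $v$ still a subsolution), and takes the decreasing limit; the uniform bound from $\ul u$ and the convergence theorem for decreasing sequences (Lemma~\ref{lem:bounded-decreasing}) guarantee the limit is the desired $v_B$ with $H_{m,\al}(v_B)=\mu$ on $B$. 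The verification that $\wt v$ glues to a genuinely $(\al,m)$-$\om$-sh function and that the Hessian measure does not lose mass along $\d B$ is routine given the weak-convergence and maximum-of-two-functions results established earlier in the paper.
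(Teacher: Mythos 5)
Your plan is essentially the approach the paper takes: the paper's proof is one line, citing Lemma~\ref{lem:bounded-ss-local} for local solvability in a ball and saying that the argument of [KN22, Lemma~3.7] (the Monge--Amp\`ere case) adapts. Your reconstruction --- solve a Dirichlet problem on $B$ with boundary data $v|_{\d B}$, approximate the usc boundary data from above by continuous functions, take the decreasing limit, then glue and check that the result is still in $\cB(\vphi,\mu)$ via the comparison/domination principle and the maximum inequality --- is exactly that balayage argument.

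One step is more delicate than you indicate. You write that ``$v$ itself serv[es] as the required bounded subsolution on $B$,'' but Lemma~\ref{lem:bounded-ss-local} asks for a bounded \emph{$m$-$\om$-sh} function $w$ on $B$ with \emph{zero} boundary values on $\d B$ and $\mu\leq H_m(w)$; $v$ is $(\al,m)$-$\om$-sh, not $m$-$\om$-sh, and its boundary values on $\d B$ are not zero. The conversion is standard but has to be said: pick a strictly plurisubharmonic $\rho$ defined near $\ov B$ with $dd^c\rho\geq\al$, so that $v+\rho$ is $m$-$\om$-sh and $H_m(v+\rho)\geq(\al+dd^cv)^m\wed\om^{n-m}\geq\mu$ by G\aa rding monotonicity, and then arrange the boundary normalization (this is precisely why the paper works with the class $\wt\cE_0(\Om)$ in \eqref{eq:Cegrell-class} and the extension trick \eqref{eq:extension} in the reductions for Theorem~\ref{thm:bounded-subsolution}). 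With that adjustment, and the standard observations that $v_B\geq v$ on $B$ by Corollary~\ref{cor:DP}, that the glued function is $(\al,m)$-$\om$-sh by the gluing lemma, and that $H_{m,\al}(\wt v)\geq\mu$ across $\d B$ (shrink $B$ slightly so the sphere has $\mu$-measure zero if necessary), your proof is complete and matches the paper's intended argument.
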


\begin{proof} Given the solution in a small coordinate ball in Lemma~\ref{lem:bounded-ss-local}, the proof from \cite[Lemma~3.7]{KN22} is readily adaptable here.
\end{proof}

 By \eqref{eq:boundary} it remains to show that the function $u$ above satisfies $H_{m,\al}(u) = \mu$. Let $B \subset\subset M$ be a small coordinate ball. It is enough to check $H_{m,\al}(u) = \mu$ on $B$. Let $\wt u$ be the lift of $u$ as in Lemma~\ref{lem:lift}. It follows that $\wt u \geq u$ and $H_{m,\al} (\wt u) = \mu$ on $B$. However, by the definition $\wt u \leq u$ on $M$.  Thus, $\wt u = u$ on $B$, in particular on $B$ we have $H_{m,\al} (\wt u)  = H_{m,\al}(u) = \mu$.
\end{proof}

\begin{remark} We can also study the continuity of the solution to the Dirichlet problem \eqref{eq:DP-mfd} for a measure that is well-dominated by capacity as in \cite[Section~4]{KN22} and the weak solution to the complex Hessian type equations such as a generalization of the Monge-Amp\`ere equation in \cite{KN23}. We leave these to the future projects.
\end{remark}

\end{document}